
\documentclass[10pt]{amsart}
\setcounter{tocdepth}{2}
\setcounter{secnumdepth}{3}
\widowpenalty	= 4000
\clubpenalty	= 4000
\tolerance		= 1000


\usepackage[USenglish]{babel}
\usepackage{amsmath, amsfonts, amssymb, amsthm, amscd}
\usepackage{bbm, color, csquotes, dsfont, enumitem, epsf, graphicx, indentfirst, latexsym, mathtools, rotating, scalerel, tikz, tikz-cd, verbatim}
\usepackage[final, breaklinks=true, colorlinks, citecolor=green, linkcolor=blue, pdfusetitle]{hyperref}

\usetikzlibrary{decorations.pathmorphing}

\usepackage[T1]{fontenc}

\usepackage[abbrev]{amsrefs}


\usepackage{lmodern}










\newcommand{\bm}[1]{\boldsymbol{#1}}



\newtheorem{theorem}{Theorem}[section]

\newtheorem{proposition}[theorem]{Proposition}

\newtheorem*{mainresult}{Main Result}

\newcommand{\thisaxiomname}{}
\newtheorem*{genericaxiom*}{\thisaxiomname}
	\newenvironment{axiom*}[1]{\renewcommand{\thisaxiomname}{#1}%
	\begin{genericaxiom*}}{\end{genericaxiom*}}

\theoremstyle{definition}
\newtheorem{definition}[theorem]{Definition}

\newtheorem{problem}{Problem}

\theoremstyle{remark}
\newtheorem{remark}[theorem]{Remark}

\numberwithin{equation}{section}



\newcommand{\R}{\mathbb{R}}
\newcommand{\Z}{{\mathbb Z}}

\newcommand{\C}{{\mathbb C}}

\newcommand{\Q}{{\mathbb Q}}


\newcommand{\ep}{\varepsilon}



\newcommand{\cB}{{\mathcal B}}

\newcommand{\cJ}{{\mathcal J}}

\newcommand{\cM}{{\mathcal M}}

\newcommand{\cO}{{\mathcal O}}
\newcommand{\cP}{{\mathcal P}}

\newcommand{\cR}{{\mathcal R}}
\newcommand{\cS}{{\mathcal S}}

\newcommand{\cU}{{\mathcal U}}
\newcommand{\cV}{{\mathcal V}}
\newcommand{\cW}{{\mathcal W}}
\newcommand{\cX}{{\mathcal X}}
\newcommand{\cY}{{\mathcal Y}}
\newcommand{\cZ}{{\mathcal Z}}





\newcommand{\CM}{\overline{\mathcal{M}}}

\newcommand{\ww}{\omega}
\newcommand{\CP}{\mathbb{C}P}

\newcommand{\delbarj}{\overline{\partial}_J}

\newcommand{\delbarjinv}{\smash{\overline{\partial}_J}\vphantom{\partial}^{-1}}


\newcommand{\hb}{
	\mathchoice
	{
		B_{\frac{1}{2}}
	}
	{
		B_{1/2}
	}
	{}{}
}

\newcommand{\abs}[1]{\lvert#1\rvert}

\DeclareMathOperator{\ind}{ind}
\DeclareMathOperator{\dR}{dR}

\newcommand{\red}[1]{\textcolor{red}{#1}}

\DeclareMathOperator*{\GW}{GW}
\DeclareMathOperator{\PD}{PD}


\newcommand{\ssc}{\text{sc}}
\newcommand{\dmspace}{\overline{\mathcal{M}}}
\newcommand{\dmlog}{\smash{\overline{\mathcal{M}}}\vphantom{\mathcal{M}}^{\text{log}}}
\newcommand{\dmexp}{\smash{\overline{\mathcal{M}}}\vphantom{\mathcal{M}}^{\text{exp}}}

\newcommand{\av}{\text{av}}

\newcommand{\supp}{\operatorname{supp}}
\newcommand{\id}{\operatorname{id}}
\newcommand{\pt}{\operatorname{pt}}
\newcommand{\sign}{\text{sgn}}
\newcommand{\Aut}{\text{Aut}}

\hyphenation{mar-gin-al-ia branch-ed mani-fold mani-folds poly-fold poly-folds orbi-fold orbi-folds mod-uli mul-ti-sec-tion mul-ti-sec-tions pa-ra-me-trize pa-ra-me-triz-ed weight-ed in-vari-ant in-vari-ants}



\title[The Gromov--Witten axioms via polyfold theory]{The Gromov--Witten axioms for symplectic manifolds via polyfold theory}
\date{\today}
\subjclass[2010]{Primary 53D05, 53D30, 53D45}
\thanks{Research partially supported by Project C5 of SFB/TRR 191 ``Symplectic Structures in Geometry, Algebra and Dynamics,'' funded by the DFG}

\author{Wolfgang Schmaltz}
\address{Mathematics Institute, Justus-Liebig University, D-35392 Gie{\ss}en, Germany}
\email{\href{mailto:wolfgang.schmaltz@math.uni-giessen.de}{wolfgang.schmaltz@math.uni-giessen.de}}
\urladdr{\url{https://sites.google.com/view/wolfgang-schmaltz/home}}

\begin{document}


\begin{abstract}
	Polyfold theory, as developed by Hofer, Wysocki, and Zehnder, is a relatively new approach to resolving transversality issues that arise in the study of $J$-holomorphic curves in symplectic geometry.
	This approach has recently led to a well-defined Gromov--Witten invariant for $J$-holomorphic curves of arbitrary genus, and for all closed symplectic manifolds.
	
	The Gromov--Witten axioms, as originally described by Kontsevich and Manin, give algebraic relationships between the Gromov--Witten invariants.	
	In this paper, we prove the Gromov--Witten axioms for the polyfold Gromov--Witten invariants.
\end{abstract}

\maketitle

\tableofcontents


\section{Introduction}

\subsection{History}

In 1985 Gromov published the paper ``Pseudo holomorphic curves in symplectic manifolds,'' laying the foundations for the modern study of pseudo holomorphic curves (also know as $J$-holomorphic curves) in symplectic topology \cite{G}.
In this paper, Gromov proved a compactness result for the moduli space of $J$-holomorphic curves in a fixed homology class.  This paper contained antecedents to the modern notion of the Gromov--Witten invariants in the proofs of the nonsqueezing theorem and the uniqueness of symplectic structures on $\CP^2$.

Around 1988, inspired by Floer's study of gauge theory on three manifolds, Witten introduced the topological sigma model \cites{floer1988instanton, witten1988topological}.
The invariants of this model are the ``$k$-point correlation functions,'' another precursor to the modern notion of the Gromov--Witten invariants.
Witten also observed some of the relationships between these invariants and possible degenerations of Riemann surfaces \cite{witten1990two}.
Further precursors to the notion of the Gromov--Witten invariants can also be seen in McDuff's classification of symplectic ruled surfaces \cite{mcduff1991symplectic}.

In 1993 Ruan gave a modern definition of the genus zero Gromov--Witten invariants for semipositive symplectic manifolds \cites{ruan1996topological, ruan1994symplectic}.  At the end of 1993, Ruan and Tian established the associativity of the quantum product for semipositive symplectic manifolds, giving a mathematical basis to the composition law of Witten's topological sigma model \cite{rt1995quatumcohomology}.

In 1994 Kontsevich and Manin stated the Gromov--Witten axioms, given as a list of formal relations between the Gromov--Witten invariants \cite{KM}.  
At the time it was not possible for Kontsevich and Manin to give a proof of the relations they listed; the definition of the Gromov--Witten invariant (complete with homology classes from a Deligne--Mumford space) would require in addition new ideas involving ``stable maps'' \cite{Kstable}.
Hence they used to term ``axiom'' with the presumed meaning ``to take for assumption without proof''/``to use as a premise for further reasoning.''  And indeed, from these starting assumptions they were able to establish foundational results in enumerative geometry, answers to esoteric questions such as:
\begin{quote}
	\textit{(Kontsevich's recursion formula).}  Let $d\geq 1$.  How many degree $d$ rational curves in $\CP^2$ pass through $3d - 1$ points in general position?
\end{quote}
Moreover, in this paper they outlined some of the formal consequences of the axioms by demonstrating how to combine the invariants into a Gromov--Witten potential, and interpret the axioms as differential equations which the potential satisfies.

To varying extents, this work has predated the construction of a well-defined Gromov--Witten invariant in symplectic geometry for $J$-holomorphic curves of arbitrary genus, and for all closed symplectic manifolds.
Efforts to construct a well-defined Gromov--Witten invariant constitute an ever growing list of publications, including but not limited to the following: \cites{LT, FO, fukaya2012technical, Si, cieliebak2007symplectic, mcduff2012smooth, mcduff2018fundamental, MWtopology, ionel2013natural, pardon2016algebraic}. 
A discussion of some of the difficulties inherent in these approaches can be found in \cite{ffgw2016polyfoldsfirstandsecondlook}.
Similarly, there have been several efforts to prove the Gromov--Witten axioms \cites{FO, MSbook, castellano2016genus}.

Over the past two decades, Hofer, Wysocki, and Zehnder have developed a new approach to resolving transversality issues that arise in the study of $J$-holomorphic curves in symplectic geometry called \textit{polyfold theory} \cites{HWZ1, HWZ2, HWZ3, HWZGW, HWZsc, HWZdm, HWZint, HWZbook}.  This approach has been successful in constructing a well-defined Gromov--Witten invariant \cite{HWZGW}.

\subsection{The polyfold Gromov--Witten invariants}

Let $(Q,\ww)$ be a closed symplectic manifold and let $J$ be a compatible almost complex structure.
Let $2n:= \dim_{\R} Q$.
For a fixed homology class $A\in H_2(Q,\Z)$, and for fixed integers $g\geq 0$, $k\geq 0$, we consider the following set:
	\[
	\cM_{A,g,k}(J) := 
	\left\{
	\begin{array}{c}
	u: (\Sigma_g,j) \to Q \\
	\{z_1,\ldots,z_k\}\in \Sigma_g
	\end{array}
	\biggm|
	\begin{array}{c}
	\tfrac{1}{2} (du+J\circ du\circ j)=0 \\
	u_*[\Sigma_g] = A
	\end{array}
	\right\}
	\biggm/
	\begin{array}{l}
	u \sim u\circ \phi,\\
	\phi\in \Aut
	\end{array}
	\]
consisting of smooth maps $u:(\Sigma_g,j)\to Q$ which satisfy the Cauchy--Riemann equation modulo reparametrization; here $(\Sigma_g,j)$ is a genus $g$ Riemann surface and $\Aut$ is the automorphism group of the Riemann surface $(\Sigma_g,j)$ which preserves the ordering of the marked points.
We will refer to an equivalence class of a solution to the Cauchy--Riemann equation as a \textbf{$J$-holomorphic curve}.

Gromov's compactness theorem states that given a sequence of $J$-holomorphic curves there exists a subsequence which ``weakly converges'' to a ``cusp-curve'' \cite{G}.
This was later refined in \cite{Kstable} into the ``stable map compactification.''
Consequently, the set $\cM_{A,g,k}(J)$ can be compactified by adding nodal curves yielding a \textit{compact} topological space
	\[
	\CM_{A,g,k}(J) := \cM_{A,g,k}(J) \sqcup \{\text{nodal curves}\}.
	\]	
We call this space the \textbf{unperturbed Gromov--Witten moduli space} of genus $g$, $k$ marked stable curves which represent the class $A$.

In a set of small but often studied cases where the symplectic manifold $(Q,\ww)$ is ``semipositive'' it is possible to give this compact topological space the additional structure of a ``pseudocycle,'' which is suitable for defining an invariant.
This is achieved via a perturbation of the almost complex structure $J$.
The space of compatible almost complex structures $\mathcal{J}(Q,\ww)$ is nonempty and contractible, from which it can be shown that the invariant does not depend on the choice of $J$.
However in general symplectic manifolds no $J\in \mathcal{J}(Q,\ww)$ can give sufficient transversality to yield a well-defined invariant.  For a textbook treatment of this material, we refer to \cite{MSbook}.

Polyfold theory, developed by Hofer, Wysocki, and Zehnder, is a relatively new approach to resolving transversality issues that arise in attempts to solve moduli space problems in symplectic geometry.
The polyfold theoretic approach to solving a moduli space problem is to recast the problem into familiar terms from differential geometry.
To do this, we may construct a ``Gromov--Witten polyfold'' $\cZ_{A,g,k}$---a massive, infinite-dimensional ambient space, designed to contain the entire unperturbed Gromov--Witten moduli space $\CM_{A,g,k}(J)$ as a compact subset.  
We may furthermore construct a ``strong polyfold bundle'' $\cW_{A,g,k}$ over $\cZ_{A,g,k}$; the Cauchy--Riemann operator then defines a ``scale smooth Fredholm section'' of this bundle, $\delbarj :\cZ_{A,g,k} \to \cW_{A,g,k}$, such that $\delbarjinv(0) = \CM_{A,g,k}(J)$.
We can construct ``abstract perturbations'' $p$ of this section such that $\delbarj +p$ is transverse to the zero section and such that $(\delbarj+p)^{-1}(0)$ is a compact set.  
In this way, we may take a scale smooth Fredholm section and ``regularize'' the unperturbed Gromov--Witten moduli space yielding a \textbf{perturbed Gromov--Witten moduli space} $\cS_{A,g,k}(p):= \text{`` }(\delbarj+p)^{-1}(0) \text{ ''}$ which has the structure of a compact oriented ``weighted branched orbifold.''

	\[
	\begin{array}{c}
	\CM_{A,g,k}(J) = \delbarj^{-1}(0) \\
	\text{\small{compact topological space}} \\
	\mbox{}
	\end{array}
	\begin{tikzcd}
	\mbox{} \arrow[rrr, "\text{\small{``polyfold regularization''}}", squiggly] & \mbox{} & \mbox{} & \mbox{}
	\end{tikzcd}
	\begin{array}{c}
	\cS_{A,g,k}(p):=(\delbarj+p)^{-1}(0) \\
	\text{\small{compact ``weighted}}\\
	\text{\small{branched orbifold''}}
	\end{array}
	\]

This approach has been successful in giving a well-defined Gromov--Witten invariant for curves of arbitrary genus, and for all closed symplectic manifolds.
Suppose that $2g+k\geq 3$, and consider the following diagram of smooth maps between the perturbed Gromov--Witten moduli space $\cS_{A,g,k}(p)$, the $k$-fold product manifold $Q^k$, and the Deligne--Mumford orbifold $\dmlog_{g,k}$:  
	\[
	\begin{tikzcd}[column sep=12ex]
	\cS_{A,g,k}(p) \arrow{r}{ev_1\times\cdots\times ev_k} \arrow{d}{\pi} & Q^k\\
	\dmlog_{g,k}
	\end{tikzcd}
	\]
Here $ev_i$ is evaluation at the $i$th-marked point, and $\pi$ is the projection map to the Deligne--Mumford space which forgets the stable map solution and stabilizes the resulting nodal Riemann surface by contracting unstable components.

Consider homology classes $\alpha_1,\ldots, \alpha_k \in H_* (Q;\Q)$ and $\beta\in H_* (\dmlog_{g,k};\Q)$.
We can represent the Poincar\'e duals of the $\alpha_i$ and $\beta$ by closed differential forms in the de Rahm cohomology groups, $\PD(\alpha_i)\in H^*_{\dR} (Q)$ and $\PD(\beta)\in H^*_{\dR}(\dmlog_{g,k})$.
By pulling back via the evaluation and projection maps, we obtain a closed $\ssc$-smooth differential form 
	\[
	ev_1^* \PD (\alpha_1) \wedge \cdots \wedge ev_k^* \PD(\alpha_k) \wedge\pi^* \PD (\beta) \in H^*_{\dR} (\cZ_{A,g,k}).
	\]
	
\begin{theorem}[{\cite[Thm.~1.12]{HWZGW}}]
	The \textbf{polyfold Gromov--Witten invariant} is the homomorphism
		\[
		\GW_{A,g,k} : H_* (Q;\Q)^{\otimes k} \otimes H_* (\dmlog_{g,k}; \Q) \to \Q
		\]
	defined via the ``branched integration'' of \cite{HWZint}:
		\[
		\GW_{A,g,k} (\alpha_1,\ldots,\alpha_k;\beta) : = \int_{\cS_{A,g,k}(p)} ev_1^* \PD (\alpha_1) \wedge \cdots \wedge ev_k^* \PD(\alpha_k) \wedge\pi^* \PD (\beta).
		\]
	This invariant does not depend on the choice of perturbation.
\end{theorem}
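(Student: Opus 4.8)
The plan is to deduce the theorem from two structural inputs of polyfold theory: the regularization theorem, which endows the perturbed moduli space $\cS_{A,g,k}(p)$ with the structure of a compact oriented weighted branched orbifold whose dimension equals the Fredholm index of $\delbarj$; and the branched integration of \cite{HWZint}, which assigns a rational number to the pairing of such an object with a closed $\ssc$-smooth differential form, and which obeys a Stokes theorem. The homomorphism property is then essentially formal: Poincar\'e duality $H_*(\,\cdot\,;\Q)\to H^*_{\dR}(\,\cdot\,)$, the pullbacks $ev_i^*$ and $\pi^*$, the wedge product, and branched integration are all $\Q$-(multi)linear, so $\GW_{A,g,k}$ is linear in each $\alpha_i$ and in $\beta$; and for degree reasons the integral vanishes unless the total degree of the integrand equals $\dim \cS_{A,g,k}(p)$, which recovers the expected dimension constraint.

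First I would check that the right-hand side of the displayed formula is well-defined for a fixed admissible perturbation $p$. Because the $\alpha_i$ and $\beta$ are homogeneous, $\PD(\alpha_i)$ and $\PD(\beta)$ are closed forms, so $ev_1^*\PD(\alpha_1)\wedge\cdots\wedge ev_k^*\PD(\alpha_k)\wedge\pi^*\PD(\beta)$ is a closed $\ssc$-smooth form on $\cZ_{A,g,k}$; its restriction to $\cS_{A,g,k}(p)$ is a closed $\ssc$-smooth form on a compact oriented weighted branched orbifold, which is exactly the input required by the branched integral of \cite{HWZint}. Moreover, since $\cS_{A,g,k}(p)$ is closed, Stokes' theorem for branched integrals shows the value depends only on the de Rham cohomology classes of $\PD(\alpha_i)$ and $\PD(\beta)$, hence only on $\alpha_1,\dots,\alpha_k$ and $\beta$.

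Next I would prove independence of the perturbation by a cobordism argument. Given two admissible perturbations $p_0$ and $p_1$, I would invoke the polyfold perturbation and transversality theory (via $\ssc^+$-multisections) over the product polyfold $\cZ_{A,g,k}\times[0,1]$, equipped with the pulled-back strong bundle $\cW_{A,g,k}$, to produce an interpolating perturbation $\hat p$ with $\hat p|_{t=0}=p_0$ and $\hat p|_{t=1}=p_1$ such that $\delbarj+\hat p$ is transverse to the zero section with compact zero set. Its solution set $\cS$ is then a compact oriented weighted branched orbifold with boundary, and with the induced boundary orientation
	\[
	\partial\cS = \cS_{A,g,k}(p_1)\ \sqcup\ \bigl(-\cS_{A,g,k}(p_0)\bigr).
	\]
Since the evaluation and projection maps extend $\ssc$-smoothly over $\cZ_{A,g,k}\times[0,1]$, the form $\omega := ev_1^*\PD(\alpha_1)\wedge\cdots\wedge ev_k^*\PD(\alpha_k)\wedge\pi^*\PD(\beta)$ pulls back to a closed $\ssc$-smooth form on $\cS$ restricting to the two boundary pieces as above, and Stokes' theorem for branched integrals gives
	\[
	\int_{\cS_{A,g,k}(p_1)}\omega \;-\; \int_{\cS_{A,g,k}(p_0)}\omega \;=\; \int_{\partial\cS}\omega \;=\; \int_{\cS} d\omega \;=\; 0,
	\]
because $\omega$ is closed, proving that $\GW_{A,g,k}$ does not depend on $p$.

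The hard part is the construction of the interpolating perturbation $\hat p$ whose zero set is simultaneously transverse and compact: this requires setting up the abstract polyfold perturbation scheme and controlling the auxiliary norm together with the Gromov-type compactness estimates uniformly in the parameter $t\in[0,1]$, and then verifying that the resulting orientation of $\partial\cS$ agrees with those of $\cS_{A,g,k}(p_0)$ and $\cS_{A,g,k}(p_1)$. All of this structural machinery is developed in \cites{HWZGW, HWZint, HWZbook}; once it is in place, the invariance is a formal consequence of Stokes' theorem as above.
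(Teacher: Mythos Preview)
The paper does not prove this theorem; it is quoted from \cite[Thm.~1.12]{HWZGW} as a foundational input and no proof is given here. Your outline is the standard one from \cites{HWZGW,HWZint}: well-definedness via branched integration on the compact oriented weighted branched orbifold $\cS_{A,g,k}(p)$, independence of the cohomology representatives by Stokes on a closed space, and independence of $p$ by interpolating two regular perturbations over $[0,1]$ and applying Stokes to the resulting cobordism. That is exactly the argument carried out in the cited references, so your proposal is correct and matches the intended proof.
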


For a survey of the core ideas of the polyfold theory, we refer to \cite{ffgw2016polyfoldsfirstandsecondlook}.
For a complete treatment of polyfold theory in the abstract, we refer to \cite{HWZbook}.
For the construction of the GW-polyfolds and the polyfold GW-invariants, we refer to \cite{HWZGW}.

\subsection{The Gromov--Witten axioms}

With a general polyfold Gromov--Witten invariant in place, a natural question is: To what extent does this newly defined invariant satisfy traditional results of Gromov--Witten theory for symplectic manifolds?  A natural place to begin is with verifying the Gromov--Witten axioms.

\begin{mainresult}
	The polyfold Gromov--Witten invariants satisfy the Gromov--Witten axioms.
\end{mainresult}
\begin{axiom*}{Effective axiom}
	If $\ww (A) <0 $ then $\GW_{A,g,k} = 0$.
\end{axiom*}
\begin{axiom*}{Grading axiom}
	If $\GW_{A,g,k} (\alpha_1,\ldots,\alpha_k; \beta) \neq 0$ then
	\[
	\sum_{i=1}^k (2n - \deg (\alpha_i)) + (6g-6+2k - \deg(\beta)) = 2c_1(A) + (2n - 6)(1-g) + 2k.
	\]
\end{axiom*}
\begin{axiom*}{Homology axiom}
	There exists a homology class
	\[
	\sigma_{A,g,k} \in H_{2c_1(A) + (2n-6)(1-g) + 2k} (Q^k\times \dmspace_{g,k};\Q)
	\]
	such that
	\[
	\GW_{A,g,k} (\alpha_1,\ldots,\alpha_k; \beta) = \langle p_1^* \PD(\alpha_1) \smallsmile \cdots \smallsmile p_k^*\PD (\alpha_k) \smallsmile p_0^*\PD(\beta), \sigma_{A,g,k} \rangle
	\]
	where $p_i: Q^k \times \dmspace_{g,k} \to Q$ denotes the projection onto the $i$th factor and the map $p_0:Q^k \times \dmspace_{g,k}\to\dmspace_{g,k}$ denotes the projection onto the last factor.
\end{axiom*}
\begin{axiom*}{Zero axiom}
	If $A=0,\ g=0$ then $\GW_{0,0,k} (\alpha_1,\ldots,\alpha_k;\beta) = 0$ whenever $\deg (\beta) >0$, and
	\[
	\GW_{0,0,k} (\alpha_1,\ldots,\alpha_k; [\pt]) 
	= \int_Q \PD(\alpha_1) \wedge \cdots \wedge \PD(\alpha_k).
	\]
\end{axiom*}
\begin{axiom*}{Symmetry axiom}
	Fix a permutation $\sigma: \{1,\ldots, k\}\to \{1,\ldots,k\}$.  Consider the permutation map $\sigma:\dmlog_{g,k}\to \dmlog_{g,k}, \ [\Sigma,j,M,D]  \mapsto [\Sigma,j,M^\sigma,D]$ where $M = \{z_1,\ldots,z_k\}$ and where $M^\sigma := \{z'_1,\ldots,z'_k\}$, $z'_i:= z_{\sigma(i)}$.
	Then
	\[
	\GW_{A,g,k} (\alpha_{\sigma(1)},\ldots,\alpha_{\sigma(k)}; \sigma_*\beta) = (-1)^{N(\sigma;\alpha_i)} \GW_{A,g,k} (\alpha_1,\ldots,\alpha_k; \beta)
	\]
	where $N(\sigma;\alpha_i):= \sharp 	\{	i<j \mid \sigma(i)> \sigma(j), \deg (\alpha_i)\deg(\alpha_j)\in 2\Z +1	\}	$.
\end{axiom*}

\begin{definition}[{\cite[Eq.~2.3]{KM}}]
	\label{def:basic-classes}
	We say that $(A,g,k)$ is a \textbf{basic class} if it is equal to one of the following: $(A,0,3)$, $(A,1,1)$, or $(A,g\geq 2,0)$.
\end{definition}
The point is, for such values of $g$ and $k$ we will have $\dmspace_{g,k-1} = \emptyset$ by definition.

\begin{axiom*}{Fundamental class axiom}
	Consider the fundamental classes $[Q]\in H_{2n}(Q;\Q)$ and $[\dmlog_{g,k}] \in H_{6g-6+2k}(\dmlog_{g,k};\Q)$.  Suppose that $A\neq 0$ and that $(A,g,k)$ is not basic.  Then 
	\[
	\GW_{A,g,k} (\alpha_1,\ldots,\alpha_{k-1},[Q]; [\dmlog_{g,k}]) = 0.
	\]
	
	Consider the canonical section $s_i :\dmlog_{g,k-1} \to \dmlog_{g,k}$ defined by doubling the $i$th-marked point.  Then
	\[
	\GW_{A,g,k} (\alpha_1,\ldots,\alpha_{k-1},[Q]; s_{i*}\beta) = \GW_{A,g,k-1} (\alpha_1,\ldots,\alpha_{k-1};\beta).
	\]
\end{axiom*}

\begin{axiom*}{Divisor axiom}
	Suppose $(A,g,k)$ is not basic.
	If $\deg (\alpha_k) = 2n-2$ then
	\[
	\GW_{A,g,k} (\alpha_1,\ldots,\alpha_k; \PD (ft_k^* \PD (\beta))) = (A\cdot \alpha_k ) \ \GW_{A,g,k-1} (\alpha_1,\ldots,\alpha_{k-1};\beta),
	\]
	where $A\cdot \alpha_k$ is given by the homological intersection product.
\end{axiom*}

Let $\{e_\nu \} \in H^*(Q;\Q)$ be a homogeneous basis and let $\{e^\mu \} \in H^*(Q;\Q)$ be the dual basis with respect to Poincar\'e duality, i.e., $\langle e_\nu \smallsmile e^\mu, [Q] \rangle = \delta_{\nu \mu}$.
It follows from the K\"unneth formula that $\{e_\nu \otimes e^\mu \}$ is a basis for $H^*(Q\times Q;\Q)$.
We correct the sign by redefining $e_\nu$ as $(-1)^{\deg e_\nu} e_\nu$.
We can write the Poincar\'e dual of the diagonal $\Delta \subset Q\times Q$ in this basis as 
$\PD([\Delta]) = \sum_\nu e_\nu \otimes e^\nu$ (see \cite[Lem.~11.22]{bott2013differential}).

\begin{axiom*}{Splitting axiom}
	Fix a partition $S_0 \sqcup S_1 =\{1,\ldots, k\}$.
	Let $k_0 := \sharp S_0,$ $k_1 := \sharp S_1$ and let $g_0,$ $g_1 \geq 0$ such that $g= g_0 + g_1$,
	and $k_i + g_i \geq 2$ for $i=0,1$.
	Consider the natural map 
	\[\phi_S : \dmspace_{k_0+1 , g_0}\times \dmspace_{k_1+1 , g_1} \to \dmspace_{g,k}\]
	which identifies the last marked point of a stable noded Riemann surface in $\dmspace_{k_0+1 , g_0}$ with the first marked point of a stable noded Riemann surface in $\dmspace_{k_1+1, g_1}$, and which maps the first $k_0$ marked points of $\dmspace_{g_0,k_0+1}$ to marked points indexed by $S_0$ and likewise maps the last $k_1$ marked points of $\dmspace_{g_1,k_1+1}$ to marked points indexed by $S_1$.
	Then
	\begin{align*}
	&\GW_{A,g,k} (\alpha_1, \ldots, \alpha_k; \phi_{S*} (\beta_0\otimes \beta_1) ) = 
	(-1)^{N(S;\alpha)} \sum_{A_0+A_1 = A}	\sum_\nu \\
	&\qquad 
	\GW_{A_0,g_0,k_0+1} (\{\alpha_i\}_{i\in S_0}, \PD (e_\nu) ; \beta_0)
	\cdot
	\GW_{A_1,g_1,k_1+1} (\PD (e^\nu), \{\alpha_j\}_{j\in S_1} ; \beta_1)
	\end{align*}
	where $N(S;\alpha)=\sharp \{j<i \mid i\in S_0, j\in S_1, \deg(\alpha_i)\deg(\alpha_j)\in 2\Z +1 \}$.	
\end{axiom*}

\begin{axiom*}{Genus reduction axiom\footnote{We note that the original statement \cite[Eq.~2.12]{KM} missed the additional factor of $2$.}}
	Consider the natural map 
	\[\psi: \dmspace_{g-1,k+2} \to \dmspace_{g,k}\]
	which identifies the last two marked points of a stable noded Riemann surface, increasing the arithmetic genus by one.
	Then
	\[
	2 \cdot \GW_{A,g,k} (\alpha_1, \ldots, \alpha_k; \psi_* \beta) = \sum_\nu \GW_{A,g-1,k+2} (\alpha_1,\ldots,\alpha_k, \PD (e_\nu) , \PD (e^\nu) ; \beta).
	\]
\end{axiom*}

\subsection{Strategy of the proof}

The Gromov--Witten axioms give relationships between the Gromov--Witten invariants.
These relationships are determined by the geometry of certain naturally defined maps defined between the unperturbed Gromov--Witten moduli spaces, namely:
\begin{itemize}
	\item permutation maps, \[\sigma : \CM_{A,g,k}(J) \to \CM_{A,g,k}(J),\]
	\item $k$th-marked point forgetting maps, \[ft_k : \CM_{A,g,k}(J) \to \CM_{A,g,k-1}(J),\]
	\item canonical sections, \[s_i : \CM_{A,g,k-1}(J) \hookrightarrow \CM_{A,g,k}(J).\]
\end{itemize}

Furthermore, using the map
	$ev_{k_0+1} \times ev_1 : \CM_{A_0,g_0,k_0+1}(J) \times \CM_{A_1,g_1,k_1+1}(J) \to Q\times Q$
we may consider the subset  $(ev_{k_0+1} \times ev_1 )^{-1}(\Delta)$ of the product unperturbed Gromov--Witten moduli space with a constraint imposed by the diagonal $\Delta \subset Q\times Q$.
We then additionally have:
\begin{itemize}
	\item inclusion maps, and maps $\phi$ which identify the marked points $z_{k_0+1}$ and $z_1'$,
\end{itemize}
	\[
	\begin{tikzcd}
	\CM_{A_0,g_0,k_0+1}(J) \times \CM_{A_1,g_1,k_1+1}(J)	&	\\
	(ev_{k_0+1} \times ev_1 )^{-1}(\Delta) \arrow[u, hook, "i"] \arrow[r, "\phi"] & \CM_{A_0+A_1,g_0+g_1,k_0+k_1}(J)
	\end{tikzcd}
	\]
Likewise, using the map
	$ev_{k+1} \times ev_{k+2} : \CM_{A,g-1,k+2}(J) \to Q\times Q$
we may consider the subset $(ev_{k+1} \times ev_{k+2} )^{-1}(\Delta)$ of the unperturbed Gromov--Witten moduli space with a constraint imposed by the diagonal $\Delta \subset Q\times Q$.
We then additionally have: 
\begin{itemize}
	\item inclusion maps, and maps $\psi$ which identify the marked points $z_{k+1}$ and $z_{k+2}$ (increasing the arithmetic genus by one),
\end{itemize}
	\[
	\begin{tikzcd}
	\CM_{A,g-1,k+2}(J)	&	\\
	(ev_{k+1} \times ev_{k+2} )^{-1}(\Delta) \arrow[u, hook, "i"] \arrow[r, "\psi"] & \CM_{A,g,k}(J)
	\end{tikzcd}
	\]
	
Intuitively, we should prove the Gromov--Witten axioms by interpreting the Gromov--Witten invariants as a finite count of curves and using the geometry of the above maps to directly compare such counts with respect to constraints imposed by the homology classes on $Q$ and $\dmlog_{g,k}$.

A substantial amount of work is required to make this intuition rigorous in the context of an abstract perturbation theory.
A deep understanding of the full machinery of polyfold theory, in addition to the geometry of the Gromov--Witten invariants is necessary to navigate substantial difficulties that we encounter.

\subsubsection*{The polyfold Gromov--Witten invariants as intersection numbers}

The branched integral is useful for giving a well-defined definition of the polyfold Gromov--Witten invariants and moreover showing that they are, in fact, invariants and do not depend on choices.  But they are not the best viewpoint for giving a proof of all of the axioms.

To prove the Gromov--Witten axioms, it is necessary to interpret the Gromov--Witten invariants as a finite count of curves via intersection theory.
By \cite[Cor.~1.7]{schmaltz2019steenrod}, the polyfold Gromov--Witten invariants may equivalently be defined as an intersection number evaluated on a basis of representing submanifolds $\cX \subset Q$ and {representing suborbifolds} $\cB \subset \cO$:
\[
\GW_{A,g,k} ([\cX_1],\ldots,[\cX_k];[\cB]) :=
\left(ev_1\times\cdots\times ev_k\times\pi\right)|_{\cS_{A,g,k}(p)} \cdot \left(\cX_1 \times\cdots\times \cX_k \times \cB\right).
\]
The invariant does not depend on the choice of abstract perturbation, nor on the choice of representing basis.
Thus, the traditional geometric interpretation of the Gromov--Witten invariants as a ``count of curves which at the $i$th-marked point passes through $\cX_i$ and such that the image under the projection $\pi$ lies in $\cB$'' is made literal.

\subsubsection*{Pulling back abstract perturbations}

In some cases there exist natural extensions of these maps from the Gromov--Witten moduli spaces to the modeling Gromov--Witten polyfolds.
However, these maps will not in general \textit{persist} after abstract perturbation, i.e., maps between Gromov--Witten polyfolds will not have well-defined restrictions to the perturbed Gromov--Witten moduli spaces.

In the semipositive situation, the set of compatible almost complex structures $\cJ (Q, \ww)$ gives a common space of perturbations; we can therefore choose a common regular $J$ for the source and target of a map and obtain a well-defined map between Gromov--Witten moduli spaces.

In contrast, abstract perturbations are constructed using bump functions and choices of vectors in a strong polyfold bundle, which in general we cannot assume will be preserved by an arbitrary map between Gromov--Witten polyfolds.
For example, consider the permutation map which lifts to a $\ssc$-diffeomorphism 
	$\sigma: \cZ_{A,g,k} \to \cZ_{A,g,k}.$
In general, the aforementioned bump functions and choices of vectors in a strong polyfold bundle will not exhibit symmetry with regards to the labelings of the marked points.
As a result, given a stable curve $x \in \cZ_{A,g,k}$ which satisfies a perturbed equation $(\delbarj +p )(x)=0$ we cannot expect that  $(\delbarj +p )(\sigma (x))=0$.
Therefore, naively there does not exist a well-defined permutation map between perturbed Gromov--Witten moduli spaces.

The natural approach for obtaining a well-defined map between perturbed moduli spaces is to pullback an abstract perturbation.
In \S~\ref{subsec:pulling-back-abstract-perturbations} we apply \cite[Thm.~1.7]{schmaltz2019naturality} to obtain well-defined restricted maps between perturbed Gromov--Witten moduli spaces for several of the maps we have considered.

\subsubsection*{Problems arise}

The $k$th-marked point forgetting map is, by far, the most difficult and complicated map to define between perturbed Gromov--Witten moduli spaces, as we immediately encounter numerous difficulties.

The construction of the smooth structure for the Deligne--Mumford orbifolds as described in \cites{HWZGW, HWZdm} requires a choice: that of a ``gluing profile,'' i.e., a smooth diffeomorphism $\varphi: (0,1]\to [0,\infty).$
The logarithmic gluing profile is given by 
	\[\varphi_{\log} (r) = -\frac{1}{2\pi} \log (r)\]
and produces the classical holomorphic Deligne--Mumford orbifolds $\dmlog_{g,k}$.  There is also an exponential gluing profile, given by 
	\[\varphi_{\exp} (r) = e^{1/r} - e\]
which produces Deligne--Mumford orbifolds $\dmexp_{g,k}$ which are only smooth orbifolds.

This use of nonstandard smooth structure has the following consequence: 
	\begin{quote}
		\textit{In general the map $ft_k: \dmexp_{g,k} \to \dmexp_{g,k-1}$ is continuous but not differentiable (see Problem~\ref{prob:ftk-continuous-not-c0}).}
	\end{quote}

Independent of the usage of a nonstandard gluing profile, there is no hope of defining a $k$th-marked point forgetting map on the Gromov--Witten polyfolds as they are defined:
\begin{quote}
	\textit{In general there does not exist a natural map $ft_k$ on the Gromov--Witten polyfolds (see Problem~\ref{prob:no-natural-forgetting-map}).}
\end{quote}
The reason is that the GW-stability condition \eqref{eq:gw-stability-condition} imposed on stable curves in the polyfold $\mathcal{Z}_{A,g,k}$ may not hold in $\mathcal{Z}_{A,g,k-1}$ once the $k$th point is removed.
A stable curve in $\cZ_{A,g,k}$ may contain a ``destabilizing ghost component,'' i.e., a component $C_k\simeq S^2$ with precisely $3$ special points, one of which is the $k$th-marked point, and such that $\int_{C_k} u^*\ww =0, \ u|_{C_k} \neq\text{const}$.
After removal of the $k$th-marked point from such a component, the GW-stability condition \eqref{eq:gw-stability-condition} is no longer satisfied and we cannot consider the resulting data as a stable curve in $\cZ_{A,g,k-1}$.


We might try to consider a subset of stable curves in $\cZ_{A,g,k}$ for which the GW-stability condition \eqref{eq:gw-stability-condition} will hold after forgetting the $k$th-marked point; thus we can attempt to restrict to a subset $\cZ^\text{const}_{A,g,k}\subset \cZ_{A,g,k}$ with a stronger stability condition, and such that the $k$th-marked point forgetting map is well-defined on $\mathcal{Z}^\text{const}_{A,g,k}$.
However, if we consider $\mathcal{Z}^\text{const}_{A,g,k}\subset \mathcal{Z}_{A,g,k}$ with the subspace topology, and $\mathcal{Z}_{A,g,k-1}$ with the usual polyfold topology, then:
\begin{quote}
	\textit{In general the well-defined restriction $ft_k:\cZ_{A,g,k}^\text{const} \to \mathcal{Z}_{A,g,k-1}$ is not continuous (see Problem~\ref{prob:restriction-is-not-continuous}).}
\end{quote}

There is a final problem. In general, the projection map must factor through the $k$th-marked point forgetting map; this is due to the need to forget the added stabilizing points (see Proposition~\ref{prop:projection-map-smooth}). Thus, in order to obtain a smooth projection map we must map to the logarithmic Deligne--Mumford orbifold. However:
\begin{quote}
	\textit{While the projection $\pi : \cZ_{A,g,k} \to \dmlog_{g,k}$ is $\ssc$-smooth, in general it is not a submersion (see Problem~\ref{prob:projection-not-submersion}).}
\end{quote}
This has important consequences if we wish to consider the Gromov--Witten invariant as an intersection number; the only way to get transversality of the projection map with a representing suborbifold $\cB \subset \dmlog_{g,k}$ is through perturbation of the suborbifold.
Fortunately, by \cite[Thm.~1.2]{schmaltz2019steenrod} there exist suitable representing suborbifolds for which perturbation is possible \cite[Prop.~3.9]{schmaltz2019steenrod}.

\subsubsection*{The universal curve polyfold}

In essence the central problem is that the Gromov--Witten polyfolds as constructed are not ``universal curves.''
Our proof of Gromov--Witten axioms rectifies this by constructing a universal curve polyfold $\cZ^\text{uc}_{A,g,k}$ over $\cZ_{A,g,k-1}$, on which we may consider a well-defined $k$th-marked point forgetting map
	\[
	ft_k : \cZ^\text{uc}_{A,g,k} \to \cZ_{A,g,k-1}.
	\]
The preimage of stable curve  in $\cZ_{A,g,k-1}$ via $ft_k$ consists of the underlying Riemann surface with nodes identified, thereby justifying our choice of nomenclature ``universal curve.''

Although this map is $\ssc^0$ due to the use of the exponential gluing profile, it is still possible to pullback $\ssc$-smooth abstract perturbations via this map.
Furthermore, applying \cite[Thm.~1.3]{schmaltz2019naturality} we may prove that the invariants associated to the universal curve polyfold coincide with the usual polyfold Gromov--Witten invariants.

\subsection{Organization of the paper}

In \S~\ref{sec:dm-orbifolds-gw-polyfolds} we review the construction of the DM-orbifolds and the GW-polyfolds.
In particular, we consider the local smooth structures given by the good uniformizing families of stable noded Riemann surfaces/stable maps. We also consider additional GW-type polyfolds needed to prove the splitting and genus reduction axioms.
In \S~\ref{sec:maps-between-orbifolds-polyfolds} we consider certain naturally defined maps on the DM-orbifolds/GW-polyfolds.
In \S~\ref{sec:the-polyfold-gw-invariants} we recall the definition of the polyfold GW-invariants, defined equivalently via the branched integral and by the intersection number. We show how to pullback abstract perturbations for many of the natural maps we are considering, yielding well-defined maps on the perturbed GW-moduli spaces.
In \S~\ref{sec:problems-arise} we discuss the many problems that arise in considering the $k$th-marked point forgetting map.
In \S~\ref{sec:universal-curve-polyfold} we construct the universal curve polyfold.
We prove that the invariants for this polyfold are equal to the usual GW-invariants.
Furthermore, we show that we can define a $k$th-marked point forgetting map on this polyfold, and that we may pullback abstract perturbations via this map.
In \S~\ref{sec:gw-axioms} we prove the GW-axioms.


\section{The Deligne--Mumford orbifolds and the Gromov--Witten polyfolds}
	\label{sec:dm-orbifolds-gw-polyfolds}

In this section, we give a precise description of the underlying sets and the local smooth structures of the DM-orbifolds and GW-polyfolds.
A full treatment of the smooth structure of an orbifold or a polyfold requires in addition transition information for how these local smooth structures fit together; this is accomplished via the language of ep-groupoids.

Our discussion of the DM-orbifolds is from the perspective of modern symplectic geometry (rather than algebraic geometry), for this point of view, we refer to \cites{HWZdm,robbinsalamon2006delignemumford}.
Our discussion of the GW-polyfolds is due to \cite{HWZGW}.

\subsection{The Deligne--Mumford orbifolds}

We begin by describing the underlying sets of the DM-spaces and discussing their natural topology.

\begin{definition}\label{def:underlying-dmspace}
	For fixed integers $g\geq 0$ and $k\geq 0$ which satisfy $2g+k\geq 3$, the \textbf{underlying set of the Deligne--Mumford space} $\dmspace_{g,k}$ is defined as the set of equivalence classes of stable noded Riemann surfaces of arithmetic genus $g$ and with $k$ marked points, i.e.,
	\[
	\dmspace_{g,k}:= \{(\Sigma,j,M,D) \mid \cdots, \text{ DM-stability condition} \} / \sim
	\]
	with data as follows:
	\begin{itemize}
		\item $(\Sigma,j)$ is a closed (possibly disconnected) Riemann surface.
		\item $M$ consists of $k$ ordered distinct marked points $z_1,\ldots,z_k\in\Sigma$.
		\item $D$ consists of finitely many unordered nodal pairs $\{x,y\}$ with $x,y\in\Sigma$ and $x\neq y$.  We require that two such pairs are disjoint.  We require both elements of the pair to be distinct from $M$.  We denote $\abs{D} := \cup_{\{x,y\}\in D} \{x,y\} \subset \Sigma$, and we let $\sharp D:=\tfrac{1}{2} \sharp \abs{D}$, i.e., the number of pairs.
		\item Viewing each connected component $C\subset \Sigma$ as a vertex, and each nodal pair $\{x,y\}$ as an edge via the incidence relation $\{C_x,C_y\}$ if $x\in C_x,\ y\in C_y$, we obtain a graph $T$.  We require that $T$ be connected.
		\item The arithmetic genus $g$ is given as:
		\[
		g=\sum_{C} g_C + \text{number of cycles of the graph $T$}
		\]
		where the sum is taken over the finitely many connected components $C\subset \Sigma$, and where $g_C$ is defined as the genus of the connected component $C$.
		\item For each connected component $C\subset \Sigma $ we require the following \textbf{DM-stability condition}:
			\begin{equation}
			\label{eq:dm-stability-condition}
			2 g_C+\sharp (M \cup \abs{D})_C \geq 3
			\end{equation}
		where $\sharp (M \cup \abs{D})_C := \sharp ((M\cup\abs{D}) \cap C)$, i.e., the number of marked and nodal points on the component $C$.
		\item The equivalence relation is given by $(\Sigma,j,M,D)\sim (\Sigma',j',M',D')$ if there exists a biholomorphism $\phi: (\Sigma,j)\to(\Sigma',j')$ such that $\phi(M)=M',\ \phi(\vert D\vert)=\vert D'\vert$, and which preserves the ordering of the marked points, and maps each pair of nodal points to a pair of nodal points. We call such a $\phi$ a \textbf{morphism between stable Riemann surface}, and write it as
			\[
			\phi: (\Sigma,j,M,D) \to (\Sigma',j',M',D').
			\]
	\end{itemize}
  We will refer to any point in $M \cup \abs{D}$ as a \textbf{special point}.
  We call a tuple $(\Sigma,j,M,D)$ which satisfies the DM-stability condition a \textbf{stable noded Riemann surface}.
\end{definition}

For a fixed $(\Sigma,j,M,D)$ the \textbf{isotropy group}
	\[
	\Aut(\Sigma,j,M,D) := 
	\left\{
	\begin{array}{c}
	\phi: (\Sigma,j) \to (\Sigma,j)
	\end{array}
	\biggm|
	\begin{array}{c}
	\phi \text{ biholomorphic}, \\
	\phi(M)=M \text{ preservers ordering},\\
	\phi(\vert D\vert)=\vert D\vert
	\end{array}
	\right\}
	\]
is finite if and only if the DM-stability condition holds.

\begin{proposition}[{\cite[Prop.~2.4]{HWZGW}}]
	Independent of the construction of a smooth structure, the set $\dmspace_{g,k}$ has a natural second-countable, paracompact, Hausdorff topology.  With this topology, $\dmspace_{g,k}$ is a compact topological space.
\end{proposition}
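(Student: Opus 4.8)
The plan is to establish this proposition by constructing the topology on $\dmspace_{g,k}$ explicitly and then verifying the four properties (second-countability, paracompactness, Hausdorffness, compactness) in that order, with compactness drawn from the Gromov/stable-map compactness theorem. First I would describe the topology via a basis of neighborhoods around each stable noded Riemann surface $[\Sigma,j,M,D]$. The natural such neighborhoods come from small deformations: one fixes a representative $(\Sigma,j,M,D)$, chooses disjoint holomorphic coordinate disks around the special points, and considers (i) small variations of the complex structure $j$ supported away from the nodes (parametrized by a finite-dimensional Teichm\"uller-type slice, since $\Aut(\Sigma,j,M,D)$ is finite by the DM-stability condition), (ii) small movements of the marked points, and (iii) gluing parameters at each node (a small complex number, or a point in the real oriented blow-up, depending on the chosen gluing profile). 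Two such deformed surfaces are declared close if the parameters are close, after quotienting by the finite isotropy group. This yields a well-defined neighborhood filter; one checks it satisfies the neighborhood axioms for a topology, which is routine bookkeeping with the coordinate charts.

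Second I would verify second-countability and paracompactness. Each local model above is a quotient of an open subset of a finite-dimensional space by a finite group, hence second-countable; since $\dmspace_{g,k}$ is covered by countably many such models (this follows once compactness is in hand, or can be arranged directly by a covering argument), the whole space is second-countable. Paracompactness for a Hausdorff space then follows from second-countability together with local compactness, or alternatively is automatic once we know the space is compact and Hausdorff. Third, Hausdorffness: given two inequivalent stable noded surfaces, one must produce disjoint neighborhoods. The key input is that the equivalence relation (existence of a biholomorphism respecting $M$, $\abs{D}$, and the ordering) is a \emph{closed} relation — if a sequence of pairs of nearby surfaces are each equivalent, a limiting biholomorphism exists by elliptic estimates / normal families on each component and compatibility across nodes. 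This separatedness is exactly the Deligne--Mumford property and is where the DM-stability condition \eqref{eq:dm-stability-condition} is essential: it rules out infinite automorphism groups and rubber components, so no sequence of non-nodal surfaces can converge to a nodal one via a ``vanishing cylinder'' without the nodal structure being detected.

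Finally, compactness: this is the substantive analytic point and I expect it to be the main obstacle. The statement is that every sequence $[\Sigma_n,j_n,M_n,D_n]$ has a convergent subsequence in the topology just defined. One argues by the Deligne--Mumford / stable-reduction compactness theorem: pass to a subsequence so that the underlying graphs $T_n$ stabilize combinatorially; on each component with fixed topological type, use compactness of the relevant finite-dimensional moduli of pointed curves (or, equivalently, normal families for the complex structures together with control on the marked and nodal points) to extract limits of $j_n$ and of the point configurations; when marked or nodal points collide, or when a sequence of complex structures degenerates, a new node forms and a new component bubbles off, still satisfying DM-stability after adding the node — here one invokes the standard fact that the limiting configuration is automatically stable because each newly created component carries at least the node plus the colliding points, i.e.\ at least three special points (or genus contributions summing correctly). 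Matching the gluing parameters of the limit to the degeneration of the $j_n$ near each forming node shows convergence in the prescribed topology. The main work is bundling together the component-wise normal-family arguments with the combinatorial control of the dual graph and verifying nothing escapes to infinity in Teichm\"uller space without producing a node; modulo that, all four properties follow, and I would cite \cite{HWZdm} or \cite{robbinsalamon2006delignemumford} for the detailed execution.
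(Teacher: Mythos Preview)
The paper does not prove this proposition at all: it is stated with the attribution \cite[Prop.~2.4]{HWZGW} and no proof is given, so there is nothing in the present paper to compare your proposal against. Your outline is a reasonable sketch of the standard argument one finds in the cited sources (indeed you yourself point to \cite{HWZdm} and \cite{robbinsalamon2006delignemumford} for the execution), so in that sense it is consistent with how the result is actually established in the literature.

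That said, since you asked for a proof and the paper simply cites one, a brief remark on your sketch: the overall architecture is correct, but the Hausdorffness step is the most delicate and your account glosses over the real content. The issue is not merely that the equivalence relation is closed in some ambient space; one must show that the germ of the good uniformizing family at a nodal curve injects into $\dmspace_{g,k}$ after quotienting by the finite isotropy, and that two distinct such germs centered at inequivalent curves have disjoint images. This requires the ``morphism compactness'' or ``properness of the diagonal'' argument (cf.\ \cite[Prop.~3.22]{HWZGW} for the analogous statement in the polyfold setting), which in turn uses the DM-stability condition to bound automorphisms uniformly. Your normal-families remark gestures at this but does not isolate it. Similarly, for compactness you correctly identify the mechanism (degeneration of complex structure producing nodes, collision of points producing bubbles), but the claim that the limit is automatically DM-stable needs the observation that unstable components can be contracted without changing the equivalence class---this is built into the definition here since we only consider stable representatives, but one must check that after bubbling and contracting one lands back in $\dmspace_{g,k}$ with the correct arithmetic genus and marked-point count.
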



To describe the local smooth structure of a DM-orbifold, we must construct certain families of stable Riemann surfaces. These families are the so-called ``good uniformizing families.''

\subsubsection{Gluing profiles and the gluing construction for a nodal Riemann surface}
	\label{subsubsec:gluing-profiles-gluing-construction-noded-reimann-surface}

In order to describe the gluing construction for a nodal Riemann surface, we must first choose a ``gluing profile'' which we now define.

\begin{definition}[{\cite[Def.~2.1]{HWZGW}}]
	\label{def:gluing-profile}
	A \textbf{gluing profile} is a smooth diffeomorphism 
	\[
	\varphi:(0,1]\to [0, \infty ).
	\]
\end{definition}

A gluing profile allows us to make a conversion of the absolute value of a non-zero complex number into a positive real number.
This is used to define the gluing construction at the nodes of a nodal Riemann surface, replacing a neighborhood of a nodal region with a finite cylinder. We recall this construction now.

Consider a Riemann surface $(\Sigma,j)$ with a nodal pair $\{x,y\}$.
Associate to this pair a \textbf{gluing parameter} $a\in\hb =\{z\in\C \mid \abs{z}< 1/2\}$.
We use the gluing profile to construct a family of Riemann surfaces parametrized by $a$ in the following way:
\begin{enumerate}
	\item Choose small disk-like neighborhoods $D_x$ of $x$ and $D_y$ of $y$, and identifications (via biholomorphisms) $D_x\setminus\{x\}\simeq \R^+\times S^1$ and $D_y \setminus \{y\} \simeq \R^- \times S^1$.  Moreover, the punctures $x$ and $+\infty$ are identified, and likewise $y$ and $-\infty$.
	\item Write the gluing parameter $a\neq 0$ in polar coordinates as
	\[
	a = r_a e^{-2\pi i\theta_a}, \qquad r_a \in (0,\tfrac{1}{2}),\ \theta_a\in \R / \Z.
	\]
	We then use the gluing profile to define a \textbf{gluing length} given by
	\[
	R_a:= \varphi(r_a) \in (\varphi (\tfrac{1}{2}), \infty).
	\]
	\item Delete the points $(R_a,+\infty)\times S^1 \subset \R^+\times S^1$ and $(-\infty,-R_a)\times S^1\subset \R^-\times S^1$ from $\R^+\times S^1$ and $\R^-\times S^1$, and identify the remaining cylinders $[0,R_a]\times S^1$ and $[-R_a,0]\times S^1$ via the map
	\[
	L_{R_a} : [0,R_a]\times S^1 \to [-R_a,0]\times S^1, \qquad (s,t)\mapsto (s-R_a,t-\theta_a).
	\]	
	We replace $D_x \sqcup D_y$ with the finite cylinder
	\[
	Z_a:= [0,R_a] \times S^1 \simeq_{L_{R_a}} [-R_a,0] \times S^1.
	\]
	(For $a=0$ we may define $Z_0 := \R^+\times S^1\sqcup \R^-\times S^1$, identifiable with $D_x \setminus \{x\}\sqcup D_y \setminus \{y\}$.)
	\item This procedure yields a new Riemann surface defined by the quotient space
	\[
	\Sigma_a := \frac{\Sigma \setminus \left (( [R_a,\infty) \times S^1 \cup \{x\} ) \cup (\{y\}\cup (-\infty, R_a] \times S^1)\right) }{L_{R_a} : [0,R_a]\times S^1 \to [-R_a,0]\times S^1}
	\]
	and which carries a naturally induced complex structure.  
\end{enumerate}

Alternatively, we can write $\Sigma_a$ as the disjoint union
	\[
	\Sigma_a = (\Sigma \setminus (D_x\sqcup D_y) ) \sqcup Z_a
	\]
with complex structure
	\[
	j(a):= \begin{cases}
	j &\text{on } \Sigma \setminus (D_x\sqcup D_y) \\
	i &\text{on } Z_a
	\end{cases}
	\]
where $i$ is the standard complex structure on the finite cylinder $Z_a=[0,R_a]\times S^1$.

We can repeat this gluing construction for a Riemann surface with multiple nodes, hence we introduce the following definition.
\begin{definition}
	\label{def:family-Riemann-surfaces}
	Consider a Riemann surface $(\Sigma,j)$ with nodal pairs $\{x_a,y_a\} \in D$.  
	We may choose disjoint small disk-like neighborhoods $D_{x_a}$ and $D_{y_a}$ at every node.  Carrying out the above procedure at every node we obtain a \textbf{glued Riemann surface} defined by the quotient space
	\[
	\Sigma_a := \frac{\Sigma \setminus \sqcup_{\{x_a,y_a\}\in D} \left (( [R_a,\infty) \times S^1 \cup \{x_a\} ) \cup (\{y_a\}\cup (-\infty, R_a] \times S^1)\right) }{ L_{R_a} : [0,R_a]\times S^1 \to [-R_a,0]\times S^1 }
	\]
	and which carries a naturally induced complex structure.
\end{definition}

Alternatively, we can write $\Sigma_a$ as the disjoint union 
	\[
	\Sigma_a := \left(\Sigma \setminus \cup_{\{x_a,y_a\}\in D} (D_{x_a}\sqcup D_{y_a} )\right) \sqcup Z_a.
	\]
with complex structure
	\[
	j(a) := \begin{cases}
	j &\text{on } \Sigma \setminus \cup_{\{x_a,y_a\}\in D} (D_{x_a}\sqcup D_{y_a} )\\
	i &\text{on } Z_a \text{ for every pair } \{x_a,y_a\}\in D
	\end{cases}
	\]
where $i$ is the standard complex structure on the finite cylinder $Z_a$.

\subsubsection{Good uniformizing families of stable Riemann surfaces}

We now describe a certain family of variations of the complex structure of a stable Riemann surfaces.
This variation is designed to additionally account for the movement of the marked points.

\begin{definition}[{\cite[Def.~2.6]{HWZGW}}]
	Let $(\Sigma,j,M,D)$ be a stable noded Riemann surface. Choose (disjoint) small disk-like neighborhoods $D_z \subset \Sigma$ at the special points $z\in M\cup \abs{D}$; we may moreover assume that these disk-like neighborhoods are invariant under the natural action of the isotropy group $\Aut(\Sigma,j,M,D)$.
	Consider a smooth family of complex structures $\cJ(\Sigma)$ on $\Sigma$:
		\[
		V \to \cJ(\Sigma), \qquad	v \mapsto j(v)
		\]
	where $V$ is an open subset which contains $0$ of a complex vector space $E$ of dimension $\dim_{\R} E = 6g-6 + 2\sharp M - 2\sharp D$.
	We call such a family $v\mapsto j(v)$ a \textbf{good complex deformation} if it satisfies the following:
	\begin{itemize}
		\item $j(0)=j$.
		\item The family is constant on the disk-like neighborhoods, i.e., $j(v)=j$ on $D_z$ for every $z\in M\cup \abs{D}$.
		\item For every $v\in V$ the \textit{Kodaira--Spencer differential}
		\[
		[Dj(v)] : H^1 (\Sigma,j,M,D) \to H^1 (\Sigma,j(v),M,D)
		\]
		is a complex linear isomorphism (for the definition of the differential see \cite[pp.~21--22]{HWZGW}).
		\item There exists a natural action
		\[
		\Aut(\Sigma,j,M,D) \times V \to V,\qquad (\phi,v)\mapsto \phi\ast v
		\]
		such that $\phi: (\Sigma,j(v)) \to (\Sigma,j(\phi\ast v))$ is biholomorphic.
	\end{itemize}
\end{definition}

\begin{definition}[{\cite[Def.~2.12]{HWZGW}}]
	\label{def:good-uniformizing-family}
	Let $(\Sigma,j,M,D)$ be a stable noded Riemann surface and let $v\mapsto j(v)$ be a good complex deformation.
	We may define a \textbf{good uniformizing family} as the following family of stable noded Riemann surfaces
		\[
		(a,v) \mapsto (\Sigma_a, j(a,v), M_a, D_a), \quad (a,v)\in (\hb)^{\sharp D} \times V.
		\]
	with data as follows.
	\begin{itemize}
		\item As in Definition~\ref{def:family-Riemann-surfaces} the glued surface $\Sigma_a$ is given by
		\[
		\Sigma_a = \left(\Sigma \setminus \cup_{\{x_a,y_a\}\in D} (D_{x_a}\sqcup D_{y_a} )\right) \sqcup Z_a.
		\]
		\item The complex structure $j(v)$ on the unglued Riemann surfaces $\Sigma$ induces the following complex structure on $\Sigma_a$:
			\[
			j(a,v):= 
			\begin{cases}
			j(v) &\text{ on } \Sigma \setminus \cup_{\{x_a,y_a\}\in D} (D_{x_a}\sqcup D_{y_a} ),\\
			i &\text{ on } Z_a \text{ for every pair } \{x_a,y_a\}\in D.
			\end{cases}
			\]
		\item The set of marked points $z_1,\ldots,z_k \in M_a$ are given by the former marked points which by definition all lie in $\Sigma \setminus \cup_{\{x_a,y_a\}\in D} (D_{x_a}\sqcup D_{y_a})$.
		\item The set of nodal pairs $D_a$ is obtained from $D$ by deleting every nodal pair $\{x_a,y_a\}\in D$ for which $a\neq 0$.
	\end{itemize}
	This good uniformizing family is \textbf{centered} at $(\Sigma,j,M,D)$ in the sense that $(0,0) \mapsto (\Sigma,j,M,D)$.
	We call the parameters $(a,v)$ \textbf{local coordinates} centered at the stable noded Riemann surface $(\Sigma,j,M,D)$.
\end{definition}

The following proposition gives a description of the action of the isotropy group of a point on a neighborhood of the point.

\begin{proposition}
	\label{prop:natural-representation}
	Consider a stable noded Riemann surface $(\Sigma,j,M,D)$ and let $G:= \Aut(\Sigma,j,M,D)$.
	Consider the family of stable noded Riemann surfaces as constructed above, 
		\[(a,v) \mapsto (\Sigma_a,j(a,v),M_a,D_a),\qquad (a,v) \in (\hb)^{\sharp D} \times V.\]
	For every open neighborhood of $0$ in $\hb^{\sharp D} \times V$ there exists an open subneighborhood $U$ of $0$ and a group action
		\begin{equation}
		\label{eq:natural-representation}
		\begin{split}
		G \times U 		&\to U, \\
		(\phi,(a,v)) 	&\mapsto \phi\ast(a,v) 
		\end{split}
		\end{equation}
	such that the following holds.
	\begin{enumerate}
		\item For every $(\phi,(a,v))\in G\times U$ there exists a morphism 
			\[
			\phi_{(a,v)} : (\Sigma_a,j(a,v),M_a,D_a) \to (\Sigma_{\phi\ast (a,v)}, j(\phi\ast(a,v)), M_{\phi\ast (a,v)}, D_{\phi\ast (a,v)}).
			\]
		\item Given a morphism $\psi : (\Sigma_a,j(a,v),M_a,D_a) \to (\Sigma_{a'}, j(a',v'), M_{a'}, D_{a'})$ for parameters $(a,v), (a',v')\in U$, there exists a unique element $\phi \in G$ such that $\phi \ast (a,v) = (a',v')$ and moreover $\psi = \phi_{(a,v)}$.
	\end{enumerate}
	The neighborhood $U$ is called a \textbf{local uniformizer} centered at $(\Sigma,j,M,D)$.
\end{proposition}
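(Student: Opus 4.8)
The plan is to assemble the $G$-action from two ingredients that are already available --- the action $\Aut(\Sigma,j,M,D)\times V\to V$ built into the good complex deformation, and a ``neck rotation'' action on the gluing parameters obtained by restricting elements of $G$ to the disk-like neighborhoods --- and then to establish properties (1) and (2) by an explicit gluing argument together with the identity theorem, the one serious input being the effectiveness of the good complex deformation.

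First I would examine the restriction of a morphism $\phi\in G$ to the chosen ($G$-invariant) disk-like neighborhoods $D_z$, $z\in M\cup\abs{D}$. Since $\phi$ permutes the special points and maps each $D_z$ biholomorphically onto $D_{\phi(z)}$ fixing the puncture, composing with the fixed cylindrical identifications $D_z\setminus\{z\}\simeq\R^{\pm}\times S^1$ exhibits $\phi|_{D_z}$, in those coordinates, as a rotation $(s,t)\mapsto(s,t+\theta_0)$, post-composed with the (biholomorphic, orientation-preserving) end-reversal $(s,t)\mapsto(-s,-t+\theta_0)$ exactly when $\phi$ exchanges the two ends of a node. Reading off the effect on a gluing parameter $a=r_ae^{-2\pi i\theta_a}$, one finds that $\phi$ acts on $\hb^{\sharp D}$ by permuting the $\sharp D$ coordinates according to its action on nodal pairs and, coordinatewise, by a rotation or a conjugate-rotation; in particular this is an action by homeomorphisms of $\hb^{\sharp D}$ fixing $0$ and preserving $\abs{a}$. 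Setting $\phi\ast(a,v):=(\phi\cdot a,\phi\ast v)$ and using associativity of composition of biholomorphisms, this defines a $G$-action on $\hb^{\sharp D}\times V$. Because $G$ is finite and the action fixes $0$, intersecting the finitely many $G$-translates of the given neighborhood produces a $G$-invariant open subneighborhood $U$ of $0$; this $U$, possibly shrunk once more below, is the desired local uniformizer.

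Next I would construct the morphism $\phi_{(a,v)}$. On $\Sigma\setminus\bigcup_{\{x_a,y_a\}\in D}(D_{x_a}\sqcup D_{y_a})$, where the complex structure over $(a,v)$ is $j(v)$ and over $\phi\ast(a,v)$ is $j(\phi\ast v)$, the self-map $\phi$ is itself biholomorphic by the defining property of the good complex deformation; on each neck $Z_a=[0,R_a]\times S^1$ the corresponding rotation or end-reversal is biholomorphic onto $Z_{\phi\cdot a}$, since $R_{\phi\cdot a}=\varphi(r_{\phi\cdot a})=\varphi(r_a)=R_a$. These two descriptions agree on the collar overlaps --- precisely because $a\mapsto\phi\cdot a$ was read off from exactly that compatibility --- so they patch to a biholomorphism from $(\Sigma_a,j(a,v),M_a,D_a)$ onto the stable noded Riemann surface with parameters $\phi\ast(a,v)$, and it preserves marked points (with order) and nodal pairs because $\phi$ does. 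This gives property (1).

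For property (2), given a morphism $\psi$ between the surfaces with parameters $(a,v)$ and $(a',v')$ in $U$, I would first shrink $U$ so that in the whole family every neck is conformally long and disjoint from the marked points and from the other necks. Since a biholomorphism respects the thick/thin (collar) decomposition up to bounded error, $\psi$ must carry necks to necks and near-node regions to near-node regions, inducing a bijection of the underlying nodal graphs; deleting the necks and filling in the punctures, $\psi$ then extends to a biholomorphism $\Phi\colon(\Sigma,j(v),M,D)\to(\Sigma,j(v'),M,D)$ respecting $M$ (with order) and $D$. Here is the one nontrivial point: by effectiveness of the good complex deformation --- the Kodaira--Spencer differential being an isomorphism at every parameter, so that $V$ behaves as a slice for the action of the diffeomorphism group --- such a $\Phi$ must agree on the thick part with a unique $\phi\in G$, forcing $\phi\ast v=v'$; tracking the neck rotations recovered in the first step then forces $\phi\cdot a=a'$, so $\phi\ast(a,v)=(a',v')$. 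Finally $\psi$ and $\phi_{(a,v)}$ are two biholomorphisms between the same pair of surfaces that agree on the nonempty open thick part (which meets every connected component), hence coincide by the identity theorem, giving $\psi=\phi_{(a,v)}$; and $\phi$ is unique because $\phi_{(a,v)}$ restricted to the thick part recovers $\phi$ as a self-map of $\Sigma$, and two elements of $G$ agreeing on a dense open subset of $\Sigma$ are equal. The main obstacle is this last step --- controlling an arbitrary biholomorphism near the degenerating necks (a collar-lemma / thin-part estimate, or equivalently a limiting argument as $a,a'\to 0$) and invoking the effectiveness of the good complex deformation --- and here I expect to rely on the corresponding analysis in \cite{HWZdm} and \cite{HWZGW}.
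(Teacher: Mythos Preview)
The paper does not actually give a proof of this proposition: it is stated as a structural fact about good uniformizing families, with the implicit understanding that the construction and verification are carried out in \cite{HWZdm} and \cite[\S2.1]{HWZGW}. So there is no in-paper argument to compare your proposal against.

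That said, your sketch is the right shape and matches the standard approach in those references. The construction of the action on gluing parameters via the rotations induced on the $G$-invariant disk neighborhoods (Schwarz lemma on each $D_z$), the product action $\phi\ast(a,v)=(\phi\cdot a,\phi\ast v)$, and the patching of $\phi$ with neck rotations to build $\phi_{(a,v)}$ are all correct and essentially how \cite{HWZGW} sets things up. You are also right that property~(2) is where the work lies, and you correctly locate the two inputs: a thick/thin decomposition argument to pin down what $\psi$ does near the necks, and the slice property of the good complex deformation (Kodaira--Spencer isomorphism) to force $v'=\phi\ast v$ for a unique $\phi\in G$. One point to tighten: your passage ``deleting the necks and filling in the punctures, $\psi$ then extends to a biholomorphism $\Phi:(\Sigma,j(v))\to(\Sigma,j(v'))$'' is not literally an extension---$\psi$ lives on $\Sigma_a$, not $\Sigma$---and the honest argument here is a compactness/limiting one (sequences of morphisms over parameters tending to $0$ subconverge to an element of $G$, and the germ of the morphism near $0$ is determined by this limit), exactly as carried out in \cite[Prop.~3.22]{HWZGW} and the surrounding discussion. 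Your final paragraph already concedes this and defers to those references, which is the appropriate move.
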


\subsubsection{Parametrizing the movement of a marked point}
Situations will arise where we will want to parametrize the movement of the $k$th-marked point directly, e.g., when we consider the $k$th-marked point forgetting maps in \S~\ref{subsec:kth-marked-point-forgetting-map-log-dm}.

In the above description of a good uniformizing family, movement of the marked points is determined by the variation of the complex structure $j$ via the parameter $v$.  Consider a stable noded Riemann surface $(\Sigma,j,M,D)$ with a good complex deformation $v\mapsto j(v)$.  Wiggle the marked points slightly, and obtain a new stable noded Riemann surface $(\Sigma,j,M',D)$; there exists a parameter $v\in V$ such that there exists a biholomorphism $\phi : (\Sigma,j(v),M,D) \to (\Sigma,j,M',D)$.

We now describe a good uniformizing family in which the movement of the $k$th-marked point is parametrized. 
Consider a stable noded Riemann surface $(\Sigma,j,M,D)$ and suppose that the component $C_k$ which contains the $k$th-marked point $z_k$ remains stable after forgetting $z_k$.
We parametrize a neighborhood of $z_k$ by embedding a small disk via a holomorphic map
	\[
	\varphi:(B_\epsilon,i) \hookrightarrow (\Sigma,j), \quad \text{such that } \varphi(0)=z_k\text{ and }\varphi(B_\epsilon)\subset D_{z_k},
	\]
where $D_{z_k}$ is a small disk-like neighborhood of $z_k$.

\begin{definition}
	Let $(\Sigma,j,M,D)$ be a stable noded Riemann surface and choose disjoint smooth disk-like neighborhoods $D_z \subset \Sigma$ at the special points $z\in M\cup \abs{D}$ which we may moreover assume are $\Aut(\Sigma,j,M,D)$-invariant.
	Consider a smooth family of complex structures $\cJ(\Sigma)$ on $\Sigma$:
		\[
		V \to \cJ(\Sigma), \qquad	v \mapsto j(v)
		\]
	where $V$ is an open subset which contains $0$ of a complex vector space $E$ of dimension $\dim_{\R} E = 6g-6 + 2\sharp M - 2\sharp D-2$.
	We call such a family $v\mapsto j(v)$ an \textbf{alternative good complex deformation} if it satisfies the following:
	\begin{itemize}
		\item $j(0)=j$.
		\item The family is constant on the disk-like neighborhoods, i.e., $j(v)=j$ on $D_z$ for every $z\in M\cup \abs{D}$.
		\item For every $v\in V$ the Kodaira--Spencer differential
		\[
		[Dj(v)] : H^1 (\Sigma,j,M\setminus\{z_k\},D) \to H^1 (\Sigma,j(v),M\setminus\{z_k\},D)
		\]
		is a complex linear isomorphism.
		\item There exists a natural action
		\[
		\Aut(\Sigma,j,M,D) \times V \times B_\ep \to V\times B_\ep,\qquad (\phi,v,y)\mapsto (\phi\ast v, \phi \ast y)
		\]
		such that $\phi: (\Sigma,j(v)) \to (\Sigma,j(\phi \ast v))$ is biholomorphic and which furthermore satisfies $\phi(\varphi(y)) = \varphi(\phi \ast y)$.
	\end{itemize}
\end{definition}

\begin{definition}
	Let $(\Sigma,j,M,D)$ be a stable noded Riemann surface and let $v\mapsto j(v)$ be a good complex deformation.
	We may define an \textbf{alternative good uniformizing family} centered at $(\Sigma,j,M,D)$ as the following family of stable noded Riemann surfaces
	\[
	(a,v,y) \mapsto (\Sigma_a, j(a,v), M_{(a,y)}, D_a), \quad (a,v,y)\in (\hb)^{\sharp D} \times V \times B_\ep
	\]
	with data as follows.
	\begin{itemize}
		\item As in Definition~\ref{def:family-Riemann-surfaces} the glued surface $\Sigma_a$ is given by
		\[
		\Sigma_a = \left(\Sigma \setminus \cup_{\{x_a,y_a\}\in D} (D_{x_a}\sqcup D_{y_a} )\right) \sqcup Z_a.
		\]
		\item The complex structure $j(v)$ on the unglued Riemann surfaces $\Sigma$ induces the following complex structure on $\Sigma_a$:
		\[
		j(a,v):= 
		\begin{cases}
		j(v) &\text{ on } \Sigma \setminus \cup_{\{x_a,y_a\}\in D} (D_{x_a}\sqcup D_{y_a} ),\\
		i &\text{ on } Z_a \text{ for every pair } \{x_a,y_a\}\in D.
		\end{cases}
		\]
		\item The set of marked points $z'_1,\ldots,z'_{k-1} \in M_{(a,y)}$ are given by the former marked points which by construction all lie in $\Sigma \setminus \cup_{\{x_a,y_a\}\in D} (D_{x_a}\sqcup D_{y_a})$.  The $k$th-marked point is parametrized by the map $\varphi : B_\epsilon \hookrightarrow \Sigma$, i.e.,
		\[
		z'_k := \varphi(y) \in \Sigma \setminus \cup_{\{x_a,y_a\}\in D} (D_{x_a}\sqcup D_{y_a}).
		\]
		\item The set of nodal pairs $D_a$ is obtained from $D$ by deleting every nodal pair $\{x_a,y_a\}\in D$ for which $a\neq 0$.
	\end{itemize}
\end{definition}

There exists an analog of Proposition~\ref{prop:natural-representation} associated to an alternative good uniformizing family.

\subsubsection{Local smooth structures on the logarithmic and exponential Deligne--Mumford orbifolds}

An orbifold is locally homeomorphic to the quotient of an open subset of $\R^n$ by a finite group action.
In the current context, given a local uniformizer for a good uniformizing family of stable Riemann surfaces, the projection to an equivalence class is $\Aut$-invariant. Hence we obtain a well-defined map
	\[
	\frac{\{(\Sigma_a,j(a,v), M_a,D_a)\}_{(a,v)\in U}}{\Aut(\Sigma,j,M,D)} \to \dmspace_{g,k}.
	\]
This map is a local homeomorphism (see \cite[\S~2.1]{HWZGW}).

However, this is only a small part of the orbifold structure of the DM-spaces; in order to understand the full smooth orbifold structure it is necessary to construct an ep-groupoid structure on $\dmspace_{g,k}$. This is discussed in \cite[pp.~28--31]{HWZGW}.

Remember that the construction of a family of Riemann surfaces, and hence of the good uniformizing families required the choice of a gluing profile.
Ergo different choices of gluing profile yield different smooth orbifold structures for the same underlying topological space $\dmspace_{g,k}$.

We will be especially concerned with the following two gluing profiles: the \textbf{logarithmic gluing profile}
	\[
	\varphi_\text{log}: (0,1]\rightarrow [0,\infty), \quad r\mapsto  -\frac{1}{2\pi}\log(r).
	\]
and the \textbf{exponential gluing profile}
	\[
	\varphi_\text{exp}: (0,1]\rightarrow [0,\infty),\quad  r\mapsto e^{1/r}-e.
	\]

\begin{theorem}[{\cite[Thms.~2.14,~2.16]{HWZGW}}]
	Using the logarithmic gluing profile, we reproduce the classical Deligne--Mumford theory and obtain a complex orbifold we denote as $\dmlog_{g,k}$.  Conversely, using the exponential gluing profile we obtain a smooth oriented orbifold $\dmexp_{g,k}$. In both cases, the real dimension is equal to $6g-6 + 2k$.
\end{theorem}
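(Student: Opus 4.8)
The plan is to construct the orbifold structures directly from the good uniformizing families of Definition~\ref{def:good-uniformizing-family}, assemble the resulting local charts into an ep-groupoid, and then read off the dimension and the orientation; the gluing profile enters only through the regularity of the transition maps, and that is where essentially all of the work lies.

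\emph{Local charts and the dimension count.} Fix a stable noded Riemann surface $(\Sigma,j,M,D)$ and a good complex deformation $v\mapsto j(v)$ on an open neighborhood $V$ of $0$ in a complex vector space $E$ with $\dim_\R E = 6g-6+2\sharp M - 2\sharp D$ (the existence of such a versal deformation is the Kodaira--Spencer input). Forming the good uniformizing family over $(\hb)^{\sharp D}\times V$ and passing to a local uniformizer $U$ as in Proposition~\ref{prop:natural-representation}, the assignment $(a,v)\mapsto [(\Sigma_a,j(a,v),M_a,D_a)]$ is $G$-invariant for $G=\Aut(\Sigma,j,M,D)$ and, using the description of the topology on $\dmspace_{g,k}$ from \cite[Prop.~2.4]{HWZGW} together with a continuity argument for the gluing construction, descends to a homeomorphism from $G\backslash U$ onto an open subset of $\dmspace_{g,k}$. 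Since $\sharp M = k$, the chart $U$ has real dimension $2\sharp D + (6g-6+2k-2\sharp D) = 6g-6+2k$; note that this is independent of the number of nodes of the center, because the deeper strata are exactly compensated for by the gluing parameters.

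\emph{Transition maps (the main obstacle).} Given two such charts with overlapping images, one must produce, after shrinking and passing to appropriate subgroups, an equivariant transition diffeomorphism between open subsets of the two uniformizers. When the two centers have the same dual graph this is merely a comparison of two good complex deformations of one nodal surface and follows from the implicit function theorem. The hard case is when one center is obtained from the other by gluing a subset of its nodes with small nonzero parameters: the transition map is then the plumbing map written in the cylindrical coordinates near those nodes, and its regularity depends decisively on the gluing profile. One shows that with $\varphi_{\log}$ the plumbed complex structure $j(a,v)$ depends holomorphically on $(a,v)$ --- this is precisely the classical plumbing coordinate, under which the plumbing parameter is, up to the identification $a = r_a e^{-2\pi i\theta_a}$, the parameter $a$ itself --- so the transitions are biholomorphisms; with $\varphi_{\exp}$ one shows they are only $C^\infty$, the exponential profile being inserted exactly in order to trade holomorphicity for smoothness. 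This analytic estimate --- controlling how the plumbed surface and its complex structure depend on the gluing and deformation parameters in these coordinates --- is the crux, and it is carried out in detail in \cite{HWZdm} and \cite[\S2]{HWZGW}.

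\emph{Assembling the ep-groupoid; identification; orientation.} Take the object space to be a disjoint union of local uniformizers whose images cover $\dmspace_{g,k}$ (finitely many suffice by compactness) and the morphism space to be generated by the isotropy actions of Proposition~\ref{prop:natural-representation}(1) together with the transition maps of the previous step; étaleness and properness of the structure maps follow from the uniqueness clause Proposition~\ref{prop:natural-representation}(2) and the Hausdorff compact topology, while the cocycle condition is bookkeeping with the same uniqueness statement. This yields an orbifold structure of dimension $6g-6+2k$ on $\dmspace_{g,k}$, with holomorphic charts (hence a complex orbifold) for $\varphi_{\log}$ and smooth charts for $\varphi_{\exp}$. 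In the logarithmic case, matching the plumbing charts with the classical Teichmüller-plus-plumbing description identifies $\dmlog_{g,k}$ with the classical Deligne--Mumford orbifold, and its complex structure supplies a canonical orientation. In the exponential case holomorphicity is lost, so orientability is checked by hand: each chart $(\hb)^{\sharp D}\times V \cong \mathbb{C}^{3g-3+k}$ carries the canonical complex orientation, and the smooth transition maps preserve it because they differ from the holomorphic logarithmic transitions only by the radial reparametrizations $r\mapsto \varphi_{\exp}^{-1}(\varphi_{\log}(r))$ of the gluing coordinates, which are orientation-preserving diffeomorphisms of $(0,1]$; equivalently, one connects $\varphi_{\exp}$ to $\varphi_{\log}$ through gluing profiles and observes that the transition Jacobians stay nondegenerate along the way. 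This orients $\dmexp_{g,k}$ and completes the proof.
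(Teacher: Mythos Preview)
The paper does not supply its own proof of this theorem; it is stated purely as a citation of \cite[Thms.~2.14,~2.16]{HWZGW}, with the ep-groupoid construction deferred to \cite[pp.~28--31]{HWZGW}. Your outline---local uniformizers from good families, regularity of transition maps as the analytic crux, assembly into an ep-groupoid, and the dimension/orientation bookkeeping---is a faithful sketch of the argument one finds in that reference, so there is nothing to compare against here.

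One conceptual correction is worth flagging. You write that ``the exponential profile [is] inserted exactly in order to trade holomorphicity for smoothness.'' This misidentifies the motivation. The logarithmic profile already gives holomorphic (hence smooth) transition maps on $\dmspace_{g,k}$, so for the DM-orbifold alone nothing is gained by passing to $\varphi_{\exp}$; one only \emph{loses} holomorphicity. The reason the exponential profile is used is external to this theorem: it is required for the $\ssc$-smoothness of the gluing retractions $\pi_a$ in the polyfold theory of stable maps (see the Remark following Theorem~\ref{thm:ssc-retraction} and \cite[\S~2.3]{HWZsc}). Its appearance on the DM-space is a side effect of needing one consistent gluing profile throughout the GW-polyfold construction, not a fix for any defect of $\varphi_{\log}$ at the DM level. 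Your orientation argument for $\dmexp_{g,k}$ via comparison with the holomorphic transitions is fine in spirit, but note that the comparison map $\id:\dmlog_{g,k}\to\dmexp_{g,k}$ is not differentiable across the nodal strata (as the paper later computes), so the argument should be phrased on the dense top stratum and extended by continuity of the Jacobian sign, rather than as a global factorization.
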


\subsection{The Gromov--Witten polyfolds}
	\label{subsec:the-gw-polyfolds}

We now describe the underlying set of the GW-polyfolds.

\begin{definition}
	\label{def:gw-polyfold}
	For a fixed homology class $A\in H_2 (Q;\Z)$, and for fixed integers $g\geq 0$ and $k\geq 0$, \textbf{the underlying set of the Gromov--Witten polyfold} $\cZ_{A,g,k}$ is defined as the set of stable curves with homology class $A$, arithmetic genus $g$, and $k$ marked points
	\[
	\cZ_{A,g,k}:= \{ (\Sigma,j,M,D,u)  \mid  \cdots, \text{ GW-stability condition} \}/ \sim
	\]
	where $(\Sigma,j,M,D)$ is a noded Riemann surface except that we do not require the DM-stability condition \eqref{eq:dm-stability-condition}, with data as follows.
	\begin{itemize}
		\item $u:\Sigma \to Q$ is a continuous map such that $u_*[\Sigma]=A \in H_2 (Q;\Z)$.
		\item For each nodal pair $\{x,y\}\in D$ we have $u(x)=u(y)$.
		\item The map $u$ is of class $H^{3,\delta_0}$ at the nodal points in $\abs{D}$ and of class $H^3_\text{loc}$ near the other points in $\Sigma$ (see Definition~\ref{def:class-3delta} below).
		\item $\int_C u^*\ww \geq 0$ for each connected component $C\subset \Sigma$.
		\item For each connected component $C\subset \Sigma$ the following \textbf{GW-stability condition} holds.  We require at least one of the following:
		\begin{equation}
		2 g_C+\sharp (M\cup\abs{D})_C \geq 3 \quad \text{or} \quad \int_C u^* \ww > 0, \label{eq:gw-stability-condition}
		\end{equation}
		where $g_C$ is the genus of $C$ and $\sharp (M\cup\abs{D})_C$ is the number of marked and nodal points on the component $C$.
		\item The equivalence relation is given by $(\Sigma,j,M,D,u)\sim (\Sigma',j',M',D',u')$ if there exists a biholomorphism $\phi:(\Sigma,j)\to (\Sigma',j')$ such that $u'\circ\phi = u$, in addition to $\phi(M)=M',\ \phi(\vert D\vert)=\vert D'\vert$, and which preserves ordering and pairs.
	\end{itemize}
	We call a tuple $(\Sigma,j,M,D,u)$ which satisfies these requirements a \textbf{stable map}, and call an equivalence class $[\Sigma,j,M,D,u]$ a \textbf{stable curve}.
\end{definition}

\begin{definition}[{\cite[Def.~1.1]{HWZGW}}]
	\label{def:class-3delta}
	Let $u: \Sigma \to Q$ be a continuous map, and fix a point $z\in \Sigma$.
	We consider a local expression for $u$ as follows. Choose a small disk-like neighborhood $D_z\subset \Sigma$ of $z$ such that there exists a biholomorphism $ \sigma:[0,\infty)\times S^1\rightarrow D_z \setminus\{z\}$.
	Let $\varphi:U\rightarrow \R^{2n}$ be a smooth chart on a neighborhood $U \subset Q$ of $u(z)$ such that to $\varphi (u(z))=0$.
	The local expression 
		\[
		\tilde{u}: [s_0,\infty)\times S^1 \to \R^{2n}, \qquad (s,t) \mapsto \varphi\circ u\circ \sigma(s,t)
		\]
	is defined for $s_0$ large.
	
	Let $m\geq 3$ be an integer, and let $\delta >0$. We say that $u$ is of \textbf{class $H^{m,\delta}$ around the point $z\in \Sigma$} 
	if $e^{\delta s}  \tilde{u}$ belongs to the space $L^2([s_0,\infty)\times S^1,\R^{2n})$.
	We say that $u$ is of \textbf{class} $H^m_{\text{loc}}$ \textbf{around the point $z\in \Sigma$} if $u$ belongs to the space $H^m_{\text{loc}}(D_z)$.
	If $u$ is of class $H^{m,\delta}$ at a point $z\in \Sigma$ we will refer to that point as a \textbf{puncture}.
	
	These definitions do not depend on the choices involved of holomorphic polar coordinates on $\Sigma$ or smooth charts on $Q$.
\end{definition}

Some situations will require that the map
$u$ is of class $H^{3,\delta_0}$ at a fixed subset of the marked points, in addition to the nodal points.
Allowing a puncture at an $i$th-marked is a global condition on our polyfold, and so we may
add the following to the above conditions on the set $\cZ_{A,g,k}$:
\begin{itemize}
	\item We require that $u$ is of class $H^{3,\delta_0}$ at all marked points in a fixed subset of the index set $\{1,\ldots,k\}$.
\end{itemize}
By \cite[Cor.~1.6]{schmaltz2019naturality} the polyfold GW-invariants are independent of a fixed choice of puncture at the marked points.
As an aside, consider a point $z\in \Sigma$ with a small disk neighborhood $D_z\subset \Sigma$ as above.
Consider the spaces $H^{3,\delta_0} (D_z\setminus\{z\})$ and $H^3_\text{loc} (D_z)$;  as it turns out, neither of these spaces contains the other.
As an aside, one can show that there exists an inclusion map $H^{3,\delta_0} (D_z\setminus \{z\}) \hookrightarrow C^0 (D_z)$.

For a fixed $(\Sigma,j,M,D,u)$ the \textbf{isotropy group}
	\[
	\Aut(\Sigma,j,M,D,u) := 
	\left\{
	\begin{array}{c}
	\phi: (\Sigma,j) \to (\Sigma,j)
	\end{array}
	\biggm|
	\begin{array}{c}
	\phi \text{ biholomorphic}, 
	u \circ \phi = u, \\
	\phi(M)=M \text{ preservers ordering},\\
	\phi(\vert D\vert)=\vert D\vert
	\end{array}
	\right\}
	\]
is finite if and only if the GW-stability condition \eqref{eq:gw-stability-condition} holds.

\begin{theorem}[{\cite[Thm.~3.27]{HWZGW}}]
	The set of stable curves $\cZ_{A,g,k}$ has a natural second countable, paracompact, Hausdorff topology.
\end{theorem}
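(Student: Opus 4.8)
The plan is to equip $\cZ_{A,g,k}$ with a topology by specifying, around every stable curve $x=[\Sigma,j,M,D,u]$, an explicit family of basic neighborhoods built from local models of stable maps, to check that these neighborhoods form a basis for a topology, and then to read off the three asserted properties from their shape. The local models are constructed in three steps. First, since the domain $(\Sigma,j,M,D)$ need not satisfy the DM-stability condition~\eqref{eq:dm-stability-condition}, one \emph{stabilizes} it: on each component violating~\eqref{eq:dm-stability-condition} the GW-stability condition~\eqref{eq:gw-stability-condition} forces $\int_C u^*\ww>0$, so $u|_C$ is nonconstant and one may add finitely many auxiliary marked points $y_1,\dots,y_\ell$, each defined as a transverse intersection of $u$ with a small codimension-two submanifold $H_m\subset Q$ through $u(y_m)$, so that $(\Sigma,j,M\cup\{y_1,\dots,y_\ell\},D)$ is DM-stable. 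Second, to this stabilized surface one applies Definition~\ref{def:good-uniformizing-family}, obtaining a finite-dimensional family $(a,v)\mapsto(\Sigma_a,j(a,v))$ with its gluing data at the nodes, and one pre-glues $u$ across the necks (cutting off against the exponentially decaying behaviour guaranteed by the $H^{3,\delta_0}$ condition) to obtain an approximate map $\hat u_{(a,v)}:\Sigma_a\to Q$. Third, a nearby map is written as $\exp_{\hat u_{(a,v)}}(\x)$ for a small section $\x$ in the appropriate $H^3$ completion (weighted by $\delta_0$ at the remaining nodes). A \textbf{basic neighborhood} $\cU(x;\,\text{choices},\,\ep)$ of $x$ is then the set of all stable curves $[\Sigma',j',M',D',u']$ which, after adjoining the marked points cut out by $(u')^{-1}(H_m)$ and after a biholomorphism identifying the combinatorial type, coincide with some $[\Sigma_a,j(a,v),M_a\cup\{y_m\},D_a,\exp_{\hat u_{(a,v)}}(\x)]$ with $\abs{a}<\ep$, $\norm{v}<\ep$, and $\norm{\x}_{H^3}<\ep$.

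The first substantial step is to verify the basis axiom: if a stable curve $y$ lies in $\cU_1\cap\cU_2$ for two basic neighborhoods with possibly different centers and auxiliary choices, then some basic neighborhood of $y$ is contained in $\cU_1\cap\cU_2$. This rests on two ingredients, which I expect to be the main obstacle. The first is a \emph{change-of-model} lemma: any two of these local models around a common stable curve refine a third, so that the basic neighborhoods are independent --- after passing to a smaller neighborhood --- of the stabilizing points, the disk-like neighborhoods, the charts on $Q$, and the chosen good complex deformation. The second is a package of \emph{gluing and anti-gluing estimates} controlling how the $H^3$-size of $\x$ and the weighted $H^{3,\delta_0}$-behaviour on the necks transform under reparametrization of the gluing length and under the pre-gluing of $u$. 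This is the exact analogue, in the presence of the infinite-dimensional map component and the weighted norms, of the statement recorded for $\dmspace_{g,k}$ in Proposition~\ref{prop:natural-representation}, and it is where the bulk of the work lies. Granting the basis axiom, declaring a set open if and only if it is a union of basic neighborhoods gives a well-defined topology.

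For the Hausdorff property I would argue directly. If two distinct stable curves $x$ and $x'$ could not be separated, one produces a sequence $x_n$ lying in arbitrarily small basic neighborhoods of both; the shape of the basic neighborhoods then forces convergence of the complex structures and of the marked and nodal points off the necks, uniform $C^0$-convergence of the maps there (using the inclusion $H^{3,\delta_0}(D_z\setminus\{z\})\hookrightarrow C^0(D_z)$ noted after Definition~\ref{def:class-3delta}), and conservation of both the homology class $A$ and the $\ww$-energy across the necks. A uniqueness-of-limits argument of the usual Gromov-compactness type then identifies $x$ with $x'$, a contradiction. Equivalently, one may exhibit a continuous point-separating map from $\cZ_{A,g,k}$ into a manifestly Hausdorff target --- pairing the stabilized Deligne--Mumford datum with the pushforward currents $u_*[\Sigma]$ restricted to an exhausting family of open subsets of $Q$ --- and invoke that a space admitting such a map is Hausdorff.

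Finally, second countability and paracompactness. The total energy $\ww(A)$ is fixed, which bounds the number of components carrying positive energy, hence the number of stabilizing points ever needed; together with $\sharp M=k$ and the bound on $\sharp D$ by the genus, it follows that only countably many combinatorial-plus-homological types of stabilized domain occur. For each type the basic neighborhoods of that type are parametrized by a finite-dimensional space of gluing and complex-deformation parameters times an open subset of a fixed separable Hilbert space of $H^3$-sections, hence are second countable; a bookkeeping argument then extracts a countable subfamily of basic neighborhoods covering $\cZ_{A,g,k}$, whose countable bases union to a countable basis. Regularity holds because each basic neighborhood embeds as an open subset of a metrizable space, and then second countable $+$ regular Hausdorff $\Rightarrow$ paracompact (indeed metrizable, by Urysohn's theorem), completing the proof. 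Apart from the gluing analysis near the nodes, every step runs parallel to the proof of the corresponding statement for $\dmspace_{g,k}$ (\cite[Prop.~2.4]{HWZGW}).
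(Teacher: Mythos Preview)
The paper does not supply a proof of this statement; it is quoted verbatim as \cite[Thm.~3.27]{HWZGW} and left entirely to that reference. So there is nothing in the present paper to compare your attempt against. What you have written is a reasonable high-level outline of the construction actually carried out in \cite[\S\S 3.1--3.4]{HWZGW}: stabilize the domain using transversal constraints, build basic neighborhoods from good uniformizing families of stable maps, verify the basis axiom via a change-of-model lemma together with gluing/anti-gluing estimates, and then deduce the topological properties. At that level of abstraction your sketch is broadly faithful to the source.

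One small correction: your claim of ``the bound on $\sharp D$ by the genus'' is false for stable \emph{maps}. For stable noded Riemann surfaces the DM-stability condition~\eqref{eq:dm-stability-condition} does bound $\sharp D$ in terms of $g$ and $k$, but in $\cZ_{A,g,k}$ a component may be stabilized by positive $\omega$-area alone, and since the per-component areas are only required to be nonnegative and to sum to $\omega(A)$, there is no a priori bound on the number of such components (hence on $\sharp D$). This does not damage the second-countability argument, since the set of combinatorial types is still countable and each type contributes a second-countable chart --- but you should drop the parenthetical about the genus bound. Beyond this, the serious content you flag as ``the main obstacle'' (the change-of-model lemma and the neck estimates) is exactly where \cite{HWZGW} spends its effort, and your sketch correctly identifies it without attempting the analysis.
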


Before describing a fully general good uniformizing family of stable maps, we will consider special cases of such families which allow us to isolate:
\begin{enumerate}
	\item the gluing construction of stable maps in a region of a nodal Riemann surface,
	\item the transversal constraint construction in the case that a domain component does not satisfy the DM-stability condition \ref{eq:dm-stability-condition}.
\end{enumerate}


\subsubsection{Good uniformizing families centered at unnoded stable maps in the stable case}

The simplest description of a good uniformizing family centered at a stable map occurs when the stable map is without nodes and whose underlying Riemann surface is already stable.
Thus, consider an unnoded stable map $(\Sigma,j,M,\emptyset,u)$.
Let us assume moreover that $2g+k \geq 3$; it follows that $(\Sigma,j,M,\emptyset)$ is a stable Riemann surface, and we can let $v\mapsto j(v)$, $v\in V$ be a good complex deformation.

Consider the space of sections $H^3(\Sigma, u^*TQ)$; for a given Riemannian metric $g$ on $Q$.
There exists a sufficiently small neighborhood $U\subset H^3(\Sigma, u^*TQ)$ such that the associated exponential map defines a map $\exp_u \eta : \Sigma \to Q$.

In this case, a good uniformizing family centered at the stable map $(\Sigma,j,M,\emptyset,u)$ can be given as the following family of stable maps
	\[
	(v, \eta) \mapsto (\Sigma, j(v), M, \emptyset, \exp_u \eta) \quad \text{where } (v,\eta) \in V \times U \subset E\times H^3(\Sigma, u^*TQ).
	\]

\subsubsection{The gluing construction for a stable map}
	\label{subsubsec:the-gluing-construction-stable-map}

The description of a good uniformizing family is complicated by the fact that stable maps may be defined on nodal domains.
To examine this situation, let us consider a stable map with a single nodal pair, and such that that every domain component is stable.
In order to describe the good uniformizing family centered at such a stable map, we recall the gluing construction for a stable map as described in \cite[\S~2.4]{HWZGW}.

At the outset, fix a smooth cutoff function $\beta:{\mathbb R}\rightarrow [0,1] $ which satisfies the following:
\begin{itemize}
	\item $\beta (-s)+\beta (s)=1$ for all $s\in \R$,
	\item $\beta (s)=1$ for all $s\leq -1$,
	\item $\tfrac{d}{ds}\beta(s)<0$ for all $s\in (-1, 1)$.
\end{itemize}
Consider a pair of continuous maps
	\[
	h^+:\R^+\times S^1\to \R^{2n}, \qquad h^-:\R^-\times S^1\to \R^{2n};
	\]
with common asymptotic constant $c:= \lim_{s\to\infty} h^+(s,t) = \lim_{s\to-\infty} h^-(s,t)$.
For a given gluing parameter $a\in \hb$ we define the \textbf{glued map} $\oplus_a (h^+,h^-) : Z_a \to \R^{2n}$ by the interpolation
	\[
	\oplus_a (h^+,h^-) (s,t):=
	\begin{cases}
	\beta\left(	s - \tfrac{R_a}{2}	\right) \cdot h^+(s,t) &	\\
	\qquad+\left(1-\beta \left(	s-\tfrac{R_a}{2}	\right)\right) \cdot  h^-(s-R_a,t-\theta_a) &\text{if } a\neq 0,\\
	(h^+,h^-) & \text{if }a=0.
	\end{cases}
	\]

Now consider a stable map with a single nodal pair $(\Sigma,j,M,\{x_a,y_a\}, u)$, and such that every connected component $C\subset \Sigma$ is stable. Since $(\Sigma,j,M,\{x_a,y_a\})$ is assumed to be stable there exists a good complex deformation $v\mapsto j(v), v\in V$.

Consider the following space of sections $H^{3,\delta_0}_c(\Sigma, u^*TQ)$, consisting of sections $\eta : \Sigma \to u^*TQ$ such that:
$\eta$ is of class $H^{3,\delta_0}$ around the nodal points and of class $H^3_\text{loc}$ at the other points of $\Sigma$, and in addition $\eta$ has matching asymptotic values at the nodal pairs, i.e., $\eta(x_a) = \eta(y_a)$ for $\{x_a,y_a\}\in D$.
We now consider local expressions for the map $u$ and the section $\eta$ as follows.
In a neighborhood of the point $u(x_a)=u(y_a) \in Q$ choose a chart which identifies $u(x_a)=u(y_a)$ with $0\in \R^{2n}$. 
Furthermore, given a Riemannian metric $g$ on $Q$ we may assume that this chart is chosen such that this metric is identifiable with the Euclidean metric on $\R^{2n}$.
Choose small disk neighborhoods at $x_a$ and $y_a$ such that via biholomorphisms we may identify $D_{x_a}\setminus \{x_a\} \simeq \R^+\times S^1$ and $D_{y_a}\setminus \{y_a\} \simeq \R^-\times S^1$.
Localized to these coordinate neighborhoods, we may view the base map $u$ as a pair of maps 
	\[u^+:\R^+\times S^1\to \R^{2n}, \qquad u^-:\R^-\times S^1\to \R^{2n}\]
and likewise the section $\eta$ as maps
	\[\eta^+:\R^+\times S^1\to \R^{2n}, \qquad \eta^-:\R^-\times S^1\to \R^{2n}.\]

Given a gluing parameter $a\in \hb$ we define the \textbf{glued stable map} $\oplus_a \exp_u (\eta) :\Sigma_a \to Q$ as follows:
	\[
	\oplus_a \exp_u \eta :=
	\begin{cases}
	\exp_u \eta 							& \text{ on } \Sigma \setminus (D_{x_a}\sqcup D_{y_a}), \\
	\oplus_a (u^+ + \eta^+, u^- + \eta^- ) 	& \text{ on } Z_a.
	\end{cases}
	\]

At this point, we might hope to define a good uniformizing family of stable maps centered at $(\Sigma,j,M,\{x_a,y_a\},u)$ as follows:
	\begin{align*}
	&(a,v,\eta) \mapsto (\Sigma_a, j(a,v), M_a, \{x_a,y_a\}_a, \oplus_a \exp_u \eta), \\
	&\qquad \text{where } (a,v,\eta) \in \hb \times V \times U \subset \C \times E \times H^{3,\delta_0}_c (\Sigma,u^*TQ),
	\end{align*}
where $U$ is a suitably small neighborhood of the zero section.
The problem with this is that this map is not injective; to see this, fix a gluing parameter $a\neq 0$ and consider two sections $\eta$, $\eta'$ which differ only in a sufficiently small neighborhood of the nodal pair $\{x_a,y_a\}$; the glued stable maps will be identical, $\oplus_a \exp_u  \eta = \oplus_a \exp_u \eta'$.

\begin{theorem}[{\cite[Thm.~2.49]{HWZGW}}]
	\label{thm:ssc-retraction}
	There exists a ``$\ssc$-retraction'' (i.e., a $\ssc$-smooth map which satisfies $\pi \circ \pi = \pi$),
		\begin{align*}
			\pi: \hb \times V \times U 	& \to \hb \times V \times U \\
					(a,v,\eta)				& \mapsto (a,v,\pi_a (\eta)),
		\end{align*}
	such that the restriction of the above family to the subset $\cV := \pi (\hb \times V \times U)$ is injective.
\end{theorem}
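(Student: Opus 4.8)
The plan is to realize $\pi$ as what the polyfold formalism calls a \emph{splicing}: a family $\{\pi_a\}_{a\in\hb}$ of bounded linear projections on the section space $H^{3,\delta_0}_c(\Sigma,u^*TQ)$, depending $\ssc$-smoothly on $a$, built from the cutoff function $\beta$. Near the node a section $\eta$ is recorded by its exterior part $\eta_{\mathrm{out}}:=\eta|_{\Sigma\setminus(D_{x_a}\sqcup D_{y_a})}$ together with the pair $(\eta^+,\eta^-)$ of its local expressions on $\R^+\times S^1$ and $\R^-\times S^1$. Since $\oplus_a$ is \emph{linear} in the pair, the glued stable map $\oplus_a\exp_u\eta$ depends on $\eta$ only through $\eta_{\mathrm{out}}$ and through the glued section $\oplus_a(\eta^+,\eta^-)$ on $Z_a$ (the base map $u$ contributes a term independent of $\eta$); the complementary information is captured by an \emph{anti-glued} section $\ominus_a(\eta^+,\eta^-)$, given by the formula for $\oplus_a$ with $\beta$ replaced by $1-\beta$ in one slot and a sign flip, arranged so that the pointwise coefficient matrix built from $\beta$ and $1-\beta$ is invertible (its determinant, of the form $-\beta^2-(1-\beta)^2$, never vanishes), whence $(\eta^+,\eta^-)\mapsto(\oplus_a(\eta^+,\eta^-),\ominus_a(\eta^+,\eta^-))$ is a linear isomorphism onto its image. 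One then \emph{defines} $\pi_a(\eta)$ to equal $\eta$ on $\eta_{\mathrm{out}}$ and, near the node, to be the unique pair having the same glued section as $(\eta^+,\eta^-)$ but vanishing anti-glued part; equivalently $\pi_a$ is ``glue, then re-split in normal form.'' That $\pi_a\circ\pi_a=\pi_a$ is then a formal pointwise computation from $\beta(s)+\beta(-s)=1$, and $\pi_0=\mathrm{id}$ because $\oplus_0$ is the identity pairing.

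\textbf{Elementary verifications.} Granting this, I would check in turn: (i) each $\pi_a$ maps $H^{3,\delta_0}_c(\Sigma,u^*TQ)$ into itself with operator norm bounded uniformly in $a\in\hb$ — the matching-asymptotics condition is manifestly preserved, and the exponential weight is preserved because the modification is supported in the finite neck $[0,R_a]\times S^1$ and a bound of the shape $e^{\delta_0 s}\,e^{-\delta_0(R_a-s)}\le 1$ for $s\le R_a/2$ controls the re-split section against the weighted norm of $\eta$; (ii) therefore $\pi_a(U)\subset U$ after shrinking $U$, so that $\pi\colon(a,v,\eta)\mapsto(a,v,\pi_a(\eta))$, with $\pi_a$ acting as the identity away from the node, is a well-defined self-map of $\hb\times V\times U$; (iii) $\pi\circ\pi=\pi$ from the fibrewise idempotency in (i).

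\textbf{The main obstacle.} The substantial step — the one I expect to be the real difficulty — is that $\pi$ is $\ssc$-smooth, in particular $\ssc$-smooth in the gluing parameter $a$. The naive obstruction is that the center $R_a=\varphi(r_a)$ of the neck cutoff moves with $a$ and blows up as $r_a\to 0$ for either gluing profile, so difference quotients of $a\mapsto\pi_a$ do not converge in the top norm; differentiability is recovered only after dropping one level of the scale, trading the exponential decay of the sections against the gain built into the weight sequence $\delta_0<\delta_1<\cdots$ of the spaces $H^{m,\delta_m}$. Concretely I would invoke the $\ssc$-smoothness of the gluing and anti-gluing maps, of $\ssc$-smooth cutoff families, and the exponential gluing estimates developed in \cite{HWZGW}, together with the general $\ssc$-calculus of \cite{HWZbook}. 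This identifies $\cV=\pi(\hb\times V\times U)$ as an $\ssc$-retract, which is exactly the structure needed for it to serve as a local model.

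\textbf{Injectivity on $\cV$.} Suppose two elements $(a,v,\eta),(a',v',\eta')\in\cV$ yield isomorphic stable curves. On the level of underlying noded Riemann surfaces the node survives precisely when the gluing parameter is $0$, so $a=0\iff a'=0$; and the injectivity of the map $(a,v)\mapsto(\Sigma_a,j(a,v),M_a,D_a)$ on a good uniformizing family (Definition~\ref{def:good-uniformizing-family}, using that the Kodaira--Spencer differential is an isomorphism and shrinking $V$; see \cite[\S~2.1]{HWZGW}) gives $(a,v)=(a',v')$, up to the action of $\Aut(\Sigma,j,M,D)$ which is absorbed into the ep-groupoid structure. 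With $(a,v)=(a',v')$ fixed, an isomorphism of the stable maps restricts to the identity away from the neck (as $\exp_u$ is injective near the zero section, forcing $\eta=\eta'$ on $\eta_{\mathrm{out}}$) and satisfies $\oplus_a(\eta^+,\eta^-)=\oplus_a(\eta'^+,\eta'^-)$ on $Z_a$. By the very design of $\pi_a$, its fibres coincide with the fibres of $\eta\mapsto\oplus_a\exp_u\eta$, so $\pi_a(\eta)=\pi_a(\eta')$; since $\eta$ and $\eta'$ are fixed points of $\pi_a$, this gives $\eta=\eta'$. For $a=0$ the conclusion is immediate because $\pi_0=\mathrm{id}$ and $\oplus_0$ is the identity pairing.
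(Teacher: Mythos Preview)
The paper does not prove this theorem; it simply cites \cite[Thm.~2.49]{HWZGW}. Your sketch is correct and is essentially the standard construction: define $\pi_a$ by the requirement $\oplus_a(\pi_a\eta)=\oplus_a\eta$ and $\ominus_a(\pi_a\eta)=0$, solve this explicitly using that the $\beta$-coefficient matrix has determinant $\beta^2+(1-\beta)^2\neq 0$ (note the sign: it is positive, not $-\beta^2-(1-\beta)^2$), and defer the hard $\ssc$-smoothness analysis to \cite[\S~2.4]{HWZsc}. This is exactly what the paper does when it proves the analogous two-parameter version in \S~\ref{new-gluing-construction}, writing down the explicit formulas for $\eta^\pm$ in terms of $\xi^\pm$ involving $\beta_{a,b}/\gamma_{a,b}$ with $\gamma_{a,b}=\beta_{a,b}^2+(1-\beta_{a,b})^2$ and then citing \cite{HWZsc}. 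One small omission in your description: the actual anti-gluing involves subtracting an averaged constant $\av_a(\eta^+,\eta^-)$ before applying the $\beta$-weights, which is needed to make the codomain the right weighted space; but this does not affect the logic of your argument.
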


We therefore define a good uniformizing family of stable maps centered at $(\Sigma,j,M,\{x_a,y_a\},u)$ by
	\[
	(a,v,\eta) \mapsto (\Sigma_a, j(a,v), M_a, \{x_a,y_a\}_a, \oplus_a \exp_u \eta) \quad \text{where } (a,v,\eta) \in \cV.
	\]

\begin{remark}
	The exponential gluing profile is necessary to prove the scale smoothness of the above map $\pi$; see \cite[Thms.~1.27,~1.28]{HWZsc}, the relevant calculations involving the exponential gluing profile appear in \cite[\S~2.3]{HWZsc}).
\end{remark}

\subsubsection{The transversal constraint construction}

The description of a good uniformizing family is again complicated by the possibility that for a given stable map the underlying Riemann surface may contain unstable components. For the sake of explaining the phenomena in a simple case, let us consider an unnoded stable map $(\Sigma,j,M, \emptyset, u)$ whose underlying Riemann surface $(\Sigma,j,M,\emptyset)$ is not stable.
Let us assume the automorphism group is the identity, $\Aut(\Sigma,j,M, \emptyset, u)= \{\id \}$.
Since this is a stable map, it necessarily follows from the GW-stability condition \eqref{eq:gw-stability-condition} that $\int_\Sigma u^*\ww >0$.
We note that $\Sigma$ consists of a single component of genus $0$ or $1$. If $g=0$ then $\sharp M <3$, and if $g=0$ then $M=\emptyset$.

Suppose that $\Sigma = S^2$, and suppose that there is a single marked point, $M=\{z_1\}$ (the cases where there are no marked points or two marked points are similar).
Recall that up to diffeomorphism, any complex structure on $S^2$ is the standard complex structure.
In order to stabilize $S^2$, choose a finite set of unordered points $S\subset S^2$ such that:
\begin{itemize}
	\item $S$ is disjoint from the marked point, in this case, $\{z_1\}\cap S = \emptyset$,
	\item $\sharp S \geq 2$, i.e.,  $(S^2,i,\{z_1\}\cup S,\emptyset)$ satisfies the DM-stability condition \eqref{eq:dm-stability-condition},
	\item the image $u(S)$ is disjoint from $u(z_1)$,
	\item at each point $z \in S$ the differential $du_z : T_z S^2 \to T_{u(z)} Q$ is injective, the pullback $u^*\ww$ is non-degenerate on $T_zS^2$, and the induced orientation of $u^*\ww$ on $T_z S^2$ agrees with the orientation determined by the standard complex structure $i$.
\end{itemize}
Such a stabilization always exists \cite[Lem.~3.2]{HWZGW}.
Let $l :=\sharp S -2$, and note that $\dim \dmlog_{0,1 + \sharp S} = 2l$.
For any three distinct points on the sphere there exists a unique M\"{o}bius transformation $\phi\in \text{PGL}(2,\C)$ which sends these points to $\{0,1,\infty\}\in S^2$.
Let $S_0:=\{y',y''\}$ denote the first two points of $S$; by fixing the three points $M\cup S_0$ and by parametrizing the remaining $\sharp S -2$ stabilizing points via maps $\phi_i : B_\ep \hookrightarrow S^2$, one can associate to a parameter $y\in B^{2l}_\ep$ the stabilization $S_y := S_0 \cup \{ \phi_i(y_i)\}_{1\leq i\leq l}$; we write $z_y\in S_y$ for one of the parametrized points.
Thus, we obtain a good uniformizing family centered at the stabilized sphere $(S^2,i,M\cup S,\emptyset)$:
	\[
	y \mapsto (S^2,i, \{z_0\} \cup S_0 \cup S_y,\emptyset), \qquad y \in B_\ep^{2l}.
	\]

Intuitively, each stabilizing point ``increases'' the dimension by $2$,
To get the correct dimension, we can place a codimension $2$ constraint at the stabilizing points on the space of sections.
This can be done as follows.

Using the requirement that at each point $z \in S$ the differential $du_z : T_z \Sigma \to T_{u(z)} Q$ is injective, choose a $2n-2$-dimensional complement $H_{u(z)}$ such that
	\[
	du_z (T_z\Sigma ) \oplus H_{u(z)} = T_{u(z)} Q.
	\]
We call such a complement a \textbf{linear constraint} associated with the point $z\in S$. 
We may also identify a neighborhood of zero in $H_{u(z)}$ with an embedded submanifold of $Q$ via the exponential map.
We will restrict our family of stable maps to sections which satisfy the linear constraint the following space $H_{u(z)}$ at associated (parametrized) point $z_y \in S_y$, i.e., we restrict to an open neighborhood $U$ of zero in the space 
	\[
	E_S := \{ (y,\eta) \in B^{2l}_\ep \times H^3 (\Sigma, u^*TQ)	\mid \eta(z_y) \in H_{u(z)} \text{ for } z_y \in S_y\}.
	\]

Thus using the above \textbf{transversal constraint construction}, we define a good uniformizing family of stable maps centered at $(S^2,i,\{z_0\},\emptyset,u)$ by
	\[
	(y, \eta) \mapsto (S^2,i,\{z_0\},\emptyset, \exp_u \eta), \qquad (y,\eta) \in U \subset E_S.
	\]
This construction is justified by the following assertion: the projection to the space of stable curves
	\[
	(y,\eta)  \mapsto [S^2,i, \{z_0\}, \emptyset, \exp_u \eta], \qquad (y,\eta)\in U \subset E_S
	\]
is a local homeomorphism (for the appropriate topology on the space of stable curves).

Indeed, it is locally surjective. Consider a nearby stable curve, and let $(S^2,i,\{w_0\}, \emptyset, v)$ be a stable map representative which is also near $(S^2,i,\{z_0\},\emptyset,u)$.
Observe that for any map $v$ sufficiently close to $u$ will have the property that it intersects the submanifold associated to each linear constraint $H_{u(z)}$ precisely once, and intersects transversally. 
Let $\phi:S^2\to S^2$ be the unique M\"{o}bius transformation which takes $\{z_0,y',y''\}$ to $\{w_0\}$ and the unique points of intersection of the map $v$ with the linear constraints associated to $y'$ and $y''$.
We then define $\eta$ uniquely by $\exp_u \eta = v\circ \phi$ and $y$ by the unique points of intersection of $v\circ \phi$ with the constraints $H_{u(z)}, z\in S$.
Moreover, since these parameters are uniquely determined the projection is injective.

\subsubsection{Good uniformizing families of stable maps in the general case}
	\label{subsubsec:families-stable-maps-in-general}

Hopefully the above prototypical cases provide some intuition for the gluing/transversal constraint constructions.
In general, both constructions will be needed in the construction of a general good uniformizing family---clearly, a stable map may contain nodal points as well as unstable domain components.
There is no issue with using both constructions at the same time, since these constructions are localized to disjoint regions of the underlying Riemann surface.

Consider a stable map $(\Sigma,j,M,D,u)$ and consider the associated automorphism group $\Aut(\Sigma,j,M,D,u)$.
We now recall the full definition of a stabilization \cite[Def.~3.1]{HWZGW} in the general case. A \textbf{stabilization} is a set of points $S\subset \Sigma$ which satisfy the following:
\begin{itemize}
	\item $S$ is disjoint from the special points, i.e., $S\cap (M\cup\abs{D}) = \emptyset$,
	\item given an automorphism $\phi \in \Aut(\Sigma,j,M,D,u)$ then $\phi(S) =S$,
	\item the tuple $(\Sigma,j,M\cup S,D)$ satisfies the DM-stability condition \eqref{eq:dm-stability-condition},
	\item if $u(z)=u(z')$ for $z,z'\in S$ then there exists an automorphism $\phi$ such that $\phi(z)=z'$,
	\item the image $u(S)$ is disjoint from $u(M)$ and from $u(D)$,
	\item at each point $z \in S$ the differential $du_z : T_z \Sigma \to T_{u(z)} Q$ is injective, the pullback $u^*\ww$ is non-degenerate on $T_z\Sigma$, and the induced orientation of $u^*\ww$ on $T_z \Sigma$ agrees with the orientation determined by the complex structure $j$.
\end{itemize}
Again, such a stabilization always exists \cite[Lem.~3.2]{HWZGW}.

The Riemann surface $(\Sigma,j,M\cup S,D)$ is now stable; let 
	\[
	(a,v)\mapsto (\Sigma_a, j(a,v), (M\cup S)_a, D_a), \quad (a,v) \in \hb^{\sharp D} \times V
	\]
be a good uniformizing family of stable noded Riemann surfaces.

By the final condition of a stabilization, we may choose a $(2n-2)$-dimensional complement $H_{u(z)}$ such that
	\[
	du_z (T_z\Sigma ) \oplus H_{u(z)} = T_{u(z)} Q.
	\]
We call such a complement a \textbf{linear constraint} associated with the point $z\in S$. 

Consider the constrained subspace of sections
	\[
	E_S := \{ \eta \in H^{3,\delta_0}_c (u^*TQ)	\mid \eta(z_s) \in H_{u(z_s)} \text{ for } z_s \in S\}
	\]
and let $U \subset E_S$ be an suitably small open neighborhood of the zero section.
As before, by using the gluing construction one can define a $\ssc$-retraction
	\begin{align*}
	\pi : \hb^{\sharp D} \times V \times U 	& \to \hb^{\sharp D} \times V \times U\\
			(a,v,\eta)									& \mapsto (a,v,\pi_a(\eta)).
	\end{align*}
Then the image $\cV : = \pi (\hb^{\sharp D} \times V \times U)$ is a $\ssc$-retract on which the gluing map is injective.

\begin{definition}[{\cite[Def.~3.9]{HWZGW}}]
	\label{def:good-uniformizing-family-of-stable-maps}
	Having chosen a stabilization $S$, a \textbf{good uniformizing family of stable maps} centered at $(\Sigma,j,M,D,u)$ is a family of stable maps
	\[
	(a,v,\eta) \mapsto (\Sigma_a, j(a,v), M_a, D_a, \oplus_a \exp_u \eta), \quad (a,v,\eta )\in \cV.
	\]
	In particular, the section $\eta$ satisfies the linear constraint $H_{u(z_s)}$ at each stabilizing point $z_s\in S$.
	In addition, the glued stable map may be described on the glued Riemann surface as follows:
	\[
	\oplus_a \exp_u \eta :=
	\begin{cases}
	\exp_u \eta 			& \text{on } \Sigma \setminus \cup_{\{x_a,y_a\}\in D} (D_{x_a}\sqcup D_{y_a}), \\
	\oplus_a \exp_u \eta 	& \text{on } Z_a \text{ for every gluing parameter } a \in \hb.
	\end{cases}
	\]
	We call the parameters $(a,v,\eta)$ \textbf{local $\ssc$-coordinates} centered at the stable map $(\Sigma,j,M,D,u)$.
\end{definition}

\begin{proposition}[{\cite[Prop.~3.12]{HWZGW}}]
	Consider a stable map $(\Sigma,j,M,D,u)$ and let $G:= \Aut(\Sigma,j,M,D,u)$. Consider a good uniformizing family of stable maps centered at $(\Sigma,j,M,D,u)$:
		\[
		(a,v,\eta) \mapsto (\Sigma_a, j(a,v), M_a, D_a, \oplus_a \exp_u \eta), \quad (a,v,\eta )\in \cV.
		\]
	For every open neighborhood of $0$ in $\cV$ there exists an open subneighborhood of $0$, call it $\cU$, and a group action
		\begin{align*}
		G\times \cU 		& \to \cU \\
		(\phi,(a,v,\eta)) 	& \mapsto (\phi\ast(a,v), \eta \circ \phi^{-1})
		\end{align*}
	(where $\phi\ast(a,v)$ is the action from equation \eqref{eq:natural-representation}) such that the following holds.
	\begin{enumerate}
		\item Let $(\phi,(a,v,\eta)) \in G\times \cU$, and denote $(b,w,\xi):= (\phi\ast (a,v), \eta\circ \phi^{-1})$. Then there exists a morphism
			\[
			\phi_{(a,v,\eta)} : (\Sigma_a,j(a,v),M_a,D_a, \oplus_a \exp_u \eta ) \to 
								(\Sigma_b,j(b,w),M_b,D_b, \oplus_b \exp_u \xi).
			\]
		\item Consider a morphism 
				\[
				\psi : (\Sigma_a,j(a,v),M_a,D_a, \oplus_a \exp_u \eta ) \to (\Sigma_{a'},j(a',v'),M_{a'},D_{a'}, \oplus_{a'} \exp_u \eta' )
				\]
			for parameters $(a,v,\eta), (a',v',\eta') \in \cU_S$. Then there exists a unique element $\phi \in G$ such that $(\phi\ast(a,v), \eta \circ \phi^{-1}) = (a',v',\eta')$ and moreover $\psi = \phi_{(a,v,\eta)}$.
	\end{enumerate}
	We call $\cU$ a \textbf{local uniformizer} centered at $(\Sigma,j,M,D,u)$.
\end{proposition}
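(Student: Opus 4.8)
The plan is to bootstrap from Proposition~\ref{prop:natural-representation}, which is the corresponding statement for the good uniformizing family of stable \emph{noded Riemann surfaces} attached to the stabilized domain $(\Sigma,j,M\cup S,D)$. One uses throughout that $G:=\Aut(\Sigma,j,M,D,u)$ is a subgroup of $\Aut(\Sigma,j,M\cup S,D)$ --- finite by the DM-stability condition~\eqref{eq:dm-stability-condition} for $(\Sigma,j,M\cup S,D)$ --- which holds because the stabilization $S$ can be, and is, chosen $G$-invariant \cite[Lem.~3.2]{HWZGW}; one likewise arranges the disk-like neighborhoods at the special points and the linear constraints $H_{u(z_s)}$, $z_s\in S$, to be $G$-equivariant. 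Shrinking the local uniformizer of Proposition~\ref{prop:natural-representation} inside $\hb^{\sharp D}\times V$ gives the action $(\phi,(a,v))\mapsto\phi\ast(a,v)$ of~\eqref{eq:natural-representation} together with its DM-level morphisms $\phi_{(a,v)}\colon(\Sigma_a,j(a,v),(M\cup S)_a,D_a)\to(\Sigma_{\phi\ast(a,v)},j(\phi\ast(a,v)),(M\cup S)_{\phi\ast(a,v)},D_{\phi\ast(a,v)})$. I would then verify that the stated formula $\phi\ast(a,v,\eta):=(\phi\ast(a,v),\eta\circ\phi^{-1})$ preserves the good-uniformizing-family data: since $u\circ\phi=u$, $\phi(S)=S$, and the linear constraints are $G$-equivariant, $\eta\circ\phi^{-1}$ again satisfies the linear constraint at each stabilizing point; and since the gluing construction of~\S\ref{subsubsec:the-gluing-construction-stable-map} (the cutoff $\beta$, the cylindrical coordinates near the nodes, the interpolation $\oplus_a$) is equivariant under $\phi$, which near each nodal pair acts on $\R^\pm\times S^1$ by an $s$-translation and a $t$-rotation, the $\ssc$-retraction $\pi$ of Theorem~\ref{thm:ssc-retraction} is $G$-equivariant, so $\eta\circ\phi^{-1}$ lies in $\cV$ whenever $\eta$ does. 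A routine shrinking argument, using finiteness of $G$, then produces the $G$-invariant open subneighborhood $\cU$ of $0$ on which this is a group action; the action axioms reduce to those at the DM level together with $(\eta\circ(\phi')^{-1})\circ\phi^{-1}=\eta\circ(\phi\phi')^{-1}$.

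For part~(1) I would take $\phi_{(a,v,\eta)}$ to be the biholomorphism $\phi_{(a,v)}$ of Proposition~\ref{prop:natural-representation} and check that it intertwines the glued maps, namely $(\oplus_{\phi\ast(a,v)}\exp_u(\eta\circ\phi^{-1}))\circ\phi_{(a,v)}=\oplus_a\exp_u\eta$. On $\Sigma\setminus\cup_{\{x_a,y_a\}\in D}(D_{x_a}\sqcup D_{y_a})$ the left-hand side equals $\exp_u(\eta\circ\phi^{-1})\circ\phi=\exp_u\eta$ because $u\circ\phi=u$ and $\phi_{(a,v)}$ restricts to $\phi$ there, while on each glued cylinder $Z_a$ it follows from the explicit interpolation formula for $\oplus_a$ and the translation/rotation action of $\phi$ on the nodal coordinates, again using $u\circ\phi=u$; preservation of the ordered marked points and of the nodal pairs is inherited from $\phi_{(a,v)}$. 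This exhibits $\phi_{(a,v,\eta)}$ as the required morphism.

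For part~(2), given any morphism $\psi\colon(\Sigma_a,j(a,v),M_a,D_a,\oplus_a\exp_u\eta)\to(\Sigma_{a'},j(a',v'),M_{a'},D_{a'},\oplus_{a'}\exp_u\eta')$ with $(a,v,\eta),(a',v',\eta')\in\cU$, I would first recover the stabilizing points intrinsically: after shrinking $\cU$ further, each $\oplus_a\exp_u\eta$ meets the submanifold associated to each linear constraint $H_{u(z_s)}$ transversally in a single point near $z_s$, and the resulting finite set $S_{(a,v,\eta)}\subset\Sigma_a$ makes $(\Sigma_a,j(a,v),(M\cup S_{(a,v,\eta)})_a,D_a)$ precisely the member over $(a,v)$ of the DM good uniformizing family. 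Since $\psi$ intertwines the maps to $Q$, the constraints $H_{u(z_s)}$ are fixed, and transversal intersection is reparametrization-invariant, $\psi(S_{(a,v,\eta)})=S_{(a',v',\eta')}$, so $\psi$ is a morphism of the underlying stabilized noded Riemann surfaces; Proposition~\ref{prop:natural-representation} then yields a unique $\phi\in\Aut(\Sigma,j,M\cup S,D)$ with $\phi\ast(a,v)=(a',v')$ and $\psi=\phi_{(a,v)}$. Having shrunk $\cU$ so that the finitely many elements $\phi'$ with $u\circ\phi'\neq u$ are incompatible with the $Q$-map-intertwining condition for small $\eta,\eta'$, we get $\phi\in G$. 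Finally, the morphism $\phi_{(a,v,\eta)}$ from part~(1) has underlying biholomorphism $\phi_{(a,v)}=\psi$ and intertwines $\oplus_a\exp_u\eta$ with $\oplus_{\phi\ast(a,v)}\exp_u(\eta\circ\phi^{-1})$; comparing with $\psi$ and invoking the injectivity of the good uniformizing family on $\cV$ (Theorem~\ref{thm:ssc-retraction}) forces $(a',v',\eta')=\phi\ast(a,v,\eta)$ and $\psi=\phi_{(a,v,\eta)}$, and $\phi$ is unique in $G$ since it is unique in $\Aut(\Sigma,j,M\cup S,D)$.

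The main obstacle is the pair of equivariance checks threading through the two preceding paragraphs: that the gluing/interpolation construction --- hence the $\ssc$-retraction $\pi$ cutting out $\cV$ --- genuinely commutes with the reparametrization action of $G$ (where the $G$-invariant disk neighborhoods, the antisymmetric cutoff $\beta$, and the identity $u\circ\phi=u$ all enter), together with the dual statement that any $Q$-map-intertwining morphism of the stabilized domains has its underlying automorphism already in $G$. This last point forces the several successive shrinkings of $\cU$, which must be organized so that a single $\cU$ works simultaneously for all of $G$ and for the intrinsic recovery of the stabilizing points via transversality with the linear constraints.
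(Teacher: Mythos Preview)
The paper does not give its own proof of this proposition: it is simply quoted from \cite[Prop.~3.12]{HWZGW} and stated without argument, so there is no in-paper proof to compare against. Your proposal is essentially the standard argument one finds in \cite{HWZGW}: reduce to the Deligne--Mumford level statement (Proposition~\ref{prop:natural-representation}) for the stabilized domain $(\Sigma,j,M\cup S,D)$, use $G$-invariance of the stabilization and of the auxiliary choices (disk neighborhoods, linear constraints, cutoff $\beta$) to make the gluing construction and hence the $\ssc$-retraction $\pi$ equivariant, and for uniqueness recover the stabilizing points intrinsically as the transverse intersections of the glued map with the constraint hypersurfaces so that any stable-map morphism becomes a morphism of stabilized Riemann surfaces to which Proposition~\ref{prop:natural-representation} applies.

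One small point to tighten: your claim that ``$\phi_{(a,v)}$ restricts to $\phi$'' away from the nodal disks is correct but deserves a word of justification --- it holds because the good complex deformation satisfies $\phi\colon(\Sigma,j(v))\to(\Sigma,j(\phi\ast v))$ biholomorphically and is constant $=j$ on the disk neighborhoods, so the DM-level morphism $\phi_{(a,v)}$ is literally $\phi$ on the core and the induced glued-cylinder map on each $Z_a$. Likewise, the step ``shrink $\cU$ so that elements $\phi'\in\Aut(\Sigma,j,M\cup S,D)$ with $u\circ\phi'\neq u$ are excluded'' is the right idea, but you should say explicitly that this uses finiteness of $\Aut(\Sigma,j,M\cup S,D)$ together with a quantitative separation of $u$ from $u\circ\phi'$ in the relevant Sobolev norm, propagated through the gluing estimates. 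With those two clarifications your sketch matches the argument in \cite{HWZGW}.
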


\subsubsection{Local smooth structures on the Gromov--Witten polyfolds}

A polyfold is locally homeomorphic to the quotient of a $\ssc$-retract by a finite group action (compare with our mantra regarding the local topology of an orbifold.).
In the current context, given a local uniformizer for a good uniformizing family of stable maps, the projection to an equivalence class is $\Aut$-invariant. Hence we obtain a well-defined map
	\[
	\frac{\{(\Sigma_a,j(a,v), M_a,D_a, \oplus_a \exp_u \eta)\}_{(a,v,\eta)\in \cU}}{\Aut(\Sigma,j,M,D,u)} \to \cZ_{A,g,k}.
	\]
This map is a local homeomorphism (see \cite[\S~2.1]{HWZGW}). This further gives us a picture of the local smooth structure of a GW-polyfold

In order to understand the full smooth structure on $\cZ_{A,g,k}$, it is necessary to construct a polyfold structure on $\cZ_{A,g,k}$. We refer to \cite[\S~3.5]{HWZGW} for such a construction.

\begin{theorem}[{\cite[Thm.~3.37]{HWZGW}}]
	Having fixed the exponential gluing profile and a strictly increasing sequence $(\delta_i)_{i\geq 0}\subset (0,2\pi)$, the second countable, paracompact, Hausdorff topological space $\cZ_{A,g,k}$ possesses a natural equivalence class of polyfold structures.
\end{theorem}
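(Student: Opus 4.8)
The plan is to package the local data furnished by the good uniformizing families of stable maps into an ep-groupoid whose orbit space is the topological space $\cZ_{A,g,k}$, and then to invoke the general polyfold formalism of \cite{HWZbook}: an ep-groupoid whose object and morphism sets are M-polyfolds and whose structure maps are $\ssc$-smooth determines a polyfold structure on its orbit space. Naturality --- that the structure is well-defined up to equivalence of ep-groupoids --- will follow from a common-refinement argument over the choices (stabilization, good complex deformation, disk-like neighborhoods, cutoff function $\beta$) entering the construction. Note that the purely topological requirements (second countable, paracompact, Hausdorff) are already supplied by \cite[Thm.~3.27]{HWZGW}, so all the content is in the smooth structure.

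First I would set up the object space. For each stable curve choose a representative $(\Sigma,j,M,D,u)$ together with a stabilization $S$ and the remaining choices, and form the good uniformizing family of Definition~\ref{def:good-uniformizing-family-of-stable-maps} on a local uniformizer $\cU\subset\cV=\pi(\hb^{\sharp D}\times V\times U)$. By Theorem~\ref{thm:ssc-retraction} the set $\cV$ is the image of an $\ssc$-retraction defined on an open subset of $\C^{\sharp D}\times E\times E_S$, so $\cV$, and hence $\cU$, is an $\ssc$-retract, i.e.\ an M-polyfold. Here the scale structure on the constrained section space $E_S\subset H^{3,\delta_0}_c(u^*TQ)$ is the one built from the strictly increasing sequence $(\delta_i)\subset(0,2\pi)$: level $i$ is modeled on $H^{3+i,\delta_i}$, and the hypotheses $\delta_i<2\pi$ and $\delta_i<\delta_{i+1}$ are exactly what make the inclusions between consecutive levels dense and compact, which is the defining property of an $\ssc$-Banach space. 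Since $\cZ_{A,g,k}$ is second countable, countably many such local uniformizers cover it; let the object space $Z$ be their disjoint union, a second countable M-polyfold, equipped with its projection $Z\to\cZ_{A,g,k}$ whose fibers are the isotropy orbits.

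Next I would build the morphism space and check the \'etale and proper conditions. A morphism is a morphism of stable maps, in the sense of Definition~\ref{def:gw-polyfold}, between the stable maps indexed by two (possibly equal) points of local uniformizers; within a single chart \cite[Prop.~3.12]{HWZGW} identifies these with the finite isotropy group acting by $(\phi,(a,v,\eta))\mapsto(\phi\ast(a,v),\eta\circ\phi^{-1})$, and gluing-compatibility on overlaps endows the morphism set with an M-polyfold structure for which the source and target maps are local $\ssc$-diffeomorphisms --- the \'etale property. Properness --- that every object has a neighborhood $O$ with $\gamma\mapsto t(\gamma)$ proper on $s^{-1}(\overline{O})$ --- is a finite, local statement which reduces to the fact that every stable curve near a given one is, up to morphism, represented by a point of a fixed chart; this is the local surjectivity and injectivity of the projection to the orbit space built into the good uniformizing families (see \S\ref{subsubsec:families-stable-maps-in-general}), upgraded to global properness by the Hausdorff paracompact topology of \cite[Thm.~3.27]{HWZGW}.

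The hard part, on which everything rests, is that the transition maps between overlapping good uniformizing families are $\ssc$-smooth; this is what forces the exponential gluing profile. A transition decomposes into: (i) a change of good complex deformation and of holomorphic polar coordinates at the special points, which is finite-dimensional and classically smooth; (ii) a change of stabilization and of linear constraints, whose effect on $\eta$ is an implicitly defined reparametrization, controlled by an $\ssc$-implicit function theorem using that the transversal constraint construction cuts out a local slice transversally; and (iii) the interaction at the nodes, where one must show that $\oplus_a\exp_u(\cdot)$, pre- and post-composed with the $\ssc$-retraction $\pi$ of Theorem~\ref{thm:ssc-retraction}, remains $\ssc$-smooth. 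Step (iii) is precisely where the logarithmic profile fails and the exponential profile succeeds --- the requisite estimates are those of \cite[\S2.3]{HWZsc} underlying \cite[Thms.~1.27,~1.28]{HWZsc}. With $\ssc$-smoothness of transitions in hand, naturality is then routine: any two systems of choices admit a common refinement --- charts can be shrunk, stabilizations enlarged, cutoff functions interpolated --- so the associated ep-groupoids are related by a generalized isomorphism, and the polyfold structure on $\cZ_{A,g,k}$ is well-defined as an equivalence class.
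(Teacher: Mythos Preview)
The paper does not give its own proof of this theorem; it is quoted as \cite[Thm.~3.37]{HWZGW} and the surrounding text explicitly refers the reader to \cite[\S3.5]{HWZGW} for the construction. Your proposal is a faithful outline of exactly that construction --- assembling an ep-groupoid from good uniformizing families, with the $\ssc$-retraction supplying the M-polyfold charts, the isotropy action and transition maps supplying the morphism space, and the exponential-profile estimates of \cite{HWZsc} supplying $\ssc$-smoothness of transitions --- so there is nothing to correct and nothing materially different to compare.
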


It was further proven that the polyfold GW-invariants are independent of the choice of strictly increasing sequence $(\delta_i)_{i\geq 0}\subset (0,2\pi)$, see \cite[Cor.~1.5]{schmaltz2019naturality}.

\subsection{Other Gromov--Witten-type polyfolds}
	\label{subsec:other-gw-polyfolds}

We now introduce two variants of the GW-polyfolds which will play an important role in proving the GW-axioms.

Fix a pair $(g,k)$ of integers such that $g\geq 0$, $k\geq 0$, and $2g+k \geq 3$.
A \textbf{splitting} $S$ of $(g,k)$ consists of the following:
	\begin{itemize}
		\item a pair of integers $k_0,k_1 \geq 0$ such that $k_0 + k_1 = k$,
		\item a pair of integers $g_0,g_1 \geq 0$ such that $g_0 + g_1 = g$,
	\end{itemize}
such that the following holds: $k_0 + g_0 \geq 2$ and $k_1 + g_1 \geq 2$.

\begin{definition}
	Fix a homology class $A\in H_2(Q;\Z)$, and fix a pair of integers $g,\ k$ such that $g\geq 0$, $k\geq 0$ and $2g+k \geq 3$. 
	Let $A_0, A_1$ be such that $A_0+A_1=A$, and let $S$ be a splitting of $(g,k)$.
	The \textbf{underlying set of the split Gromov--Witten polyfolds} with respect to $S$ and $A_0,A_1$ is defined as 
		\[
		\cZ_{A_0+A_1,S} := \{(\Sigma_0, j_0, M_0, D_0 , u_0, \Sigma_1, j_1 , M_1, D_1, u_1  ) \mid \cdots, u_0 (z_{k_0+1}) = u_1 (z'_1) 		\} / \sim
		\]
	with data as follows.
	\begin{itemize}
		\item The $(\Sigma_i,j_i,M_i,u_i)$ is a stable map with respect to the GW-polyfold $\cZ_{A_i,k_i+1,g_i}$ for $i=0,1$.
		\item We require the following incidence relation between the last marked point of $M_0$ and the first marked point of $M_1$:
				\[u_0 (z_{k_0+1}) = u_1 (z'_1).\]
			  We moreover require that $u_0$ and $u_1$ are of class $H^{3,\delta_0}$ at the punctures $z_{k_0+1}$ and $z'_1$, respectively.
		\item As usual, the equivalence relation is given by biholomorphisms of the form $\phi:(\Sigma_0\sqcup \Sigma_1,j_0\sqcup j_1)\to (\Sigma'_0\sqcup \Sigma'_1,j'_0\sqcup j'_1)$ such that $\phi$ preserves the ordering of the marked points and maps pairs to pairs; notice that since $\phi$ preserves the $(k_0+1)$th and $1$st marked points this is already enough to imply that $\phi(\Sigma_i)=\Sigma'_i$ for $i=0,1$. We then require in addition that $u'_i\circ \phi = u_i$ for $i=0,1$.
	\end{itemize}
\end{definition}

There are multiple ways to give $\cZ_{A_0+A_1,S}$ a natural polyfold structure.
Consider the map
	\[
	ev_{k_0+1} \times ev_1 : \cZ_{A_0,g_0,k_0+1} \times \cZ_{A_1,g_1,k_1+1} \to Q\times Q;
	\]
it follows from Proposition~\ref{prop:evaluation-smooth-submersion} this map is transverse to the diagonal $\Delta \subset Q\times Q$.
The preimage $(ev_{k_0+1} \times ev_1)^{-1} (\Delta)$ may be identified with $\cZ_{A_0+A_1,S}$ and intuitively should possess a natural smooth structure.
Indeed, this is precisely the situation described by \cite{filippenko2018constrained}, which considers the general problem of developing a polyfold regularization theorem for constrained moduli spaces.
A natural polyfold structure on $\cZ_{A_0+A_1,S}$ follows from \cite[Thm.~1.5]{filippenko2018constrained}; we make the technical remark that this requires shifting the polyfold filtration up one level.

On the other hand, it is straightforward to directly define a polyfold structure on $\cZ_{A_0+A_1,S}$; this does not require any shifting of the polyfold filtrations.
In the construction of a good uniformizing family of stable maps we may simply restrict to sections $\eta_0\in H^{3,\delta_0}_c (u_0^*TQ)$,  $\eta_1\in H^{3,\delta_0}_c (u_1^*TQ)$ which have matching asymptotic values at the punctures $z_{k_0+1}$, $z'_1$, i.e.,
	\[
	\eta_0 (z_{k_0+1}) = \eta_1 (z'_1).
	\]
In essence, to construct an appropriate good uniformizing family we treat $\{z_{k_0}, z'_1\}$ as a nodal pair but without an associated gluing parameter.
Compatibility of such different good uniformizing families of stable maps may then be shown by considering compatibility in the general case but where we restrict a gluing parameter to zero.

\begin{definition}
	Fix a homology class $A\in H_2(Q;\Z)$ and fix a pair of integers $g,\ k$ such that $g\geq 1$, $k\geq 0$ and $2g+k \geq 3$.
	The \textbf{underlying set of the increased arithmetic genus Gromov--Witten polyfold (genus GW-polyfold)} is defined as
		\[
		\cZ^\text{g}_{A,g-1,k+2} := \{(\Sigma,j,M,D,u) \mid \cdots, u(z_{k+1})= u(z_{k+2}) \} / \sim
		\]
	with data as follows.
	\begin{itemize}
		\item The tuple $(\Sigma,j,M,D,u)$ is a stable map with respect to the GW-polyfold $\cZ_{A,g-1,k+2}$.
		\item We require the following incidence relation between the second-to-last and last marked points:
				\[u (z_{k+1}) = u (z_{k+2}).\]
			  We moreover require that $u$ is of class $H^{3,\delta_0}$ at the punctures $z_{k+1}$ and $z_{k+2}$.
		\item As usual, the equivalence relation is given by biholomorphisms of the form $\phi:(\Sigma,j)\to (\Sigma',j')$ such that $\phi$ preserves the ordering of the marked points and maps pairs to pairs, and such that $u'\circ \phi = u$.
	\end{itemize}
\end{definition}

Consider the map
	\[
	ev_{k+1} \times ev_{k+2} : \cZ_{A,g-1,k+2} \to Q\times Q.
	\]
As above, the preimage $(ev_{k+1} \times ev_{k+2})^{-1} (\Delta)$ may be identified with $\cZ^\text{g}_{A,g-1,k+2}$.
A natural polyfold structure on $\cZ^\text{g}_{A,g-1,k+2}$ can be seen exactly as in the above case of the split GW-polyfolds.

We remark that considering the $(k+1)$th and $(k+2)$th marked points as a nodal pair, a stable curve in $\cZ^\text{g}_{A,g-1,k+2}$ has arithmetic genus $g$ and $k$ marked points.
However, it is vital to note that the morphisms would differ with this viewpoint; a morphism must fix the $(k+1)$th and $(k+2)$th marked points, however it may permute the nodal pairs.
As an explicit example of this phenomena, consider the sphere $S^2$ with three marked points $\{0,1,\infty\}$; the only morphism is the identity.
However if we consider $\{0\}$ as the only marked point and consider $\{1,\infty\}$ as a nodal pair we obtain a surface with arithmetic genus $1$ with two distinct morphisms: the identity, and the element of $\text{PGL}(2,\C)$ which fixes $0$ and exchanges the points $1$ and $\infty$.


\section{Maps between Deligne--Mumford orbifolds/Gromov--Witten polyfolds}
	\label{sec:maps-between-orbifolds-polyfolds}

In this section we consider certain naturally defined maps between DM-orbifolds\slash GW-polyfolds.

\subsection{Maps between orbifolds/polyfolds}

In principle, the definition of a map between orbifold-type spaces is somewhat subtle; the appropriate notion is a ``generalized map'' which is itself an equivalence class of a functor between two ep-groupoids (see \cite[Def.~16.5]{HWZbook}).
However, in the present context this subtlety is not a concern due to the naturality of the maps we are considering; in these cases generalized maps arise naturally.
In the cases we consider, it is sufficient to define a map on the level of the underlying sets and then to check (scale) smoothness by writing appropriate local expressions in terms of good uniformizing families of stable noded Riemann surfaces/good uniformizing families of stable maps.

In the present context, it is sufficient to know the following: a generalized map $f: \cO \to \cP$ between orbifolds/polyfolds defines a continuous map between the underlying topological spaces (also written as $f: \cO \to \cP$) and also gives rise to an equivariant lift between local uniformizers:
	\[
	\begin{tikzcd}[row sep = small]
	U \arrow[r, "\hat{f}"] \arrow[d]			& V \arrow[d] \\
	U / G \arrow[r, "\abs{\hat{f}}"] \arrow[d]	& V / H \arrow[d] \\
	\cO \arrow[r, "f"]							& \cP.
	\end{tikzcd}
	\]
The lift $\hat{f}$ is equivariant with respect to an induced group homomorphism between the local isotropy groups, $G \to H$. 
Then the induced map $\abs{\hat{f}}$ between the quotients $U/G$, $V/H$ is identifiable via local homeomorphisms with a restriction of the continuous map $f: \cO \to \cP$.
Hence, (scale) smoothness may therefore be checked by considering the local expressions given by the lift $\hat{f}$.

We also recall the important definition of scale smoothness.
Consider two $\ssc$-smooth retractions $\pi : U \to U$, $\pi' : U' \to U'$, and let $O:= \pi (U),$ $O':= \pi' (U')$ be the associated $\ssc$-retracts.
A map between two $\ssc$-retracts, $f: O \to O'$ is \textbf{$\ssc$-smooth} (resp.\ $\ssc^k$) if the composition $f\circ \pi : U \to U'$ is $\ssc$-smooth (resp.\ $\ssc^k$).
Note that for a finite-dimensional space the identity is a $\ssc$-retract, and hence we also have a consistent notion of $\ssc$-smoothness when the source or target space is finite-dimensional.

\subsection{Natural maps between the Deligne--Mumford orbifolds/Gromov--Witten polyfolds}


To begin, we consider the \textbf{identity map} 
	\[\id : \dmspace^\varphi_{g,k} \to \dmspace^\phi_{g,k}\]
on the DM-orbifolds with respect to different gluing profiles $\varphi$, $\phi$.
Since the topology is independent of gluing profile, it is clear this map is continuous for any choices of gluing profiles.

We begin by comparing the gluing constructions for different choices of gluing profile; it is easy to see that different gluing parameters will produce identical glued Riemann surfaces 
	$\Sigma^\varphi_a = \Sigma^\phi_b$
precisely when $\varphi (r_a) = \phi (r_b)$ and $\theta_a = \theta_b$.

Using this fact, we may write a local expression for the identity at an arbitrary point $[\Sigma,j,M,D] \in \dmspace^\varphi_{g,k}$ in terms of good uniformizing families centered at a stable noded Riemann surface representative $(\Sigma,j,M,D)$ as follows:
	\begin{align*}
	\hat{\id} : \{\Sigma^\varphi_a,j(a,v),M_a,D_a \}_{(a,v)\in U}
				  &\to \{\Sigma^\phi_b,j(b,w), M_b,D_b \}_{(b,w)\in V} 	\\
			(r_a e^{-2\pi i\theta_a},v) &\mapsto (\phi^{-1}\circ \varphi(r_a) e^{-2\pi i\theta_a}, v).
	\end{align*}

\begin{proposition}
	\label{prop:identity-exp-to-log}
	The identity map from the exponential to the logarithmic DM-orbifolds,
	\[
	\id : \dmexp_{g,k} \to \dmlog_{g,k}
	\]
	is smooth.
\end{proposition}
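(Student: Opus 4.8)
The plan is to verify scale smoothness by working in the local models supplied by good uniformizing families, exactly as outlined in the preceding subsection. Fix a point $[\Sigma,j,M,D]\in\dmexp_{g,k}$ and a representative $(\Sigma,j,M,D)$ with a good complex deformation $v\mapsto j(v)$. The local expression for the identity is the map
\[
\hat{\id}:(a,v)\mapsto (b,v),\qquad b = \varphi_{\log}^{-1}\!\bigl(\varphi_{\exp}(r_a)\bigr)\,e^{-2\pi i\theta_a},
\]
where $a = r_a e^{-2\pi i\theta_a}$, and where the deformation parameter $v$ is untouched. Since $v$ is carried identically and the gluing construction on the non-cylindrical part $\Sigma\setminus\cup(D_{x_a}\sqcup D_{y_a})$ does not depend on the gluing profile, the only content is that the reparametrization $a\mapsto b$ of the gluing parameters is $\ssc$-smooth as a map $\hb^{\sharp D}\to\hb^{\sharp D}$ (after shrinking to a suitable $G$-invariant neighborhood $U$ of $0$). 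Because the transition data between different uniformizing families is smooth in both orbifold structures, checking this in one chart per point suffices, and equivariance with respect to the (common) isotropy group $G=\Aut(\Sigma,j,M,D)$ is automatic since $\hat{\id}$ fixes $v$ and acts on each $a$-coordinate only through its modulus and argument, which the $G$-action permutes/rotates compatibly.

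The heart of the matter is therefore the single-variable claim: the function
\[
g(r):=\varphi_{\log}^{-1}\!\bigl(\varphi_{\exp}(r)\bigr)
\]
extends to a smooth function on a neighborhood of $r=0$ in $[0,\tfrac12)$, with $g(0)=0$, so that $a\mapsto \tfrac{g(r_a)}{r_a}\,a$ (set equal to $0$ at $a=0$) is smooth near $0\in\C$. Computing, $\varphi_{\exp}(r)=e^{1/r}-e$ and $\varphi_{\log}^{-1}(R)=e^{-2\pi R}$, so
\[
g(r) = \exp\!\bigl(-2\pi(e^{1/r}-e)\bigr) = e^{2\pi e}\,\exp\!\bigl(-2\pi e^{1/r}\bigr).
\]
As $r\to 0^+$, $e^{1/r}\to\infty$ superexponentially, so $g(r)\to 0$ and in fact $g$ together with all its derivatives tends to $0$ faster than any power of $r$; hence $g$ extends by $g(0)=0$ to a $C^\infty$ function on $[0,\tfrac12)$ which is flat to infinite order at $0$. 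Consequently $r\mapsto g(r)/r$ likewise extends smoothly (and flatly) across $0$, and the map on the gluing parameter,
\[
a\longmapsto \frac{g(r_a)}{r_a}\,a,
\]
is smooth on a neighborhood of $0$ in $\C$, being the product of $a$ with the smooth radial function $g(r_a)/r_a$. (In the $a=0$ case the glued surface is the original nodal surface and $\hat{\id}$ is visibly the identity.)

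From here scale smoothness of $\hat{\id}$ on the full chart $\hb^{\sharp D}\times V$ follows: it is a product of the smooth gluing-parameter reparametrization in each of the $\sharp D$ factors with the identity on $V$, and smoothness on a finite-dimensional domain coincides with $\ssc$-smoothness (the identity being an $\ssc$-retraction in the finite-dimensional setting, as noted above). Since $[\Sigma,j,M,D]$ was arbitrary and the local expressions are compatible with the transition maps, the generalized map $\id:\dmexp_{g,k}\to\dmlog_{g,k}$ is $\ssc$-smooth, i.e.\ smooth. The main obstacle is purely the flatness computation for $g(r)=e^{2\pi e}\exp(-2\pi e^{1/r})$ at $r=0$; once infinite-order flatness is established, everything else is routine bookkeeping with the local models. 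I note in passing that the reverse map $\id:\dmlog_{g,k}\to\dmexp_{g,k}$ would require $\varphi_{\exp}^{-1}\circ\varphi_{\log}$, i.e.\ $r\mapsto 1/\log(e - \tfrac{1}{2\pi}\log r)$-type behavior, which is \emph{not} smooth at $r=0$ — explaining why the proposition is one-directional.
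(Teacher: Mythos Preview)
Your proof is correct and follows the same route as the paper: reduce to the gluing-parameter reparametrization and use the infinite-order flatness of $g(r)=e^{-2\pi(e^{1/r}-e)}$ at $r=0$. One small point of phrasing: smoothness of $r\mapsto g(r)/r$ on $[0,\tfrac12)$ alone does not yield smoothness of $a\mapsto g(\lvert a\rvert)/\lvert a\rvert$ on $\C$ (since $\lvert a\rvert$ is not smooth at $0$), but the infinite-order flatness you have already established does---the paper makes this step explicit by writing out the form $\tfrac{C\,e^{-2\pi(e^{1/\lvert x\rvert}-e)+a/\lvert x\rvert}}{x^b}$ of the iterated partial derivatives and observing each such term tends to $0$.
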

\begin{proof}
	We assert that all local expressions for the identity map are smooth. 	This reduces to the claim that the function
		\[
		\C \to \C, \qquad r_a e^{-2\pi i\theta_a} \mapsto \varphi^{-1}_{\log}\circ \varphi_{\exp}(r_a) e^{-2 \pi i\theta_a}
		\]
	is smooth.
	The inverse of $\varphi_{\log}(r) = -\tfrac{1}{2\pi} \log (r)$ is given by $\varphi^{-1}_{\log} (R) = e^{-2\pi R}$, hence $\varphi^{-1}_{\log}\circ \varphi_{\exp}(r_a) = e^{-2\pi (e^{1/r_a}-e)}$.
	We may write this function in rectangular coordinates as
		\[
		\R^2 \to \R^2, \qquad (x,y) \mapsto \frac{\varphi^{-1}_{\log}\circ \varphi_{\exp}(\sqrt{x^2+y^2})}{\sqrt{x^2+y^2}} (x,y).
		\]
	This function is smooth, except possibly at $(0,0)$.
	Considering partial derivatives of the coordinate functions, one may see that it is enough to show that the coordinate function 
		\[
		\R \to \R, \qquad x \mapsto \sign (x) e^{-2\pi \left(e^{{1}/{\abs{x}}}-e\right)} 
		\]
	is smooth at $0$.
	The derivatives of this function may be computed using the chain rule; they will involve sums and multiples of terms of the form
		\[
		\frac{C  e^{-2\pi \left(e^{1/\abs{x}} -e\right) + a/\abs{x}}}{x^b}
		\]
	for a constant $C$ and positive integers $a,b$; such terms have limit $0$ as $x\to 0$.	
\end{proof}

\begin{remark}
	Consider now the identity map from the logarithmic to the exponential DM-orbifolds,
		\[\id : \dmlog_{g,k} \to \dmexp_{g,k}.\]
	As we have already observed this identity map is continuous; however in general it is not differentiable---the only the exception is the trivial case $(g,k)=(0,3)$, in which case $\dmspace_{0,3} = \{\pt\}$.
	As above, the gluing parameters transform according to the function
		\[
		\C \to \C, \qquad r_a e^{-2\pi i\theta_a} \mapsto \varphi^{-1}_{\exp}\circ \varphi_{\log}(r_a) e^{-2 \pi i\theta_a}.
		\]
	We may further write a coordinate function
		\[
		\R \to \R, \qquad x \mapsto \sign (x) \frac{1}{\log ( -\tfrac{1}{2\pi} \log (\abs{x}) + e )}.
		\]
	The limits as $x\to 0$ of the first derivatives of this function do not exist.
\end{remark}


The \textbf{evaluation map at the $i$th-marked point} is defined on the level of underlying sets by 
	\begin{align*}
		ev_i : 	\cZ_{A,g,k} &\to Q \\
				[\Sigma,j,M,D,u] &\mapsto u(z_i).
	\end{align*}
It is well-defined regardless of whether there is a puncture at the marked point $z_i$ or not.

\begin{proposition}
	\label{prop:evaluation-smooth-submersion}
	The evaluation map is a $\ssc$-smooth submersion.
\end{proposition}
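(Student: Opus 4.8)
The plan is to reduce the statement to a computation in a single good uniformizing family of stable maps, in which the evaluation map becomes, up to composition with an exponential chart, the pointwise evaluation of a section; scale smoothness and surjectivity of the linearization can then be read off directly. To begin, $ev_i$ is a genuine generalized map: if $\phi : (\Sigma,j,M,D,u) \to (\Sigma',j',M',D',u')$ is a morphism of stable maps then $u = u' \circ \phi$ and $\phi$ preserves the ordering of the marked points, so $\phi(z_i) = z'_i$ and hence $u(z_i) = u'(z'_i)$; thus $ev_i$ descends to equivalence classes and is compatible with all morphisms, and viewing $Q$ as an ep-groupoid with only identity morphisms we obtain a generalized map inducing the trivial homomorphism on isotropy groups. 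By the reduction described in \S\ref{sec:maps-between-orbifolds-polyfolds} it therefore suffices to verify $\ssc$-smoothness and the submersion property on local uniformizers.

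Fix a stable map $(\Sigma,j,M,D,u)$, a stabilization $S$, and the associated good uniformizing family
\[
(a,v,\eta) \longmapsto (\Sigma_a, j(a,v), M_a, D_a, \oplus_a \exp_u \eta), \qquad (a,v,\eta) \in \cV = \pi(\hb^{\sharp D} \times V \times U)
\]
of Definition~\ref{def:good-uniformizing-family-of-stable-maps}. Since the disk-like neighborhoods at the special points were chosen pairwise disjoint, the marked point $z_i$, together with a fixed neighborhood of it, lies in $\Sigma \setminus \cup_{\{x_a,y_a\}\in D}(D_{x_a} \sqcup D_{y_a})$ for every gluing parameter $a$; moreover $z_i$ is disjoint from $S$, and in the family the marked points are precisely the (unmoved) former marked points. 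Hence near $z_i$ the glued map equals $\exp_u \eta$, the $\ssc$-retraction $\pi_a$ acts as the identity there (the gluing and anti-gluing construction being localized to the node neighborhoods), and the local expression for $ev_i$ precomposed with $\pi$ is simply
\[
(a,v,\eta) \longmapsto (\oplus_a \exp_u \eta)(z_i) = \exp_{u(z_i)}\!\big(\eta(z_i)\big),
\]
a map independent of $a$ and $v$ and defined on all of $\hb^{\sharp D} \times V \times U$.

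It remains to analyze this expression. For $\ssc$-smoothness: pointwise evaluation $\eta \mapsto \eta(z_i)$ is linear and bounded on every scale level of the section space --- via the Sobolev embedding $H^{3+m}_{\text{loc}} \hookrightarrow C^0$ near $z_i$ when $z_i$ is an ordinary point, and via the inclusion $H^{3+m,\delta_m}(D_{z_i}\setminus\{z_i\}) \hookrightarrow C^0$ when $z_i$ is a puncture --- and a level-wise bounded linear map is automatically $\ssc$-smooth; composing with the smooth finite-dimensional map $\exp_{u(z_i)}$ on a neighborhood of $0$ preserves $\ssc$-smoothness by the scale chain rule. Hence $ev_i$ is $\ssc$-smooth. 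For the submersion property, since every point of $\cZ_{A,g,k}$ is the center of such a family it suffices to compute the linearization at the center $\eta = 0$, namely $\delta\eta \mapsto (d\exp_{u(z_i)})_0\,\delta\eta(z_i) = \delta\eta(z_i)$. Given $w \in T_{u(z_i)}Q$, pick a smooth section of $u^*TQ$ supported in a small disk about $z_i$ that avoids $M\cup\abs{D}$, $S$, and all neck regions, with value $w$ at $z_i$; this section lies in $H^{3,\delta_0}_c(u^*TQ)$, satisfies the linear constraints at the stabilizing points and the matching conditions at the nodes, and --- being supported away from the nodes --- satisfies $\pi_a(t\,\delta\eta) = t\,\delta\eta$, so it is a tangent vector to $\cV$. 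Therefore $D(ev_i)$ is surjective everywhere, and since $Q$ is finite-dimensional its kernel is a complemented $\ssc$-subspace; this is the condition required for $ev_i$ to be a $\ssc$-smooth submersion (see \cite{HWZbook}).

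The step I expect to require the most care is the bookkeeping around the $\ssc$-retraction $\pi$: one must be certain that $z_i$ and a neighborhood of it remain outside the necks $Z_a$ and outside the node neighborhoods $D_{x_a}\sqcup D_{y_a}$ for all gluing parameters, so that $\pi_a$ is the identity there and the local expression is genuinely $a$-independent. This follows from the disjointness of the disk-like neighborhoods at the special points, but it is what turns the scale-calculus content of the argument into a triviality; the remaining ingredients (Sobolev embedding, the scale chain rule, and a bump-function construction of sections) are routine.
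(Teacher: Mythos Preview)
Your proof is correct and follows essentially the same approach as the paper: reduce to a local uniformizer, observe that near $z_i$ the glued map is $\exp_u\eta$ so that $ev_i$ becomes pointwise evaluation (composed with $\exp$), and read off $\ssc$-smoothness and surjectivity of the linearization. The paper is terser --- it picks a chart on $Q$ in which the metric is Euclidean so that $\exp_u\eta(z_i)=\eta(z_i)$ is literally linear --- while you spell out the Sobolev embedding, the bump-function construction for surjectivity, and the complementedness of the kernel, all of which are details the paper leaves implicit.
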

\begin{proof}
	We check the $\ssc$-smoothness at an arbitrary stable curve $[\Sigma,j,M,D,u]\in \cZ_{A,g,k}$.
	Let $(a,v,\eta)\in\cU$ be a local uniformizer centered at a representative $(\Sigma,j,M,D,u)$.
	Choose a chart on $Q$ which identifies $u(z_i)\in Q$ with $0\in \R^{2n}$ and assume moreover this chart is chosen such that the given Riemannian metric $g$ on $Q$ is identifiable with the Euclidean metric on $\R^{2n}$.
	The local expression for a retraction composed with the evaluation map is
	\[
	(a,v,\eta) \mapsto (a,v,\pi_a (\eta)) \mapsto \exp_u\eta (z_i) = \eta(z_i).
	\]
	which is $\ssc$-smooth.
	The linearization of this expression also clearly spans the tangent space $T_{u(z_i)} Q$, which proves the claim that the evaluation map is submersive.
\end{proof}


The \textbf{projection map}, 
	\begin{align*}
	\pi:\cZ_{A,g,k}		& \rightarrow \dmspace_{g,k} \\
	[\Sigma,j,M,D,u]	& \mapsto  [(\Sigma,j,M,D)_{stab}],
	\end{align*}
is defined on the level of underlying sets by taking a stable curve and, after removing unstable components, associating the underlying stable domain.
	
We describe this process on the level of the underlying sets as follows.  Consider a point $[\Sigma,j,M,D,u]\in \cZ_{A,g,k}$, and let $(\Sigma,j,M,D,u)$ be a stable map representative.  
First forget the map $u$ and consider, if it exists, a component $C\subset \Sigma$ satisfying $2g_C+\sharp (M\cup |D|)_C<3$.
Then we have the following cases. 
	\begin{enumerate}
		\item $C$ is a sphere without marked points and with one nodal point, say $x$. Then  we remove the sphere, the nodal point $x$ and its partner $y$, where $\{x,y\}\in D$.
		\item $C$ is a sphere with two nodal points. In this case there are two  nodal pairs $\{x,y\}$ and $\{x',y'\}$, where $x$ and $x'$ lie on the sphere.  We remove the sphere and the two nodal pairs  but add the nodal pair $\{y,y'\}$.
		\item $C$ is a sphere with one node and one marked point. In that case we  remove the sphere but replace the corresponding nodal point  on the other component by the marked point.
	\end{enumerate}
Once we have removed all unstable components in this manner, we end up with a stable noded marked Riemann surface we denote as $(\Sigma,j,M,D)_{stab}$.

\begin{proposition}
	\label{prop:projection-map-smooth}
	The projection map 
	\begin{align*}
	\pi: \cZ_{A,g,k}	& \to \dmlog_{g,k}\\
	[\Sigma,j,M,D,u] 		& \mapsto [(\Sigma,j,M,D)_{\text{stab}}]
	\end{align*}
	defined between the GW-polyfold and the logarithmic DM-orbifold is $\ssc$-smooth.
\end{proposition}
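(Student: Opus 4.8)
The plan is to establish $\ssc$-smoothness of $\pi$ locally, by exhibiting each local expression of $\pi$ (in terms of good uniformizing families) as a composition of $\ssc$-smooth maps; since $\dmlog_{g,k}$ is finite-dimensional this amounts to ordinary smoothness of the composition with the source $\ssc$-retraction. Fix a stable curve $[\Sigma,j,M,D,u]\in\cZ_{A,g,k}$, a stable map representative, a stabilization $S$, and local $\ssc$-coordinates $(a,v,\eta)\in\cV$ coming from a good uniformizing family of stable maps as in Definition~\ref{def:good-uniformizing-family-of-stable-maps}. By construction (see \S~\ref{subsubsec:families-stable-maps-in-general}) this family is built on top of a good uniformizing family $(a,v)\mapsto(\Sigma_a,j(a,v),(M\cup S)_a,D_a)$ of the stabilized domain $(\Sigma,j,M\cup S,D)$ --- a stable noded Riemann surface with $k+\sharp S$ marked points --- computed with the exponential gluing profile. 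Hence, locally near $[\Sigma,j,M,D,u]$, the projection factors as
\[
\cZ_{A,g,k} \;\longrightarrow\; \dmexp_{g,\,k+\sharp S} \;\xrightarrow{\ \id\ }\; \dmlog_{g,\,k+\sharp S} \;\xrightarrow{\ ft_S\ }\; \dmlog_{g,k},
\]
where the first map (only locally defined, as it depends on $S$) sends a nearby stable curve to its domain together with its tracked stabilizing points, and is in these good uniformizing families simply $(a,v,\eta)\mapsto(a,v)$; the middle map is the identity between the exponential and logarithmic Deligne--Mumford orbifolds; and $ft_S$ is the forgetful morphism deleting the $\sharp S$ stabilizing points. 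We treat the stabilizing points as unordered, so that $ft_S$ is the iterate of the classical marked-point forgetting maps; it sends $[\Sigma,j,M\cup S,D]$ to $[(\Sigma,j,M,D)_\text{stab}]$, because the classical forgetful morphism automatically contracts the components that become unstable --- which is exactly the stabilization procedure described above.

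Each factor is $\ssc$-smooth. The first forgets an $\ssc$-coordinate: after precomposition with the source $\ssc$-retraction $(a,v,\eta)\mapsto(a,v,\pi_a(\eta))$ it depends only on the finite-dimensional parameter $(a,v)$, read through the chosen good uniformizing family. The identity $\id\colon\dmexp_{g,k+\sharp S}\to\dmlog_{g,k+\sharp S}$ is $\ssc$-smooth by Proposition~\ref{prop:identity-exp-to-log}: in these coordinates it is $(a,v)\mapsto(b,v)$ with $b=\varphi_{\log}^{-1}\circ\varphi_{\exp}(r_a)\,e^{-2\pi i\theta_a}$, and the smoothness of $a\mapsto b$ --- including at $a=0$ --- is exactly the estimate carried out in the proof of that proposition. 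Finally, $ft_S\colon\dmlog_{g,k+\sharp S}\to\dmlog_{g,k}$ is holomorphic, being an iterate of the classical marked-point forgetting maps of Deligne--Mumford theory (which are differentiable precisely when the logarithmic gluing profile is used), hence $\ssc$-smooth. Composing, every local expression of $\pi$ (precomposed with the source retraction) is a smooth map $\hb^{\sharp D}\times V\times U\to\dmlog_{g,k}$; moreover these local expressions are equivariant with respect to the isotropy homomorphism $\Aut(\Sigma,j,M,D,u)\to\Aut((\Sigma,j,M,D)_\text{stab})$ induced by forgetting $u$ and contracting unstable components. Thus the local expressions assemble into a generalized map realizing $\pi$, and $\pi$ is $\ssc$-smooth.

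The main obstacle is the bookkeeping in the first paragraph: making precise that forgetting the map and section from a good uniformizing family of stable maps leaves behind an honest good uniformizing family of the stabilized domain $(\Sigma,j,M\cup S,D)$ in $\dmexp_{g,k+\sharp S}$, and that the subsequent deletion of the stabilizing points $S$ is captured verbatim by the classical forgetful morphism. This is also where the choice of target orbifold becomes essential: the forgetful morphism is \emph{not} $\ssc$-smooth for the exponential gluing profile, so one cannot land in $\dmexp_{g,k}$; routing through $\dmlog_{g,k+\sharp S}$, where the forgetful morphism is holomorphic, and invoking Proposition~\ref{prop:identity-exp-to-log} to pass from the exponential to the logarithmic structure, is precisely what makes the argument work.
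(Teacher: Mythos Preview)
Your proposal is correct and follows essentially the same approach as the paper: both factor the local expression of $\pi$ as the forgetful map $(a,v,\eta)\mapsto(a,v)$ into $\dmexp_{g,k+\sharp S}$, followed by the identity $\dmexp\to\dmlog$ (Proposition~\ref{prop:identity-exp-to-log}), followed by iterated marked-point forgetting on the logarithmic side (Proposition~\ref{prop:ftk-log-dmspace}). Your write-up adds a little extra care regarding equivariance and the assembly into a generalized map, and your closing paragraph correctly identifies why routing through the logarithmic structure is essential.
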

\begin{proof}
	We check scale smoothness at an arbitrary stable curve $[\Sigma,j,M,D,u]\in \cZ_{A,g,k}$.  
	Consider a good uniformizing family of stable maps centered at a stable map representative:
		\[(a,v,\eta) \to (\Sigma_a,j(a,v),M_a,D_a,\oplus_a \exp_u \eta), \qquad (a,v,\eta)\in \cU\]
	Recall that the construction of a good uniformizing family of stable maps requires we choose a stabilization $S\subset \Sigma$ to the underlying Riemann surface in order to make it stable.
	Thus, we may also consider the following a good uniformizing family of stable Riemann surfaces:
		\[(a,v) \mapsto (\Sigma_a,j(a,v), (M\cup S)_a, D_a), \qquad (a,v)\in U\]
	Crucially, we use the exponential gluing profile for both of these good uniformizing families.
	
	We may write a local expression for the projection map as a composition in the following way.
	First, forget the stable map from the good uniformizing family,
	\begin{align*}
	\{(\Sigma_a,j(a,v),M_a,D_a,\oplus_a \exp_u \eta)\}_{(a,v,\eta)\in \cU}
		& \to \{(\Sigma_a,j(a,v), (M\cup S)_a, D_a)\}_{(a,v)\in U} \\
	(a,v,\eta) & \mapsto (a,v) 
	\end{align*}
	(note also that this expression is only defined locally due to the stabilization).
	Then switch from the exponential to the logarithmic gluing profile,
	\begin{align*}
	\{(\Sigma_a,j(a,v), (M\cup S)_a, D_a)\}_{(a,v)\in U} 
		& \xrightarrow{\hat{\id}} \{(\Sigma_b,j(b,w), (M\cup S)_b, D_b)\}_{(b,w)\in V} \\
	(r_a e^{-2\pi i\theta_a},v) & \mapsto (\varphi_{\log}^{-1} \circ \varphi_{\exp} (r_a) e^{-2 \pi i\theta_a}, v).
	\end{align*}
	We may then forget all of the stabilizing points in $S$, which we may write as a composition of maps which each forgets a single stabilizing point, by composing with the following sequence of maps:
	\[
	\dmlog_{g,k+\sharp S} \xrightarrow{ft} \dmlog_{g,k+\sharp S-1} \xrightarrow{ft} \cdots \xrightarrow{ft} \dmlog_{g,k}
	\]
	By Proposition~\ref{prop:ftk-log-dmspace} the forgetting maps on the logarithmic DM-orbifolds are smooth. Hence, the local expression for $\pi$ is a composition of $\ssc$-smooth maps, and is therefore smooth.

\end{proof}


Fix a permutation $\sigma \in S_k$ in the symmetric group, i.e., a bijection $\sigma: \{1,\ldots, k\}\to \{1,\ldots,k\}$.
The \textbf{permutation map} is defined on the underlying set of a DM-space
		\[
		\sigma: \dmspace_{g,k} \to \dmspace_{g,k}, \qquad [\Sigma,j,M,D]  \mapsto [\Sigma,j,M^\sigma,D]
		\]
or on the underlying set of a GW-polyfold 
		\[
		\sigma: \cZ_{A,g,k} \to \cZ_{A,g,k}, \qquad [\Sigma,j,M,D,u]  \mapsto [\Sigma,j,M^\sigma,D,u]
		\]
by relabeling the marked points as follows: with $M = \{z_1,\ldots,z_k\}$ we define $M^\sigma := \{z'_1,\ldots,z'_k\}$ by the relabeling $z'_i:= z_{\sigma(i)}$.

\begin{proposition}\label{permutation-map-sc-smooth}
	The permutation map between logarithmic or exponential DM-orbifolds is a diffeomorphism. The permutation map between GW-polyfolds is a $\ssc$-diffeomorphism.
\end{proposition}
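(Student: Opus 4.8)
The plan is to check smoothness (resp.\ $\ssc$-smoothness) of an arbitrary permutation map directly from its local expressions, and then to conclude using the observation that the inverse of a permutation map is again a permutation map. First I would note that $\sigma$ is a bijection of the underlying sets whose inverse is the permutation map associated to $\sigma^{-1}\in S_k$, namely $[\Sigma,j,M,D]\mapsto[\Sigma,j,M^{\sigma^{-1}},D]$ on $\dmspace_{g,k}$ and $[\Sigma,j,M,D,u]\mapsto[\Sigma,j,M^{\sigma^{-1}},D,u]$ on $\cZ_{A,g,k}$. Hence it is enough to prove that \emph{every} permutation map is smooth (resp.\ $\ssc$-smooth); applying this both to $\sigma$ and to $\sigma^{-1}$ then shows that $\sigma$ is a diffeomorphism (resp.\ a $\ssc$-diffeomorphism). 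By the discussion of maps between orbifolds/polyfolds, smoothness is checked by producing a smooth (resp.\ $\ssc$-smooth) equivariant lift of $\sigma$ in terms of good uniformizing families centered at a representative and at its image.

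The key point is that the permutation map changes neither the underlying Riemann surface, nor its complex structure, nor the nodal data, nor (in the GW-polyfold case) the map $u$ --- it only relabels the $k$ marked points. Concretely, for the DM-orbifolds I would fix an arbitrary point, a representative $(\Sigma,j,M,D)$, a choice of disk-like neighborhoods at the special points, a good complex deformation $v\mapsto j(v)$ on $V$, and a gluing profile, giving a good uniformizing family $(a,v)\mapsto(\Sigma_a,j(a,v),M_a,D_a)$ on $U$. Since $M$ and $M^\sigma$ are literally the same subset of $\Sigma$, every ingredient of this construction depends on the marked points only as a set and not on their ordering: the requirement $j(v)=j$ on the disk-like neighborhoods, the Kodaira--Spencer condition $[Dj(v)]\colon H^1(\Sigma,j,M,D)\to H^1(\Sigma,j(v),M,D)$, and the natural action $\phi\ast(a,v)$ are all unchanged after replacing $M$ by $M^\sigma$; moreover $\Aut(\Sigma,j,M,D)=\Aut(\Sigma,j,M^\sigma,D)$ since a morphism preserving the ordering of the marked points fixes each of them individually. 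Consequently one may take the good uniformizing family centered at $(\Sigma,j,M^\sigma,D)$ to be $(a,v)\mapsto(\Sigma_a,j(a,v),(M_a)^\sigma,D_a)$ on the \emph{same} parameter set $U$, and the induced lift $\hat{\sigma}\colon U\to U$ is then the identity map, which is smooth, with identity induced homomorphism of isotropy groups. As the gluing profile plays no role, this argument works verbatim for both $\dmlog_{g,k}$ and $\dmexp_{g,k}$.

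For the GW-polyfolds the argument is the same: starting from a representative $(\Sigma,j,M,D,u)$ I would choose a stabilization $S$, linear constraints $H_{u(z)}$ at the stabilizing points, the constrained section space $E_S$, and the $\ssc$-retraction with image $\cV$; each of these depends on $M$ only as a set and on $u$, hence is equally a valid choice for $(\Sigma,j,M^\sigma,D,u)$, so the lift $\hat{\sigma}\colon\cV\to\cV$ is once more the identity, hence $\ssc$-smooth, with identity homomorphism on isotropy groups. (In the language of the excerpt, $\sigma$ is the generalized map induced by the evident ep-groupoid functor which relabels marked points on objects and acts as the identity on morphisms; this functor is in fact an isomorphism of ep-groupoids, so $\sigma$ is visibly invertible.)

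The only thing that really requires care is the bookkeeping in the two preceding paragraphs: one must verify that none of the data entering a good uniformizing family --- the good complex deformation and its Kodaira--Spencer isomorphism, the isotropy group action, and, in the GW case, the stabilization, the linear constraints, and the $\ssc$-retraction --- is sensitive to the labeling of the marked points, so that the source and target families can be chosen to coincide after relabeling. Once this is established, every local expression is literally an identity map and there is nothing left to compute; combining smoothness of $\sigma$ and of $\sigma^{-1}$ yields the claim.
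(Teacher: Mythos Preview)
Your proposal is correct and takes essentially the same approach as the paper: the paper's proof is a two-sentence version of your argument, observing that the good uniformizing family (including the stabilization data in the GW case) may be chosen identically without regard to the numbering of the marked points, so that the local expression for $\sigma$ is the identity. Your write-up simply spells out in detail why none of the ingredients of the good uniformizing family depend on the ordering, and explicitly reduces the diffeomorphism claim to smoothness of both $\sigma$ and $\sigma^{-1}$.
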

\begin{proof}
	In either case, the choice of good uniformizing family (along with data such as the stabilization) may be made identically, without regard for the specific numbering of the marked points. The permutation map in such local coordinates will be the identity.
\end{proof}


\subsection{The \texorpdfstring{$k$th-marked}{kth-marked} point forgetting map on the logarithmic Deligne--Mumford orbifolds}
	\label{subsec:kth-marked-point-forgetting-map-log-dm}
Suppose that $2g+k >3$.  We define the \textbf{$k$th-marked point forgetting map}
	\[
	ft_k :\dmspace_{g,k} \to \dmspace_{g,k-1}
	\]
on the underlying sets of the DM-spaces as follows.  Let $[\Sigma,j,M,D]\in \dmspace_{g,k}$, and let $(\Sigma,j,M,D)$ be a representative given by a stable noded Riemann surface.  To define $ft_k ([\Sigma,j,M,D])$ we distinguish three cases for the component $C_k$ which contains the $k$th-marked point, $z_k\in C_k$.
	\begin{enumerate}
		\item \label{case-1}
		The component $C_k\setminus \{z_k\}$ satisfies the DM-stability condition. It follows that  $2 g_{C_k} + \sharp (M \cup \abs{D})_{C_k}>3$ and we define 	
			\[
			ft_k ([\Sigma,j,M,D]) = [\Sigma,j,M\setminus\{z_k\}, D].
			\]
		\item The component $C_k\setminus \{z_k\} $ does not satisfy the DM-stability condition.  It follows that $2 g_{C_k} + \sharp (M \cup \abs{D})_{C_k}=3$.  From $2g+k>3$ and the assumption that the set $\Sigma / \sim$ (where $x_a\sim y_a$ for nodal pairs $\{x_a,y_a\}\in D$) is connected, we can deduce
		\[
		2 g_{C_k} = 0, \qquad \sharp (M \cup \abs{D})_{C_k}=3.
		\]
		There are now two possibilities.
		\begin{enumerate}
			\item[(2a)] \label{case-2a}
			$z_k\in M_{C_k}$, and two nodal points $z_a, z_b \in \abs{D}_{C_k}$
			Then $ft_k ([\Sigma,j,M,D])$ is defined as the equivalence class of the stable noded Riemann surface obtained from $(\Sigma,j,M,D)$ as follows.  
			Delete $z_k$, delete the component $C_k$, and delete the two nodal pairs.  We add a new nodal pair $\{x_a,y_b\}$ given by two points of the former nodal pairs.
			\item[(2b)] \label{case-2b}
			$z_i, z_k\in M_{C_k}$, and one nodal point $z_a \in \abs{D}_{C_k}$
			Then $ft_k ([\Sigma,j,M,D])$ is defined as the equivalence class of the stable noded Riemann surface obtained from $(\Sigma,j,M,D)$ as follows.
			Delete $z_k$, delete the component $C_k$, and delete the nodal pair.  We add a new marked point $z_i$, given by the former nodal point which did not lie on $C_k$.
		\end{enumerate}
	\end{enumerate}


\begin{remark}[Special cases for the $k$th-marked point forgetting map]
	\label{rmk:special-cases-forgetting}
	By definition, if $2g+(k-1) < 3$ then $\dmspace_{g,k-1}=\emptyset$; hence, one can also consider the trivial map $ft_k : \dmspace_{g,k} \to \emptyset.$
	
	By considering possible stable configurations, one can see that $ft_k$ is given by case 1 whenever $[\Sigma,j,M,D]$ lies on the top dimensional stratum of the DM-space.
	Additionally, there will exist a point $[\Sigma,j,M,D]$ with the configuration specified by case 2a whenever 
	\[
	(g=0, k\geq 5), \quad (g=1,k=1), \quad (g\geq 2, k\geq 1).
	\]
	Finally, there will exist a point $[\Sigma,j,M,D]$ with the configuration specified by case 2b whenever 
	\[
	(g=0, k\geq 4), \qquad (g\geq 1, k\geq 2).
	\]
\end{remark}

\begin{remark}[The universal curve]
	The preimage of a point $[\Sigma,j,M,D] \in \dmspace_{g,k-1}$ via $ft_k$ consists of the Riemann surface $\Sigma$ with nodes identified, i.e.,
	\[
	ft_k^{-1} ([\Sigma,j,M,D]) \simeq \Sigma / \sim, \quad \text{where $x_a\sim y_a$ for nodal pairs $\{x_a,y_a\}\in D$}.
	\]
	According to such a description $\dmspace_{g,k}$ is sometimes called the \textbf{universal curve} over $\dmspace_{g,k-1}$.  We note that this is not a fiber bundle (the ``fibers'' are not constant and can vary locally), nor is it a fibration (the ``fibers'' are not homotopy equivalent).  However, considered as homology classes, the fibers are homologous.
\end{remark}

\subsubsection{Local expressions for the $k$th-marked point forgetting map on the DM-orbifolds.}
	\label{subsubsec:local-expression-forgetting-map-on-dm-orbifold}
We now write down local expressions for $ft_k$ in the coordinates given by the (alternative) good uniformizing families. We work first with regards to an arbitrary gluing profile on the DM-orbifolds to obtain general expressions.


Consider a stable Riemann surface $[\Sigma,j,M,D]\in \dmspace_{g,k}$.
We may write a local expression for $ft_k$ for each of the three cases as follows.

\noindent\textit{Case 1.}
By assumption $C_k \setminus \{z_k\}$ is stable.
Hence we can take an alternative good uniformizing family centered at a representative $(\Sigma,j,M,D)$: $(a,v,y)\mapsto (\Sigma_a, j(a,v), M_{(a,y)}, D_a )$.
Observe that forgetting the parametrization of the $k$th-marked point gives a good uniformizing family centered at $(\Sigma,j,M\setminus\{z_k\},D)$: $(a,v)\mapsto (\Sigma_a, j(a,v), (M\setminus\{z_k\})_a, D_a )$.

A local expression for $\hat{ft}_k$ in terms of these good uniformizing families is given by
	\begin{align*}
	\hat{ft}_k : \{(\Sigma_a,j(a,v),M_{(a,y)}, D_a)\}_{(a,v,y)\in U}
		& \to \{(\Sigma_b, j(b,w), (M\setminus\{z_k\})_b, D_b)\}_{(b,w)\in V} \\
		(a,v,y) & \mapsto (a,v).
	\end{align*}

\noindent\textit{Case 2a.}
For notational simplicity, let us assume that $\{x_a,y_a\}$ and $\{x_b,y_b\}$ are the only nodal pairs on the stable Riemann surface, i.e., we consider $[\Sigma, \allowbreak j, \allowbreak M, \allowbreak \{\{x_a,y_a\}, \allowbreak  \{x_b,y_b\}\}]$.
Hence after forgetting the $k$th-marked point there is only one nodal pair, which we denote as $\{x_c,y_c\}$.

There is a unique biholomorphism 
	\[C_k \setminus \{y_a,x_b\} \simeq \R\times S^1\]
which sends the marked point $z_k$ to the point $(0,0)$, the puncture $y_a$ to $-\infty$, and the puncture $x_b$ to $+\infty$.
We may choose the small disk structure at $y_a$ such that there is a biholomorphism between $D_{y_a}\setminus \{y_a\}$ and $\R^-\times S^1\subset \R\times S^1$.  Likewise, we choose the small disk structure at $x_b$ such that there is a biholomorphism between $D_{x_b}\setminus \{x_b\}$ and $\R^+\times S^1$.

We replace $D_{x_a}\sqcup C_k \sqcup D_{y_b}$ with the glued cylinder
	\[
	Z_{a,b} :=
	\begin{cases}
	[0,R_a+R_b]\times S^1 & a\neq 0, b\neq 0, \\
	\R^+\times S^1 \sqcup \R^-\times S^1 & a\neq 0,b=0 \text{ or } a=0, b\neq 0,\\
	\R^+\times S^1 \sqcup \R\times S^1 \sqcup \R^-\times S^1 & a=0, b=0.
	\end{cases}
	\]
We thus obtain the glued Riemann surface 
	\[\Sigma_{a,b}: = \Sigma \setminus (D_{x_a}\sqcup C_k \sqcup D_{y_b}) \sqcup Z_{a,b}.\]
The movement of the marked point $z_k$ is parametrized as follows:
\begin{itemize}
	\item For $a=b=0$, the marked point $z_k$ is given by $(0,0)$ on the component $\R \times S^1$.
	\item For $a\neq 0$ and $b=0$, the marked point $z_k$ is given by $(R_a,\theta_a)\in \R^+\times S^1$.  Analogously for $a=0$ and $b\neq 0$, the marked point is given by $(R_b,\theta_b)\in \R^-\times S^1$.
	\item For $a\neq 0$ and $b\neq 0$, the marked point $z_k$ is given by $(R_a,\theta_a)$ in the $(s^+,t^+)$ coordinates and $(-R_b,-\theta_b)$ in the $(s^-,t^-)$ coordinates.
\end{itemize}

Consider a good uniformizing family centered at $(\Sigma,j,M,\{\{x_a,y_a\}, \{x_b,y_b\}\})$:
	\[
	(a,b,v)\mapsto (\Sigma_{a,b},j(a,b,v),M_{a,b},\{\{x_a,y_a\}, \{x_b,y_b\} \}_{a,b} ), \qquad (a,b,v) \in \hb\times \hb \times V.
	\]
We may use the same good complex deformation after forgetting the $k$th-marked point, hence we may also consider a good uniformizing family centered at  $\hat{ft}_k (\Sigma,j,M,\{\{x_a,y_a\}, \{x_b,y_b\}\}) = (\Sigma\setminus C_k, j, M\setminus \{z_k\}, \{\{x_c,y_c\}\})$:
	\[
	(c,w)\mapsto ((\Sigma\setminus C_k)_c, j(c,w), (M\setminus \{z_k\})_c, \{\{x_c,y_c\}\}_c), \qquad (c,w) \in \hb \times V
	\]
Comparing the families of Riemann surfaces, we see
	\begin{align*}
	&\hat{ft}_k (\Sigma_{a,b},j(a,b,v),M_{a,b},\{\{x_a,y_a\}, \{x_b,y_b\}\}_{a,b}) \\
	&\qquad = ((\Sigma\setminus C_k)_c, j(c,w), (M\setminus \{z_k\})_c, \{x_c,y_c\}_c)
	\end{align*}
precisely when $w=v$ and when $c= a\ast_{\varphi} b$ where $a\ast_{\varphi} b$ is defined by:
	\[\label{local-expression-for-ftk}
	a\ast_{\varphi} b :=
	\begin{cases}
	\varphi^{-1}(\varphi(r_a)+\varphi(r_b)) e^{-2\pi i (\theta_a+\theta_b)} & \text{when } a\neq 0 \text{ and } b\neq 0, \\
	0 & \text{when } a=0 \text{ or } b=0.
	\end{cases}
	\]

A local expression for $\hat{ft}_k$ is given by:	
	\begin{align*}
	\hat{ft}_k :	&\{(\Sigma_{a,b},j(a,b,v),M_{a,b},\{\{x_a,y_a\}, \{x_b,y_b\}\}_{a,b})\}_{(a,b,v)} \\
		 			&\qquad\to \{((\Sigma\setminus C_k)_c, j(c,w), (M\setminus \{z_k\})_c, \{x_c,y_c\}_c)\}_{(c,w)} \\
					&(a,b,v)  \mapsto (a\ast_{\varphi} b, v).
	\end{align*}

\begin{figure}[ht]
	\centering
	\includegraphics[width=\textwidth]{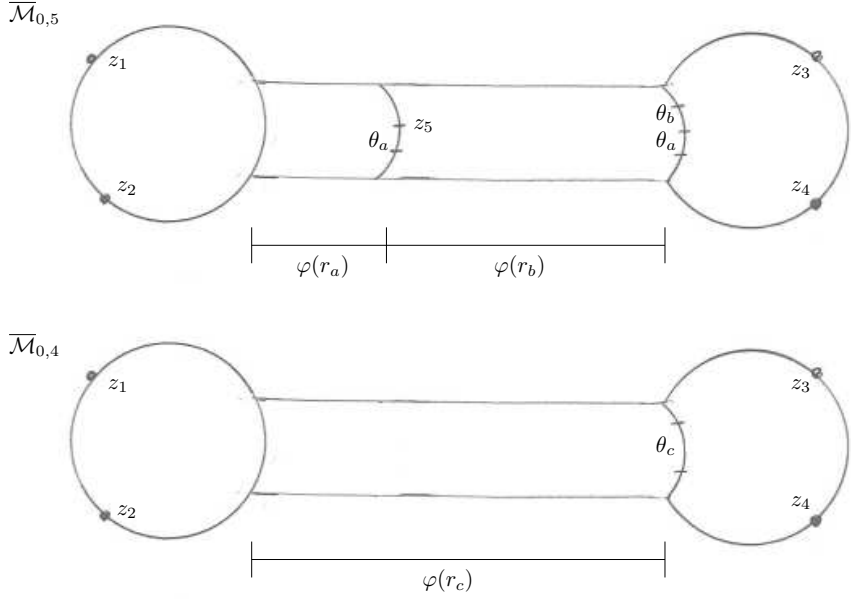}
	\caption{The $5$th-marked point forgetting map}
\end{figure}

\noindent\textit{Case 2b.}
Again for simplicity, let us assume that $\{x_a,y_a\}$ is the only nodal pair on $[\Sigma,j,M,D]$ and hence $ft_k([\Sigma,j,M,D])$ contains no nodal pairs.
Hence we consider assume $[\Sigma,j,M,\{\{x_a,y_a\}\}]$ which maps to $ft_k ([\Sigma,j,M,\{\{x_a,y_a\}\}]) = [\Sigma\setminus C_k, j, M\setminus \{z_k\}, \emptyset]$.

Again, there is a unique biholomorphism between $C_k \setminus \{y_a,z_i\}$ and $\R\times S^1$ which sends the marked point $z_k$ to the point $(0,0)$, the puncture $y_a$ to $-\infty$, and the puncture at the marked point $z_i$ to $+\infty$.
We may choose the small disk structure at $y_a$ such that there is a biholomorphism between $D_{y_a}\setminus \{y_a\}$ and $\R^-\times S^1\subset \R\times S^1$.
For $a\neq 0$ we may write the ``glued'' cylinder as:
	\[
	\Sigma_a = \Sigma \setminus (D_{x_a} \sqcup D_{y_a}) \sqcup Z_a \sqcup \R^+\times S^1 \sqcup \{+\infty\} \simeq \Sigma \setminus C_k.
	\]
The movement of the marked point $z_k$ is parametrized as follows:
\begin{itemize}
	\item For $a=0$, the marked point $z_k$ is given by $(0,0)$ on the component $\R \times S^1$, while the marked point $z_i$ is given by the puncture at $+\infty$ on $\R\times S^1$.
	\item For $a\neq 0$ the marked point $z_k$ is given by $(R_a,\theta_a)\in \R^+\times S^1$, while the marked point $z_i$ is again given by the puncture at $+\infty$ on $\R^+\times S^1$.
\end{itemize}

Consider a good uniformizing family centered at $(\Sigma,j,M,\{\{x_a,y_a\}\})$:
	\[
	(a,v)\mapsto (\Sigma_a,j(a,v),M_a,\{\{x_a,y_a\}\}_a), \qquad (a,v) \in \hb \times V
	\]
We may use the same good complex deformation after forgetting the $k$th-marked point, hence we may also consider a good uniformizing family centered at $\hat{ft}_k (\Sigma,j,M,\{\{x_a,y_a\}\}) = (\Sigma\setminus C_k, j, M\setminus \{z_k\}, \emptyset)$:
	\[
	w\mapsto (\Sigma\setminus C_k, j(w), M\setminus \{z_k\},\emptyset), \qquad w\in V
	\]

Comparing the families of Riemann surfaces, we see
	\[
	\hat{ft}_k (\Sigma_a,j(a,v),M_a,\{\{x_a,y_a\}\}_a) = (\Sigma\setminus C_k, j(w), M\setminus \{z_k\}, \emptyset)
	\]
precisely when $w=v$.

A local expression for $\hat{ft}_k$ is given by:	
	\begin{align*}
	\hat{ft}_k : \{(\Sigma_a,j(a,v),M_a,\{\{x_a,y_a\}\}_a)\}_{(a,v)} &\to \{(\Sigma\setminus C_k, j(w), M\setminus \{z_k\}, \emptyset)\}_{w} \\
	(a,v)  & \mapsto v.
	\end{align*}

\begin{proposition}
	\label{prop:ftk-log-dmspace}
	The $k$th-point forgetting map, considered on the Deligne--Mumford orbifolds constructed with the logarithmic gluing profile,
	$$
	ft_k: \dmlog_{g,k}\to \dmlog_{g,k-1},
	$$
	is a smooth map.  Moreover, it is holomorphic.
\end{proposition}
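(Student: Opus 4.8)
The plan is to verify smoothness and holomorphicity purely locally, using the principle recalled in \S~\ref{sec:maps-between-orbifolds-polyfolds}: a natural map between orbifolds is smooth (resp.\ holomorphic) exactly when each of its local lifts $\hat{ft}_k$, expressed in coordinates given by good uniformizing families, is smooth (resp.\ holomorphic). So it suffices to inspect the three local expressions for $\hat{ft}_k$ already written down in \S~\ref{subsubsec:local-expression-forgetting-map-on-dm-orbifold}, at an arbitrary representative $(\Sigma,j,M,D)$ of a point of $\dmlog_{g,k}$, now specialized to the logarithmic gluing profile $\varphi_{\log}(r)=-\tfrac{1}{2\pi}\log(r)$. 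Recall that by the theorem quoted above the logarithmic profile makes both $\dmlog_{g,k}$ and $\dmlog_{g,k-1}$ complex orbifolds, with the parameters $a$ lying in open subsets of $\C$ and the deformation parameters $v$ (and the point parameter $y$) in open subsets of complex vector spaces, so ``holomorphic'' means exactly that the local lifts are holomorphic maps between open subsets of complex vector spaces.

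In Case~1 the lift is the projection $(a,v,y)\mapsto(a,v)$ from the alternative good uniformizing family, and in Case~2b it is the projection $(a,v)\mapsto v$; each is the restriction of a complex-linear projection between the ambient complex vector spaces, hence holomorphic and in particular smooth, with no dependence on the choice of gluing profile at all. The only genuinely non-trivial case is Case~2a, where (after the displayed normalization to a single destabilizing component $C_k$ meeting exactly the nodal pairs $\{x_a,y_a\},\{x_b,y_b\}$) the lift has the form $(a,b,v)\mapsto(a\ast_\varphi b,\,v)$; the gluing parameters of the nodal pairs not meeting $C_k$, as well as the deformation parameter $v$, are carried over identically, so they contribute only a product with an identity map.

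The key computation is that the logarithmic profile turns the abstract gluing operation $\ast_\varphi$ into ordinary complex multiplication. Indeed $\varphi_{\log}^{-1}(R)=e^{-2\pi R}$, so for $a=r_a e^{-2\pi i\theta_a}$, $b=r_b e^{-2\pi i\theta_b}$ with $a,b\neq 0$,
\[
a\ast_{\varphi_{\log}} b
= \varphi_{\log}^{-1}\bigl(\varphi_{\log}(r_a)+\varphi_{\log}(r_b)\bigr)\,e^{-2\pi i(\theta_a+\theta_b)}
= r_a r_b\, e^{-2\pi i(\theta_a+\theta_b)}
= a\cdot b,
\]
and the remaining clause $a\ast_{\varphi_{\log}} b=0$ whenever $a=0$ or $b=0$ is precisely the value of this formula. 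Thus in Case~2a the local lift is, up to the trivial factors noted above, $(a,b,v)\mapsto(ab,v)$, a holomorphic map between open subsets of complex vector spaces. Hence every local expression for $ft_k$ is holomorphic, and a fortiori smooth, which is the assertion. I expect no real obstacle: all the analytic content is in the observation that $\ast_{\varphi_{\log}}$ is complex multiplication, and the only thing demanding a little care is the bookkeeping that the unaffected gluing parameters and the deformation parameter pass through untouched and that the degenerate special cases of Remark~\ref{rmk:special-cases-forgetting} (such as $\dmlog_{g,k-1}=\emptyset$) are trivially consistent.
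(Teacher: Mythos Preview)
Your proof is correct and matches the paper's own argument essentially line for line: reduce to the three local expressions of \S~\ref{subsubsec:local-expression-forgetting-map-on-dm-orbifold}, note that Cases~1 and~2b are linear projections hence trivially holomorphic, and in Case~2a compute $\varphi_{\log}^{-1}(R)=e^{-2\pi R}$ to conclude $a\ast_{\varphi_{\log}} b = ab$. The paper states this slightly more tersely (dispatching Cases~1 and~2b with the phrase ``trivially holomorphic''), but the content is identical.
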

\begin{proof}
	We check the smoothness of local expressions of $ft_k$.  The only points in $\dmlog_{g,k}$ where the local expression of $\hat{ft}_k$ will not be trivially holomorphic is when $[\Sigma,j,M,D]$ belongs to case \ref{case-2a}, i.e., a representative contains a component $S^2$ with precisely $3$ special points, one of which is the $k$th-marked point while the other two are nodal points.
	The inverse of $\varphi_\text{log}(r)=-\tfrac{1}{2\pi}\log (r)$ is given by $\varphi^{-1}_\text{log}(R)= e^{-2\pi R}$.  Hence it follows that:
	\[
	a\ast_{\log} b = r_a\cdot r_b e^{-2\pi i (\theta_a+\theta_b)} =	a\cdot b,
	\]
	i.e., complex multiplication of the gluing parameters. Therefore the local expressions for $\hat{ft}_k$ are all holomorphic.
\end{proof}


For $i\in \{1,\ldots,k-1\}$, we may define a \textbf{canonical section}
	\[
	s_i : \dmspace_{g,k-1} \to \dmspace_{g,k}
	\]
on the DM-spaces by introducing a new component $S^2$ which contains the marked points $z_i$ and $z_k$.
This map is a section of $ft_k: \dmspace_{g,k} \to \dmspace_{g,k-1}$ in the sense that $ft_k \circ s_i = \id_{\dmspace_{g,k-1}}$.
For any choice of gluing profile, this map is a {smooth embedding} of $\dmspace_{g,k-1}$ into $\dmspace_{g,k}$.


\subsection{Inclusion and marked point identifying maps for the split/genus Gromov--Witten polyfolds}
	\label{subsec:inclusion-marked-point-identifying-maps}

Associated to a splitting $S$ of $(g,k)$ as defined in \S~\ref{subsec:other-gw-polyfolds} we may consider the natural \textbf{marked point identifying map}
	\[
	\phi_S : \dmspace_{k_0+1 , g_0}\times \dmspace_{k_1+1 , g_1} \to \dmspace_{g,k}
	\]
which identifies the last marked point of a stable noded Riemann surface in $\dmspace_{k_0+1 , g_0}$ with the first marked point of a stable noded Riemann surface in $\dmspace_{k_1+1, g_1}$, and which maps the first $k_0$ marked points of $\dmspace_{g_0,k_0+1}$ to $\{1,\ldots,k_0\}$ and likewise maps the last $k_1$ marked points of $\dmspace_{g_1,k_1+1}$ to $\{k_0+1,\ldots,k_0+k_1\}$.

Let $A_0,$ $A_1$ be such that $A_0+A_1=A$.
Together with the inclusion map, this map lifts to a $\ssc$-smooth map defined on the split GW-polyfolds:
	\[
	\begin{tikzcd}
	\cZ_{A_0,g_0,k_0+1} \times \cZ_{A_1,g_1,k_1+1}	&	\\
	\cZ_{A_0+A_1,S}	\arrow[u, hook, "i"] \arrow[r, "\phi_S"] & \cZ_{A,g,k}
	\end{tikzcd}
	\]

There is also a natural map 
	\[\psi: \dmspace_{g-1,k+2} \to \dmspace_{g,k}\]
which identifies the last two marked points of a stable noded Riemann surface increasing the arithmetic genus by one.
Note that this is a degree two map.
Given a point in the top stratum of $\dmspace_{g-1,k+2}$, by exchanging the marked points $z_{k+1}$ and $z_{k+2}$ we will in general obtain distinct points in $\dmspace_{g-1,k+2}$, but both of these points will map to the same point in $\dmspace_{g,k}$.
The exception is if both points $z_{k+1}$ and $z_{k+2}$ lie on a single component $S^2$ with one other special point; however in this case the image will be a singular point $\dmspace_{g,k}$ and should be counted with weight $\tfrac{1}{2}$.

Together with the inclusion map, this map also lifts to a $\ssc$-smooth map defined on the genus GW-polyfolds:
	\[
	\begin{tikzcd}
	\cZ_{A,g-1,k+2} 		&	\\
	\cZ^\text{g}_{A,g-1,k+2}	\arrow[u, hook, "i"] \arrow[r, "\psi"] & \cZ_{A,g,k}
	\end{tikzcd}
	\]


\section{The polyfold Gromov--Witten invariants}
	\label{sec:the-polyfold-gw-invariants}

In this section, we give a brief introduction to the polyfold GW-invariants as defined in \cite{HWZGW}.
In addition to the definition of these invariants via the branched integral of \cite{HWZint}, it is also necessary to give an equivalent definition of these invariants in terms of intersection numbers, as introduced in \cite{schmaltz2019steenrod}.

\subsection{Abstract perturbation of the Cauchy--Riemann section}
	\label{subsec:cauchy-riemann-section}

At the outset, fix a compatible almost complex structure $J$ on $Q$ so that $\ww(\cdot,J\cdot)$ is a Riemannian metric on $Q$.

\begin{definition}
	\label{def:strong-polyfold-bundle}
	The \textbf{underlying set of the strong polyfold bundle $\cW_{A,g,k}$} is defined as the set of equivalence classes
		\[
		\cW_{A,g,k}:= \{ (\Sigma,j,M,D,u,\xi ) \mid \cdots \} / \sim
		\]
	with data as follows.
	\begin{itemize}
		\item $(\Sigma,j,M,D,u)$ is a stable map representative of a stable curve in $\cZ_{A,g,k}$.
		\item $\xi$ is a continuous section along $u$ such that the map
			\[
			\xi(z):T_z\Sigma\rightarrow T_{u(z)}Q,\qquad \text{for } z\in
			\Sigma
			\]
		is a complex anti-linear map.
		\item As in Definition~\ref{def:class-3delta} we may write a local expression for $\xi$ of the form
			\[
			[s_0,\infty) \times S^1 \to \R^{2n}, \quad (s,t) \mapsto d\varphi_{u(\sigma(s,t))} \cdot \xi (\sigma(s,t)) \cdot \left(\tfrac{\partial}{\partial_s} \sigma(s,t)\right)
			\]
		which is defined for $s_0$ large. We require this local expression is of class $H^{2,\delta_0}$ around the nodal points in $\abs{D}$.
		We require a similar coordinate expression is of class $H^2_\text{loc}$ near the other points in $\Sigma$.
		(If $u$ has a puncture at the marked point $z_i$, then we also require that $\xi$ is of class $H^{2,\delta_0}$ at the marked point.)
		\item The equivalence relation is given by 
			\[(\Sigma,j,M,D,u,\xi)\sim (\Sigma',j',M',D',u',\xi')\]
		if there exists a biholomorphism $\phi :(\Sigma,j)\to (\Sigma',j')$ which satisfies $\xi'\circ d\phi=\xi$ in addition to $u'\circ \phi = u$, $\phi(M)= M'$, $\phi (\abs{D})=\abs{D'}$, and which preserves ordering and pairs.
	\end{itemize}
\end{definition}

By \cite[Thm.~1.10]{HWZGW}, the set $\cW_{A,g,k}$ possesses a polyfold structure such that
	\[
	P:\mathcal{W}_{A,g,k}\to \mathcal{Z}_{A,g,k}, \qquad [\Sigma,j,M,D,u,\xi]\mapsto [\Sigma,j,M,D,u]
	\]
defines a ``strong polyfold bundle'' over the GW-polyfold $\cZ_{A,g,k}$.

The \textbf{Cauchy--Riemann section} $\delbarj$ of the strong polyfold bundle $P:\cW_{A,g,k}\to \cZ_{A,g,k}$ is defined on the underlying sets by
	\[
	\delbarj([\Sigma,j,M,D,u]):=[\Sigma,j,M,D,u, \frac{1}{2}( du+J(u)\circ du\circ j)].
	\]
By \cite[Thm.~1.11]{HWZGW}, the  Cauchy--Riemann section is an $\ssc$-smooth Fredholm section with Fredholm index given by
	\[\ind \delbarj =2c_1(A)+(\dim_{\R} Q-6)(1-g)+2k.\]

We define the \textbf{unperturbed Gromov--Witten moduli space} as the zero set of this section,
	\[
	\CM_{A,g,k}(J) := \{[\Sigma,j,M,D,u] \mid \delbarj([\Sigma,j,M,D,u])  =0 \} \subset \cZ_{A,g,k}.
	\]
This set is precisely the same as the stable map compactification of a GW-moduli space as we discussed in the introduction.
Moreover, one may show that the subspace topology on $\CM_{A,g,k}(J)$ is equivalent to the Gromov topology defined on the stable map compactification.

There exist ``regular perturbations'' $p$ which ``regularize'' the unperturbed GW-moduli spaces; thus we obtain a \textbf{perturbed Gromov--Witten moduli space}
	\[
	\cS_{A,g,k} (p) := \text{`` } (\delbarj +p)^{-1}(0) \text{ ''} \subset \cZ_{A,g,k}
	\]
which has the structure of a compact oriented \textit{weighted branched suborbifold}. 
For the precise definition of a ``regular perturbation,'' we refer to \cite[Cor.~15.1]{HWZbook}.
We recall some important properties of such a suborbifold.

\begin{definition}
	\label{def:weighed-branched-suborbifold}
	Let $\Q^+:= \Q \cap [0,\infty)$.
	A \textbf{weighted branched suborbifold} $\cS$ of the GW-polyfold $\cZ_{A,g,k}$ is a subset $\cS \subset \cZ_{A,g,k}$ such that:
	\begin{itemize}
		\item $\cS$ consists entirely of \textit{smooth} stable curves,
		\item $\cS$ comes equipped with a rational valued \textbf{weight function} $\theta : \cS \to \Q^+.$
	\end{itemize}
	Given a stable curve $[\Sigma,j,M,D,u] \in \cS$, let 
	\[
	(a,v,\eta) \to (\Sigma_a,j(a,v),M_a,D_a,\oplus_a \exp_u \eta), \quad (a,v,\eta) \in \cU
	\]
	be a local uniformizer for $\cZ_{A,g,k}$ centered at a representative $(\Sigma,j,M,D,u)$.
	Then $\cS$ may be locally described as follows:
	\begin{itemize}
		\item \textbf{Local branches.} There exists a finite collection $M_i$, $i\in I$ of finite-dimensional manifolds $(M_i)_{i\in I}$ and proper embeddings 
		\[
		M_i \hookrightarrow \{(\Sigma_a,j(a,v),M_a,D_a,\oplus_a \exp_u \eta)\}_{(a,v,\eta) \in \cU}
		\]
		such that the union $\cup_{i\in I} M_i$ is $\Aut(\Sigma,j,M,D,u)$-invariant, and moreover 
		\[
		\frac{\bigcup_{i \in I} M_i}{\Aut(\Sigma,j,M,D,u)} \to \cS
		\]
		is a local homeomorphism.
		\item \textbf{Weights.} Associated to each $M_i$ there exists a positive rational number $w_i$ such that the function 
			\[
			\hat{\theta} : \bigcup_{i \in I} M_i \to \Q^+, \qquad	x \mapsto \sum_{ \{i\in I \mid x\in M_i\} } w_i
			\]
		is an $\Aut(\Sigma,j,M,D,u)$-invariant lift of the weight function $\theta:\cS \to \Q^+$.
	\end{itemize}
	The data ${(M_i)}_{i\in I}$, ${(w_i)}_{i\in I}$ is called a \textbf{local branching structure}.
	We say that $\cS$ is \textbf{compact} if the underlying topological space equipped with the subspace topology is compact.
	We may define an \textbf{orientation} on a weighted branched suborbifold as an $\Aut$-invariant orientation of each local branch, denoted as $(M_i,o_i)$.
\end{definition}
We note that this is only a partial description of a weighted branched suborbifold but it is sufficient for our purposes, see {\cite[Def.~9.1]{HWZbook}} for a full definition in the context of ep-groupoid theory.


\subsection{Polyfold Gromov--Witten invariants as branched integrals}
	\label{subsec:branched-integrals}

We briefly consider the branched integration theory, as originally developed in \cite{HWZint}.
Given a smooth map from a polyfold to a manifold/orbifold,
	\[f:\cZ \to \cO,\]
we may pullback a differential form $\ww \in \Omega^n(\cO)$ and obtain a ``$\ssc$-differential form'' $f^*\ww \in \Omega^n_\infty (Z)$.
We can consider the restriction of a $\ssc$-differential form to a weighted branched suborbifold; locally this restriction is the same as the normal pullback on the local branches.

\begin{theorem}[{\cite[Thm.~9.2]{HWZbook}}]
	\label{def:branched-integral}
	Let $\cZ$ be a polyfold.
	Consider a $\ssc$-smooth differential form $\ww\in \Omega^n_\infty (Z)$ and an $n$-dimensional compact oriented weighted branched suborbifold $\cS\subset \cZ$.
	Then there exists a well-defined \textbf{branched integral}, denoted as $\int_{\cS} \ww.$
\end{theorem}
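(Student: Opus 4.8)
The plan is to define $\int_{\cS}\ww$ by reducing to ordinary integration of differential $n$-forms over finite-dimensional oriented manifolds, and then to prove the result is independent of every choice entering that reduction. First I would make sense of restricting the $\ssc$-smooth form to $\cS$: near a point $[\Sigma,j,M,D,u]\in\cS$ with a local branching structure ${(M_i)}_{i\in I}$, ${(w_i)}_{i\in I}$, each $M_i$ is a finite-dimensional oriented manifold properly (and $\ssc$-smoothly) embedded into a $\ssc$-retract, so the pullback $\widehat{\ww}_i$ of $\ww$ along this embedding is an honest smooth $n$-form on $M_i$ (recall from \S~\ref{sec:maps-between-orbifolds-polyfolds} that $\ssc$-smooth maps restrict to smooth maps when the domain is finite-dimensional). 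Since $\cS$ is compact, I would cover it by finitely many local branching structures and pick a $\ssc$-smooth partition of unity $\{\beta_\alpha\}$ subordinate to this cover.

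Next I would define the contribution of a single chart, with isotropy group $G=\Aut(\Sigma,j,M,D,u)$, bump function $\beta$, and $\Aut$-invariant lift $\widehat{\beta}$ to the local uniformizer, as the finite sum
\[
\frac{1}{\sharp G}\sum_{i\in I} w_i \int_{M_i} \widehat{\beta}\,\widehat{\ww}_i,
\]
and then set $\int_{\cS}\ww$ to be the (finite) sum of the contributions of $\beta_\alpha\ww$ over $\alpha$. Compactness of $\cS$ makes this a finite sum, so there is no summability issue to address.

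The main obstacle — and essentially the entire content of the theorem — is proving that this number is independent of all the choices: the finite cover, the local branching structures used to describe $\cS$, the bump functions, and the partition of unity. The decisive structural input is that the weight function $\theta:\cS\to\Q^+$ is globally defined on $\cS$, and every local branching structure is required to be a lift of $\theta$. Using this I would prove the core lemma: if ${(M_i,w_i)}_{i\in I}$ and ${(N_l,v_l)}_{l\in L}$ are two local branching structures describing $\cS$ on the same chart, then $\sum_i w_i\int_{M_i}\widehat{\beta}\,\widehat{\ww}_i = \sum_l v_l\int_{N_l}\widehat{\beta}\,\widehat{\ww}_l$ for every compactly supported $\widehat{\beta}$. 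The proof goes through the \emph{regular part} $\cS^{\mathrm{reg}}\subset\cS$, the open dense subset where $\cS$ is locally a single embedded suborbifold: its complement is a closed set which is locally a finite union of submanifolds of positive codimension, hence contributes nothing to the integral of any $n$-form on any $M_i$; and over $\cS^{\mathrm{reg}}$ both sides of the claimed identity literally equal $\int_{\cS^{\mathrm{reg}}}\theta\cdot\widehat{\beta}\,\widehat{\ww}$, an expression intrinsic to $\cS$ and to $\theta$. Independence of the partition of unity then follows by the usual device of refining two partitions to a common one together with bilinearity, and the factor $1/\sharp G$ combined with the $\Aut$-invariance of the local branching structure and of the lift of $\theta$ makes chart contributions agree on overlaps and under change of uniformizer. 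Finally I would record additivity in the form and the expected behaviour under orientation reversal, which are immediate from the corresponding facts for finite-dimensional integration on each $M_i$.
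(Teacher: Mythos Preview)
The paper does not prove this theorem: it is quoted from \cite[Thm.~9.2]{HWZbook} and stated without proof. The paragraph immediately following the statement only records the computational formula
\[
\int_{\cS}\ww=\sum_n \frac{1}{\sharp\Aut}\sum_{i\in I} w_i\int_{(M_i,o_i)}\beta_n\cdot\ww
\]
in the special situation of the GW-polyfolds (effective isotropy, existence of $\ssc$-smooth partitions of unity), deferring the well-definedness to the cited reference. So there is nothing in the paper to compare your argument against.

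That said, your sketch is the correct outline and matches the strategy of \cite{HWZint,HWZbook}: localize via a partition of unity, integrate branch-by-branch with weights, divide by $\sharp G$, and prove independence of choices. One point where you are glossing over genuine work is the comparison of two local branching structures on overlapping uniformizers: the claim that both sides equal $\int_{\cS^{\mathrm{reg}}}\theta\cdot\widehat{\beta}\,\widehat{\ww}$ presupposes that the branch locus (where distinct $M_i$ meet) has measure zero in each $M_i$, and that the transition between uniformizers respects branches and weights in a compatible way. In the ep-groupoid framework this is exactly what the axioms of a branched suborbifold encode, but it is the substantive step, not a triviality. Your appeal to the regular part is the right mechanism; just be aware that making it rigorous requires the full local compatibility conditions from \cite[Def.~9.1]{HWZbook}, which the present paper only partially restates in Definition~\ref{def:weighed-branched-suborbifold}.
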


In our current situation, we may note that the GW-polyfolds admit $\ssc$-smooth partitions of unity,  see the discussion in \cite[\S~5.5]{HWZbook}.
Furthermore, we may note that the automorphism groups are all effective; this may be seen by considering the explicit action of the automorphism group, for any automorphism we may find a stable map which breaks the symmetry.
Therefore, we may compute the the branched integral as described in \cite[Rem.~3.6]{schmaltz2019steenrod} as follows.

Cover the compact topological space $\cS$ with finitely many open sets of the form $\cup_{i\in I} M_i / \Aut$.
As in the remark, we can define a $\ssc$-smooth partition of unity on $\cZ$ so that its restriction to $\cS$ is also a partition of unity with respect to the open cover $\cup_{i\in I} M_i / \Aut$. We can then write:
	\[
	\int_{\cS} \ww = \sum_n \int_{\cS} \beta_n \cdot \ww = \sum_n \frac{1}{\sharp \Aut(\Sigma,j,M,D,u) } \sum_{i\in I} w_i \int_{(M_i,o_i)} \beta_n \cdot \ww
	\]
where the first sum is over the open cover indexed by $n$ and $\beta_n$ is the associated partition of unity.

The polyfold GW-invariant is defined as the homomorphism
	\[
	\GW_{A,g,k} : H_*(Q;\Q)^{\otimes k} \otimes H_*(\dmlog_{g,k};\Q ) \to \Q
	\]
defined via the branched integration of \cite{HWZint}:
	\[
	\GW_{A,g,k} (\alpha_1,\ldots,\alpha_k;\beta) := \int_{\cS_{A,g,k}(p)} ev_1^* \PD (\alpha_1)\wedge \cdots \wedge ev_k^* \PD (\alpha_k) \wedge\pi^* \PD (\beta).
	\]
It was justified in \cite[Rem.~3.15]{schmaltz2019steenrod} that this homomorphism is rationally valued, using equivalence of the branched integral with the intersection number.

\begin{theorem}[Change of variables, {\cite[Thm.~2.47]{schmaltz2019naturality}}]
	\label{thm:change-of-variables}
	Let $\cS_i\subset \cZ_i$ be $n$-dimensional compact oriented weighted branched suborbifolds with weight functions $\theta_i:\cS_i \to \Q^+$ for $i=1,2$.	
	Let $g:\cZ_1 \to \cZ_2$ be a $\ssc^1$-map between polyfolds, which has a well-defined restriction $g|_{\cS_1}	: \cS_1 \to \cS_2$ between the branched suborbifolds. In addition, assume the following:
	\begin{itemize}
		\item $g|_{\cS_1}: \cS_1 \to \cS_2$ is a homeomorphism between the underlying topological spaces and is \textit{weight preserving}, i.e., $\theta_2\circ g=\theta_1$,
		\item A lift to the local branching structures $\hat{g}: \bigcup_{i \in I} M_i \to \bigcup_{j \in I'} M_j'$ is a local homeomorphism, and moreover maps branches to branches and is orientation and weight preserving on each branch.
\end{itemize}
	Then given a $\ssc$-smooth differential form $\ww \in \Omega^n_\infty (Z_2)$,
	\[
	\int_{\cS_2} \ww = \int_{\cS_1} g^* \ww.
	\]
\end{theorem}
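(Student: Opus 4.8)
The plan is to recognize that the hypotheses say precisely that $g|_{\cS_1}$ identifies $\cS_1$ with $\cS_2$ as compact oriented weighted branched suborbifolds, and that branched integration is natural under such an identification; concretely, I would localize both sides through the branching structures and a partition of unity and, on each finite-dimensional branch, invoke the classical change-of-variables formula.

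First I would set up a compatible system of local branching structures and a cover. Around every point of $\cS_2$ choose a local branching structure $\bigl(\bigcup_{j\in I'}M'_j\bigr)/G$ over an open $V\subset\cS_2$; by the homeomorphism property of $g|_{\cS_1}$ and the assumption that a lift $\hat g$ to local branching structures exists and carries branches to branches, one may shrink $V$ so that $U:=(g|_{\cS_1})^{-1}(V)$ carries a local branching structure $\bigl(\bigcup_{i\in I}M_i\bigr)/G$ of $\cS_1$ with the same isotropy group, with $\hat g$ restricting to a local homeomorphism $\bigcup_i M_i\to\bigcup_j M'_j$ that sends each $M_i$ onto a branch $M'_{j(i)}$, matches weights $w_i=w'_{j(i)}$, and is orientation preserving. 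Because $\cS_1$ consists of smooth stable curves lying on the top scale and $g$ is $\ssc^1$, the restriction $\hat g|_{M_i}$ is a $C^1$ map; being in addition an orientation-preserving homeomorphism onto $M'_{j(i)}$, its Jacobian with respect to the chosen orientations is $\geq 0$, which is exactly what the change-of-variables theorem for $C^1$ injective maps requires. By compactness of $\cS_2$ I would extract a finite cover $V_1,\dots,V_N$ of this type, and arrange that a $\ssc$-smooth partition of unity $\{\beta_n\}$ on $\cZ_2$ be chosen with each $\supp(\beta_n|_{\cS_2})$ contained in the image $g(U_{\alpha(n)})$ of one of the corresponding source charts (polyfolds admit $\ssc$-smooth partitions of unity, cf.\ \cite[\S~5.5]{HWZbook}).

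Next I would compute. Using the explicit description of the branched integral given in \S~\ref{subsec:branched-integrals},
\[
\int_{\cS_2}\ww=\sum_n\int_{\cS_2}\beta_n\cdot\ww=\sum_n\frac{1}{\sharp G_{\alpha(n)}}\sum_{j\in I'_{\alpha(n)}}w'_j\int_{(M'_j,o'_j)}\beta_n\cdot\ww .
\]
For a fixed $n$ and $j$, with $i$ the index for which $\hat g(M_i)=M'_j$, the classical change-of-variables formula applied to the orientation-preserving $C^1$ homeomorphism $\hat g|_{M_i}\colon M_i\to M'_j$ (the $n$-form $\beta_n\cdot\ww$ is supported inside $M'_j$ by the above choice) gives
\[
\int_{(M'_j,o'_j)}\beta_n\cdot\ww=\int_{(M_i,o_i)}\hat g^{*}(\beta_n\cdot\ww)=\int_{(M_i,o_i)}(g^{*}\beta_n)\cdot(g^{*}\ww),
\]
where $g^{*}\ww$ denotes the pullback of $\ww$ restricted to $\cS_1$ (legitimate because $\cS_1$ consists of smooth points, so $g^{*}\ww$ restricts to a continuous $n$-form on each branch). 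Substituting and using $w_i=w'_{j(i)}$ together with the identification of isotropy groups,
\[
\int_{\cS_2}\ww=\sum_n\frac{1}{\sharp G_{\alpha(n)}}\sum_{i\in I_{\alpha(n)}}w_i\int_{(M_i,o_i)}(g^{*}\beta_n)\cdot(g^{*}\ww)=\int_{\cS_1}g^{*}\ww ,
\]
the last equality because $\{(g^{*}\beta_n)|_{\cS_1}\}$ is a partition of unity on $\cS_1$ subordinate to $\{U_\alpha\}$ which is smooth on every branch, and hence computes the branched integral of $g^{*}\ww$ by the independence of the branched integral from auxiliary choices (Theorem~\ref{def:branched-integral}).

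The main obstacle is the bookkeeping at the level of branching structures in the first two paragraphs: one must produce a single partition of unity all of whose supports are simultaneously subordinate, downstairs and upstairs, to matched local branching structures, and verify that $\hat g$ really does set up a weight-, orientation- and isotropy-preserving bijection between the finite collections of branches. A related point worth isolating is that an $\ssc^1$ map restricts on a branch (a manifold of smooth points) only to a $C^1$ map, so one cannot appeal to change of variables for diffeomorphisms; instead one uses that an orientation-preserving $C^1$ homeomorphism has nonnegative Jacobian, for which the change-of-variables formula for $C^1$ injective maps still applies. Finally, one should check that the branched integral over $\cS_1$ may be computed with the pulled-back partition of unity $\{g^{*}\beta_n\}$, which is only $\ssc^1$ on $\cZ_1$ but smooth along $\cS_1$; this again follows from the well-definedness in Theorem~\ref{def:branched-integral}.
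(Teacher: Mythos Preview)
The paper does not contain a proof of this theorem: it is stated with a citation to \cite[Thm.~2.47]{schmaltz2019naturality} and used as a black box (in the proof of the Symmetry axiom), so there is no in-paper argument to compare your proposal against. That said, your outline is exactly the expected strategy---localize via matched local branching structures and a partition of unity, then reduce to the classical change-of-variables formula on each finite-dimensional branch---and you have correctly flagged the genuine technical issues (matching isotropy, weights, and orientations across $\hat g$; the fact that $\ssc^1$ only gives $C^1$ on branches; and that $\{g^*\beta_n\}$ is admissible for computing the branched integral on $\cS_1$).
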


\subsection{The Steenrod problem and polyfold Gromov--Witten invariants as intersection numbers}
	\label{subsec:intersection-numbers}

We now describe the definition of the GW-invariants as intersection numbers, as developed in \cite{schmaltz2019steenrod}.

The polyfold GW-invariants take as input homological data coming from a closed orientable orbifold of the form $Q^k \times \dmlog_{g,k}$.
In order to define the polyfold GW-invariants as intersection numbers, we need to understand how to take this homological data and interpret it in a suitable way.

This is done via the Steenrod problem. As proved by Thom in \cite[Thm.~II.1]{thom1954quelques}, there exists a basis of $H_*(Q;\Q)$ which consists of the fundamental classes of closed embedded submanifolds $\cX\subset Q$.
We call such a submanifold a \textbf{representing submanifold}.
Since $\dmlog_{g,k}$ is an orbifold for $g\neq 0$, we need the following theorem.

\begin{theorem}[The Steenrod problem for orbifolds, {\cite[Thm.~1.2]{schmaltz2019steenrod}}]
	Let $\cO$ be a closed orientable orbifold.
	There exists a basis $\{[\cX_i]\}$ of $H_*(\cO;\Q)$ which consists of the fundamental classes of ``closed embedded full suborbifolds $\cX_i\subset \cO$ whose normal bundles have fiberwise trivial isotropy action'' (see \cite[Defs.~2.12,~2.14]{schmaltz2019steenrod}).
	We will call such a suborbifold a \textbf{representing suborbifold}.
\end{theorem}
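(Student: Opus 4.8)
The plan is to transplant Thom's solution of the classical Steenrod problem, as in \cite[Thm.~II.1]{thom1954quelques}, to the orbifold category; the new feature is that the genericity arguments automatically enforce the condition on the normal bundle. First I would reduce to a spanning statement in a fixed degree: it suffices to show that for every $n$ the fundamental classes of $n$-dimensional closed oriented full suborbifolds with fiberwise trivial normal isotropy span $H_n(\cO;\Q)$, since a basis can then be extracted by selecting, over all degrees, a maximal linearly independent collection. Writing $m := \dim \cO$ and $q := m-n$, orbifold Poincar\'e duality over $\Q$ (cap product with the orientation class $[\cO]$) identifies $H_n(\cO;\Q) \cong H^q(\cO;\Q)$, so it is enough to realize each class $w\in H^q(\cO;\Q)$, up to a nonzero rational scalar, as $\PD[\cX]$ for such a suborbifold $\cX$ of codimension $q$.

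Next I would realize $w$ by a map to a Thom space of a finite Grassmannian. The Thom spectrum $M\SO$ splits rationally as a product of Eilenberg--MacLane spectra, so for a compact base the rational Thom homomorphism $[\cO,M\SO(q)]\otimes\Q \to H^q(\cO;\Q)$ is surjective, being the projection onto the degree-$q$ Eilenberg--MacLane factor of $M\SO(q)_\Q$; in particular I may choose a map with vanishing higher components. Taking $L$ large relative to $m$, the inclusion $\mathrm{Th}(\gamma_{q,L})\hookrightarrow M\SO(q)$ of the Thom space of the tautological oriented $q$-plane bundle $\gamma_{q,L}$ over the finite real Grassmannian $\widetilde{\mathrm{Gr}}_q(\R^{q+L})$ is highly connected, so I obtain an orbifold map $f:\cO\to \mathrm{Th}(\gamma_{q,L})$ with $f^{*}(\text{Thom class}) = N\,w$ for some $N\in\Q\setminus\{0\}$.

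Then I would invoke transversality. After homotoping $f$ to be $\ssc$-smooth away from the basepoint and transverse to the zero section $\widetilde{\mathrm{Gr}}_q(\R^{q+L})\hookrightarrow \mathrm{Th}(\gamma_{q,L})$---a standard orbifold perturbation carried out chart by chart, with no equivariance obstruction because the target is an honest manifold---the preimage $\cX := f^{-1}\bigl(\widetilde{\mathrm{Gr}}_q(\R^{q+L})\bigr)$ is a closed oriented full suborbifold of codimension $q$ with normal bundle $N\cX\cong f^{*}\gamma_{q,L}$ and $\PD[\cX] = f^{*}(\text{Thom class}) = N\,w$. The normal-bundle condition is automatic: near a point $p\in\cX$ the orbifold map $f$ is an isotropy-invariant map from a chart of $\cO$ at $p$ into the manifold $\widetilde{\mathrm{Gr}}_q(\R^{q+L})$, so the induced isomorphism $N_p\cX\cong(\gamma_{q,L})_{f(p)}$ is equivariant for the isotropy group $G_p$ of $\cO$ at $p$; since $G_p$ acts trivially on a manifold and hence on $\gamma_{q,L}$, it acts trivially on $N_p\cX$, which is exactly the required fiberwise trivial isotropy action. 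Rescaling by $N^{-1}$ and collecting such classes over all degrees produces the desired basis of representing suborbifolds.

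The main obstacle is the orbifold bookkeeping underlying these steps: establishing orbifold Poincar\'e duality over $\Q$, the rational comparison between orbifold homology and orbifold (or equivariant) bordism, and a transversality theory for orbifold maps into Thom spaces that keeps track of isotropy, together with verifying that the output $\cX$ is a \emph{full} suborbifold in the sense of \cite[Defs.~2.12,~2.14]{schmaltz2019steenrod}. An alternative that sidesteps some of this is to pass to the orthonormal frame bundle $\mathrm{Fr}(\cO)$, a closed manifold on which $\Orth(m)$ acts with finite stabilizers and quotient orbifold $\cO$, apply an $\Orth(m)$-equivariant version of Thom's theorem to $\mathrm{Fr}(\cO)$ (using that rationally the Borel construction computes $H^*(\cO;\Q)$), and descend the resulting equivariant submanifolds to suborbifolds of $\cO$; here the trivial normal isotropy again comes for free since the target Grassmannian carries a trivial action. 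Either way, the full details are those of \cite[Thm.~1.2]{schmaltz2019steenrod}.
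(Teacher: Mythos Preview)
The paper under review does not contain a proof of this theorem. It is stated in \S\ref{subsec:intersection-numbers} purely as a citation to \cite[Thm.~1.2]{schmaltz2019steenrod} and is used as a black box in order to interpret the polyfold Gromov--Witten invariants as intersection numbers. There is therefore no ``paper's own proof'' against which to compare your proposal.

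That said, your sketch is a reasonable outline of how one would expect such a result to go, and the central observation---that when the target of the classifying map is an honest manifold, the equivariance forced on the local lift $\hat f$ by $G_p$-invariance makes $d\hat f_p$ intertwine the $G_p$-action on the normal space with the trivial action on the target, hence the normal isotropy action is trivial---is correct and is indeed the mechanism that distinguishes the orbifold case from a naive transplant of Thom's argument. Your acknowledgment that the ``full suborbifold'' condition and the orbifold transversality package require care is appropriate; these are exactly the points where the cited reference does the work. Whether \cite{schmaltz2019steenrod} proceeds via the Thom-spectrum route you describe or via the frame-bundle alternative you mention (or something else entirely) cannot be determined from the present paper.
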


Consider the evaluation and projection maps defined on a GW-polyfold:
	\[
	\cZ_{A,g,k} \xrightarrow{ev_1\times\cdots\times ev_k \times \pi} Q^k \times \dmlog_{g,k}.
	\]
Let $\cS_{A,g,k}(p)$ be a perturbed GW-moduli space, and let $\cX_1\times \cdots \times \cX_k \times \cB \subset Q^k\times \dmlog_{g,k}$ be a representing suborbifold. Consider the diagram:
	\[
	\begin{tikzcd}[column sep= huge, row sep=tiny]
	\cS_{A,g,k}(p) \arrow[r,"ev_1\times \cdots \times ev_k \times \pi"] & Q^k \times \dmlog_{g,k} \arrow[d,phantom,"\cup\quad"] \\
	& \cX_1\times \cdots \times \cX_k \times \cB.
	\end{tikzcd}
	\]
Then there exists a well-defined notion of \textbf{transversal intersection} \cite[Def.~3.8]{schmaltz2019steenrod}
	\[(ev_1\times \cdots \times ev_k \times \pi)|_{\cS_{A,g,k}(p)} \pitchfork (\cX_1\times \cdots \times \cX_k \times \cB)\]
and moreover a well-defined \textbf{intersection number}  \cite[Def.~3.13]{schmaltz2019steenrod}
	\[
	\left(ev_1\times\cdots\times ev_k\times\pi\right)|_{\cS_{A,g,k}(p)} \cdot \left(\cX_1 \times\cdots\times \cX_k \times \cB\right).
	\]
When $\dim \cS_{A,g,k}(p) + \dim \left(\cX_1 \times\cdots\times \cX_k \times \cB\right) = \dim (Q^k \times \dmlog_{g,k})$ the intersection number is given by the signed weighted count of a finite number of points of intersection.

We may then define the polyfold GW-invariants as the intersection number
	\[
	\GW_{A,g,k} ([\cX_1],\ldots,[\cX_k];[\cB]) := \left(ev_1\times\cdots\times ev_k\times\pi\right)|_{\cS_{A,g,k}(p)} \cdot \left(\cX_1 \times\cdots\times \cX_k \times \cB\right).
	\]

\begin{figure}[ht]
	\centering
	\includegraphics[width=\textwidth]{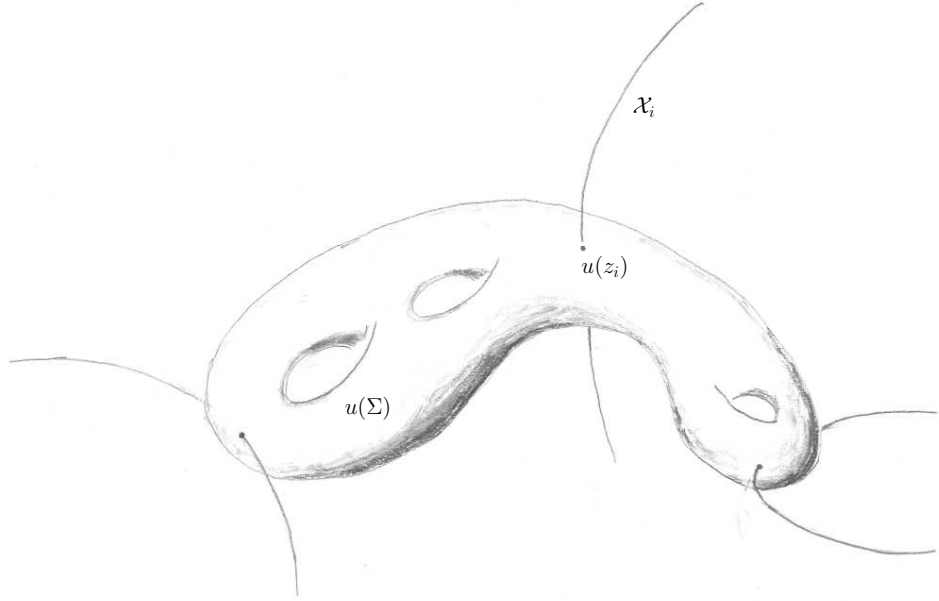}
	\caption{A Gromov--Witten invariant as an intersection number}
\end{figure}

By \cite[Thm.~1.6, Cor.~1.7]{schmaltz2019steenrod} the branched integral and the intersection number are related by the following equation:
	\begin{align*}
	&\int_{\cS_{A,g,k}(p)} ev_1^* \PD [\cX_1] \wedge \cdots \wedge ev_k^* \PD[\cX_k] \wedge\pi^* \PD [\cB] \\
	&\qquad = \left(ev_1\times\cdots\times ev_k\times\pi\right)|_{\cS_{A,g,k}(p)} \cdot \left(\cX_1 \times\cdots\times \cX_k \times \cB\right).
	\end{align*}

Transversality of a perturbed solution space of a polyfold with representing submanifolds/suborbifolds may \textit{always} be achieved through either of the following:
\begin{itemize}
	\item Through the perturbation of the representing suborbifold; due to the properties of the normal bundle representing suborbifolds may always be perturbed (see \cite[Prop.~3.9]{schmaltz2019steenrod}).
	\item Assuming the map defined on the ambient polyfold is a submersion, we may obtain transversality through choice of a suitable regular perturbation (see \cite[Prop.~3.10]{schmaltz2019steenrod}).
\end{itemize}

We showed in Proposition~\ref{prop:evaluation-smooth-submersion} that the evaluation map is a smooth submersion, hence we may always choose a perturbation such that $ev_i \pitchfork \cX_i$.
This is important in the contexts of the splitting and genus reduction axioms where we consider a representing submanifold given by the diagonal $\Delta \subset Q\times Q$; in these cases, it is important that we do not perturb this representing submanifold or else we will lose the geometric meaning of the intersection.

However, the projection map 
	\[
	\pi: \cZ_{A,g,k} \to \dmlog_{g,k}
	\]
is not a submersion (unless $(g,k)=(0,3)$) as we explain in Problem~\ref{prob:projection-not-submersion}.
Hence in this case we must perturb the representing suborbifold $\cB \subset \dmlog_{g,k}$ in order to obtain transversality.


\subsection{Pulling back abstract perturbations and maps between perturbed moduli spaces}
\label{subsec:pulling-back-abstract-perturbations}

The natural approach for obtaining a well-defined map between perturbed GW-moduli spaces is to pullback an abstract perturbation. The technical details for this approach are contained in \cite{schmaltz2019naturality}.

The general setup for this problem may be given as follows.
Consider a $\ssc$-smooth map between polyfolds, $f: \cY \to \cZ$, and consider a pullback diagram of strong polyfold bundles:
\[\begin{tikzcd}
f^* \cW \arrow[d, "f^*\delbarj\quad"'] \arrow[r, "\text{proj}_2"'] & \cW \arrow[d, "\quad \delbarj"] &  \\
\cY \arrow[r, "f"'] \arrow[u, bend left] & \cZ. \arrow[u, bend right] & 
\end{tikzcd}\]
It is possible to obtain transversality for both strong bundles through the choice of an appropriately generic perturbation.
The main technical point therefore is ensuring that we can control the compactness of the pullback perturbation.
This is achieved by a mild topological hypothesis on the map $f$.

We say that $f$ satisfies the \textbf{topological pullback condition} if for all $[y] \in \cS(\delbarj)\subset \cZ$ and for any open neighborhood $\cV \subset \cY$ of the fiber $f^{-1} ([y])$ there exists an open neighborhood $\cU_{[y]}\subset \cZ$ of $[y]$ such that $f^{-1} (\cU_{[y]}) \subset \cV$.
(Note that if $f^{-1} ([y])=\emptyset$, this implies that there exists an open neighborhood $\cU_{[y]}$ of $[y]$ such that $f^{-1} (\cU_{[y]})=\emptyset$.)

\begin{theorem}[{\cite[Thm.~1.7]{schmaltz2019naturality}}]
	\label{thm:pullback-regular-perturbation}
	If $f$ satisfies the topological pullback condition then there exists a regular perturbation $p$ which pulls back to a regular perturbation $f^*p$.
	
	It follows that we can consider a well-defined restriction between perturbed moduli spaces,
	\[f|_{\cS(f^*p)} : \cS(f^*p) \to \cS (p).\]
	This map is weight preserving.
\end{theorem}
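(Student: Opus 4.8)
The plan is to run the Hofer--Wysocki--Zehnder abstract perturbation scheme (as in \cite[Cor.~15.1]{HWZbook}) for $\delbarj$ on $\cZ$, while tracking how the resulting $\ssc^+$-multisection behaves under pullback by $f$, and to impose two genericity requirements simultaneously: that $\delbarj+p$ be regular on $\cZ$ and that $f^*\delbarj+f^*p$ be regular on $\cY$. Two formal observations organize the argument. First, since $f$ is $\ssc$-smooth, $f^*p$ is again an $\ssc^+$-multisection of $f^*\cW$, and under the tautological bundle identification $(f^*\cW)_y\cong\cW_{f(y)}$ one has $(f^*\delbarj+f^*p)(y)=(\delbarj+p)(f(y))$; hence $\cS(f^*p)=f^{-1}(\cS(p))$ as subsets, so once $p$ is fixed the map $f$ automatically restricts to a continuous map $f|_{\cS(f^*p)}\colon\cS(f^*p)\to\cS(p)$. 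Second, because the perturbations are assembled from local $\ssc^+$-data near the compact solution set and their effect is measured by an auxiliary norm, an $\ssc^+$-multisection of small auxiliary norm supported in a prescribed neighborhood of $\cS(\delbarj)$ pulls back to one of correspondingly small auxiliary norm supported in the $f$-preimage of that neighborhood. It therefore remains to arrange that $f^*p$ is a \emph{regular} perturbation for $f^*\delbarj$: that $\cS(f^*p)$ is compact and that $f^*\delbarj+f^*p$ is transverse to the zero section.

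Compactness is where the topological pullback condition enters, and I expect this to be the main obstacle. Choose a ``controlling'' open neighborhood $\cV$ of the compact set $\cS(f^*\delbarj)=f^{-1}(\cS(\delbarj))$ in $\cY$, small enough that the standard compactness package for perturbed Fredholm sections applies inside it. Applying the topological pullback condition at each point of $\cS(\delbarj)$ with the open set $\cV$, and then passing to a finite subcover of the compact set $\cS(\delbarj)$, yields an open neighborhood $\cU\subseteq\cZ$ of $\cS(\delbarj)$ with $f^{-1}(\cU)\subseteq\cV$. If $p$ is supported in $\cU$ with sufficiently small auxiliary norm then $\cS(p)\subseteq\cU$, so $\cS(f^*p)=f^{-1}(\cS(p))\subseteq f^{-1}(\cU)\subseteq\cV$; together with the small auxiliary-norm control on $f^*p$ inherited from $p$, the compactness package gives that $\cS(f^*p)$ is compact. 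Without the topological pullback condition, a perturbation small and localized near $\cS(\delbarj)$ in $\cZ$ could pull back to one whose effective support spreads uncontrollably in $\cY$, destroying compactness.

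Transversality is then a Sard--Smale argument on a complete metric space $\mathcal{P}$ of admissible perturbations of $\delbarj$ supported in $\cU$ with controlled auxiliary norm, with ``regular'' meaning ``comeager in $\mathcal{P}$''. Regularity of $\delbarj+p$ on $\cZ$ holds on a comeager subset of $\mathcal{P}$ by \cite[Cor.~15.1]{HWZbook}. For the pulled-back section, consider the universal perturbed section $(y,p)\mapsto(f^*\delbarj)(y)+(f^*p)(y)$ over $\cY\times\mathcal{P}$. Transversality of this universal section to the zero section is a \emph{pointwise} condition at each point $(y_0,p_0)$ of its zero set; at such a point $f(y_0)\in\cS(p_0)\subseteq\cU$, so the perturbation directions $\{\dot p(f(y_0)):\dot p\in T_{p_0}\mathcal{P}\}$ form a dense subspace of the fiber $(f^*\cW)_{y_0}\cong\cW_{f(y_0)}$, and hence surject onto any finite-dimensional complement of the image of the linearized Fredholm operator $D(f^*\delbarj)_{y_0}$. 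Therefore the universal solution set is cut out transversally (an M-polyfold), the projection to $\mathcal{P}$ is Fredholm, and Sard--Smale provides a comeager set of regular values; since the transversality check is purely local at each $y_0$, any failure of injectivity of $f$ is irrelevant here. Intersecting the two comeager subsets of $\mathcal{P}$ yields a dense, hence nonempty, set of $p$ that are regular for both $\delbarj$ and $f^*\delbarj$.

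Fixing such a $p$, it remains to verify weight-preservation of $f|_{\cS(f^*p)}$, which is a local bookkeeping matter. The weight of a point of a perturbed moduli space is the sum of the rational multiplicities of the local branches through it; since $f^*\delbarj+f^*p$ is transverse, the local branches of $\cS(f^*p)$ near a point $y$ are exactly the $f$-preimages of the local branches of $\cS(p)$ near $f(y)$, and the pulled-back multisection carries the same weighted branches as the original under the bundle identification, so these preimages inherit the same multiplicities. Hence $\theta_{\cS(f^*p)}=\theta_{\cS(p)}\circ f$, which is the asserted weight-preservation.
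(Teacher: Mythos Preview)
This theorem is not proved in the present paper: it is quoted from \cite[Thm.~1.7]{schmaltz2019naturality} and used as a black box, so there is no proof here to compare against. The paper does, however, give a related hands-on argument in Theorem~\ref{thm:pulling-back-via-ftk}, and the discussion there (together with references to \cite[\S~4]{schmaltz2019naturality}) indicates that the actual proof proceeds by the inductive HWZ construction of $\ssc^+$-multisections: one pulls back an auxiliary norm, uses the topological pullback condition to obtain compatible controlling neighborhoods, and then builds the multisection by adding local $\ssc^+$-sections chosen to span cokernels simultaneously for $\delbarj$ on $\cZ$ and for $f^*\delbarj$ on $\cY$.

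Your outline captures the two essential points correctly: the topological pullback condition is precisely what converts a small, localized perturbation on $\cZ$ into one whose pullback stays inside a prescribed controlling neighborhood on $\cY$, and transversality is a genericity statement that can be arranged on both sides at once. Where your write-up diverges from the actual machinery is the Sard--Smale packaging: you posit a complete metric space $\mathcal{P}$ of admissible perturbations, a tangent space $T_{p_0}\mathcal{P}$, and a universal Fredholm projection to which Sard--Smale applies. In the polyfold framework the perturbations are $\ssc^+$-\emph{multisections} built inductively from local section structures and bump functions; they do not naturally sit in a Banach manifold, and the comeager-in-$\mathcal{P}$ argument you sketch does not literally run. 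The transversality step in \cite{schmaltz2019naturality} is instead constructive: at each stage one selects finitely many local $\ssc^+$-sections whose values span the (finite-dimensional) cokernels at the relevant points of both $\cS(\delbarj)$ and $\cS(f^*\delbarj)$, and then the standard HWZ argument (as in \cite[Cor.~15.1]{HWZbook}) produces a regular perturbation. Your identification $\cS(f^*p)=f^{-1}(\cS(p))$ and the weight-preservation bookkeeping are correct and are exactly how the restricted map is obtained.
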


By checking that the strong bundles are related by pullback, and by checking the topological pullback condition holds, we may obtain well-defined maps between perturbed GW-moduli spaces.
In particular, by pulling back via the permutation map between GW-polyfolds, $\sigma: \cZ_{A,g,k} \to \cZ_{A,g,k}$, we obtain:
\begin{itemize}
	\item permutation maps,
	\[
	\sigma : \cS_{A,g,k}(\sigma^* p ) \to \cS_{A,g,k}(p).
	\]
\end{itemize}

By pulling back via the inclusion maps from \S~\ref{subsec:inclusion-marked-point-identifying-maps}, we obtain:
\begin{itemize}
	\item inclusion maps for the split perturbed GW-moduli spaces,
	\[
	i: \cS_{A_0+A_1,S}(i^*p) \hookrightarrow (\cS_{A_0,g_0,k_0+1} \times \cS_{A_1,g_1,k_1+1})(p),
	\]
	\item inclusion maps for the genus perturbed GW-moduli spaces,
	\[
	i:\cS^\text{g}_{A,g-1,k+2}(i^*p) \hookrightarrow \cZ_{A,g-1,k+2} (p).
	\]
\end{itemize}

Observe that since the unperturbed GW-moduli space $\CM_{A,g,k}$ is compact, there are only finitely many decompositions $A_0+A_1$ for which the associated split GW-polyfolds can contain a nonempty unperturbed GW-moduli space.
We may then pullback via 
	\[\sqcup \phi_S: \displaystyle{\bigsqcup_{A_0+A_1 = A}} \cZ_{A_0+A_1,S} \to \cZ_{A,g,k}\]
where the (finite) disjoint union is only considered for split GW-polyfolds which contain a nonempty unperturbed GW-moduli space.
We thus obtain
\begin{itemize}
	\item marked point identifying maps for a disjoint union of split perturbed GW-moduli spaces:
	\[
	\sqcup \phi_S: \displaystyle{\bigsqcup_{A_0+A_1 = A}} \cS_{A_0+A_1,S} (\phi_S^*p )	\to \cS_{A,g,k}(p).
	\]
\end{itemize}

Finally, we also obtain:
\begin{itemize}
	\item marked point identifying maps for the genus perturbed GW-moduli spaces,
	\[
	\psi: \cS^\text{g}_{A,g-1,k+2}(\psi^* p)  \to \cS_{A,g,k} (p).
	\]
\end{itemize}

\begin{theorem}
	We can give a perturbed Gromov--Witten moduli space the structure of a stratified space, whose codimension $2m$ strata are given by the $m$-noded stable curves.
\end{theorem}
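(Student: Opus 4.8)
The plan is to show that the stratification of $\cZ_{A,g,k}$ by the number of nodes restricts to a stratification of the perturbed moduli space $\cS_{A,g,k}(p)$, and then to identify the codimension of each stratum. First I would recall that the GW-polyfold carries a natural decomposition $\cZ_{A,g,k} = \bigsqcup_{m\geq 0} \cZ^{(m)}_{A,g,k}$, where $\cZ^{(m)}$ consists of stable curves with exactly $m$ nodal pairs; in local $\ssc$-coordinates $(a,v,\eta)\in\cV$ of a good uniformizing family centered at an $m$-noded representative, this stratum is cut out by setting the appropriate subset of the gluing parameters $a=(a_1,\ldots,a_{\sharp D})$ equal to zero, so each $\cZ^{(m)}$ is locally a $\ssc$-subpolyfold obtained by intersecting with a coordinate subspace. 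The key local fact, which follows from the structure of the gluing construction in \S~\ref{subsubsec:the-gluing-construction-stable-map} (and from the $\ssc$-retraction of Theorem~\ref{thm:ssc-retraction}, which does not mix gluing parameters with the rest of the data), is that the $\ssc$-retract $\cV$ is compatible with this coordinate splitting: $\pi(a,v,\eta) = (a,v,\pi_a(\eta))$, so setting $a_i=0$ commutes with the retraction. Hence each $\cZ^{(m)}_{A,g,k}$ is itself (the underlying set of) a GW-type polyfold, of "codimension $2m$" in the sense that the fiber of the ambient $\ssc$-structure drops by $2m$ real dimensions at such points.

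Next I would invoke the general transversality statement: a regular perturbation $p$ makes $\delbarj + p$ transverse to the zero section, and by the standard argument (e.g. the discussion surrounding \cite[Cor.~15.1]{HWZbook}) one may arrange that $p$ is also compatible with the stratification, i.e.\ that the restriction of $\delbarj + p$ to each $\cZ^{(m)}_{A,g,k}$ is again transverse to the zero section. This is the analytic heart of the argument and uses that the Cauchy--Riemann section preserves the nodal stratification (the image of an $m$-noded stable map under $\delbarj$ is again supported on the same $m$-noded domain) together with the Sard--Smale-type genericity available in the polyfold framework. Granting this, $\cS^{(m)}_{A,g,k}(p) := \cS_{A,g,k}(p)\cap\cZ^{(m)}_{A,g,k}$ is a compact oriented weighted branched suborbifold, and its dimension is
\[
\dim \cS^{(m)}_{A,g,k}(p) = \ind(\delbarj) - 2m,
\]
since passing to the $m$-noded stratum drops the Fredholm index of the restricted problem by exactly $2m$ (the matching conditions $u(x_i)=u(y_i)$ at the $m$ nodes impose $2n$ real conditions each, compensated by the $2n-2$ from the lost reparametrization plus the fact that removing a gluing parameter removes $2$ real dimensions from the domain data — a bookkeeping check against the index formula $\ind\delbarj = 2c_1(A)+(2n-6)(1-g)+2k$ and Definition~\ref{def:underlying-dmspace}). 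Thus $\cS^{(m)}$ has codimension $2m$ in $\cS_{A,g,k}(p)$.

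Finally I would assemble these pieces into the statement that $\{\cS^{(m)}_{A,g,k}(p)\}_{m\geq 0}$ is a stratification: the strata are locally closed, disjoint, cover $\cS_{A,g,k}(p)$, and the frontier condition $\overline{\cS^{(m)}} \subset \bigcup_{m'\geq m}\cS^{(m')}$ holds because a limit of $m$-noded curves can only acquire additional nodes (Gromov compactness / the topology of $\cZ_{A,g,k}$ from \cite[Thm.~3.27]{HWZGW}), never lose them. I expect the main obstacle to be the second step: arranging a single regular perturbation $p$ that is \emph{simultaneously} transverse on every stratum. In the finite-dimensional setting this is a standard iterated Sard--Smale argument over the (finitely many) strata, and in the polyfold setting the tools are available (regular perturbations form a comeager set, and one intersects finitely many such sets), but one must check that the perturbations used on deeper strata extend to the ambient polyfold without destroying transversality already achieved on shallower strata — this is exactly the kind of compatibility handled by the abstract perturbation theory of \cite{HWZbook}, so the work is in citing and applying it correctly rather than in a new construction.
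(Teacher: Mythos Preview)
Your approach is correct in outline but takes a different route from the paper's. You work \emph{internally}: you stratify $\cZ_{A,g,k}$ by the number of nodes, argue that each stratum is cut out by setting gluing parameters to zero, and then appeal to a simultaneous Sard--Smale argument to produce a single perturbation transverse along every stratum. The paper instead works \emph{externally}: it indexes degenerations by labeled trees $T$ (a finer decomposition than your node-count $m$, since several combinatorial types share the same $m$), observes by compactness of $\CM_{A,g,k}(J)$ that only finitely many such trees occur, builds for each $T$ a separate GW-type polyfold $\cZ_T$ exactly as in \S\ref{subsec:other-gw-polyfolds}, and then applies the pullback machinery of Theorem~\ref{thm:pullback-regular-perturbation} to the single map $\bigsqcup_T \cZ_T \to \cZ_{A,g,k}$. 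This produces, in one stroke, a regular perturbation on $\cZ_{A,g,k}$ whose pullback to each $\cZ_T$ is again regular, which is precisely the simultaneous transversality you identify as ``the analytic heart.'' The advantage of the paper's route is that it reduces everything to a theorem already on the shelf, so no new iterated-genericity argument is needed and no care is required about whether the strata inherit well-behaved sub-polyfold structures from the ambient space; the advantage of your route is that it makes the local picture (gluing parameters as normal coordinates to the strata, frontier condition via Gromov compactness) more explicit.
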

\begin{proof}
	In the literature, we may keep track of possible degenerations of an unperturbed GW-moduli space $\CM_{A,g,k}(J)$ by means of a tree $T$ with labelings $(A_\alpha, g_\alpha, k_\alpha)$ on each vertex $\alpha$ such that 	
	\[A = \sum_{\alpha} A_\alpha, \quad g= \sum_{\alpha} g_\alpha, \quad k = \sum_{\alpha} k_\alpha.\]
	Since $\CM_{A,g,k}(J)$ is compact, the set of such trees which model a possible degeneration is finite.
	We can define a GW-polyfold modeled on a tree as in \S~\ref{subsec:other-gw-polyfolds}; the theorem is then obtained by pulling back an abstract perturbation via the map
	\[
	\displaystyle{\bigsqcup_T} \cZ_T \to \cZ_{A,g,k}.
	\]
\end{proof}


\section{Problems arise}
	\label{sec:problems-arise}

At this point, there are substantial obstructions in polyfold GW-theory which we must understand before we have any hope to prove the GW-axioms.

\subsection{Problems}

\begin{problem}
	\label{prob:ftk-continuous-not-c0}
	In general the map $ft_k: \dmexp_{g,k} \to \dmexp_{g,k-1}$ is continuous but not differentiable.
\end{problem}

The construction of the GW-polyfolds requires the modification of the gluing profile used to define the DM-orbifolds, giving rise to exponential DM-orbifolds.
As a consequence of this nonstandard smooth structure the map $ft_k: \dmexp_{g,k} \to \dmexp_{g,k-1}$ is in general continuous but not differentiable. 
Differentiability fails at points which are nodal Riemann surfaces which contain components
$S^2$ with precisely $3$ special points, two of which are nodal, and one of which is the
$k$th-marked point.
In \S~\ref{subsec:differentiability-failure-ftk} we show this failure via explicit calculation.

\begin{problem}
	\label{prob:no-natural-forgetting-map}
	In general there does not exist a natural map which forgets the $k$th-marked point on the GW-polyfolds.
\end{problem}

The GW-stability condition \eqref{eq:gw-stability-condition} imposed on stable curves in the polyfold $\cZ_{A,g,k}$ may not hold in $\cZ_{A,g,k-1}$ once the $k$th-marked point is removed.
Recall once again the GW-stability condition:
\begin{itemize}
	\item For each connected component $C\subset \Sigma$ we require at least one of the following:
	\begin{equation*}
	2g_C+\sharp (M\cup\abs{D})_C \geq 3 \quad \text{or} \quad \int_C u^* \ww > 0.
	\end{equation*}
\end{itemize}
A stable curve in $\cZ_{A,g,k}$ may contain a ``destabilizing ghost component,'' i.e., a component $C_k\simeq S^2$ with precisely $3$ special points, one of which is the $k$th-marked point, and such that $\int_{C_k} u^*\ww =0, \ u|_{C_k} \neq \text{const}$.
In this case, after removal of the $k$th-marked point the GW-stability condition is no longer satisfied, and we cannot consider the resulting data as a stable curve in $\cZ_{A,g,k-1}$.
Hence even on the level of the underlying sets of the GW-polyfolds there does not exist a natural $k$th-marked point forgetting map.
In \S~\ref{subsec:classifying-destabilizing-ghost-components} we classify possible destabilizing ghost components, and also consider the fringe situations where it is possible to obtain a (trivial) well-defined $k$th-marked point forgetting map.

\begin{figure}[ht]
	\centering
	\includegraphics[width=\textwidth]{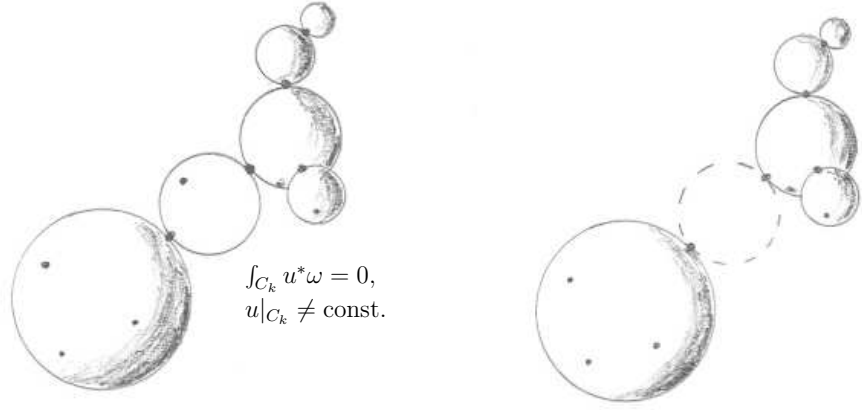}
	\caption{There does not exist a natural map which forgets the $k$th-marked point.}
\end{figure}

\begin{problem}
	\label{prob:restriction-is-not-continuous}
	In general the well-defined restriction $ft_k:\cZ_{A,g,k}^\text{const} \to \cZ_{A,g,k-1}$ is not continuous.
\end{problem}

We can attempt to restrict to a subset $\cZ^\text{const}_{A,g,k}\subset \cZ_{A,g,k}$ with a stronger stability condition:
\begin{itemize}
	\item For each connected component $C\subset \Sigma$ we require at least one of the following:
	\[
	2 g_C+\sharp (M\cup\abs{D})_C \geq 3 \quad \text{or} \quad \int_C u^* \ww > 0.
	\]
	Additionally, if the $k$th-marked point $z_k$ lies on a component $C_k$ with 
	\[
	2 g_{C_k} +\sharp (M\cup \abs{D})_{C_k} = 3 \quad \text{and}\quad \int_{C_k} u^*\ww =0
	\]
	then we require that $u|_{C_k}$ is constant, hence necessarily $u|_{C_k}\equiv u(z_k)$.
\end{itemize}

By the energy identity, any map $u: C_k \to Q$ with $\delbarj u = 0$ and $\int_{C_k} u^* \ww =0$ must be constant.  Hence it follows that the above subset is large enough to contain the entire unperturbed Gromov--Witten solution space, i.e., $\CM_{A,g,k}(J) \subset \cZ^\text{const}_{A,g,k}$.

With this stability condition the $k$th-marked point forgetting map is well-defined on $\cZ^\text{const}_{A,g,k}$, considered as a set.
Consider the subspace topology on $\cZ^\text{const}_{A,g,k}\subset \cZ_{A,g,k}$, and the usual polyfold topology on $\cZ_{A,g,k-1}$.
In general, with respect to these topologies the $k$th-marked point forgetting map is not continuous.
In \S~\ref{subsec:failure-of-continuity-ftk} we demonstrate lack of continuity by exhibiting a sequence which converges in the subspace topology on $\cZ^\text{const}_{A,g,k}$ but for which the image of the sequence does not converge.

\begin{problem}
	\label{prob:projection-not-submersion}
	While the projection $\pi : \cZ_{A,g,k} \to \dmlog_{g,k}$ is $\ssc$-smooth, in general it is not a submersion.
\end{problem}

We observed in Proposition~\ref{prop:projection-map-smooth} that the projection $\pi$ locally factors through the (smooth) identity map $\id :\dmexp_{g,k} \to \dmlog_{g,k}$.
However, in general this identity map is not a submersion.
The only exception is when $(g,k)=(0,3)$ as in this case $\dmspace_{0,3} = \{\pt\}$.
In the proof of Proposition~\ref{prop:identity-exp-to-log} we saw that the gluing parameters transform according to the rectangular coordinate expression
	\[
	F:\R^2 \to \R^2, \qquad (x,y) \mapsto \frac{e^{-2\pi \left(e^{1/\sqrt{x^2+y^2}}-e\right)}}{\sqrt{x^2+y^2}} (x,y).
	\]
It is elementary to compute partial derivatives of $F$ and obtain:
	\begin{align*}
	\frac{\partial F_1}{\partial x} (x,0) &= \frac{-2\pi e^{\left(-2\pi (e^{1/\abs{x}}-e)  + 1/\abs{x} \right)} }{x^2},	\\
	\frac{\partial F_1}{\partial y} (0,y) &= \frac{\partial F_2}{\partial x} (x,0) = 0,	\\
	\frac{\partial F_2}{\partial y} (0,y) &= \frac{-2\pi e^{\left(-2\pi (e^{1/\abs{y}}-e)  + 1/\abs{y} \right)} }{y^2}.
	\end{align*}
Taking limits, one may observe that the Jacobian of this function at $(0,0)$ is therefore
	\[
	\bm{J}(0,0) =
	\begin{bmatrix}
	0 & 0 \\
	0 & 0
	\end{bmatrix}.
	\]

\subsection{Differentiability of the \texorpdfstring{$k$th-marked}{kth-marked} point forgetting map on the exponential Deligne--Mumford orbifolds}
	\label{subsec:differentiability-failure-ftk}

We begin by remarking that the topology on the DM-space is independent of the choice of gluing profile, hence it automatically follows from Proposition~\ref{prop:ftk-log-dmspace} that $ft_k: \dmexp_{g,k}\to \dmexp_{g,k-1}$ is, at the very least, continuous.

However in general, $ft_k :\dmexp_{g,k} \to \dmexp_{g,k-1}$ is not $C^1$.
Differentiability fails at points in $\dmexp_{g,k}$ which contain precisely $3$ special points, one of which is the $k$th-marked point while the other two are nodal points.
In particular, differentiability fails in case 2a from \S~\ref{subsubsec:local-expression-forgetting-map-on-dm-orbifold}, and as noted in Remark~\ref{rmk:special-cases-forgetting} such points will occur when $(g=0,k\geq 5),$ $(g=1,k=1),$ or $(g\geq 2, k\geq 1)$.
We previously derived coordinate expressions for $ft_k$ for an arbitrary gluing profile; in case 2a such an expression has the following simplified form:
	\[
	\hat{ft}_k: (a,b,v) \mapsto (a\ast_{\exp} b, v).
	\]
Hence consider the function
	\[
	\C \times \C \to \C, \qquad (a,b) \mapsto a\ast_{\exp} b
	\]
where once again
	\[
	a\ast_{\exp} b :=
	\begin{cases}
	\varphi_\text{exp}^{-1}(\varphi_\text{exp}(r_a)+\varphi_\text{exp}(r_b)) e^{-2\pi i (\theta_a+\theta_b)} & \text{when } a\neq 0 \text{ and } b\neq 0, \\
	0 & \text{when } a=0 \text{ or } b=0.
	\end{cases}
	\]	
The inverse of $\varphi_\text{exp}(r)=e^{1/r}-e$ is given by $\varphi^{-1}_\text{exp}(R)= \tfrac{1}{\log(R+e)}$, and so if $a\neq 0$ and $b\neq 0$ we have
	\[
	a\ast_{\exp} b = \tfrac{1}{\log \left(e^{1/r_a}+e^{1/r_b}-e\right)}e^{-2\pi i(\theta_a+\theta_b)}.
	\]
This expression is not $C^1$.	
To see this, we rewrite the equation in rectangular coordinates as a function $F:\R^4\to \R^2$, 
	\begin{multline*}
	F(x_1,x_2,x_3,y_4):= \\ 
	\begin{cases}
		(0,0) 	& \text{if } x_1=x_2=0 \text{ or } x_3=x_4=0,\\
		\frac{1}{\log \left(e^{1/r_a}+e^{1/r_b}-e\right)}(\cos (\vartheta_a+\vartheta_b),\sin (\vartheta_a+\vartheta_b)) &\text{else},
	\end{cases}
	\end{multline*}
where $r_a^2 := x_1^2+x_2^2, r_b^2:=x_3^2+x_4^2$ and $\vartheta_a:=-2\pi\tan^{-1}(\tfrac{x_2}{x_1}), \vartheta_b:=-2\pi \tan^{-1}(\tfrac{x_4}{x_3})$.
We now compute the Jacobian matrix $\bm{J}$ of partial derivatives of $F$ at $(0,0,0,0)\in \R^4$.  From the above expression for $F$ we see that $\tfrac{\partial F_i}{\partial x_j} = 0$ for all $i=1,2$ and $j=1,2,3,4$, hence $\bm{J}_{i,j} = 0$.
	
If $F$ were differentiable, the Jacobian matrix would give the total derivative, and we could compute the directional derivative at $(0,0,0,0)$ via the equation 
		\[\nabla_v F =\bm{J}\cdot v = (0,0).\]
We may directly compute the directional derivative as follows.
Let $v\in \R^4$ be a unit vector, we may write
	\[v = r_1 \cos \theta_1 \tfrac{\partial}{\partial x_1} + r_1 \sin \theta_1 \tfrac{\partial}{\partial x_2} + r_2 \cos \theta_2 \tfrac{\partial}{\partial x_3} + r_2 \cos \theta_2 \tfrac{\partial}{\partial x_4}\]
where $r_1^2 +r_2^2 = 1$.
Then one may calculate:
	\begin{align*}
	\nabla_v F
	&=	\lim_{h\to 0} \frac{F(0+hv)}{h} \\
	&=	\lim_{h\to 0} \frac{1}{h \log \left(e^{1/(r_1 h)}+e^{1/(r_2 h)}-e\right)} (\cos (\theta_1+\theta_2), \sin(\theta_1+\theta_2)) \\
	&= \min \{r_1,r_2\} (\cos (\theta_1+\theta_2), \sin(\theta_1+\theta_2)).
	\end{align*}
This contradicts the assumption that $F$ was differentiable.

\subsection{Classifying destabilizing ghost components and fringe definitions of the \texorpdfstring{$k$th-marked}{kth-marked} point forgetting map on the Gromov--Witten polyfolds}
	\label{subsec:classifying-destabilizing-ghost-components}

Consider a stable curve $[\Sigma,j,M,D,u]$ with $k$ marked points, and let $(\Sigma,j,M,D,u)$ be a stable map representative.  We say that this stable curve/stable map contains a \textbf{destabilizing ghost component} if the connected component $C_k\subset \Sigma$ with $z_k \in C_k$ satisfies
	\[
	2 g_{C_k} +\sharp (M\cup \abs{D})_{C_k} = 3 \quad \text{and}\quad \int_{C_k} u^*\ww =0.
	\]
We may classify destabilizing ghost components as follows:
	\begin{itemize}
		\item Type $(0,1)$, i.e., $(g_{C_k}, M_{C_k}) = (0,1)$. There are two nodal points on $C_k$.
		\item Type $(0,2)$, i.e., $(g_{C_k}, M_{C_k}) = (0,2)$. There is one nodal point on $C_k$.
		\item Type $(0,3)$, i.e., $(g_{C_k}, M_{C_k}) = (0,3)$. Then $C_k$ contains no nodal points; it follows that $C_k$ is the only component and hence $[A]=0$. This situation can only arise when $(A,g,k)=(0,0,3)$.
		\item Type $(1,1)$, i.e., $(g_{C_k}, M_{C_k}) = (1,1)$. Then $C_k$ contains no nodal points; it follows that $C_k$ is the only component and hence $[A]=0$. This situation can only arise when $(A,g,k)=(0,1,1)$.
\end{itemize}

\begin{figure}[ht]
	\centering
	\includegraphics[width=\textwidth]{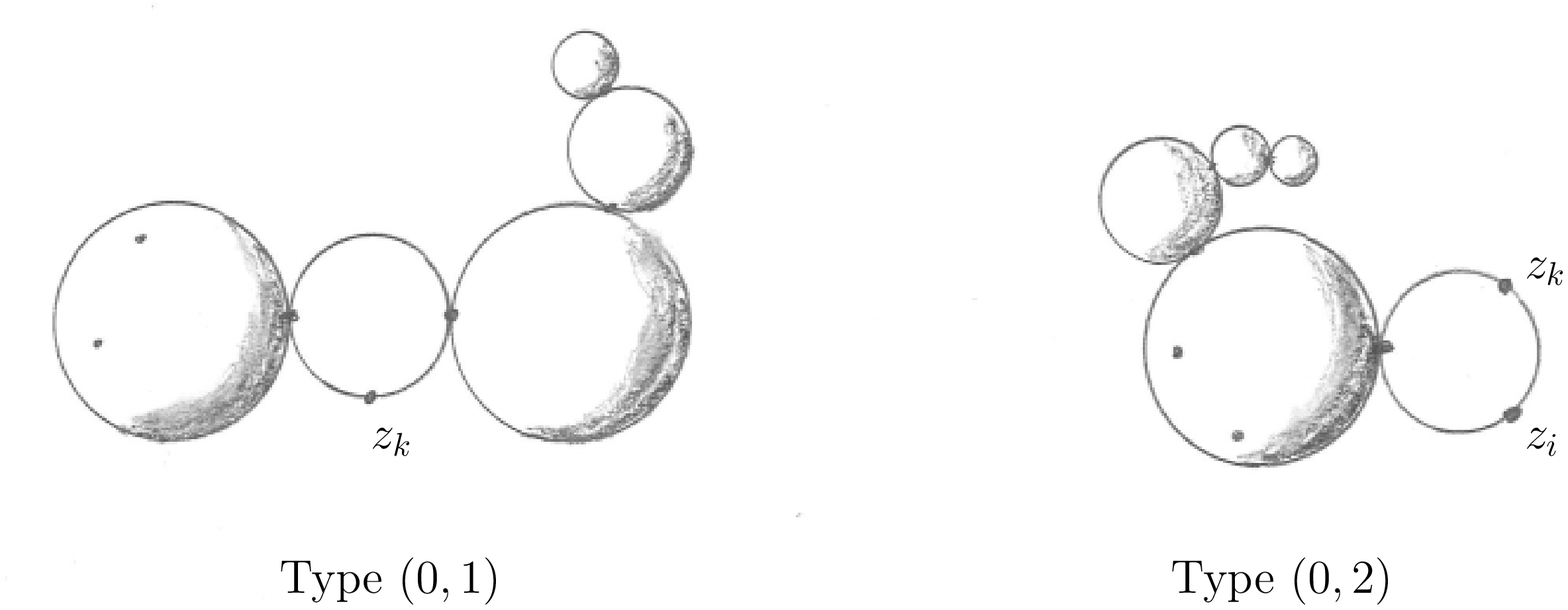}
	\caption{Destabilizing ghost components}
\end{figure}

We now consider fringe definitions of the $k$th-marked point forgetting map on the GW-polyfolds.
Notice that by definition, the following sets of stable curves must be empty:
	\[\cZ_{0,0,2} = \cZ_{0,0,1} = \cZ_{0,1,0} = \cZ_{0,0,0}= \emptyset.\]
This is because the GW-stability condition \eqref{eq:gw-stability-condition} cannot be satisfied. 
It follows that there exist trivially well-defined $\ssc$-smooth maps $ft_3 : \cZ_{0,0,3} \to \cZ_{0,0,2}=\emptyset$ and $ft_1: \cZ_{0,1,1} \to \cZ_{0,1,0}=\emptyset$, in addition to the trivially defined $k$th-marked point forgetting map with source any of the above GW-polyfolds.

Since destabilizing ghost components of types $(0,3)$ and $(1,1)$ may only arise in the fringe situations $(A,g,k)=(0,0,3)$ and $(A,g,k)=(0,1,1)$ respectively, we already have a well-defined $k$th-marked point forgetting map in these cases.

On the other hand, just as in Remark~\ref{rmk:special-cases-forgetting}, a destabilizing ghost component of type $(0,1)$ will always arise in the following situations:
	\[
	(A,g=0, k\geq 5),\quad (A,g=1, k\geq 2), \quad (A,g\geq 2, k\geq 1)
	\]
while a destabilizing ghost component of type $(0,2)$ will always arise in the following situations:
	\[
	(A,g=0, k\geq 4), \qquad (A,g\geq 1, k\geq 2).
	\]

\subsection{Failure of continuity for the \texorpdfstring{$k$th-marked}{kth-marked} point forgetting map restricted to the subset of stable curves with constant destabilizing ghost components}
	\label{subsec:failure-of-continuity-ftk}

Consider the subset $\cZ^\text{const}_{A,g,k}\subset \cZ_{A,g,k}$ of stable curves with constant destabilizing ghost components.
We may define a $k$th-marked point forgetting map
	\[
	ft_k: \cZ^\text{const}_{A,g,k} \to \cZ_{A,g,k-1}.
	\]
If $\cZ^\text{const}_{A,g,k}$ contains a destabilizing ghost component of type $(0,1)$ or of type $(0,2)$ this map is not continuous.

Failure of continuity occurs at stable curves in $\cZ^\text{const}_{A,g,k}$ which contain a component $S^2$ with precisely $3$ special points, one of which is the $k$th-marked point, and such that $\int_{S^2} u^*\ww =0$.
We demonstrate lack of continuity by exhibiting a sequence which converges to a stable curve $[\Sigma,j,M,D,u]$ but for which the image of the sequence does not converge.
For simplicity, we will assume that $[\Sigma,j,M,D,u]$ satisfies the following:
\begin{itemize}
	\item $u$ is constant on a region surrounding the component $S^2$ (this simplifies the local forms for $ft_k$ since our reference curves are now constant),
	\item the two other special points on $S^2$ are both nodal points.
\end{itemize}

Consider a $\ssc$-Banach space which consists of a gluing parameter $a\in\hb\subset\C$ and of maps
	\[\eta^+ : \R^+\times S^1\to \R^{2n}, \qquad	\eta^0 :\R \times S^1 \to \R^{2n}\]
which converge to asymptotic constants
	\[\lim_{s\to\infty} \eta^+ = \lim_{s\to-\infty} \eta^0 = c \quad \text{and}\quad \lim_{s\to\infty} \eta^0 = c'.\]
On the level $m$ we give this space the following norm:
	\begin{equation}
	\label{eq:norm-1}
	\begin{split}
	\abs{(a,\eta^+,\eta^0)}_m^2 = &\abs{a}^2 + \abs{c}^2 + \abs{c'}^2 + \sum_{|\alpha|\leq m+3} \int_{\R^+ \times S^1} \abs{D^\alpha (\eta^+-c)}^2 e^{2\delta_m\abs{s}}ds\ dt \\
	&\qquad + \sum_{|\alpha|\leq m+3} \int_{\R^- \times S^1} \abs{D^\alpha (\eta^0-c)}^2 e^{2\delta_m\abs{s}}ds\ dt \\
	&\qquad + \sum_{|\alpha|\leq m+3} \int_{\R^+ \times S^1} \abs{D^\alpha (\eta^0-c')}^2 e^{2\delta_m\abs{s}}ds\ dt. 
	\end{split}
	\end{equation}
We now construct the sequence.  Choose a smooth cut-off function $\beta : \R \to [0,1]$ such that:
\begin{itemize}
	\item $\beta (s)=1$ for all $-\tfrac{1}{2} \leq s \leq \tfrac{1}{2}$,
	\item $\beta (s)=0$ for all $-1 \leq s \leq 1$.
\end{itemize}
Choose a vector $v\in \R^{2n}$.  Then define a vector field $\gamma :\R\times S^1 \to \R^n$ by
	\[
	\gamma (s,t) := \beta(s)\cdot v.
	\]
We may renormalize $v$ by a constant so that
	\[
	\sum_{\abs{\alpha}\leq 3}\int_{\R \times S^1}\abs{D^{\alpha} \gamma (s, t)}^2e^{2\delta_0 \abs{s}}\ ds\ dt=1.
	\]
Then $\xi_n := \tfrac{1}{\sqrt{n}} \gamma$ is a vector field such that 
	\[\sum_{\abs{\alpha}\leq 3}\int_{\R \times S^1}\abs{D^{\alpha} \xi_n (s, t)}^2e^{2\delta_0 \abs{s}}\ ds\ dt=\tfrac{1}{n}.\]
Now choose $a_n\in\hb$, $a_n \neq 0$ small enough such that $e^{2\delta_0 R_{a_n}} > \tfrac{2n}{\abs{v}^2}$.
It follows that
	\[
	\abs{(a_n,0,\xi_n)}_0^2 = \abs{a_n}^2 + \tfrac{1}{n} \to 0 \quad \text{as } n\to \infty.
	\]

On the other hand, we may consider a second $\ssc$-Banach space consisting of maps
	\[\eta' : \R^+\times S^1 \to \R^{2n}\]
with asymptotic constant given by $\lim_{s\to \infty} \eta' = c''$ and with $m$ level norm
	\begin{equation}
	\label{eq:norm-2}
	\abs{\eta'}_m^2 = \abs{c''}^2 + \sum_{|\alpha|\leq m+3} \int_{\R^+ \times S^1} \abs{D^\alpha (\eta'-c'')}^2 e^{2\delta_m\abs{s}}ds\ dt.
	\end{equation}
Consider the sequence $\oplus_{a_n}(0,\xi_n)$, using the gluing procedure described in \cite[\S~2.4]{HWZGW}.
In this norm:
	\begin{align*}
	\abs{\oplus_{a_n}(0,\xi_n)}^2_0
	&=\sum_{\abs{\alpha}\leq 3}\int_{\R^+ \times S^1}\abs{D^{\alpha} \xi_n(s-R_{a_n})  }^2e^{2\delta_0 \abs{s}}ds\ dt \\
	& > \int_{[R_{a_n}, R_{a_n}+1/2]\times S^1} \left|\tfrac{1}{\sqrt{n}} \cdot v\right|^2 e^{2\delta_0 \abs{s}}ds\ dt \\
	&> \frac{1}{2}\cdot \frac{1}{n}\cdot \abs{v}^2 \cdot e^{2\delta_0 R_{a_n}} >1. 
	\end{align*}
for all $n$.

The topology of a neighborhood of $[\Sigma,j,M,D,u]$ is determined by the $0$-level norm \eqref{eq:norm-1}, moreover, because the gluing parameters $a_n$ are not equal to $0$ the sequence $(a_n,0,\xi_n)$ can be used to define a sequence $x_n \in \cZ^\text{const}_{A,g,k}$ which converges to $[\Sigma,j,M,D,u]$.
On the other hand, the topology of a neighborhood of $ft_k ([\Sigma,j,M,D,u])$ is determined by the $0$-level norm \eqref{eq:norm-2}; the image $\oplus_{a_n} (0,\xi_n)$ correspond to a sequence $ft_k(x_n) \in \cZ_{A,g,k-1}$ which does not converge.


\section{The universal curve polyfold}
	\label{sec:universal-curve-polyfold}

We show how to construct a polyfold $\cZ^\text{uc}_{A,g,k}$ of \textit{universal curves} over the GW-polyfold $\cZ_{A,g,k-1}$.
The underlying set of this polyfold may be identified with $\cZ^\text{const}_{A,g,k}$, and hence we can also consider this as a GW-polyfold of stable curves with constant destabilizing ghost components.
However, we give it a \textit{new} polyfold structure, with a \textit{new} $\ssc$-smooth structure, and a \textit{new} topology.

This new polyfold $\cZ^\text{uc}_{A,g,k}$ uses a modified gluing construction, designed to more accurately anticipate the geometry of the desired solution space.
When the destabilizing ghost component is of type $(0,1)$, it interpolates the gluing parameters surrounding a ghost component directly.
When the destabilizing ghost component is of type $(0,2)$, it forgets the gluing parameter, and relabels the remaining nodal point as a marked point.

This new polyfold carries the full abstract perturbation theory developed in \cite{HWZ3}.
We thus obtain well-defined GW-invariants for this new polyfold, see Theorem~\ref{thm:gw-invariants-new-polyfold}.
That these invariants coincide with the original polyfold GW-invariants constructed in \cite{HWZGW} is proved in Theorem~\ref{thm:equivalence-polyfold-invariants}.

Crucially, on the universal curve polyfold we may consider a well-defined $k$th-marked point forgetting map
	\[
	ft_k :\cZ^\text{uc}_{A,g,k}\to \cZ_{A,g,k-1}.
	\]
and indeed, the preimage of a point $[\Sigma,j,M,D,u] \in \cZ_{A,g,k-1}$ via $ft_k$ consists of the Riemann surface $\Sigma$ with nodes identified, i.e.,
	\[
	ft_k^{-1} ([\Sigma,j,M,D,u]) \simeq \Sigma / \sim, \quad \text{where $x_a\sim y_a$ for nodal pairs $\{x_a,y_a\}\in D$}.
	\]	
This map is $\ssc^0$, and fails to be $\ssc^1$ at stable curves which contain a destabilizing ghost component of type $(0,1)$.
As a consequence, pulling back perturbations is not quite as automatic as in \S~\ref{subsec:pulling-back-abstract-perturbations}, and will require a slightly more hands-on approach.

Throughout this section, we will assume that $(A,g,k) \neq (0,0,2),$ $(0,0,1),$  $(0,0,0),$  $(0,1,0)$ and $(A,g,k) \neq (0,0,3),$  $(0,1,1)$ as in these cases we may consider the trivially defined $k$th-marked point forgetting map $ft_k :\cZ_{A,g,k} \to \emptyset$ (see the discussion in \S~\ref{subsec:classifying-destabilizing-ghost-components}).

\subsection{Constructing the universal curve polyfold}

We begin by describing the underlying set of the universal curve polyfold.

\begin{definition}
	The \textbf{universal curve polyfold} $\cZ^\text{uc}_{A,g,k}$ is defined as the set of stable curves with constant destabilizing ghost components
	\[
	\cZ^\text{uc}_{A,g,k}:= \{ (\Sigma,j,M,D,u)  \mid  \cdots, \text{ uc-stability condition} \}/ \sim
	\]
	where $(\Sigma,j,M,D)$ is a connected noded Riemann surface (where we do not require the DM-stability condition), and which satisfies the same conditions as Definition~\ref{def:gw-polyfold}---except here we replace the GW-stability condition \eqref{eq:gw-stability-condition} with the following.
	\begin{itemize}
		\item For each connected component $C\subset \Sigma$ the following \textbf{uc-stability condition} holds.  We require at least one of the following:
		\[
		2 g_C+\sharp (M\cup\abs{D})_C \geq 3 \quad \text{or} \quad \int_C u^* \ww > 0.
		\]
		Additionally, if the $k$th-marked point $z_k$ lies on a component $C_k$ which is unstable after forgetting $z_k$ then we require that $u|_{C_k}$ is constant (hence necessarily $u|_{C_k}\equiv u(z_k)$), i.e.,
		\begin{equation}
		\label{eq:new-stability}
		2 g_{C_k} +\sharp (M\cup \abs{D})_{C_k} = 3 \quad \text{and}\quad \int_{C_k} u^*\ww =0 \implies u|_{C_k} \equiv \text{const.}
		\end{equation}
		\item We require that $u$ be of class $H^{3,\delta_0}$ at all marked points $\{z_1,\ldots z_{k-1}\}$.  We require that $u$ be of class $H^3_\text{loc}$ at the marked point $z_k$ (see Definition~\ref{def:class-3delta}).
	\end{itemize}
	We call a tuple $(\Sigma,j,M,D,u)$ which satisfies the above a \textbf{stable map with constant destabilizing ghost component $C_k$}, and call an equivalence class satisfying the above a \textbf{stable curve with constant destabilizing ghost component $C_k$}.
\end{definition}

\subsubsection{New gluing constructions at destabilizing ghost components}
	\label{new-gluing-construction}

Consider a stable map $(\Sigma,j,M,D,u)$ which satisfies the uc-stability condition \eqref{eq:new-stability}, and suppose the marked point $z_k$ lies on a destabilizing ghost component $C_k$.

By the classification of destabilizing ghost components in \S~\ref{subsec:classifying-destabilizing-ghost-components} and by the assumption $(A,g,k) \neq (0,0,3),$ $(0,0,1)$ we need only consider destabilizing ghost components of type $(0,1)$ and type $(0,2)$.

In what follows, we will define new gluing constructions for these cases, designed to more accurately model the expected behavior of the GW-moduli spaces on regions near a destabilizing ghost component.
This new gluing procedure remains identical to the DM-gluing considered on the underlying Riemann surface $\Sigma$ in \S~\ref{subsubsec:gluing-profiles-gluing-construction-noded-reimann-surface}, it only modifies the gluing construction at a stable map as described in \S~\ref{subsubsec:the-gluing-construction-stable-map}.
In order to ensure $\ssc$-smoothness of the expressions for gluing and anti-gluing it is important to use the exponential  gluing profile given by $\varphi_{\exp} (r)=e^{1/r}-e.$

\subsubsection{Gluing at destabilizing ghost components of type $(0,1)$}



For simplicity we will assume that $\{x_a,y_a\}, \{x_b,y_b\}$ are the only nodal pairs.
We recall the gluing construction for a noded Riemann surface of case $2a$ from \S~\ref{subsubsec:local-expression-forgetting-map-on-dm-orbifold}.
Writing the gluing parameters $a\neq 0$ or $b\neq 0$ in polar coordinates as 
	\begin{align*}
	a = r_a e^{-2\pi i\theta_a}, \qquad & r_a\in (0,\tfrac{1}{2}),\ \theta_a \in \R/ \Z, \\
	b = r_b e^{-2\pi i\theta_b}, \qquad & r_b\in (0,\tfrac{1}{2}),\  \theta_b \in \R/\Z,
	\end{align*}
we replace $D_{x_a}\sqcup C_k \sqcup D_{y_b}$ with the glued cylinder
	\[
	Z_{a,b} :=
	\begin{cases}
	[0,R_a+R_b]\times S^1 & a\neq 0, b\neq 0 \\
	\R^+\times S^1 \sqcup \R^-\times S^1 & \text{if } a\neq 0,b=0 \text{ or } a=0,b\neq 0\\
	\R^+\times S^1 \sqcup \R\times S^1 \sqcup \R^-\times S^1 & \text{if } a=0,b=0.
	\end{cases}
	\]
We thus obtain the glued Riemann surface 
	\[\Sigma_{a,b}: = \Sigma \setminus (D_{x_a}\sqcup C_k \sqcup D_{y_b}) \sqcup Z_{a,b}.\]

We now define the new stable map gluings.
At the outset, fix a smooth cutoff function $\beta:{\mathbb R}\rightarrow [0,1] $ which satisfies the following:
\begin{itemize}
	\item $\beta (-s)+\beta (s)=1\quad \text{for all } s\in \R$
	\item $\beta (s)=1\quad \text{for all } s\leq -1$
	\item $\tfrac{d}{ds}\beta(s)<0\quad \text{for all } s\in (-1, 1).$
\end{itemize}
Consider a pair of continuous maps
	\[
	h^+:\R^+\times S^1\to \R^{2n}, \qquad h^-:\R^-\times S^1\to \R^{2n}
	\]
with common asymptotic constant $c:= \lim_{s\to  \infty} h^+(s,t)= \lim_{s\to  -\infty} h^-(s,t)$.
For given gluing parameters $a,b\in \hb$, $a\neq 0$, $b\neq 0$ we define the \textbf{glued map of type $(0,1)$},  $\oplus^\text{uc}_{a,b} (h^+,h^-) : Z_{a,b} \to \R^{2n}$, by
	\begin{align*}
	\oplus^\text{uc}_{a,b} (h^+,h^-) (s,t):=
		& \beta\left(	s - \tfrac{R_a+R_b}{2}	\right) \cdot h^+(s,t) \\
		&+ \left(1-\beta \left(	s-\tfrac{R_a+R_b}{2}	\right)\right) \cdot h^-(s-R_a-R_b,t-\theta_a-\theta_b)
	\end{align*}
For other values of $a$ and $b$ we define $\oplus^\text{uc}_{a,b} (h^+,h^-)$ by
	\begin{align*}
	&\oplus^\text{uc}_{a,b} (h^+,h^-):= \\
	&\qquad \begin{cases}
	(h^+,h^-):\R^+\times S^1\sqcup \R^-\times S^1 \to \R^{2n} 	& \text{if } a\neq 0, b=0 \\
																& \text{or } a=0, b\neq 0, \\
	(h^+,c,h^-) : \R^+\times S^1\sqcup \R\times S^1 \sqcup \R^-\times S^1 \to \R^{2n} & \text{if } a=0, b=0,
	\end{cases}
	\end{align*}
where $c:\R\times S^1 \to \R^{2n}$ is the constant map to $c\in \R^{2n}$

Consider the following space of sections $H^{3,\delta_0}_{c,\text{const}}(\Sigma, u^*TQ)$, consisting of sections $\eta : \Sigma \to u^*TQ$ such that:
$\eta$ is of class $H^{3,\delta_0}$ around the nodal points and of class $H^3_\text{loc}$ at the other points of $\Sigma$, $\eta$ has matching asymptotic values at the nodal pairs and is constant on the destabilizing ghost component, i.e., $\eta(x_a) = \eta(C_k)=\eta(y_b)$.
We now consider local expressions for the map $u$ and section $\eta$ as follows.
In a neighborhood of the point $u(x_a)=u(C_k)=u(y_b) \in Q$ choose a chart which identifies the point with $0\in \R^{2n}$. 
Furthermore, given a Riemannian metric $g$ on $Q$ we may assume that this chart is chosen such that this metric is identifiable with the Euclidean metric on $\R^{2n}$.
Localized to these coordinate neighborhoods, we may view the base map $u$ as maps 
	\[u^+:\R^+\times S^1\to \R^{2n}, \quad 0:\R\times S^1 \to \R^{2n}, \quad u^-:\R^-\times S^1\to \R^{2n}\]
and likewise the section $\eta$ as maps
	\[\eta^+:\R^+\times S^1\to \R^{2n}, \quad c:\R\times S^1 \to \R^{2n}, \quad \eta^-:\R^-\times S^1\to \R^{2n}\]
such that $c:=\lim_{s\to  \infty} \eta^+ = \lim_{s\to  -\infty} \eta^-$ and where $c:\R\times S^1 \to \R^{2n}$ is the constant map to $c\in \R^{2n}$.

Given a gluing parameters $a,b \in \hb$ we define the \textbf{new glued stable map} $\oplus^\text{uc}_{a,b} \exp_u (\eta) :\Sigma_{a,b} \to Q$ as follows:
	\[
	\oplus^\text{uc}_{a,b} \exp_u \eta :=
	\begin{cases}
	\exp_u \eta 							& \text{ on } \Sigma \setminus (D_{x_a}\sqcup C_k\sqcup D_{y_a}), \\
	\oplus^\text{uc}_{a,b} (u^+ + \eta^+, u^- + \eta^- ) 	& \text{ on } Z_{a,b}.
	\end{cases}
	\]

We will sometimes use the abbreviations
$R_{a,b}:= \varphi (\abs{a}) +\varphi(\abs{b})$ and $\theta_{a,b} := \theta_a+\theta_b.$
If $a,b\in \hb$ are gluing parameters with $a\neq 0$, $b\neq 0$, we define the cutoff function $\beta_{a,b}:\R\to \R$ by 
	\[
	\beta_{a,b} (s):=\beta \left( s-\tfrac{R_{a,b}}{2}\right).
	\]

In order to describe data that would otherwise be lost in the gluing procedure, we define corresponding cylinders $C_{a,b}$ by
	\[
	C_{a,b} :=
	\begin{cases}
	\R\times S^1	&\text{when } a\neq 0, b\neq 0 \\
	\emptyset		&\text{otherwise}.
	\end{cases}
	\]
For $a\neq 0$ and $b\neq 0$ define the \textbf{new anti-glued section} 
as the map $\ominus^\text{uc}_{a,b}(\eta^+,\eta^-) : C_{a,b}\to \R^{2n}$,
	\begin{align*}
	\ominus^\text{uc}_{a,b} (\eta^+,\eta^-)(s,t):=
	& -\left(1-\beta_{a,b}(s) \right) \cdot \left(	\eta^+(s,t)	-\av_{a,b}(\eta^+,\eta^-)\right)\\
	&\qquad + \beta_{a,b}(s) \cdot \left(\eta^-(s-R_{a,b},t-\theta_{a,b}) -	\av_{a,b}(\eta^+,\eta^-)\right)
	\end{align*}
where
	\[
	\av_{a,b} (\eta^+,\eta^-):= \frac{1}{2} \left(	\int_{S^1} \eta^+(\tfrac{R_{a,b}}{2},t)dt + \int_{S^1} \eta^-(-\tfrac{R_{a,b}}{2},t)dt	\right).
	\]
For other values of $a$ and $b$ we define $\ominus^\text{uc}_{a,b} (\eta^+,\eta^-)$ as the unique map $\emptyset \to \R^{2n}$.

Letting $U\subset H^{3,\delta_0}_{c,\text{const}}(\Sigma, u^*TQ)$ be a sufficiently small neighborhood of the zero section, we have an analog of Theorem~\ref{thm:ssc-retraction}.
\begin{proposition}
	There exists a $\ssc$-retraction, i.e., a $\ssc$-smooth map which satisfies $\pi \circ \pi = \pi$,
	\begin{align*}
	\pi: \hb \times\hb \times V \times U 	& \to \hb \times\hb \times V \times U \\
	(a,b,v,\eta)				& \mapsto (a,b,v,\pi_{a,b} (\eta)),
	\end{align*}
	such that the restriction of the above family to the subset $\cV := \pi (\hb \times\hb \times V \times U)$ is injective.
\end{proposition}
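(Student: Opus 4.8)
The plan is to mirror the proof of Theorem~\ref{thm:ssc-retraction} (i.e., \cite[Thm.~2.49]{HWZGW}), but adapted to the new gluing/anti-gluing maps $\oplus^\text{uc}_{a,b}$, $\ominus^\text{uc}_{a,b}$ at a destabilizing ghost component of type $(0,1)$. The key structural observation is that the pair $(\oplus^\text{uc}_{a,b}, \ominus^\text{uc}_{a,b})$ is built so as to realize, for each fixed $(a,b)$ with $a\neq 0$, $b\neq 0$, a topological isomorphism between the ``unglued'' space of sections on $\R^+\times S^1 \sqcup \R^-\times S^1$ (with matching asymptotic constants and with the ghost component contributing only a constant) and the ``glued'' space of sections on the finite cylinder $Z_{a,b}$ together with the complementary data on $C_{a,b}= \R\times S^1$. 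So the first step is to make this precise: show that for $a\neq 0$, $b\neq 0$ the linear map $\eta \mapsto (\oplus^\text{uc}_{a,b}\eta, \ominus^\text{uc}_{a,b}\eta)$ is a topological isomorphism onto its image, with an explicit inverse obtained by cutting off and summing back, exactly as in the classical case but keeping track that the ghost component carries no free section data (only the common asymptotic constant $c$). This isomorphism allows us to \emph{define} $\pi_{a,b}(\eta)$ as: apply the gluing $\oplus^\text{uc}_{a,b}$, discard the anti-glued part, and then un-glue using the inverse isomorphism — which produces a section on the unglued domain whose anti-glued component vanishes. Concretely $\pi_{a,b} := (\oplus^\text{uc}_{a,b})^{-1}_{\text{split}} \circ (\oplus^\text{uc}_{a,b}, 0)$ where the first map forgets $\ominus^\text{uc}_{a,b}$. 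For $a=0$ or $b=0$ one sets $\pi_{a,b} := \id$. The idempotency $\pi\circ\pi = \pi$ is then immediate from the isomorphism property (applying gluing, zeroing the anti-glued part, and ungluing, twice, is the same as doing it once), and injectivity of the family on the image $\cV$ follows because on $\cV$ the anti-glued data is forced to be zero, so two sections glue to the same stable map only if their glued-and-ungluing images agree, i.e., only if they are equal.

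The nontrivial content is $\ssc$-smoothness of the map $(a,b,v,\eta)\mapsto (a,b,v,\pi_{a,b}(\eta))$. Here I would invoke exactly the machinery of \cite{HWZsc}: the point is that $\pi_{a,b}$ is assembled from the gluing map $\oplus^\text{uc}_{a,b}$, the anti-gluing map $\ominus^\text{uc}_{a,b}$, the cutoff functions $\beta_{a,b}$, and the averaging operators $\av_{a,b}$, all of which depend on $(a,b)$ only through the gluing length $R_{a,b} = \varphi_{\exp}(|a|) + \varphi_{\exp}(|b|)$ and the angle $\theta_{a,b} = \theta_a + \theta_b$. The decisive technical facts — that the relevant shift, cutoff-interpolation, and exponential-decay estimates are $\ssc$-smooth in the gluing parameters — are precisely \cite[Thms.~1.27,~1.28]{HWZsc}, whose proofs use the exponential gluing profile in an essential way (cf.\ \cite[\S~2.3]{HWZsc}); see also the remark following Theorem~\ref{thm:ssc-retraction}. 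The only genuinely new wrinkle compared with the classical case is that the parameter $(a,b)$ enters through the \emph{sum} $R_{a,b} = \varphi_{\exp}(|a|)+\varphi_{\exp}(|b|)$ rather than a single $\varphi_{\exp}(|a|)$; but the estimates of \cite{HWZsc} are local and depend smoothly on the gluing length, so the composition of two such smooth dependencies (in $|a|$ and in $|b|$) is again $\ssc$-smooth. The variable $v$ enters only through the good complex deformation on the part of $\Sigma$ away from the ghost region, where nothing new happens, so $\ssc$-smoothness in $v$ is inherited verbatim from \cite{HWZGW}.

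So the steps, in order, are: (1) establish the fixed-parameter topological isomorphism $\eta \leftrightarrow (\oplus^\text{uc}_{a,b}\eta, \ominus^\text{uc}_{a,b}\eta)$ for $a,b\neq 0$, with explicit inverse; (2) define $\pi_{a,b}$ via gluing, zeroing the anti-glued part, and ungluing, and $\pi := (a,b,v,\eta)\mapsto(a,b,v,\pi_{a,b}(\eta))$, extended by the identity when $a=0$ or $b=0$; (3) verify $\pi\circ\pi = \pi$ algebraically from (1); (4) prove $\ssc$-smoothness of $\pi$ by reducing to \cite[Thms.~1.27,~1.28]{HWZsc}, checking the gluing length $R_{a,b}$ and angle $\theta_{a,b}$ enter smoothly (using the exponential gluing profile), and that matching behavior at $a=0$ or $b=0$ (where $Z_{a,b}$ and $C_{a,b}$ change shape) is handled by the same limiting arguments as in \cite{HWZGW}; (5) conclude that the restriction of the good uniformizing family to $\cV := \pi(\hb\times\hb\times V\times U)$ is injective, using that the anti-glued data vanishes on $\cV$.

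The main obstacle I anticipate is step (4) at the boundary stratum, namely the gluing-parameter values where $a=0$ or $b=0$ (one node unglued, one not) and where $a=b=0$ (both unglued). At these values the glued cylinder $Z_{a,b}$ and the auxiliary cylinder $C_{a,b}$ degenerate discontinuously as \emph{sets}, so it is not at all obvious a priori that $\pi_{a,b}$ varies $\ssc$-smoothly across them; this is exactly the kind of subtlety the $\ssc$-calculus of \cite{HWZsc} was designed to handle, and the content of the argument is to check that our type-$(0,1)$ gluing falls within the scope of those theorems. The slightly novel feature — two gluing parameters being interpolated through their sum $R_{a,b}$, with the ghost component's map forced to be the constant $c$ and contributing no section degrees of freedom — needs to be checked to be compatible with the decay-weighted norms \eqref{eq:norm-1}; concretely, one must verify that the ``new anti-glued section'' $\ominus^\text{uc}_{a,b}(\eta^+,\eta^-)$, which lives on $C_{a,b} = \R\times S^1$ and records the discrepancy between $\eta^\pm$ and their common average, together with the glued section on $Z_{a,b}$, genuinely reconstructs $(\eta^+,c,\eta^-)$ on the nose — the presence of the constant middle piece $c$ being what replaces the ghost component. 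Granting the $\ssc$-smoothness input from \cite{HWZsc}, the rest is bookkeeping parallel to \cite[\S\S~2.4--2.5]{HWZGW}.
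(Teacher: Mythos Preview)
Your proposal is correct and follows essentially the same approach as the paper. The paper's proof is terser: it writes down the explicit closed-form formulas for $\eta^\pm = \pi_{a,b}(\xi)^\pm$ in terms of $\beta_{a,b}$, $\gamma_{a,b} = \beta_{a,b}^2 + (1-\beta_{a,b})^2$, and $\av_{a,b}(\xi^+,\xi^-)$ (these are exactly the output of your ``unglue after zeroing the anti-glued part'' step), and then defers $\ssc$-smoothness to a term-by-term check following \cite[\S~2.4]{HWZsc}, just as you propose in step~(4).
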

\begin{proof}
	We may consider a simplified local expression 
	\begin{align*}
	\pi: \hb\times\hb \times E^\pm &\to  \hb\times\hb\times E^\pm \\
	 (a,b,\xi^+,\xi^-)&\mapsto (a,b,\eta^+,\eta^-)
	\end{align*}
	where $\eta^\pm$ are uniquely defined by the equations:
	\[
	\oplus^\text{uc}_{a,b}(\eta^+,\eta^-) = \oplus^\text{uc}_{a,b}(\xi^+,\xi^-) \quad \text{and}\quad \ominus^\text{uc}_{a,b} (\eta^+,\eta^-) =0.
	\]
	Abbreviate $\gamma_{a,b}=\beta_{a,b}^2(s^+)+(1-\beta_{a,b}(s^+))^2$.  We may write down the following explicit formulas:
	\begin{align*}
	\eta^+(s^+, t^+) =	&\left(1-\frac{\beta_{a,b}}{\gamma_{a,b}}\right)\cdot \av_{a,b}(\xi^+, \xi^-) +\frac{\beta_{a,b}^2}{\gamma_{a,b}}\cdot \xi^+(s^+, t^+)\\
	&+ \frac{\beta_{a,b} (1-\beta_{a,b})}{\gamma_{a,b}}\cdot \xi^-(s^+-R_{a,b}, t^+ -\theta_{a,b})
	\end{align*}
	for $(s^+, t^+)\in \R^+\times S^1$.  A similar calculation leads to the following formula for $\eta^-$:
	
	\begin{align*}
	\eta^-(s^-, t^-)=	&\left( 1-\frac{\beta_{a,b} (- s^-)}{\gamma_{a,b} (-s^-)}\right)\cdot \av_{a,b} (\xi^+, \xi^-)\\
	&+\frac{\beta_{a,b} (-s^- ) (1-\beta_{a,b} (-s^- ))}{\gamma_{a,b}(-s^- )}\cdot \xi^+(s^-+R_{a,b}, t^- +\theta_{a,b}) \\
	&+\frac{\beta_{a,b} (-s^- )^2}{\gamma_{a,b}(-s^- )}\xi^-(s^-, t^-)
	\end{align*}
	for $(s^-, t^-)\in \R^-\times S^1$.
	

	The argument that the retraction $\pi$ is $\ssc$-smooth then follows precisely the same reasoning as in the proof that the projection $\pi_a$ defined via the usual gluing and anti-gluing is $\ssc$-smooth, namely by checking differentiability of individual terms in the explicit formula for $\eta^\pm$ and applying the chain rule.  Full details must follow the lengthy arguments given in \cite[\S~2.4]{HWZsc}.
\end{proof}

As in \cite[pp.~60--61]{HWZGW}, there are analogous new hat gluings and new hat antigluings, used to define the strong polyfold bundles.

\subsubsection{Gluing at destabilizing ghost components of type $(0,2)$}

For simplicity, we will assume that $\{x_a,y_a\}$ is the only nodal pair.
Let $a\in \hb$ be the associated gluing parameter.
Recall the gluing construction for a noded Riemann surface of case $2b$ from \S~\ref{subsubsec:local-expression-forgetting-map-on-dm-orbifold}, in particular, note that when $a\neq 0$, the Riemann surface $\Sigma_a$ is obtained by simply deleting the component $C_k$.

In this case we use the single gluing parameter to parametrize movement of the marked point $z_k$; we do not interpolated maps across the node.
We consider the space $\eta \in H^{3,\delta_0}_{c,\text{const}}(\Sigma, u^*TQ)$ consisting of sections $\eta: \Sigma \to u^*TQ$ such that: $\eta$ is of class $H^{3,\delta_0}$ around the nodal points and of class $H^3_\text{loc}$ at the other points of $\Sigma$, $\eta$ is constant on the destabilizing ghost component, i.e., $\eta(x_a) = \eta (C_k)$.
The new glued stable map $\oplus^\text{uc}_a \exp_u \eta : \Sigma_a \to Q$ is then defined by:
	\[
	\oplus^\text{uc}_a \exp_u \eta :=
	\begin{cases}
	\exp_u \eta |_{\Sigma\setminus C_k}	 & a\neq 0, \\
	\exp_u \eta |_{\Sigma\setminus C_k}, \exp_u \eta (C_k)|_{C_k}			 & a=0.
	\end{cases}
	\]

\subsubsection{Good uniformizing families of stable maps with constant destabilizing ghost components}

As in \S~\ref{subsubsec:families-stable-maps-in-general} we may choose a stabilization together with linear constraints.
The Riemann surface $(\Sigma,j,M\cup S,D)$ is now stable; let 
	\[
	(a,v)\mapsto (\Sigma_a, j(a,v), (M\cup S)_a, D_a), \quad (a,v) \in \hb^{\sharp D} \times V
	\]
be a good uniformizing family of stable noded Riemann surfaces.
Using the linear constraints, we consider the constrained subspace of sections
	\[
	E_S := \{ \eta \in H^{3,\delta_0}_{c,\text{const}} (\Sigma, u^*TQ)	\mid \eta(z_s) \in H_{u(z_s)} \text{ for } z_s \in S\}
	\]
and let $U \subset E_S$ be an suitably small open neighborhood of the zero section.
One can then define a $\ssc$-retraction
	\begin{align*}
	\pi : \hb^{\sharp D} \times V \times U 	& \to \hb^{\sharp D} \times V \times U\\
	(a,v,\eta)									& \mapsto (a,v,\pi_a(\eta)).
	\end{align*}
The image $\cU : = \pi (\hb^{\sharp D} \times V \times U)$ is a $\ssc$-retract on which the gluing map is injective.

\begin{definition}
	Analogously to Definition~\ref{def:good-uniformizing-family-of-stable-maps}, we define a \textbf{good uniformizing family of stable maps associated to the new gluing} centered at $(\Sigma,j,M,D,u)$ as a family of stable maps
	\[
	(a,v,\eta) \mapsto (\Sigma_a, j(a,v), M_a, D_a, \oplus^\text{uc}_a \exp_u \eta), \quad (a,v,\eta )\in \cU.
	\]
	In particular, the section $\eta$ satisfies the linear constraint $H_{u(z_s)}$ at each stabilizing point $z_s\in S$.

	If the stable map $(\Sigma,j,M,D,u)$ contains a destabilizing ghost component of type $(0,1)$, the new glued map $\oplus^\text{uc}_a \exp_u \eta : \Sigma_a \to Q$ can be described as follows:
	\[
	\oplus^\text{uc}_{a,b} \exp_u \eta :=
	\begin{cases}
	\exp_u \eta					& \text{on } \Sigma \setminus \cup_{\{x_a,y_a\}\in D} (D_{x_a}\sqcup D_{y_a}), \\
	\oplus^\text{uc}_{a,b} \exp_u \eta 	& \text{on } Z_{a,b}, \\
	\oplus_{a'} \exp_u \eta  & \text{on } Z_{a'} \text{ for all other gluing parameters}.
	\end{cases}
	\]
	
	If the stable map $(\Sigma,j,M,D,u)$ contains a destabilizing ghost component of type $(0,2)$, we define the new glued map $\oplus^\text{uc}_a \exp_u \eta : \Sigma_a \to Q$ can be described as follows:
	\[
	\oplus^\text{uc}_a \exp_u (\eta) :=
	\begin{cases}
	\exp_u \eta 			& \text{ on } \Sigma \setminus \cup_{\{x_a,y_a\}\in D} (D_{x_a}\sqcup D_{y_a}) \\
	\exp_u \eta				& \text{ on the small disks bordering the $i$th-marked point} \\
	\oplus_{a'} \exp_u \eta	& \text{ on } Z_{a'} \text{ for all other gluing parameters}.
	\end{cases}
	\]	
\end{definition}

\subsubsection{Local smooth structures on the universal curve polyfold}

In practice, the construction of a topology and the construction of a polyfold structure on a candidate set $\cZ$ are intertwined. A key step in the construction is proving a compactness property holds for the morphism set as in \cite[Prop.~3.22]{HWZGW}. 
Alternatively, one should use the updated approach of \cite[Part~IV]{HWZbook} to recast the process in terms of ``groupoidal topological categories.''

Following the arguments of \cite[\S~3.4]{HWZGW}, we may assert that the set $\cZ^\text{uc}_{A,g,k}$ has a natural second countable, paracompact, Hausdorff topology.
By construction of this topology, the map
	\[
	\frac{\{(\Sigma_a,j(a,v), M_a,D_a, \oplus^\text{uc}_a \exp_u \eta)\}_{(a,v,\eta)\in \cU}}{\Aut(\Sigma,j,M,D,u)} \to \cZ^\text{uc}_{A,g,k}.
	\]
is a local homeomorphism.

Following the arguments of \cite[\S~3.5]{HWZGW}, we can construct a polyfold structure on the set of universal curves.

\begin{theorem}
	Fix a strictly increasing sequence $(\delta_i)_{i\geq 0}\subset (0,2\pi)$.
	The second countable, paracompact, Hausdorff topological space $\cZ^\text{uc}_{A,g,k}$ possesses a natural equivalence class of polyfold structures.
\end{theorem}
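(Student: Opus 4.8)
The plan is to mirror, step by step, the construction of the polyfold structure on $\cZ_{A,g,k}$ given in \cite[\S\S~3.4--3.5]{HWZGW}, substituting the new gluing constructions $\oplus^\text{uc}$ (of types $(0,1)$ and $(0,2)$) and the associated $\ssc$-retractions from the preceding subsections wherever the original construction invokes $\oplus_a$ and $\pi_a$. Since the underlying set $\cZ^\text{uc}_{A,g,k}$, the stabilizations, the linear constraints, the good complex deformations of $(\Sigma,j,M\cup S,D)$, and the exponential gluing profile are all carried over verbatim from \cite{HWZGW}, the only genuinely new analytic inputs are: (i) the $\ssc$-smoothness of the new retractions $\pi_{a,b}$ and $\pi_a$ (established in the propositions above for type $(0,1)$, and trivially for type $(0,2)$ since no interpolation occurs), and (ii) the $\ssc$-smoothness of the transition maps between overlapping good uniformizing families of stable maps associated to the new gluing.

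First I would verify $\ssc$-smooth compatibility of the new good uniformizing families, i.e., the analogue of \cite[Thm.~3.33]{HWZGW}. Away from destabilizing ghost components the families are literally the old ones, so compatibility there is unchanged. Near a ghost component of type $(0,1)$, one compares two new gluings interpolating a pair $(a,b)$; the key point is that the change-of-coordinates expression in the gluing/anti-gluing variables is built from the same building blocks ($\beta_{a,b}$, $\av_{a,b}$, $R_{a,b}$, and the exponential profile) whose $\ssc$-smoothness is controlled exactly as in \cite[\S~2.3--2.4]{HWZsc}; near a ghost component of type $(0,2)$ the gluing merely forgets a gluing parameter and relabels a nodal point as a marked point, so the transition maps reduce to those for a standard (unglued) marked point together with a reparametrization, both already known to be $\ssc$-smooth. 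Second, I would establish the compactness property of the morphism set, the analogue of \cite[Prop.~3.22]{HWZGW}: this is where one proves that if a sequence of morphisms between stable maps converges on the level of domains and maps, then a subsequence of the associated group elements converges. The argument is the same elliptic-bootstrapping plus Gromov-compactness argument as in \cite{HWZGW}, with the single modification that the uc-stability condition \eqref{eq:new-stability} forces the map to be constant on a destabilizing $C_k$; this constancy is precisely what keeps the limiting configuration inside $\cZ^\text{uc}_{A,g,k}$ rather than escaping to a genuinely unstable configuration. Using this compactness property, one then equips the set $\cZ^\text{uc}_{A,g,k}$ with its topology as in \cite[\S~3.4]{HWZGW}, checks second countability, paracompactness, and Hausdorffness from the corresponding properties of the local $\ssc$-retracts and the compactness property, and finally assembles the ep-groupoid (M-polyfold charts, transition morphisms, and the source/target maps) exactly as in \cite[\S~3.5]{HWZGW}, obtaining the natural equivalence class of polyfold structures. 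Independence of the choice of the sequence $(\delta_i)_{i\geq 0}$ is inherited from the same argument as in \cite[Cor.~1.5]{schmaltz2019naturality}.

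The main obstacle I expect is item (ii) together with the ghost-component case of the compactness property: one must check that the new interpolation $\oplus^\text{uc}_{a,b}$, which blends $h^+$ and $h^-$ directly across a \emph{deleted} $S^2$ rather than across two separate nodes, still produces $\ssc$-smooth transition maps and still satisfies the gradient/energy estimates needed for the morphism-set compactness. The delicate point is that the gluing length is now the \emph{sum} $R_a+R_b$ and the rotation the \emph{sum} $\theta_a+\theta_b$, so one must confirm that the explicit formulas for $\eta^\pm$ displayed above (and their hat-gluing analogues) decompose into exactly the same admissible terms analyzed in \cite[\S~2.4]{HWZsc}; this is plausible precisely because $\varphi_{\exp}^{-1}(\varphi_{\exp}(r_a)+\varphi_{\exp}(r_b))$ is the relevant profile composition, for which the requisite estimates are already in the literature, but carrying it out rigorously requires repeating the lengthy computations of \cite[\S~2.4]{HWZsc} in this modified setting. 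I would only sketch these computations and refer to \cite{HWZsc, HWZGW} for the full details.
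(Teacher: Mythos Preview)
Your proposal is correct and takes essentially the same approach as the paper: the paper offers no proof beyond the sentence ``Following the arguments of \cite[\S~3.5]{HWZGW}, we can construct a polyfold structure on the set of universal curves,'' together with the preceding remark that the compactness property for the morphism set (the analogue of \cite[Prop.~3.22]{HWZGW}) is the key step. Your outline is in fact more detailed than what the paper provides, correctly identifying the analogues of \cite[Thm.~3.33]{HWZGW} and \cite[Prop.~3.22]{HWZGW} as the substantive verifications and isolating the type $(0,1)$ gluing (with its summed length $R_a+R_b$) as the place where the calculations of \cite[\S~2.4]{HWZsc} must be rerun.
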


\subsection{Invariants for the universal curve polyfold}

Just as in \S~\ref{sec:the-polyfold-gw-invariants} we may define invariants for the universal curve polyfold.

\begin{definition}
	\label{def:new-strong-polyfold-bundle}
	As in Definition~\ref{def:strong-polyfold-bundle}, the underlying set of the strong polyfold bundle $\cW^{\text{uc}}_{A,g,k}$ is defined as the set of equivalence classes
		\[
		\cW^{\text{uc}}_{A,g,k}:= \{ (\Sigma,j,M,D,u,\xi ) \mid \cdots \} / \sim
		\]
	with data as follows.
	\begin{itemize}
		\item $(\Sigma,j,M,D,u)$ is a stable map representative of a stable curve in $\cZ^{\text{uc}}_{A,g,k}$.
		\item $\xi$ is a continuous section along $u$ such that the map
		\[
		\xi(z):T_z\Sigma\rightarrow T_{u(z)}Q,\quad \text{for } z\in
		\Sigma
		\]
		is a complex anti-linear map.
		\item At a destabilizing ghost component $C_k \subset \Sigma$ we define $\xi$ to be the zero section.
		\item As in Definition~\ref{def:strong-polyfold-bundle} we require that the local expressions for $\xi$ are of class $H^{2,\delta_0}$ around the nodal points in $\abs{D}$ and around the marked points $\{z_1,\ldots,z_{k-1}\}$.
		We require the local expression is of class $H^2_\text{loc}$ near the other points in $\Sigma$.
		\item The equivalence relation is given by 
		\[(\Sigma,j,M,D,u,\xi)\sim (\Sigma',j',M',D',u',\xi')\]
		if there exists a biholomorphism $\phi :(\Sigma,j)\to (\Sigma',j')$ which satisfies $\xi'\circ d\phi=\xi$ in addition to $u'\circ \phi = u$, $\phi(M)= M'$, $\phi (\abs{D})=\abs{D'}$, and which preserves ordering and pairs.
	\end{itemize}
\end{definition}	
		
We claim that we can give $\cW^{\text{uc}}_{A,g,k}$ a polyfold structure such that
	\[
	P:\cW^{\text{uc}}_{A,g,k}\to \cZ^{\text{uc}}_{A,g,k}, \qquad [\Sigma,j,M,D,u,\xi]\mapsto [\Sigma,j,M,D,u]
	\]
defines a strong polyfold bundle over the universal curve polyfold $\cZ^{\text{uc}}_{A,g,k}$.
The Cauchy--Riemann section $\delbarj$ of the strong polyfold bundle  $P:\cW^{\text{uc}}_{A,g,k}\to \cZ^{\text{uc}}_{A,g,k}$ is a proper $\ssc$-smooth Fredholm section.  The Fredholm index of $\delbarj$ is given by
	\[\ind \delbarj =2c_1(A)+(\text{dim}_{\R} Q-6)(1-g)+2k.\]
Notice that the unperturbed GW-moduli space $\CM_{A,g,k}(J)$ is identical for both the universal curve polyfold $\cZ^{\text{uc}}_{A,g,k}$ and the usual GW-polyfold $\cZ_{A,g,k}$
	
By \cite[Cor.~15.1]{HWZbook} there exist regular perturbations of the Cauchy--Riemann section, hence the perturbed moduli space 	
	\[
	\cS^{\text{uc}}_{A,g,k}(p) : = \text{`` }(\delbarj + p)^{-1} (0) \text{ ''} \subset \cZ^{\text{uc}}_{A,g,k}
	\]
has the structure of a compact oriented weighted branched suborbifold.

As in \S\S~\ref{subsec:branched-integrals} and \ref{subsec:intersection-numbers}, we may use this perturbed moduli space to define invariants.

\begin{theorem}
	\label{thm:gw-invariants-new-polyfold}
	We define the \textbf{Gromov--Witten invariant for the universal curve polyfold} as the homomorphism
		\[
		\GW^{\text{uc}}_{A,g,k} : H_*(Q;\Q)^{\otimes k} \otimes H_*(\dmlog_{g,k};\Q ) \to \Q
		\]
	defined via either the branched integration of \cite{HWZint}:
		\[
		\GW^{\text{uc}}_{A,g,k} (\alpha_1,\ldots,\alpha_k;\beta) := \int_{\cS^\text{uc}_{A,g,k}(p)} ev_1^* \PD (\alpha_1)\wedge \cdots \wedge ev_k^* \PD (\alpha_k) \wedge\pi^* \PD (\beta)
		\]
	 or the intersection number of \cite{schmaltz2019steenrod}:
	 	\[
	 	\GW^{\text{uc}}_{A,g,k} ([\cX_1],\ldots,[\cX_k];[\cB]) := \left(ev_1\times\cdots\times ev_k\times\pi\right)|_{\cS^{\text{uc}}_{A,g,k}(p)} \cdot \left(\cX_1 \times\cdots\times \cX_k \times \cB\right).
	 	\]
	This invariant does not depend on the choice of perturbation, nor on choice of basis of representing submanifolds/suborbifolds.
\end{theorem}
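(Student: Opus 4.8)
The plan is to obtain the theorem by running the general polyfold abstract perturbation and branched integration machinery on the data $(\cZ^\text{uc}_{A,g,k},\cW^\text{uc}_{A,g,k},\delbarj)$ exactly as was done for the usual Gromov--Witten polyfolds in \S~\ref{sec:the-polyfold-gw-invariants}. All of the ingredients particular to polyfold theory are already in place: $\cZ^\text{uc}_{A,g,k}$ is a polyfold, $P:\cW^\text{uc}_{A,g,k}\to\cZ^\text{uc}_{A,g,k}$ is a strong polyfold bundle, $\delbarj$ is a proper $\ssc$-smooth Fredholm section of the stated index, and \cite[Cor.~15.1]{HWZbook} supplies regular perturbations $p$ for which $\cS^\text{uc}_{A,g,k}(p)$ is a compact oriented weighted branched suborbifold. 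What must be checked specifically for the universal curve polyfold is that the evaluation maps $ev_i:\cZ^\text{uc}_{A,g,k}\to Q$ and the projection map $\pi:\cZ^\text{uc}_{A,g,k}\to\dmlog_{g,k}$ are $\ssc$-smooth in the \emph{new} smooth structure; this is the only point at which the modified gluing construction enters.

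First I would verify $\ssc$-smoothness of $ev_i$ and $\pi$ by writing local expressions in the good uniformizing families of stable maps associated to the new gluing. For $ev_i$ with $i<k$ the argument is verbatim that of Proposition~\ref{prop:evaluation-smooth-submersion}; for $ev_k$ one notes that near a destabilizing ghost component $C_k$ the map is read off from the (free) asymptotic constant of the local section on $C_k$, so $ev_k$ remains an $\ssc$-smooth submersion. For $\pi$ one follows the proof of Proposition~\ref{prop:projection-map-smooth}: forget the map, pass from the exponential to the logarithmic gluing profile (smooth, as in Proposition~\ref{prop:identity-exp-to-log}), and forget the stabilizing points (smooth and holomorphic, by Proposition~\ref{prop:ftk-log-dmspace}). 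The one new feature is that at a type-$(0,1)$ destabilizing ghost component the new surface gluing produces the finite cylinder $Z_{a,b}=[0,R_a+R_b]\times S^1$, so in the logarithmic target coordinates the two gluing parameters $(a,b)$ are sent to the composition $a\ast_{\log}b=ab$, which is holomorphic; at a type-$(0,2)$ ghost component the gluing parameter is simply forgotten and the surviving node relabelled as a marked point, which is visibly $\ssc$-smooth into $\dmlog_{g,k}$, being the coordinate change to an alternative good uniformizing family parametrizing the movement of $z_k$. Hence $ev_1\times\cdots\times ev_k\times\pi:\cZ^\text{uc}_{A,g,k}\to Q^k\times\dmlog_{g,k}$ is $\ssc$-smooth, and we may pull back $\PD(\alpha_i)\in H^*_{\dR}(Q)$ and $\PD(\beta)\in H^*_{\dR}(\dmlog_{g,k})$ to a closed $\ssc$-differential form on $\cZ^\text{uc}_{A,g,k}$.

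With this in hand I would define $\GW^\text{uc}_{A,g,k}(\alpha_1,\dots,\alpha_k;\beta)$ as the branched integral of this form over $\cS^\text{uc}_{A,g,k}(p)$ via Theorem~\ref{def:branched-integral}; multilinearity of $(\alpha_1,\dots,\alpha_k,\beta)\mapsto\GW^\text{uc}_{A,g,k}$ is immediate from multilinearity of Poincar\'e duality, the wedge product, and the branched integral, so the assignment is a homomorphism. Independence of the perturbation is the standard cobordism argument: two regular perturbations $p_0,p_1$ are joined by a regular homotopy producing a compact oriented weighted branched suborbifold whose oriented weighted boundary is $\cS^\text{uc}_{A,g,k}(p_1)$ together with $\cS^\text{uc}_{A,g,k}(p_0)$ with reversed orientation, and since the integrand is closed, Stokes' theorem for branched integrals gives equality of the two branched integrals. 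Independence of the chosen representing submanifolds/suborbifolds follows because the branched integral of a closed $\ssc$-form depends only on its de Rham class, while $\PD(\alpha_i)$ and $\PD(\beta)$ have well-defined classes independent of the representative used.

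Finally I would establish that the branched integral and the intersection-number definitions agree by invoking \cite[Thm.~1.6, Cor.~1.7]{schmaltz2019steenrod}, which applies to any polyfold carrying a proper Fredholm section together with $\ssc$-smooth maps to closed orientable manifolds/orbifolds, provided transversality can be arranged. Here the hypotheses are met exactly as in \S~\ref{subsec:intersection-numbers}: since $ev_i$ is a submersion one can choose a regular perturbation with $ev_i\pitchfork\cX_i$ for all $i$, and although $\pi$ need not be a submersion, the representing suborbifold $\cB\subset\dmlog_{g,k}$ can always be perturbed by \cite[Thm.~1.2, Prop.~3.9]{schmaltz2019steenrod} to achieve transversality of the projection; the intersection number is then independent of perturbation and of the representing basis by cobordism invariance \cite{schmaltz2019steenrod} and coincides with the branched integral. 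The main obstacle I anticipate is the $\ssc$-smoothness of $\pi$ in the new smooth structure near destabilizing ghost components, since the modified gluing changes the relation between the universal curve polyfold coordinates and the Deligne--Mumford coordinates; I expect, however, that this reduces — just as in Proposition~\ref{prop:projection-map-smooth} — to a composition of the exponential-to-logarithmic change of gluing profile with the holomorphic forgetting maps on $\dmlog$, so that no genuinely new analytic difficulty arises beyond the $\ssc$-smoothness of the new retractions already discussed.
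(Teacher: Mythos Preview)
Your approach is correct and matches the paper's treatment: the paper gives no explicit proof, saying only (immediately before the theorem) that ``As in \S\S~\ref{subsec:branched-integrals} and \ref{subsec:intersection-numbers}, we may use this perturbed moduli space to define invariants.'' Your elaboration of what this entails---running the general polyfold machinery on $(\cZ^{\text{uc}}_{A,g,k},\cW^{\text{uc}}_{A,g,k},\delbarj)$, verifying $\ssc$-smoothness of $ev_i$ and $\pi$, and invoking \cite{HWZint} and \cite{schmaltz2019steenrod}---is exactly what the paper has in mind.

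There is, however, a small confusion in your analysis of $\pi:\cZ^{\text{uc}}_{A,g,k}\to\dmlog_{g,k}$ near destabilizing ghost components: you describe the local behavior as sending the pair $(a,b)$ to the product $a\ast_{\log}b=ab$ at a type-$(0,1)$ component, and as forgetting the gluing parameter at a type-$(0,2)$ component. These are the local expressions for $ft_k$, not for $\pi$. The projection $\pi$ retains the marked point $z_k$, and since the paper emphasizes that the new gluing ``remains identical to the DM-gluing considered on the underlying Riemann surface'' (only the \emph{map} gluing is modified), the underlying Riemann surface $(\Sigma_{a,b},j(a,b,v),M_{a,b},D_{a,b})$ is precisely the one in a standard good uniformizing family centered at the original (already DM-stable) noded surface. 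Hence the verification that $\pi$ is $\ssc$-smooth is literally the proof of Proposition~\ref{prop:projection-map-smooth} with no new feature at ghost components: forget $\eta$, convert each gluing parameter from exponential to logarithmic profile separately, and forget the stabilizing points. Your conclusion stands, but the argument is simpler than you anticipate.
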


\subsection{Naturality of the polyfold invariants for the universal curve and the Gromov--Witten polyfolds}

Consider the unperturbed GW-moduli space $\CM_{A,g,k}(J)$ defined in the introduction as the stable map compactification of the set of $J$-holomorphic curves.
We have distinct polyfolds $\cZ^{\text{uc}}_{A,g,k}$ and $\cZ_{A,g,k}$ which contain $\CM_{A,g,k}(J)$ as a compact subset; after regularization we obtain distinct perturbed moduli spaces and moreover distinct polyfold invariants $\GW^{\text{uc}}_{A,g,k}$ and  $\GW_{A,g,k}$ which, a priori, we cannot assume are equivalent.
This is precisely the type of problem described in \cite{schmaltz2019naturality}.

We may consider a commutative diagram of inclusion maps between polyfolds and between strong polyfold bundles:
	\[\begin{tikzcd}
	\cW^{\text{uc}}_{A,g,k} \arrow[r, hook] \arrow[d, "\delbarj \quad"'] & \cW_{A,g,k} \arrow[d, "\quad \delbarj"] &  \\
	\cZ^{\text{uc}}_{A,g,k} \arrow[r, hook] \arrow[u, bend left] & \cZ_{A,g,k} \arrow[u, bend right] & 
	\end{tikzcd}\]
in addition to a commutative diagram:
	\[\begin{tikzcd}
	&  & Q^k \times \dmlog_{g,k} \\
	\cZ^{\text{uc}}_{A,g,k} \arrow[r, hook] \arrow[rru, "ev\times\pi"] & \cZ_{A,g,k} \arrow[ru, "ev\times\pi"'] & 
	\end{tikzcd}
	\]
One should immediately note that $\cW^{\text{uc}}_{A,g,k}$ is not the pullback bundle of $\cW_{A,g,k}$ (recall we require the complex anti-linear sections $\xi$ are constant on destabilizing ghost components in $\cW^{\text{uc}}_{A,g,k}$).

It is straightforward to check that the maps in these diagrams satisfy the same properties as described in \cite[\S~3.3]{schmaltz2019naturality}.
The critical hypothesis is the existence of a ``intermediary subbundle'' \cite[Def.~3.15]{schmaltz2019naturality}.

\begin{proposition}
	The set
		\begin{align*}
		\cR := \{	[\Sigma,j,M,D,u,\xi] \in \cW_{A,g,k} \mid 	&\supp \xi \subset K \subset \Sigma \setminus (S^2_k \sqcup \{x_a,y_b\}) \\
																&\qquad\qquad\qquad\text{ for some compact } K\},
		\end{align*}
	(i.e., the subset of complex anti-linear sections which are supported away from a possible destabilizing ghost component and any adjacent nodes)
	is an intermediary subbundle of the strong polyfold bundle $\cW_{A,g,k}$.
\end{proposition}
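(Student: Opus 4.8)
The plan is to verify, in turn, the three conditions appearing in the definition of an intermediary subbundle \cite[Def.~3.15]{schmaltz2019naturality}: that $\cR$ is a strong polyfold subbundle of $\cW_{A,g,k}$, that it is compatible with the inclusion-induced maps relating $\cW_{A,g,k}$ and $\cW^{\text{uc}}_{A,g,k}$, and that perturbations of the Cauchy--Riemann section taking values in $\cR$ still suffice to achieve transversality near $\CM_{A,g,k}(J)$.

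First I would check that the defining condition descends to equivalence classes: a biholomorphism $\phi$ realizing a morphism of stable maps sends a destabilizing ghost component to a destabilizing ghost component, adjacent nodal points to adjacent nodal points, and satisfies $\xi' \circ d\phi = \xi$, so $\supp \xi'$ is again contained in a compact set disjoint from $S^2_k \sqcup \{x_a,y_b\}$; moreover over stable curves without a destabilizing ghost component the condition is vacuous and $\cR$ agrees with $\cW_{A,g,k}$. Next, working in the local $\ssc$-coordinates supplied by a good uniformizing family of stable maps centered at a representative (Definition~\ref{def:good-uniformizing-family-of-stable-maps}), the fiber of $\cW_{A,g,k}$ is modeled on a space of complex anti-linear sections along the glued maps, and the subset corresponding to $\cR$ consists of those sections whose support lies in a fixed compact subset of the base surface away from the destabilizing ghost region. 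One verifies that this is cut out compatibly with the $\ssc$-smooth bundle retraction defining $\cW_{A,g,k}$, so that $\cR \to \cZ_{A,g,k}$ inherits a strong polyfold bundle structure and $\cR \hookrightarrow \cW_{A,g,k}$ is a strong bundle map. Compatibility with $\cW^{\text{uc}}_{A,g,k}$ is then immediate: any $\xi$ supported away from $S^2_k$ vanishes identically on the destabilizing ghost component and hence lies in the image of $\cW^{\text{uc}}_{A,g,k} \hookrightarrow \cW_{A,g,k}$ (Definition~\ref{def:new-strong-polyfold-bundle}); thus $\cR$ factors through both bundles and the relevant squares commute.

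The substantive point is the last condition, namely that $\cR$ is large enough: one must show that regular perturbations of $\delbarj$ taking values in $\cR$ exist. The key geometric input is that every $[\Sigma,j,M,D,u]\in\CM_{A,g,k}(J)$ solves $\delbarj = 0$, and on any destabilizing ghost component $C_k$ the energy identity forces $u|_{C_k}\equiv\text{const}$, so the Cauchy--Riemann operator and the essential part of its linearization already vanish near $S^2_k\sqcup\{x_a,y_b\}$; consequently sections supported away from that region, together with the image of the linearized operator, still surject onto the cokernel, and the filling and general-position construction of \cite[Cor.~15.1]{HWZbook} can be carried out inside $\cR$. I would run this argument following the template of \cite[\S~3.3]{schmaltz2019naturality}.

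I expect the main obstacle to be precisely this last step: controlling the $\ssc$-smooth and Fredholm structure in the local models near a destabilizing ghost component, where the universal-curve gluing differs from the standard Gromov--Witten gluing and the exponential gluing profile is essential, and checking that restricting to sections supported away from $S^2_k\sqcup\{x_a,y_b\}$ does not obstruct the construction of fillers or the surjectivity needed for transversality.
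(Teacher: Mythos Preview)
Your proposal is correct and follows essentially the same approach as the paper: verify the conditions of \cite[Def.~3.15]{schmaltz2019naturality}, with the substantive content being the surjectivity/cokernel step. The paper's own proof is even terser than your outline---it simply cites \cite[Props.~3.17,~3.18]{schmaltz2019naturality} as the template and points to \cite[Cor.~A.4]{schmaltz2019naturality} for the key fact that one can find vectors spanning the cokernel which vanish on the destabilizing ghost component and on disk-like regions of the adjacent nodes; this is precisely the input you identified as the ``substantive point,'' so invoking that corollary would sharpen your argument at exactly the place you flagged as the main obstacle.
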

\begin{proof}
	The proof follows the same reasoning as the proofs of \cite[Props.~3.17,~3.18]{schmaltz2019naturality}.
	In particular, by \cite[Cor.~A.4]{schmaltz2019naturality} one can find vectors which span the cokernel and vanish on the destabilizing ghost component and on disk-like regions of any adjacent nodes.
\end{proof}

We therefore satisfy the hypothesis of \cite[Thm.~1.3]{schmaltz2019naturality}, and hence we immediately obtain the following theorem.

\begin{theorem}
	\label{thm:equivalence-polyfold-invariants}
	The polyfold GW-invariants associated to the universal curve polyfold $\cZ^{\text{uc}}_{A,g,k}$ and to the GW-polyfold $\cZ_{A,g,k}$ are identical, i.e., 
		\[\GW^{\text{uc}}_{A,g,k}= \GW_{A,g,k}.\]
\end{theorem}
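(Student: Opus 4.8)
The plan is to recognize this as a direct application of the naturality machinery of \cite{schmaltz2019naturality}, specifically \cite[Thm.~1.3]{schmaltz2019naturality}, so that almost all of the work consists of verifying hypotheses rather than proving anything new. First I would set up the three pieces of data that the cited theorem requires: (1) the commutative square of inclusions of strong polyfold bundles $\cW^{\text{uc}}_{A,g,k} \hookrightarrow \cW_{A,g,k}$ covering $\cZ^{\text{uc}}_{A,g,k} \hookrightarrow \cZ_{A,g,k}$, together with compatibility of the Cauchy--Riemann sections $\delbarj$ on both sides (which is immediate, since $\delbarj$ is defined by the same formula $\tfrac12(du + J(u)\circ du\circ j)$ and automatically vanishes on any destabilizing ghost component where $u$ is constant, so the section genuinely lands in the smaller bundle $\cW^{\text{uc}}_{A,g,k}$); (2) the commuting triangle expressing that $ev_1\times\cdots\times ev_k\times\pi$ restricts compatibly along the inclusion, which is clear on underlying sets and hence clear as generalized maps; and (3) the observation that the unperturbed moduli space $\CM_{A,g,k}(J) = \delbarjinv(0)$ is literally the same compact subset of both polyfolds, since the energy identity forces any $J$-holomorphic curve with $\int_{C} u^*\ww = 0$ on a component to be constant there, so the uc-stability condition \eqref{eq:new-stability} is automatically satisfied by every point of $\CM_{A,g,k}(J)$.

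Next I would address the one genuinely nontrivial input, which the excerpt already isolates: the existence of an \emph{intermediary subbundle} in the sense of \cite[Def.~3.15]{schmaltz2019naturality}. The point is that $\cW^{\text{uc}}_{A,g,k}$ is \emph{not} simply the pullback of $\cW_{A,g,k}$ along the inclusion --- in $\cW^{\text{uc}}_{A,g,k}$ we demand that the anti-linear section $\xi$ vanish on destabilizing ghost components, so the two bundles differ precisely over the locus of stable curves carrying such a component. The intermediary subbundle $\cR$ is the Proposition stated just above Theorem~\ref{thm:equivalence-polyfold-invariants}: the subset of $\cW_{A,g,k}$ consisting of sections $\xi$ whose support is contained in a compact set disjoint from the (potential) destabilizing ghost component $S^2_k$ and from the adjacent nodes $\{x_a, y_b\}$. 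One checks that $\cR$ sits inside $\cW^{\text{uc}}_{A,g,k}$, that it is "full" in the sense required (every element of the cokernel of the linearized $\delbarj$ at a point of $\CM_{A,g,k}(J)$ can be represented by a section lying in $\cR$), and that it has the appropriate local triviality. The fullness claim is exactly what \cite[Cor.~A.4]{schmaltz2019naturality} delivers: on a ghost component $\delbarj$ linearizes to the standard Dolbeault operator, which is surjective on the relevant weighted Sobolev space, so the cokernel of the global linearized operator is unaffected by deleting a neighborhood of the ghost component and its adjacent nodal disks, and representing cocycles may be chosen supported away from that region. This is precisely the reasoning of \cite[Props.~3.17,~3.18]{schmaltz2019naturality}, transplanted to the present setting.

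With the intermediary subbundle in hand, the remaining hypotheses of \cite[Thm.~1.3]{schmaltz2019naturality} --- properness of the Fredholm sections, the topological pullback-type condition guaranteeing compactness control, and equality of Fredholm indices (both equal $2c_1(A) + (\dim_\R Q - 6)(1-g) + 2k$, as recorded above) --- are either immediate or are verified "by the same reasoning" as in the cited paper, and I would simply remark that the maps in the two commutative diagrams satisfy the conditions of \cite[\S~3.3]{schmaltz2019naturality}. Invoking \cite[Thm.~1.3]{schmaltz2019naturality} then yields a common regular perturbation $p$ which regularizes the Cauchy--Riemann section on both polyfolds simultaneously, inducing a weight-preserving, orientation-preserving homeomorphism between the perturbed moduli spaces $\cS^{\text{uc}}_{A,g,k}(p)$ and $\cS_{A,g,k}(p)$ compatible with $ev\times\pi$; the change of variables theorem (Theorem~\ref{thm:change-of-variables}) then identifies the two branched integrals defining $\GW^{\text{uc}}_{A,g,k}$ and $\GW_{A,g,k}$, giving $\GW^{\text{uc}}_{A,g,k} = \GW_{A,g,k}$. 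The main obstacle is entirely concentrated in the intermediary subbundle step: one must be careful that the weighted Sobolev space $H^{3,\delta_0}$ with the constancy constraint on the ghost component is compatible with the surjectivity-on-ghost-components argument, and that the support condition defining $\cR$ is preserved under the transition maps of $\cW_{A,g,k}$ --- everything else is bookkeeping once that is secured.
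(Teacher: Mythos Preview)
Your proposal is correct and follows essentially the same approach as the paper: the theorem is obtained as a direct application of \cite[Thm.~1.3]{schmaltz2019naturality}, with the only nontrivial hypothesis being the existence of the intermediary subbundle $\cR$, which is precisely the content of the Proposition immediately preceding the theorem. If anything, you have spelled out more of the verification (the two commutative diagrams, the coincidence of unperturbed moduli spaces, the role of the change of variables theorem) than the paper does explicitly---the paper simply observes that the maps satisfy the conditions of \cite[\S~3.3]{schmaltz2019naturality} and that the intermediary subbundle exists, and then invokes the naturality theorem.
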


\subsection{Pulling back perturbations to the universal curve polyfold via the \texorpdfstring{$k$th-marked}{kth-marked} point forgetting map}
	\label{subsec:pulling-back-via-ftk}

The entire point of defining the universal curve polyfold is to be able to consider a well-defined $k$th-marked point forgetting map
	\[
	ft_k : \cZ^\text{uc}_{A,g,k} \to \cZ_{A,g,k-1}.
	\]
We now how to pullback abstract perturbations via this map.

\subsubsection{The $k$th-marked point forgetting map redux}

Recall that we require the stable curves in $\cZ^{\text{uc}}_{A,g,k}$ are of class $H^{3,\delta_0}$ at all marked points $\{z_1,\ldots z_{k-1}\}$, and of class $H^3_\text{loc}$ at the marked point $z_k$.
In order to get a well-defined map, we then require that the stable curves in $\cZ_{A,g,k-1}$ are of class $H^{3,\delta_0}$ at all marked points $\{z_1,\ldots z_{k-1}\}$.

\begin{definition}
	\label{def:kth-marked-point-forgetting-map-redux}
	We define the \textbf{$k$th-marked point forgetting map}
	\[
	ft_k :\cZ^{\text{uc}}_{A,g,k}\to \cZ_{A,g,k-1}
	\]
	on the underlying sets of the GW-polyfolds as follows.  Let $[\Sigma,j,M,D,u]\in \cZ^{\text{uc}}_{A,g,k}$ be a stable curve and let $(\Sigma,j,M,D,u)$ be a stable map representative. To define $ft_k$ we distinguish three cases for the component $C_k$ which contains the $k$th-marked point, $z_k \in C_k$.
	\begin{enumerate}
		\item The component $C_k\setminus \{z_k\}$ satisfies the usual GW-stability condition \eqref{eq:gw-stability-condition}, i.e.,
		\[
		2 g_{C_k} +\sharp ((M\setminus\{z_k\}) \cup \abs{D})_{C_k} \geq 3 \quad \text{or}\quad \int_{C_k} u^*\ww >0.
		\]
		We therefore define
		\[
		ft_k ([\Sigma,j, M,D,u]) = [\Sigma,j, M\setminus\{z_k\},D,u].
		\]
		\item[(2a)] The component $C_k$ is a destabilizing ghost component of type $(0,1)$.  Then $ft_k ([\Sigma,\allowbreak j,\allowbreak  M,\allowbreak D,\allowbreak u])$ is give by the stable curve obtained as follows.  Delete $z_k$, delete the component $C_k$, and delete the two nodal pairs.  We add a new nodal pair $\{x_a,y_b\}$ given by two points of the former nodal pairs.
		\item[(2b)] The component $C_k$ is a destabilizing ghost component of type $(0,2)$.  Then $ft_k ([\Sigma,\allowbreak j,\allowbreak  M,\allowbreak D,\allowbreak u])$ is give by the stable curve obtained as follows.  Delete $z_k$, delete the component $C_k$, and delete the nodal pair.  We add a new marked point $z_i$, given by the former nodal point which did not lie on $C_k$.
	\end{enumerate}
\end{definition}

\subsubsection{Local expressions for the $k$th-marked point forgetting map on the universal curve polyfold}

We now write down local expressions for $ft_k$ in terms of local scale coordinates, in an analagous way as in \S~\ref{subsubsec:local-expression-forgetting-map-on-dm-orbifold}.

\noindent\textit{Case 1.}
Using an alternative good uniformizing family to parametrize the movement of the $k$th-marked point directly, 
and noting that we may choose identical stabilizations for the source and target,
a local expression for $\hat{ft}_k$ is given by:
	\begin{align*}
	\hat{ft}_k : & \{(\Sigma_a,j(a,v),M_{(a,y)}, D_a, \oplus_a^{\text{uc}} \exp_u \eta)\}_{(a,v,y, \eta)\in \cU} \\
	& \qquad\to \{(\Sigma_b, j(b,w), (M\setminus\{z_k\})_b, D_b, \oplus_b \exp_u \zeta)\}_{(b,w,\zeta)\in \cV} \\
	&(a,v,y,\eta) \mapsto (a,v, \eta).
	\end{align*}
\noindent\textit{Case 2a.}
By construction, the interpolation given by the new gluing at a destabilizing ghost component of type $(0,1)$ for a parameter $(a,b)$ is identical to the usual gluing at a parameter $c$ are the same precisely when $c = a\ast_{\exp} b$, i.e.,
	\[
	\oplus_{a,b}^\text{uc} (\eta^+,\eta^-) = \oplus_{a\ast_{\exp} b} (\eta^+,\eta^-).
	\]
The anti-gluings are related by a similar equation, and it isn't hard to then show the appropriate $\ssc$-retractions are identical when considered on gluing parameters $(a,b)$ and $a\ast_{\exp} b$.
Using this observation, a local expression for $\hat{ft}_k$ is given by:	
	\begin{align*}
	\hat{ft}_k :	&\{(\Sigma_{a,b},j(a,b,v),M_{a,b},\{\{x_a,y_a\}, \{x_b,y_b\}\}_{a,b}, \oplus_{a,b}^\text{uc} \exp_u \eta)\}_{(a,b,v,\eta)\in \cU} \\
	&\qquad\to \{((\Sigma\setminus C_k)_c, j(c,w), (M\setminus \{z_k\})_c, \{x_c,y_c\}_c, \oplus_c \exp_u \zeta)\}_{(c,w,\zeta)\in \cV} \\
	&(a,b,v, \eta)  \mapsto (a\ast_{\exp} b, v, \eta).
	\end{align*}
\noindent\textit{Case 2b.}
In this case, the gluing parameter is only used to keep track of the $k$th-marked point and there is no interpolation involved. A local expression for $\hat{ft}_k$ is given by:	
\begin{align*}
	\hat{ft}_k : &\{(\Sigma_a,j(a,v),M_a,\{\{x_a,y_a\}\}_a, \exp_u \eta)\}_{(a,v,\eta)\in \cU} \\ 
	&\qquad\to \{(\Sigma\setminus C_k, j(w), M\setminus \{z_k\}, \emptyset, \exp_u \zeta)\}_{(w,\zeta)\in \cV} \\
	&(a,v,\eta)   \mapsto (v,\eta).
	\end{align*}

\subsubsection{Pulling back perturbations via the $k$th-marked point forgetting map}

We now may consider pullbacks of perturbations via $ft_k$ as in \S~\ref{subsec:pulling-back-abstract-perturbations}.

\begin{theorem}
	\label{thm:pulling-back-via-ftk}
	We can construct a regular perturbation which pulls back to a regular perturbation via the $k$th-marked point forgetting map.
	Thus, we can consider a well-defined restriction between perturbed GW-moduli spaces,
	\[
	ft_k : \cS^{\text{uc}}_{A,g,k} (ft_k^*p) \to \cS_{A,g,k-1}(p).
	\]
\end{theorem}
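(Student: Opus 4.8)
The plan is to verify the hypotheses of the pullback machinery established earlier (Theorem~\ref{thm:pullback-regular-perturbation}, \cite[Thm.~1.7]{schmaltz2019naturality}), adapting it to the fact that $ft_k:\cZ^{\text{uc}}_{A,g,k}\to\cZ_{A,g,k-1}$ is only $\ssc^0$ (it fails $\ssc^1$ at destabilizing ghost components of type $(0,1)$). The main subtlety is that the cited theorem is stated for $\ssc^1$-maps, so a direct appeal is not possible; instead I would isolate what the $\ssc^0$-regularity actually costs us. Concretely, the argument splits into three parts: (i) identify the correct relationship between the strong polyfold bundles $\cW^{\text{uc}}_{A,g,k}$ and $\cW_{A,g,k-1}$ under $ft_k$, showing the Cauchy--Riemann section is compatible with pullback; (ii) verify the topological pullback condition for $ft_k$; and (iii) run the genericity/compactness argument for the pullback perturbation, checking that the loss of one derivative does not obstruct the construction.

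For part (i), the key observation is that away from the destabilizing ghost component $C_k$, the map $ft_k$ is just the identity on the underlying Riemann surface (Case~1) or amounts to forgetting a gluing parameter or reindexing a node (Cases~2a, 2b); in the local $\ssc$-coordinates computed in \S~\ref{subsec:pulling-back-via-ftk}, $\hat{ft}_k$ sends $(a,v,y,\eta)\mapsto(a,v,\eta)$, $(a,b,v,\eta)\mapsto(a\ast_{\exp}b,v,\eta)$, or $(a,v,\eta)\mapsto(v,\eta)$. In all three cases the section variable $\eta$ is carried over \emph{without modification}, and on $C_k$ the Cauchy--Riemann section vanishes identically (since the map is constant there, by the uc-stability condition~\eqref{eq:new-stability}, so $\delbarj u|_{C_k}=0$) as does every element of $\cW^{\text{uc}}_{A,g,k}$ over that component by Definition~\ref{def:new-strong-polyfold-bundle}. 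Hence $\cW^{\text{uc}}_{A,g,k}$ \emph{is} the pullback $ft_k^*\cW_{A,g,k-1}$: a section of $\cW^{\text{uc}}_{A,g,k}$ over a stable curve with ghost component is precisely a section over $ft_k$ of that curve, and the anti-gluing formulas in \S~\ref{new-gluing-construction} are compatible with this identification (as noted there, $\oplus^{\text{uc}}_{a,b}=\oplus_{a\ast_{\exp}b}$ and likewise for the anti-gluings). This is the step I expect to require the most care, because one must confirm the scale structures and the $H^{2,\delta_0}$/$H^2_{\text{loc}}$ regularity conventions match up across the ghost node, and that the ``missing'' derivative in $ft_k$ does not propagate into the fibers.

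For part (ii), the topological pullback condition: given a stable curve $[y]\in\CM_{A,g,k-1}(J)\subset\cZ_{A,g,k-1}$, its fiber $ft_k^{-1}([y])$ is the compact set $\Sigma/\!\sim$ of points on the underlying curve (as recorded in \S~\ref{sec:universal-curve-polyfold}), and one must show that for every open neighborhood $\cV$ of this fiber there is a neighborhood $\cU_{[y]}$ with $ft_k^{-1}(\cU_{[y]})\subset\cV$. This is essentially a properness statement for $ft_k$ restricted over a neighborhood of the compact unperturbed moduli space, and it follows from the analysis in \S~\ref{subsec:failure-of-continuity-ftk} once one restricts to the uc-polyfold: the pathological sequences that destroyed continuity of $ft_k$ on $\cZ^{\text{const}}_{A,g,k}$ with the \emph{old} topology are precisely ruled out by the \emph{new} topology on $\cZ^{\text{uc}}_{A,g,k}$, which is designed (via the new gluing of \S~\ref{new-gluing-construction}) so that sequences approaching a ghost component converge correctly. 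Since $ft_k$ is continuous (being $\ssc^0$) and the relevant fibers over $\CM_{A,g,k-1}(J)$ are compact, properness over a neighborhood of $\CM_{A,g,k-1}(J)$ gives the topological pullback condition.

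Finally, for part (iii), with the bundle identification of (i) and the topological pullback condition of (ii) in hand, one constructs the regular perturbation exactly as in the proof of \cite[Thm.~1.7]{schmaltz2019naturality}: build $p$ on $\cZ_{A,g,k-1}$ from finitely many local perturbations supported near $\CM_{A,g,k-1}(J)$, pull back each summand via $ft_k$ (which makes sense since pulling back an $\ssc$-smooth section by an $\ssc^0$-map that is $\ssc$-smooth in the fiber directions yields an admissible perturbation of $\cZ^{\text{uc}}_{A,g,k}$ --- here one uses that in local coordinates $ft_k$ only rescales a gluing parameter $a\ast_{\exp}b$ and is the identity in $\eta$, so $ft_k^*$ of a smooth bump in $\eta$ remains smooth), and use the topological pullback condition to conclude $(\delbarj+ft_k^*p)^{-1}(0)$ is compact. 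Transversality of $ft_k^*p$ is inherited from transversality of $p$ together with the submersivity of the gluing-parameter map $(a,b)\mapsto a\ast_{\exp}b$ away from the origin and a separate check at ghost components (where the linearized Cauchy--Riemann operator is already surjective on the relevant subspace, as used in the proof of Theorem~\ref{thm:equivalence-polyfold-invariants}). The resulting restriction $ft_k:\cS^{\text{uc}}_{A,g,k}(ft_k^*p)\to\cS_{A,g,k-1}(p)$ is then well-defined and weight-preserving, completing the proof.
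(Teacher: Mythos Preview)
Your three-part decomposition (bundle identification, topological pullback condition, transversality/compactness) matches the paper's strategy, and parts (i) and (ii) are essentially what the paper does. The gap is in part~(iii), in how you handle the loss of one derivative.

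You write that ``$ft_k^*$ of a smooth bump in $\eta$ remains smooth,'' but the local section structures of an abstract multisection on $\cZ_{A,g,k-1}$ are of the form $\beta\cdot s$ where the cutoff $\beta$ localizes in \emph{all} coordinates $(c,w,\zeta)$, not just in $\zeta$. Pulling back via $(a,b,v,\eta)\mapsto(a\ast_{\exp}b,v,\eta)$ therefore produces a factor $\beta(a\ast_{\exp}b,v,\eta)$, and since $(a,b)\mapsto a\ast_{\exp}b$ is not $C^1$ at $\{a\ast_{\exp}b=0\}$ (this is exactly the computation of \S\ref{subsec:differentiability-failure-ftk}), there is no reason the pullback should be $\ssc$-smooth. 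Your assertion that being ``$\ssc$-smooth in the fiber directions'' suffices is not correct in general. The paper's fix is concrete: one \emph{constructs} $p$ so that every cutoff $\beta$ in a local section structure is \emph{constant} on a neighborhood of $c=0$ in the gluing-parameter disk $\hb$. Then $(a,b)\mapsto\beta(a\ast_{\exp}b)$ is smooth---trivially so near the bad set, and genuinely so on the region where $a\ast_{\exp}b\neq 0$, where the map is smooth. This is the missing idea; without it the pullback multisection is only $\ssc^0$ and one cannot run the transversality argument. (Compactness is unaffected, since auxiliary norms are only required to be $\ssc^0$ and hence pull back without issue; the paper notes this explicitly.)
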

\begin{proof}
	The methods of \cite[\S~4]{schmaltz2019naturality} which we recalled in \S~\ref{subsec:pulling-back-abstract-perturbations} are complicated by the fact that $ft_k$ is $\ssc^0$, and fails to be $\ssc^1$ at stable curves which contain a destabilizing ghost component of type $(0,1)$.  This is an unavoidable consequence of the fact that our construction of the GW-polyfolds uses as a base the exponential DM-orbifolds, and we have shown in Problem~\ref{prob:ftk-continuous-not-c0} that $ft_k :\dmexp_{g,k} \to \dmexp_{g,k-1}$ fails to be $C^1$ at precisely the components of type $(0,1)$.

	We may still consider the pullback via $ft_k$ of the strong polyfold bundle $\cW_{A,g,k-1} \to \cZ_{A,g,k-1}$ and the Cauchy--Riemann section $\delbarj$, as illustrated in the below commutative diagram.
	\[
		\begin{tikzcd}
		ft_k^* \cW_{A,g,k-1} \arrow[d, "ft_k^* \delbarj\quad "'] \arrow[r, "\text{proj}_2"'] & \cW_{A,g,k-1} \arrow[d, "\quad \delbarj"] &  \\
		\cZ^{\text{uc}}_{A,g,k} \arrow[r, "ft_k"'] \arrow[u, bend left] & \cZ_{A,g,k-1} \arrow[u, bend right] & 
		\end{tikzcd}
	\]
	However, the pullback $ft_k^* \mathcal{W}_{A,g,k-1}$ does not carry a $\ssc$-smooth structure.  We may replace the \'etale condition with a $\ssc^0$-\'etale condition (where the source and target maps are required to be surjective local homeomorphisms) and hence we may consider $ft_k^* \mathcal{W}_{A,g,k-1}$ as carrying a \textit{topological} polyfold structure.
	
	
	The strong polyfold bundle $\cW^{\text{uc}}_{A,g,k}\to \cZ^{\text{uc}}_{A,g,k}$ carries a $\ssc$-smooth structure.  Moreover, there is a natural $\ssc^0$-homeomorphism $\cW^{\text{uc}}_{A,g,k} \simeq ft_k^* \cW_{A,g,k-1}$.  We may therefore consider the pullback of a parametrized $\ssc^+$-multisection $\Lambda^t : \cW_{A,g,k-1} \to \Q^+$ as defining a parametrized $\ssc^0$-multisection $\text{proj}_2^* \Lambda^t :\mathcal{W}^{ft}_{A,g,k} \to \Q^+$.  Observe that since we are pulling back via a $\ssc^0$-map, the local section structures can only be assumed to be $\ssc^0$.
	
	A multisection which is $\ssc^0$ is unsuitable for running a transversality argument.  However, if we are careful in our construction of the $\ssc^+$-multisection $\Lambda$ we can actually ensure that the pullback local section structures $ft_k^* s^t_1, \ldots, ft_k^*s^t_j$ will be $\ssc$-smooth.  The main idea is the following: in the local expressions we can pinpoint exactly where the failure of differentiability occurs; the map between the gluing parameters $(a,b)\mapsto a\ast_{\exp} b $ fails to be $C^1$ at points $(a,b)$ with $a\ast_{\exp} b=0$.  We can define a cutoff function $\beta : \hb \subset \C\to [0,1]$ to be constant on a small neighborhood of the gluing parameter $c=0$.  Hence, while the expression $a \ast_{\exp} b$ is not $C^1$, the cutoff 
		\[\C\times \C\to \R, \qquad (a,b)\mapsto \beta (a\ast_{\exp} b)\]
	is smooth.
	
	Following this observation, it is easy to show that the methods of \cite[\S~4]{schmaltz2019naturality} may be used to achieve simultaneous transversality.  In order to achieve simultaneous compactness, we note that auxiliary norms are only assumed to be $\ssc^0$, and hence the pullback of an auxiliary norm by $ft_k$ gives a well-defined auxiliary norm on the strong polyfold bundle $\cW^{\text{uc}}_{A,g,k}\to \cZ^{\text{uc}}_{A,g,k}$.
	It is then a topological exercise to show that the map $ft_k$ satisfies the topological pullback condition.
\end{proof}


\section{The Gromov--Witten axioms}
	\label{sec:gw-axioms}

We restate and prove the Gromov--Witten axioms for the polyfold Gromov--Witten invariants.

\begin{axiom*}{Effective axiom}
	If $\ww (A) <0 $ then $\GW_{A,g,k} = 0$.
\end{axiom*}
\begin{proof}
	The energy of a smooth map $u:\Sigma \to Q$ is defined as
	\[
	E(u):= \frac{1}{2} \int_\Sigma \abs{du}_j^2 \text{dvol}_\Sigma.
	\]
	By the energy identity, a $J$-holomorphic map must satisfy $\ww(A) = E(u) \geq 0$ (for example, see \cite[Lem.~2.2.1]{MSbook}).
	Hence, the unperturbed GW-moduli space is the empty set, i.e., $\CM_{A,g,k}(J)= \emptyset$, and hence the Cauchy--Riemann section is trivially transverse without perturbation.  Therefore,
	\[
	\GW_{A,g,k} (\alpha_1,\ldots,\alpha_k;\beta) = 	\int_{\emptyset} ev_1^* \PD (\alpha_1)\wedge\cdots \wedge ev_k^* \PD(\alpha_k) \wedge \pi^* \PD (\beta) = 0.
	\]
\end{proof}

\begin{axiom*}{Grading axiom}
	If $\GW_{A,g,k} (\alpha_1,\ldots,\alpha_k; \beta) \neq 0$ then
	\[
	\sum_{i=1}^k (2n - \deg (\alpha_i)) + (6g-6+2k - \deg(\beta)) = 2c_1(A) + (2n-6)(1-g) + 2k.
	\]
\end{axiom*}
\begin{proof}
	The left hand side is the codegree of $\alpha_1\times \cdots \alpha_k \times \beta$ in the product $Q^k\times \dmlog_{g,k}$, while the right hand side is the dimension of the perturbed GW-moduli space.  Hence, this follows directly from the definition of the GW-invariants.
\end{proof}

\begin{axiom*}{Homology axiom}
	There exists a homology class
	\[
	\sigma_{A,g,k} \in H_{2c_1(A) + (2n-6)(1-g) + 2k} (Q^k\times \dmspace_{g,k};\Q)
	\]
	such that
	\[
	\GW_{A,g,k} (\alpha_1,\ldots,\alpha_k; \beta) = \langle p_1^* \PD(\alpha_1) \smallsmile \cdots \smallsmile p_k^*\PD (\alpha_k) \smallsmile p_0^*\PD(\beta), \sigma_{A,g,k} \rangle
	\]
	where $p_i: Q^k \times \dmspace_{g,k} \to Q$ denotes the projection onto the $i$th factor and the map $p_0:Q^k \times \dmspace_{g,k}\to\dmspace_{g,k}$ denotes the projection onto the last factor.
\end{axiom*}
\begin{proof}
	The polyfold GW-invariants define homomorphisms 
		\[\GW_{A,g,k}: H_*(Q;\Q)^{\otimes k} \otimes H_* (\dmspace_{g,k}; \Q) \to \Q.\]
	Such a homomorphism defines a cohomology class in $H^*(Q^k \times \dmspace_{g,k}; \Q)$ for $*= \sum_{i=1}^k (\dim_{\R} Q - \deg (\alpha_i)) + (6g-6+2k - \deg(\beta)).$
	The Poincar\'e dual of this cohomology class is the required homology class $\sigma_{A,g,k}$ of codegree $*$.
	
\end{proof}

\begin{axiom*}{Zero axiom}
	If $A=0,\ g=0$ then $\GW_{0,0,k} (\alpha_1,\ldots,\alpha_k;\beta) = 0$ whenever $\deg (\beta) >0$, and
	\[
	\GW_{0,0,k} (\alpha_1,\ldots,\alpha_k; [\pt]) 
	= \int_Q \PD(\alpha_1) \wedge \cdots \wedge \PD(\alpha_k).
	\]
\end{axiom*}
\begin{proof}
	Any map $u :\Sigma \to Q$ with $\delbarj u = 0$ and $u_*[\Sigma ] = 0$ must be constant.  Moreover, at all constant, genus $0$ stable curves the linearization of the Cauchy--Riemann operator is surjective (e.g., see \cite[Lem.~6.7.6]{MSbook}).
	It therefore follows that the unperturbed GW-moduli space $\CM_{0,0,k} (J)$	is transversally cut out.
	Observe that $\CM_{0,0,k} (J) \simeq Q\times \dmexp_{0,k}$; via this identification we may identify the map $ev \times \pi: \CM_{0,0,k}(J) \to Q^k\times \dmlog_{0,k}$ with the map $\Delta \times \id_{\dmspace_{0,k}}: Q\times \dmexp_{0,k} \to Q^k \times \dmlog_{0,k}$ where $\Delta$ is the diagonal $x \mapsto (x,\ldots ,x)$.
	
	We may now write
	\begin{align*}
	\GW_{0,0,k} (\alpha_1,\ldots,\alpha_k; [\pt])
	&= \int_{\CM_{0,0,k} (J) } ev^* (\PD (\alpha_1) \otimes \cdots \otimes \PD(\alpha_k)) \wedge \pi^* \PD (\beta) \\
	&= \int_{Q\times \dmexp_{0,k}} \Delta^* (\PD (\alpha_1) \otimes \cdots \otimes \PD(\alpha_k)) \wedge \text{id}_{\dmspace_{0,k}}^* \PD (\beta) \\
	&= \int_{Q} \PD (\alpha_1) \wedge \cdots \wedge \PD(\alpha_k) \cdot \int_{\dmexp_{0,k}} \PD (\beta).
	\end{align*}
	If $\deg(\beta)>0$ then $\deg (\PD (\beta )) < \dim (\dmexp_{0,k})$ and hence $\int_{\dmexp_{0,k}} \PD (\beta) =0$.
	On the other hand, if $\beta = [\pt]$, then $\int_{\dmexp_{0,k}} \PD [\pt] =1$.
\end{proof}

\begin{axiom*}{Symmetry axiom}
	Fix a permutation $\sigma: \{1,\ldots, k\}\to \{1,\ldots,k\}$.  Consider the permutation map $\sigma:\dmlog_{g,k}\to \dmlog_{g,k}, \ [\Sigma,j,M,D]  \mapsto [\Sigma,j,M^\sigma,D]$ where $M = \{z_1,\ldots,z_k\}$ and where $M^\sigma := \{z'_1,\ldots,z'_k\}$, $z'_i:= z_{\sigma(i)}$.
	Then
	\[
	\GW_{A,g,k} (\alpha_{\sigma(1)},\ldots,\alpha_{\sigma(k)}; \sigma_*\beta) = (-1)^{N(\sigma;\alpha_i)} \GW_{A,g,k} (\alpha_1,\ldots,\alpha_k; \beta)
	\]
	where $N(\sigma;\alpha_i):= \sharp 	\{	i<j \mid \sigma(i)> \sigma(j), \deg (\alpha_i)\deg(\alpha_j)\in 2\Z +1	\}	$.
\end{axiom*}
\begin{proof}
	In essence, the proof follows from the change of variables theorem~\ref{thm:change-of-variables}.
	
	Consider the permutation map between GW-polyfolds, $\sigma: \cZ_{A,g,k} \to \cZ_{A,g,k}$.
	As discussed in \S~\ref{subsec:pulling-back-abstract-perturbations} we may pullback an abstract perturbation via this map, yielding a map between the perturbed GW-moduli spaces
	\[
	\sigma: \mathcal{S}_{A,g,k}(\sigma^* p) \to \mathcal{S}_{A,g,k}(p).
	\]
	
	Consider the following commutative diagram of maps:  
	\[
		\begin{tikzcd}
		&  & Q \\
		\mathcal{S}_{A,g,k}(\sigma^* p) \arrow[r, "\sigma"'] \arrow[d, "\pi"'] \arrow[rru, "ev_{\sigma(i)}"] & \mathcal{S}_{A,g,k}(p) \arrow[d, "\pi'"] \arrow[ru, "ev'_i"'] &  \\
		\dmlog_{g,k} \arrow[r, "\sigma"'] & \dmlog_{g,k}. & 
		\end{tikzcd}
	\]
	
	We may then compute the GW-invariants as follows:
	\begin{align*}
	&\GW_{A,g,k} (\alpha_{\sigma(1)},\ldots,\alpha_{\sigma(k)}; \sigma_*\beta)\\
	&\qquad = \int_{\mathcal{S}_{A,g,k}(p)} ev_1^{'*} \PD (\alpha_{\sigma(1)}) \wedge \cdots \wedge ev_k^{'*} \PD (\alpha_{\sigma(k)}) \wedge \pi'^* \PD(\sigma_*\beta) \\
	&\qquad = \int_{\mathcal{S}_{A,g,k}(\sigma^* p)} \sigma^*\left(ev_1^{'*} \PD (\alpha_{\sigma(1)}) \wedge \cdots \wedge ev_k^{'*} \PD (\alpha_{\sigma(k)}) \wedge \pi'^* \PD(\sigma_*\beta)\right) \\
	&\qquad = \int_{\mathcal{S}_{A,g,k}(\sigma^* p)} ev_{\sigma(1)}^* \PD (\alpha_{\sigma(1)}) \wedge \cdots \wedge ev_{\sigma(k)}^* \PD (\alpha_{\sigma(k)}) \wedge \pi^* \PD(\beta) \\
	&\qquad = (-1)^{N(\sigma;\alpha_i)} \int_{\mathcal{S}_{A,g,k}(\sigma^* p)} ev_{1}^* \PD (\alpha_1)) \wedge \cdots \wedge ev_{k}^* \PD (\alpha_k) \wedge \pi^* \PD(\beta) \\
	&\qquad = (-1)^{N(\sigma;\alpha_i)} \GW_{A,g,k} (\alpha_1,\ldots,\alpha_k; \beta).
	\end{align*}
	In the second equality, we have $\int_{\mathcal{S}_{A,g,k}(p)} \ww = \int_{\mathcal{S}_{A,g,k}(\sigma^*p)} \sigma^* \ww$ by the change of variables theorem~\ref{thm:change-of-variables}.
	In the third equality, we use commutativity of the diagram to see that $ev'_i \circ \sigma = ev_{\sigma(i)}$ hence $\sigma^* ev_i^{'*} = ev^*_{\sigma(i)}$;
	we also see that $\pi'\circ \sigma = \sigma \circ \pi$ hence $\sigma^*\pi'^* =\pi^*\sigma^*$.
	Since the map $\sigma: \dmlog_{g,k} \to \dmlog_{g,k}$ is a diffeomorphism it follows that $\sigma^* \PD (\sigma_* \beta) = \PD (\beta) $ for all $\beta\in H_*(\dmlog_{g,k}; \Q)$, and therefore
		\[
		\sigma^* \pi^{'*} \PD (\sigma_* \beta) = \pi^*\sigma^* \PD(\sigma_* \beta) = \pi^* \PD(\beta).
		\]
	In the final equality, the sign $(-1)^{N(\sigma;\alpha_i)}$ is introduced by permutation of the differential forms.
\end{proof}

\begin{figure}[ht]
	\centering
	\includegraphics[width=\textwidth]{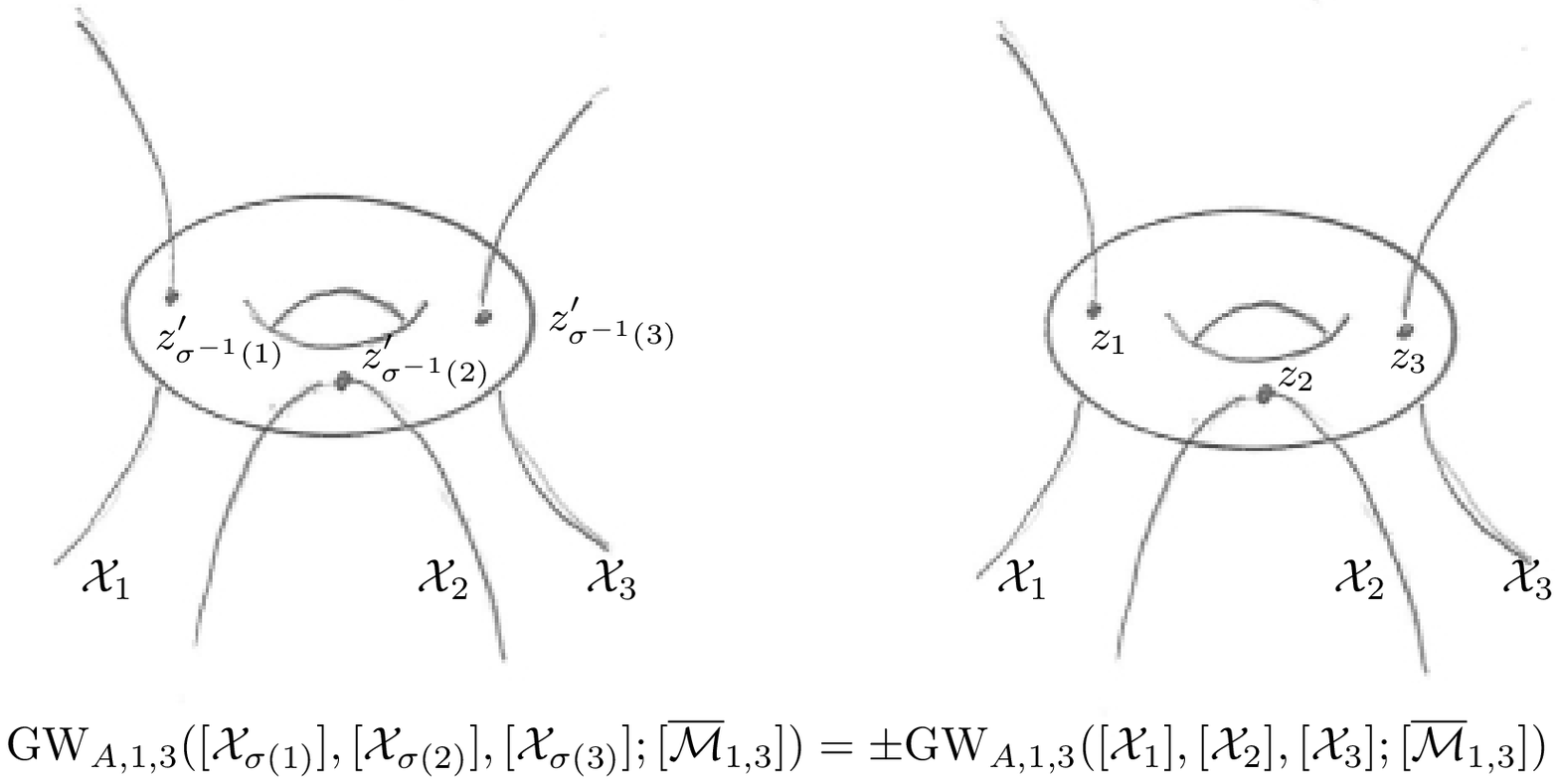}
	\caption{Symmetry axiom}
\end{figure}

Recall from Definition~\ref{def:basic-classes} that $(A,g,k)$ is a basic if it is equal to one of the following: $(A,0,3)$, $(A,1,1)$, or $(A,g\geq 2,0)$.
Again, for such values of $g$ and $k$ we will have $\dmspace_{g,k-1} = \emptyset$ by definition.

\begin{axiom*}{Fundamental class axiom}
	Consider the fundamental classes $[Q]\in H_{2n}(Q;\Q)$ and $[\dmlog_{g,k}] \in H_{6g-6+2k}(\dmlog_{g,k};\Q)$.  Suppose that $A\neq 0$ and that $(A,g,k)$ is not basic.  Then 
	\[
	\GW_{A,g,k} (\alpha_1,\ldots,\alpha_{k-1},[Q]; [\dmlog_{g,k}]) = 0.
	\]

	Consider the canonical section $s_i :\dmlog_{g,k-1} \to \dmlog_{g,k}$ defined by doubling the $i$th-marked point.  Then
	\[
	\GW_{A,g,k} (\alpha_1,\ldots,\alpha_{k-1},[Q]; s_{i*}\beta) = \GW_{A,g,k-1} (\alpha_1,\ldots,\alpha_{k-1};\beta).
	\]
\end{axiom*}
\begin{proof}
	As shown in \S~\ref{subsec:pulling-back-via-ftk} we may pullback perturbations via the well-defined $k$th-marked point forgetting map $ft_k :\cZ^\text{uc}_{A,g,k} \to \cZ_{A,g,k-1}$, yielding a map between perturbed GW-moduli spaces
		\[ft_k : \cS^\text{uc}_{A,g,k} (ft_k^*p)\to \cS_{A,g,k-1}(p).\]
	By construction, the preimage of a point $[\Sigma,j,M,D,u]\in \cS_{A,g,k-1}(p)$ consists of the Riemann surface $\Sigma$ with nodes identified, i.e.,
		\[
		ft_k^{-1} ([\Sigma,j,M,D,u]) \simeq \Sigma/\sim, \quad \text{where } x_a\sim y_a \text{ for nodal pairs } \{x_a,y_a\} \in D.
		\]
	In other words, the moduli space $\cS^\text{uc}_{A,g,k}(ft_k^*p)$ is the universal curve over $\cS_{A,g,k-1}(p)$.
	In particular, it follows that given a perturbed solution in $\cS^\text{uc}_{A,g,k}(ft_k^*)$ we may move the $k$th-marked freely and still have a perturbed solution.
	
	Consider the first assertion, and suppose the converse, i.e., 
		\[\GW_{A,g,k} (\alpha_1,\ldots,\alpha_{k-1},[Q]; [\dmlog_{g,k}]) \neq 0.\]
	Without loss of generality, we may assume that $\cX_i\subset Q$ are submanifold representatives of the homology classes $\alpha_i$.
	As noted in \S~\ref{subsec:intersection-numbers} we may assume that
		$(ev\times \pi) |_{\cS_{A,g,k-1}(p)} \pitchfork (\cX_1 \times \cdots \times \cX_{k-1});$
	since the ``submanifolds'' $Q$ and $\dmexp_{g,k}$ already span their respective components it necessarily follows that
		$(ev\times \pi) |_{\cS^\text{uc}_{A,g,k}(ft_k^*p)} \pitchfork (\cX_1 \times \cdots \times \cX_{k-1} \times Q \times \dmlog_{g,k}).$
	If the polyfold GW-invariant is nonzero, the intersection 
		\[
		(ev \times \pi)(\cS^\text{uc}_{A,g,k}(ft_k^*(p))) \cap (\cX_1\times \cdots \times \cX_{k-1} \times Q\times \dmlog_{g,k}) \neq \emptyset
		\]
	must consist of finitely many isolated points.
	However, the $k$th-marked point is unconstrained and so any intersection point can \textit{never} be isolated.
	This is a contradiction.

	We prove the second assertion.
	Intuitively, the canonical section $s_i$ forces the $k$th-marked point to lie on a component together with the $i$th-marked point; hence the constraint at the $i$th-marked point automatically constrains the $k$th-marked point.
	The map $ft_k: \cS^\text{uc}_{A,g,k} (ft_k^*p)\to \cS_{A,g,k-1}(p)$ then gives a bijection between the points of intersection.
	
	To elaborate, let $\cX_i\subset Q$ be submanifold representatives of the homology classes $\alpha_i$, and let $\cB\subset \dmlog_{g,k-1}$ be a suborbifold representative of the homology class $\beta$.
	The canonical section $s_i$ is a smooth embedding of $\dmlog_{g,k-1}$ into $\dmlog_{g,k}$.  It follows that the homology class $s_{i*} \beta \in H_*(\dmlog_{g,k};\Q)$ is represented by the suborbifold  $s_i(\cB)\subset \dmlog_{g,k}$.
	We may perturb the suborbifold $\cB$ such that $\pi|_{\cS_{A,g,k-1}(p)} \pitchfork \cB$;
	then using the fact that $\cS^\text{uc}_{A,g,k}(ft_k^*(p))$ is the universal curve over $\cS_{A,g,k-1}(p)$ one can see that $\pi|_{\cS_{A,g,k-1}(p)} \pitchfork \cB$ implies that $\pi|_{\cS^\text{uc}_{A,g,k}(ft_k^*p)} \pitchfork s_i (\cB)$.
	We may also perturb the submanifold $\cX_1 \times \cdots \times \cX_{k-1}$ such that 
	$ev|_{\pi^{-1}(\cB)} \pitchfork (\cX_1 \times \cdots \times \cX_{k-1})$ and $ev|_{\pi^{-1}(s_i(\cB))} \pitchfork(\cX_1 \times \cdots \times \cX_{k-1})$.
	
	It therefore follows that we have transversality with the perturbed GW-moduli spaces,
	\begin{itemize}
		\item $(ev \times \pi) |_{\cS_{A,g,k-1}(p)} \pitchfork (\cX_1 \times \cdots \times \cX_{k-1} \times \cB)$,
		\item $(ev \times \pi) |_{\cS^\text{uc}_{A,g,k}(ft_k^*(p))} \pitchfork (\cX_1 \times \cdots \times \cX_{k-1}\times Q \times s_i(\cB))$.
	\end{itemize}
	The map $ft_k$ gives a bijection between the finite intersection points; after taking into account orientation and weights the intersection numbers will be the same, hence
		\[
		\GW_{A,g,k} (\alpha_1,\ldots,\alpha_{k-1},[Q]; s_{i*}\beta) = \GW_{A,g,k-1} (\alpha_1,\ldots,\alpha_{k-1};\beta).
		\]
\end{proof}

\begin{axiom*}{Divisor axiom}
	Suppose $(A,g,k)$ is not basic.
	If $\deg (\alpha_k) = 2n-2$ then
	\[
	\GW_{A,g,k} (\alpha_1,\ldots,\alpha_k; \PD (ft_k^* \PD (\beta))) = (A\cdot \alpha_k ) \ \GW_{A,g,k-1} (\alpha_1,\ldots,\alpha_{k-1};\beta),
	\]
	where $A\cdot \alpha_k$ is given by the homological intersection product.
\end{axiom*}
\begin{proof}
	Without loss of generality, we may assume that $\cX_i\subset Q$ are submanifold representatives of the homology classes $\alpha_i$ and $\cB\subset \dmlog_{g,k-1}$ is a suborbifold representative of the homology class $\beta$.
	The map $ft_k :\dmlog_{g,k} \to \dmlog_{g,k-1}$ is a submersion thus $ft_k^{-1} (\cB) \subset \dmlog_{g,k}$ is also a suborbifold and moreover $[ft_k^{-1} (\cB)]=\PD (ft_k^* \PD (\beta))$ (compare with the reasoning in the genus zero case discussed in \cite[Lem.~7.5.5]{MSbook}).
	
	Without loss of generality we may assume that 
		\[(ev\times \pi)|_{\cS_{A,g,k-1}(p)} \pitchfork (\cX_1 \times \cdots \times \cX_{k-1} \times \cB);\]
	the intersection consists of a finite set of perturbed solutions $[\Sigma,j,M,D,u]\in \cS_{A,g,k-1}(p)$.
	It follows that
		\[
		(ev_1\times \cdots \times ev_{k-1} \times \pi) |_{\cS^\text{uc}_{A,g,k}(ft_k^*p)} \pitchfork (\cX_1 \times \cdots \times \cX_{k-1} \times ft_k^{-1}(\cB))
		\]
	and the preimage $(ev_1\times \cdots \times ev_{k-1} \times \pi)^{-1}(\cX_1 \times \cdots \times \cX_{k-1} \times ft_k^{-1}(\cB))$ consists of the universal curves $\Sigma / \sim$ over the finite intersection points $[\Sigma,j,M,D,u]\in \cS_{A,g,k-1}(p)$.
	The evaluation map at the $k$th-marked point restricted to this preimage may be identified with the stable map considered on the universal curve, i.e.,
		\[u:\Sigma/\sim \ \to Q.\]
		
	Let $\cX_k\subset Q$ be a submanifold representative of the homology class $\alpha_k$; we may perturb $\cX_k$ so that it is transverse to the finitely many maps $u : \Sigma /\sim\  \to Q$.  By the assumption $\deg (\alpha_k) = 2n-2$, the intersection will be a finite set of points.  The intersection number $u|_{\Sigma / \sim} \cdot \cX_k$ is then equal to the homological intersection product $A \cdot \alpha_k$.
	
	We have $(ev\times \pi)|_{\cS^\text{uc}_{A,g,k}(ft_k^*p)} \pitchfork (\cX_1\times \cdots \times \cX_k \times ft_k^{-1}(\cB))$.  The intersection of $\cX_k$ with the image of the universal curve $\Sigma / \sim$ contributes the additional factor $A \cdot \alpha_k$ over an intersection point $[\Sigma,j,M,D,u]\in \cS_{A,g,k-1}(p)$.  Therefore,
	\[
	\GW_{A,g,k} (\alpha_1,\ldots,\alpha_k; \PD (ft_k^* \PD (\beta))) = (A\cdot \alpha_k ) \ \GW_{A,g,k-1} (\alpha_1,\ldots,\alpha_{k-1};\beta).
	\]
\end{proof}

\begin{figure}[ht]
	\centering
	\includegraphics[width=\textwidth]{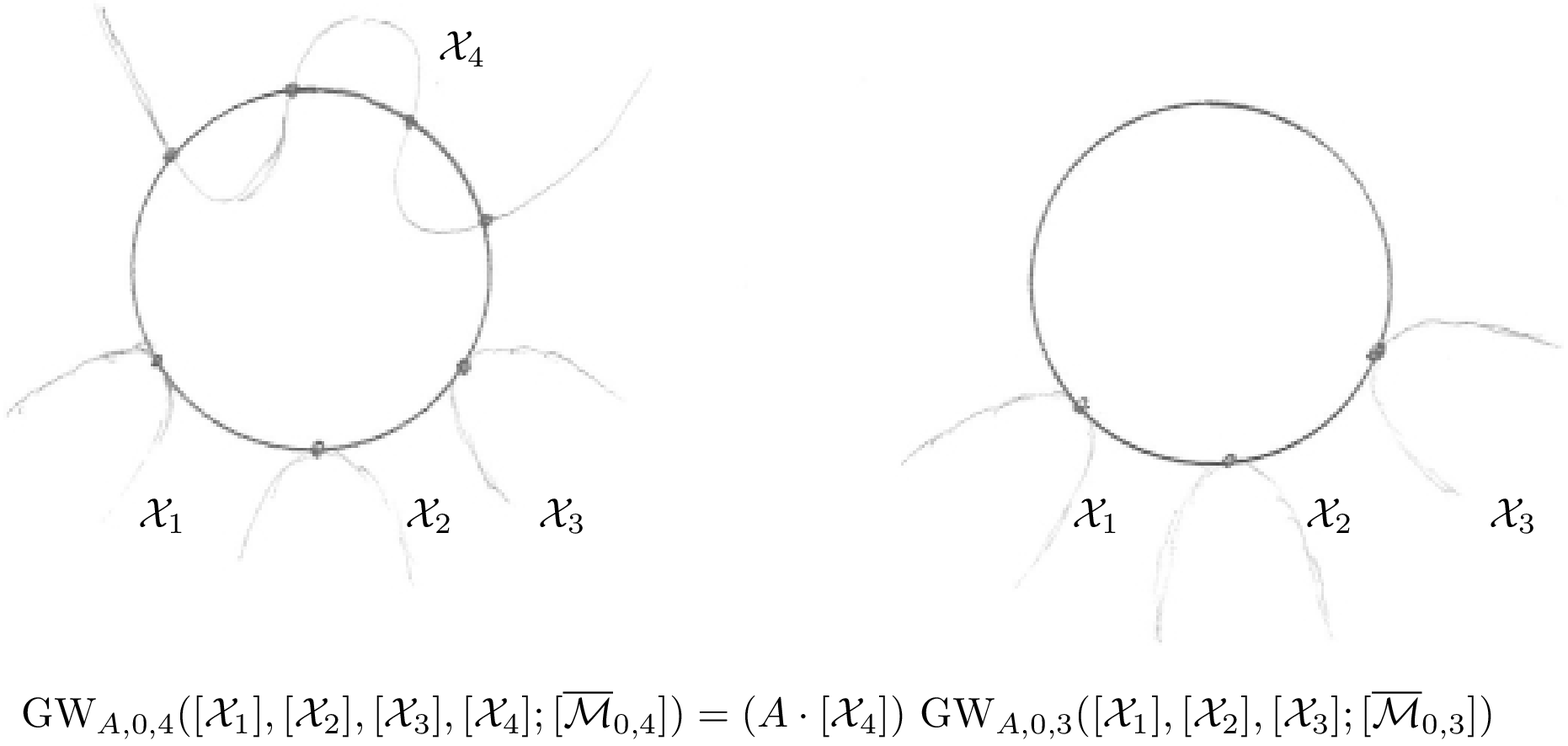}
	\caption{Divisor axiom}
\end{figure}

Recall from the introduction that we can write the Poincar\'e dual of the diagonal $\Delta \subset Q\times Q$ as 
$\PD([\Delta]) = \sum_\nu e_\nu \otimes e^\nu$
for a homogeneous basis $\{e_\nu \} \in H^*(Q;\Q)$ with dual basis with respect to Poincar\'e duality $\{e^\nu \} \in H^*(Q;\Q)$.

\begin{axiom*}{Splitting axiom}
	Fix a partition $S_0 \sqcup S_1 =\{1,\ldots, k\}$.
	Let $k_0 := \sharp S_0,$ $k_1 := \sharp S_1$ and let $g_0,$ $g_1 \geq 0$ such that $g= g_0 + g_1$,
	and $k_i + g_i \geq 2$ for $i=0,1$.
	Consider the natural map 
		\[\phi_S : \dmspace_{k_0+1 , g_0}\times \dmspace_{k_1+1 , g_1} \to \dmspace_{g,k}\]
	which identifies the last marked point of a stable noded Riemann surface in $\dmspace_{k_0+1 , g_0}$ with the first marked point of a stable noded Riemann surface in $\dmspace_{k_1+1, g_1}$, and which maps the first $k_0$ marked points of $\dmspace_{g_0,k_0+1}$ to marked points indexed by $S_0$ and likewise maps the last $k_1$ marked points of $\dmspace_{g_1,k_1+1}$ to marked points indexed by $S_1$.
	Then
	\begin{align*}
	&\GW_{A,g,k} (\alpha_1, \ldots, \alpha_k; \phi_{S*} (\beta_0\otimes \beta_1) ) = 
							(-1)^{N(S;\alpha)} \sum_{A_0+A_1 = A}	\sum_\nu \\
							&\qquad 
							\GW_{A_0,g_0,k_0+1} (\{\alpha_i\}_{i\in S_0}, \PD (e_\nu) ; \beta_0)
							\cdot
							\GW_{A_1,g_1,k_1+1} (\PD (e^\nu), \{\alpha_j\}_{j\in S_1} ; \beta_1)
	\end{align*}
	where $N(S;\alpha)=\sharp \{j<i \mid i\in S_0, j\in S_1, \deg(\alpha_i)\deg(\alpha_j)\in 2\Z +1 \}$.	
\end{axiom*}
\begin{proof}
	Without loss of generality, we may assume that $S_0$ consists of the first $k_0$ points of $\{1,\ldots, k\}$ ordered linearly, and $S_1$ consists of the last $k_1$ points of $\{1,\ldots, k\}$ ordered linearly. When this is the case, $N(S;\alpha_i)=0$.
	The general case reduces to this case through the symmetry axiom by considering the permutation 
		$\sigma: S_0 \sqcup S_1 \to \{1,\ldots, k\}$
	which sends $S_0$ to $\{1,\ldots,k_0\}$ and $S_1$ to $\{k_0+1,\ldots,k_0+k_1\}$ preserving the relative ordering. This moreover explains the presence of the sign correction.

	Observe that $[\Delta] = \sum_\nu \PD(e_\nu) \otimes \PD(e^\nu)$, and therefore interpreting the polyfold GW-invariants as intersection numbers we may write
		\begin{equation}
		\begin{split}
		\label{eq:split-1}
		&\sum_\nu 
		\GW_{A_0,g_0,k_0+1} (\{\alpha_i\}_{i\in S_0}, \PD (e_\nu) ; \beta_0)
		\cdot
		\GW_{A_1,g_1,k_1+1} (\PD (e^\nu), \{\alpha_j\}_{j\in S_1} ; \beta_1) \\
			&\phantom{\sum_\nu}
		= (ev \times ev'\times \pi \times \pi') |_{(\cS_{A_0,g_0,k_0+1} \times \cS_{A_1,g_1,k_1+1})(p)} \\
			&\phantom{\sum_\nu = }
		\cdot ( \{\cX_i\}_{i\in S_0}\times \Delta \times \{\cX_j\}_{j\in S_1} \times \cB_0 \times \cB_1).
		\end{split}
		\end{equation}
	We consider the following inclusion and marked point identifying maps as described in \S~\ref{subsec:inclusion-marked-point-identifying-maps} defined on the split GW-polyfolds of \S~\ref{subsec:other-gw-polyfolds}:
		\[
		\begin{tikzcd}
		\displaystyle{\bigsqcup_{A_0+A_1 = A}} \cZ_{A_0,g_0,k_0+1} \times \cZ_{A_1,g_1,k_1+1}	&	\\
		\displaystyle{\bigsqcup_{A_0+A_1 = A}} \cZ_{A_0+A_1,S}	\arrow[u, hook, "i"] \arrow[r, "\phi_S"] & \cZ_{A,g,k}
		\end{tikzcd}
		\]
	We may pullback a perturbation via the inclusion $i: \cZ_{A_0+A_1,S} \hookrightarrow \cZ_{A_0,g_0,k_0+1} \times \cZ_{A_1,g_1,k_1+1}$; since the evaluation map is a submersion we may also assume this perturbation is chosen such that
		\[ev_{k_0+1}\times ev'_{1} : (\cS_{A_0,g_0, k_0+1} \times \cS_{A_1,g_1,k_1+1}) (p) \to Q\times Q\]
	is transverse to the diagonal $\Delta\subset Q\times Q$.
	
	The map $i: \cS_{A_0+A_1,S} (i^*p) \hookrightarrow (\cS_{A_0,g_0, k_0+1} \times \cS_{A_1,g_1,k_1+1}) (p)$ gives an identification of $\cS_{A_0+A_1,S} (i^*p)$ with $(ev_{k_0+1}\times ev'_{1})^{-1} (\Delta)$.
	After perturbation of the remaining representing submanifolds/suborbifolds we assert that the following intersection numbers are equal:
		\begin{equation}\label{eq:split-2}
		\begin{split}
		& (ev \times ev'\times \pi \times \pi') |_{(\cS_{A_0,g_0,k_0+1} \times \cS_{A_1,g_1,k_1+1})(p)} \\
		& \qquad \cdot ( \{\cX_i\}_{i\in S_0}\times \Delta \times \{\cX_j\}_{j\in S_1} \times \cB_0 \times \cB_1) \\
		& \qquad = (ev\times \pi)|_{\cS_{A_0+A_1,S} (i^*p)}
				\cdot ( \{\cX_i\}_{i\in S_0}\times\{\cX_j\}_{j\in S_1} \times \cB_0 \times \cB_1)
		\end{split}
		\end{equation}
	
	On the other hand, we may also pullback a perturbation via the marked point identifying maps $\sqcup \phi_S: \bigsqcup_{A_0+A_1 = A} \cZ_{A_0+A_1,S} \rightarrow \cZ_{A,g,k}$.
	We therefore consider the following commutative diagram:
		\[
		\begin{tikzcd}
		&  & Q^k \\
		\displaystyle{\bigsqcup_{A_0+A_1 = A}} \cS_{A_0+A_1,S}(\phi_S^* q) \arrow[r, "\sqcup \phi_S"'] \arrow[d, "\pi"'] \arrow[rru, "ev"] & \cS_{A,g,k}(q) \arrow[d, "\pi"] \arrow[ru, "ev"'] &  \\
		\dmlog_{g_0,k_0+1}\times \dmlog_{g_1,k_1+1} \arrow[r, "\phi_S"'] & \dmlog_{g,k}. & 
		\end{tikzcd}
		\]
	We may now perturb the representing submanifolds $\cX_i$ and representing suborbifolds $\cB_0$, $\cB_1$ such that
		$(ev\times \pi) |_{\cS_{A_0+A_1,S}(\phi_S^* q)} \pitchfork ( \{\cX_i\}_{i\in S_0}\times\{\cX_j\}_{j\in S_1} \times \cB_0 \times \cB_1)$.
	The map $\sqcup \phi_S$ gives a restriction to the finite points of intersection
		\[\bigsqcup_{A_0+A_1 = A} (ev\times \pi)^{-1} ( \{\cX_i\}_{i\in S_0}\times\{\cX_j\}_{j\in S_1} \times \cB_0 \times \cB_1) \subset \bigsqcup_{A_0+A_1 = A}\cS_{A_0+A_1,S}(\phi_S^* q)\] and
		\[(ev\times \pi)^{-1} (\cX_1 \times \cdots \times \cX_k \times \phi_s(\cB_0\times \cB_1)) \subset \cS_{A,g,k}(q).\]
	We would like to claim that this map is a bijection. 
	The concern is that different decompositions $A_0+A_1 = A$, $A'_0+ A'_1 = A$ may share a common decomposition, resulting in a multiple points mapping to the same point.
	To prevent this, the representing submanifolds and representing suborbifolds should be perturbed to be transverse to all further decompositions of $A$; for dimension reasons, the intersection points with a further decomposition will then be empty.

	We must make note of an important subtlety. Due to the fact that $\pi$ is not a submersion, we can never achieve transversality of the map $ev\times \pi : \cS_{A,g,k}(q) \to Q^k \times \dmlog_{g,k}$ with the suborbifold $\cX_1 \times \cdots \times \cX_k \times \phi_S(\cB_0\times \cB_1)$ through choice of perturbation $q$. 
	(Of course, we can find a representative in the same homology class of the suborbifold $\phi_S(\cB_0\times \cB_1)$ such that we \textit{will} have transversality, but then we lose the relationship between $\cB_0\times \cB_1$ and $\phi_S(\cB_0\times \cB_1)$.)
	
	However, even though the intersection is not transverse, the intersection does consist of a finite set of points.
	Comparing the branched integrals of the Poincar\'e duals at these finite points of intersection we obtain the equality
		\begin{equation}
		\label{eq:split-3}
		\begin{split}
		&\sum_{A_0+A_1 = A} \int_{\cS_{A_0+A_1,S}(\phi_S^* q)} ev^* \{\PD[\cX_i]\}_{i\in S_0} \wedge ev^* \{\PD[\cX_j]\}_{j\in S_1} \wedge \pi^* \PD [\cB_0 \times \cB_1]\\
		&\qquad = \int_{\cS_{A,g,k}(q)} ev_1^* \PD[\cX_1] \wedge \cdots \wedge ev_k^* \PD[\cX_k] \wedge \pi^* \PD [\phi_S(\cB_0\times \cB_1)].
		\end{split}
		\end{equation}
	Combining equations \eqref{eq:split-1}, \eqref{eq:split-2}, and \eqref{eq:split-3} we obtain
	\begin{align*}
	&\sum_{A_0+A_1 = A}	\sum_\nu
	\GW_{A_0,g_0,k_0+1} (\{\alpha_i\}_{i\in S_0}, \PD (e_\nu) ; \beta_0)
	\cdot
	\GW_{A_1,g_1,k_1+1} (\PD (e^\nu), \{\alpha_j\}_{j\in S_1} ; \beta_1) \\
	&\qquad= \sum_{A_0+A_1 = A} 
			\left(
			\begin{array}{l}
			(ev \times ev'\times \pi \times \pi') |_{(\cS_{A_0,g_0,k_0+1} \times \cS_{A_1,g_1,k_1+1})(p)} \\
			\qquad\cdot ( \{\cX_i\}_{i\in S_0}\times \Delta \times \{\cX_j\}_{j\in S_1} \times \cB_0 \times \cB_1)
			\end{array}
			\right)
			\\
	&\qquad= \sum_{A_0+A_1 = A} (ev\times \pi)|_{\cS_{A_0+A_1,S} (i^*p)} \cdot ( \{\cX_i\}_{i\in S_0}\times\{\cX_j\}_{j\in S_1} \times \cB_0 \times \cB_1) \\
	&\qquad= \sum_{A_0+A_1 = A} \int_{\cS_{A_0+A_1,S}(\phi_S^* q)} 
		\left(
		\begin{array}{l}
		ev^* \{\PD[\cX_i]\}_{i\in S_0} \wedge ev^* \{\PD[\cX_j]\}_{j\in S_1} \\
		\qquad \wedge \pi^* \PD [\cB_0 \times \cB_1]		
		\end{array}
		\right)
		\\
	&\qquad= \int_{\cS_{A,g,k}(q)} ev_1^* \PD[\cX_1] \wedge \cdots \wedge ev_k^* \PD[\cX_k] \wedge \pi^* \PD [\phi_S(\cB_0\times \cB_1)]\\
	&\qquad= \GW_{A,g,k} (\alpha_1, \ldots, \alpha_k; \phi_{S*} (\beta_0\otimes \beta_1) )
	\end{align*}
	
\end{proof}

\begin{axiom*}{Genus reduction axiom}
	Consider the natural map 
		\[\psi: \dmspace_{g-1,k+2} \to \dmspace_{g,k}\]
	which identifies the last two marked points of a stable noded Riemann surface, increasing the arithmetic genus by one.
	Then
		\[
		2 \cdot \GW_{A,g,k} (\alpha_1, \ldots, \alpha_k; \psi_* \beta) = \sum_\nu \GW_{A,g-1,k+2} (\alpha_1,\ldots,\alpha_k, \PD (e_\nu) , \PD (e^\nu) ; \beta).
		\]
\end{axiom*}
\begin{proof}
	Again, $[\Delta]= \sum_\nu \PD(e_\nu) \otimes \PD(e^\nu)$, and therefore interpreting the polyfold GW-invariants as intersection numbers we may write
		\begin{equation}\label{eq:gr-1}
		\begin{split}
		&\sum_\nu \GW_{A,g-1,k+2} (\alpha_1,\ldots,\alpha_k, \PD (e_\nu) , \PD (e^\nu) ; \beta)\\
		&\qquad = (ev_1 \times \cdots \times ev_{k+2}\times \pi) |_{\cS_{A,g-1,k+2}(p)} \cdot (\cX_1\times \cdots \times \cX_k \times \Delta \times \cB).
		\end{split}
		\end{equation}
	We consider the following inclusion maps and marked point identifying maps as described in \S~\ref{subsec:inclusion-marked-point-identifying-maps} defined on the genus GW-polyfolds of \S~\ref{subsec:other-gw-polyfolds}:
	\[
	\begin{tikzcd}
	\cZ_{A,g-1,k+2} 		&	\\
	\cZ^\text{g}_{A,g-1,k+2}	\arrow[u, hook, "i"] \arrow[r, "\psi"] & \cZ_{A,g,k}
	\end{tikzcd}
	\]
	We may pullback a perturbation via the inclusion $i: \cZ^\text{g}_{A,g-1,k+2} \hookrightarrow \cZ_{A,g-1,k+2}$; we may also assume this perturbation is chosen such that
		\[ev_{k+1}\times ev_{k+2} : \cS_{A,g-1, k+2}(p) \to Q\times Q\]
	is transverse to the diagonal $\Delta\subset Q\times Q$.
	Consider the following commutative diagram:
		\[
		\begin{tikzcd}
		&  & Q^{k} \\
		\cS^\text{g}_{A,g-1,k+2}(i^* p) \arrow[r, hook, "i"'] \arrow[d, "\pi"'] \arrow[rru, "ev"] & \cS_{A,g-1,k+2}(p) \arrow[d, "\pi"] \arrow[ru, "ev"'] &  \\
		\dmlog_{g-1,k+2} \arrow[r, "\id"'] & \dmlog_{g-1,k+2}. & 
		\end{tikzcd}
		\]
	Observe that the map $i: \cS^\text{g}_{A,g-1,k+2}(i^* p) \hookrightarrow \cS_{A,g-1,k+2}(p)$ gives an identification of $\cS^\text{g}_{A,g-1,k+2}(i^*p)$ with $(ev_{k+1}\times ev_{k+2})^{-1} (\Delta)$.	
	The associated intersection numbers are equal:
		\begin{equation}
		\begin{split}
		\label{eq:gr-2}
		&(ev_1 \times \cdots \times ev_{k+2}\times \pi) |_{\cS_{A,g-1,k+2}(p)} \cdot (\cX_1\times \cdots \times \cX_k \times \Delta \times \cB)\\
		&\qquad =(ev_1 \times \cdots \times ev_k \times \pi)|_{\cS^\text{g}_{A,g-1,k+2}(i^*p)} \cdot (\cX_1\times \cdots \times \cX_k \times \cB).
		\end{split}
		\end{equation}
	
	On the other hand, we may also pullback a perturbation via the marked point identifying map $\psi: \cZ^\text{g}_{A,g-1,k+2} \to \cZ_{A,g,k}$.
	We therefore consider the following commutative diagram:
		\[
		\begin{tikzcd}
		&  & Q^{k} \\
		\cS^\text{g}_{A,g-1,k+2}(\psi^* q) \arrow[r, "\psi"'] \arrow[d, "\pi"'] \arrow[rru, "ev"] & \cS_{A,g,k}(q) \arrow[d, "\pi"] \arrow[ru, "ev"'] &  \\
		\dmlog_{g-1,k+2} \arrow[r, "\psi"'] & \dmlog_{g,k}. & 
		\end{tikzcd}
		\]
	We may now perturb the representing submanifolds $\cX_i$ and the representing suborbifold $\cB$ such that
	$(ev\times \pi) |_{\cS^\text{g}_{A,g-1,k+2}(\psi^* q)} \pitchfork (\cX_1 \times \cdots \times \cX_k \times \cB)$.
	The marked point identifying map $\psi$ gives a two-to-one map between the finite points of intersection
		\[(ev\times \pi)^{-1} (\cX_1 \times \cdots \times \cX_k \times \cB) \subset \cS^\text{g}_{A,g-1,k+2}(\psi^* q)\]
	and
		\[(ev\times \pi)^{-1} (\cX_1 \times \cdots \times \cX_k \times \psi(\cB)) \subset \cS_{A,g,k}(q).\]
	It is two-to-one since by exchanging the marked points $z_{k+1}$ and $z_{k+2}$ we obtain distinct stable curve solutions which map to the same stable curve after identifying these marked points, compare with the discussion in \S~\ref{subsec:inclusion-marked-point-identifying-maps}.
	
	We again must make note of an important subtlety. Due to the fact that $\pi$ is not a submersion, we can never achieve transversality of the map $ev\times \pi : \cS_{A,g,k}(q) \to Q^k \times \dmlog_{g,k}$ with the suborbifold $\cX_1 \times \cdots \times \cX_k \times \psi(\cB)$ through choice of perturbation $q$.
	(Again, we can find a representative in the same homology class of the suborbifold $\psi(\cB)$ such that we \textit{will} have transversality, but then we lose the relationship between $\cB$ and $\psi(\cB)$.)
	
	However, even though the intersection is not transverse, the intersection does consist of a finite set of points.
	Comparing the branched integrals of the Poincar\'e duals at these finite points of intersection we obtain the equality
		\begin{equation}
		\begin{split}
		\label{eq:gr-3}
		&\int_{\cS^\text{g}_{A,g-1,k+2}(\psi^* q)} ev_1^* \PD[\cX_1] \wedge \cdots \wedge ev_k^* \PD[\cX_k] \wedge \pi^* \PD [\cB]
		\\
		&\qquad= 2 \cdot \int_{\cS_{A,g,k}(q)} ev_1^* \PD[\cX_1] \wedge \cdots \wedge ev_k^* \PD[\cX_k] \wedge \pi^* \PD [\psi(\cB)].
		\end{split}
		\end{equation}
	Combining equations \eqref{eq:gr-1}, \eqref{eq:gr-2}, and \eqref{eq:gr-3} we obtain
		\begin{align*}
		&\sum_\nu \GW_{A,g-1,k+2} (\alpha_1,\ldots,\alpha_k, \PD (e_\nu) , \PD (e^\nu) ; \beta)\\
		&\qquad =(ev_1 \times \cdots \times ev_{k+2}\times \pi) |_{\cS_{A,g-1,k+2}(p)} \cdot (\cX_1\times \cdots \times \cX_k \times \Delta \times \cB)\\
		&\qquad =(ev_1 \times \cdots \times ev_k \times \pi)|_{\cS^\text{g}_{A,g-1,k+2}(i^*p)} \cdot (\cX_1\times \cdots \times \cX_k \times \cB)\\
		&\qquad =\int_{\cS^\text{g}_{A,g-1,k+2}(\psi^* q)} ev_1^* \PD[\cX_1] \wedge \cdots \wedge ev_k^* \PD[\cX_k] \wedge \pi^* \PD [\cB]\\
		&\qquad =2\cdot \int_{\cS_{A,g,k}(q)} ev_1^* \PD[\cX_1] \wedge \cdots \wedge ev_k^* \PD[\cX_k] \wedge \pi^* \PD [\psi(\cB)]\\
		&\qquad =2\cdot \GW_{A,g,k}(\alpha_1,\ldots,\alpha_k; \psi_*\beta).	
		\end{align*}
\end{proof}


\begin{bibdiv}
\begin{biblist}
	\bib{bott2013differential}{book}{
		AUTHOR = {Bott, Raoul},
		AUTHOR = {Tu, Loring W.},
		TITLE = {Differential forms in algebraic topology},
		SERIES = {Graduate Texts in Mathematics},
		VOLUME = {82},
		PUBLISHER = {Springer-Verlag, New York-Berlin},
		YEAR = {1982},
		PAGES = {xiv+331},
		ISBN = {0-387-90613-4},
	}

	\bib{castellano2016genus}{thesis}{
		AUTHOR = {Castellano, Robert},
		TITLE = {Kuranishi atlases and genus zero {G}romov--{W}itten invariants},
		NOTE = {Thesis (Ph.D.)--Columbia University},
		PUBLISHER = {ProQuest LLC, Ann Arbor, MI},
		YEAR = {2016},
		PAGES = {119},
		ISBN = {978-1339-60952-2},
		URL =
		{http://gateway.proquest.com/openurl?url_ver=Z39.88-2004&rft_val_fmt=info:ofi/fmt:kev:mtx:dissertation&res_dat=xri:pqm&rft_dat=xri:pqdiss:10096865},
	}

	\bib{cieliebak2007symplectic}{article}{
		AUTHOR = {Cieliebak, Kai},
		AUTHOR = {Mohnke, Klaus},
		TITLE = {Symplectic hypersurfaces and transversality in
			{G}romov--{W}itten theory},
		JOURNAL = {J. Symplectic Geom.},
		VOLUME = {5},
		YEAR = {2007},
		NUMBER = {3},
		PAGES = {281--356},
		ISSN = {1527-5256},
		URL = {http://projecteuclid.org/euclid.jsg/1210083200},
	}

	\bib{ffgw2016polyfoldsfirstandsecondlook}{article}{
		AUTHOR = {Fabert, Oliver},
		AUTHOR = {Fish, Joel W.},
		AUTHOR = {Golovko, Roman},
		AUTHOR = {Wehrheim, Katrin},
		TITLE = {Polyfolds: A first and second look},
		JOURNAL = {EMS Surv. Math. Sci.},
		VOLUME = {3},
		YEAR = {2016},
		NUMBER = {2},
		PAGES = {131--208},
		ISSN = {2308-2151},
		DOI = {10.4171/EMSS/16},
		URL = {https://doi.org/10.4171/EMSS/16},
	}

	\bib{filippenko2018constrained}{article}{
		author = {{Filippenko}, Benjamin},
		title = {Polyfold regularization of constrained moduli spaces},
		journal = {arXiv e-prints},
		year = {2018},
		pages = {89},
		eprint = {arXiv:1807.00386},
	}

	\bib{floer1988instanton}{article}{
		AUTHOR = {Floer, Andreas},
		TITLE = {An instanton-invariant for {$3$}-manifolds},
		JOURNAL = {Comm. Math. Phys.},
		VOLUME = {118},
		YEAR = {1988},
		NUMBER = {2},
		PAGES = {215--240},
		ISSN = {0010-3616},
		URL = {http://projecteuclid.org/euclid.cmp/1104161987},
	}

	\bib{fukaya2012technical}{article}{
		AUTHOR = {{Fukaya}, Kenji},
		AUTHOR = {{Oh}, Yong-Geun},
		AUTHOR = {{Ohta}, Hiroshi},
		AUTHOR = {{Ono}, Kaoru},
		title = {Technical details on Kuranishi structure and virtual fundamental chain},
		journal = {arXiv e-prints},
		year = {2012},
		pages = {257},
		eprint = {arXiv:1209.4410},
	}

	\bib{FO}{article}{
		AUTHOR = {Fukaya, Kenji},
		AUTHOR = {Ono, Kaoru},
		TITLE = {Arnold conjecture and {G}romov--{W}itten invariant for general
			symplectic manifolds},
		BOOKTITLE = {The {A}rnoldfest ({T}oronto, {ON}, 1997)},
		SERIES = {Fields Inst. Commun.},
		VOLUME = {24},
		PAGES = {173--190},
		PUBLISHER = {Amer. Math. Soc., Providence, RI},
		YEAR = {1999},
	}

	\bib{G}{article}{
		AUTHOR = {Gromov, M.},
		TITLE = {Pseudo holomorphic curves in symplectic manifolds},
		JOURNAL = {Invent. Math.},
		VOLUME = {82},
		YEAR = {1985},
		NUMBER = {2},
		PAGES = {307--347},
		ISSN = {0020-9910},
		DOI = {10.1007/BF01388806},
		URL = {https://doi.org/10.1007/BF01388806},
	}

	\bib{HWZ1}{article}{
		author={Hofer, H.},
		author={Wysocki, K.},
		author={Zehnder, E.},
		title={A general Fredholm theory. I. A splicing-based differential geometry},
		journal={J. Eur. Math. Soc. (JEMS)},
		volume={9},
		date={2007},
		number={4},
		pages={841--876},
		issn={1435-9855},
		review={\MR{2341834}},
		doi={10.4171/JEMS/99},
	}
	
	\bib{HWZ2}{article}{
		author={Hofer, H.},
		author={Wysocki, K.},
		author={Zehnder, E.},
		title={A general Fredholm theory. II. Implicit function theorems},
		journal={Geom. Funct. Anal.},
		volume={19},
		date={2009},
		number={1},
		pages={206--293},
		issn={1016-443X},
		review={\MR{2507223}},
		doi={10.1007/s00039-009-0715-x},
	}
	
	\bib{HWZ3}{article}{
		author={Hofer, H.},
		author={Wysocki, K.},
		author={Zehnder, E.},
		title={A general Fredholm theory. III. Fredholm functors and polyfolds},
		journal={Geom. Topol.},
		volume={13},
		date={2009},
		number={4},
		pages={2279--2387},
		issn={1465-3060},
		review={\MR{2515707}},
		doi={10.2140/gt.2009.13.2279},
	}
	
	\bib{HWZint}{article}{
		author={Hofer, H.},
		author={Wysocki, K.},
		author={Zehnder, E.},
		title={Integration theory on the zero sets of polyfold Fredholm sections},
		journal={Math. Ann.},
		volume={346},
		date={2010},
		number={1},
		pages={139--198},
		issn={0025-5831},
		review={\MR{2558891}},
		doi={10.1007/s00208-009-0393-x},
	}
	
	\bib{HWZsc}{article}{	
		author={Hofer, H.},
		author={Wysocki, K.},
		author={Zehnder, E.},
		title={sc-smoothness, retractions and new models for smooth spaces},
		journal={Discrete Contin. Dyn. Syst.},
		volume={28},
		date={2010},
		number={2},
		pages={665--788},
		issn={1078-0947},
		review={\MR{2644764}},
		doi={10.3934/dcds.2010.28.665},
	}
	
	\bib{HWZGW}{article}{
		author={Hofer, H.},
		author={Wysocki, K.},
		author={Zehnder, E.},
		title={Applications of polyfold theory I: The polyfolds of {G}romov--{W}itten theory},
		journal={Mem. Amer. Math. Soc.},
		volume={248},
		date={2017},
		number={1179},
		pages={v+218},
		issn={0065-9266},
		isbn={978-1-4704-2203-5},
		isbn={978-1-4704-4060-2},
		review={\MR{3683060}},
		doi={10.1090/memo/1179},
	}
	
	\bib{HWZbook}{article}{	
		author={Hofer, H.},
		author={Wysocki, K.},
		author={Zehnder, E.},
		title = {Polyfold and {F}redholm theory},
		journal = {arXiv e-prints},
		year = {2017},
		pages = {714},
		eprint = {arXiv:1707.08941},
	}

	\bib{HWZdm}{misc}{
		author={Hofer, H.},
		author={Wysocki, K.},
		author={Zehnder, E.},
		TITLE = {Deligne--{M}umford-type spaces with a view towards symplectic field theory},
		status = {Lecture notes in preparation},
	}
	
	\bib{ionel2013natural}{article}{
		author = {{Ionel}, Eleny-Nicoleta},
		author = {{Parker}, Thomas H.},
		title = {A natural {G}romov--{W}itten virtual fundamental class},
		journal = {arXiv e-prints},
		year = {2013},
		pages = {35},
		eprint = {arXiv:1302.3472},
	}

	\bib{KM}{article}{
		AUTHOR = {Kontsevich, M.},
		AUTHOR = {Manin, Yu.},
		TITLE = {{G}romov--{W}itten classes, quantum cohomology, and enumerative geometry},
		JOURNAL = {Comm. Math. Phys.},
		VOLUME = {164},
		YEAR = {1994},
		NUMBER = {3},
		PAGES = {525--562},
		ISSN = {0010-3616},
		URL = {http://projecteuclid.org/euclid.cmp/1104270948},
	}

	\bib{Kstable}{article}{
		AUTHOR = {Kontsevich, Maxim},
		TITLE = {Enumeration of rational curves via torus actions},
		BOOKTITLE = {The moduli space of curves ({T}exel {I}sland, 1994)},
		SERIES = {Progr. Math.},
		VOLUME = {129},
		PAGES = {335--368},
		PUBLISHER = {Birkh\"{a}user Boston, Boston, MA},
		YEAR = {1995},
	}

	\bib{LT}{article}{
		AUTHOR = {Li, Jun},
		AUTHOR = {Tian, Gang},
		TITLE = {Virtual moduli cycles and {G}romov--{W}itten invariants of
			general symplectic manifolds},
		BOOKTITLE = {Topics in symplectic {$4$}-manifolds ({I}rvine, {CA}, 1996)},
		SERIES = {First Int. Press Lect. Ser., I},
		PAGES = {47--83},
		PUBLISHER = {Int. Press, Cambridge, MA},
		YEAR = {1998},
	}

	\bib{mcduff1991symplectic}{article}{
		AUTHOR = {McDuff, Dusa},
		TITLE = {Symplectic manifolds with contact type boundaries},
		JOURNAL = {Invent. Math.},
		VOLUME = {103},
		YEAR = {1991},
		NUMBER = {3},
		PAGES = {651--671},
		ISSN = {0020-9910},
		DOI = {10.1007/BF01239530},
		URL = {https://doi.org/10.1007/BF01239530},
	}
	
	\bib{MSbook}{book}{
		AUTHOR = {McDuff, Dusa},
		AUTHOR = {Salamon, Dietmar},
		TITLE = {{$J$}-holomorphic curves and symplectic topology},
		SERIES = {American Mathematical Society Colloquium Publications},
		VOLUME = {52},
		EDITION = {Second},
		PUBLISHER = {American Mathematical Society, Providence, RI},
		YEAR = {2012},
		PAGES = {xiv+726},
		ISBN = {978-0-8218-8746-2},
	}

	\bib{mcduff2012smooth}{article}{
		author = {{McDuff}, Dusa},
		author = {{Wehrheim}, Katrin},
		title = {Kuranishi atlases with trivial isotropy - the 2013 state of affairs},
		journal = {arXiv e-prints},
		year = {2012},
		pages = {170},
		eprint = {arXiv:1208.1340},
	}

	\bib{MWtopology}{article}{
		AUTHOR = {McDuff, Dusa},
		AUTHOR = {Wehrheim, Katrin},
		TITLE = {The topology of {K}uranishi atlases},
		JOURNAL = {Proc. Lond. Math. Soc. (3)},
		VOLUME = {115},
		YEAR = {2017},
		NUMBER = {2},
		PAGES = {221--292},
		ISSN = {0024-6115},
		DOI = {10.1112/plms.12032},
		URL = {https://doi.org/10.1112/plms.12032},
	}

	\bib{mcduff2018fundamental}{article}{
		AUTHOR = {McDuff, Dusa},
		AUTHOR = {Wehrheim, Katrin},
		TITLE = {The fundamental class of smooth {K}uranishi atlases with
			trivial isotropy},
		JOURNAL = {J. Topol. Anal.},
		VOLUME = {10},
		YEAR = {2018},
		NUMBER = {1},
		PAGES = {71--243},
		ISSN = {1793-5253},
		DOI = {10.1142/S1793525318500048},
		URL = {https://doi.org/10.1142/S1793525318500048},
	}

	\bib{pardon2016algebraic}{article}{
		AUTHOR = {Pardon, John},
		TITLE = {An algebraic approach to virtual fundamental cycles on moduli
			spaces of pseudo-holomorphic curves},
		JOURNAL = {Geom. Topol.},
		VOLUME = {20},
		YEAR = {2016},
		NUMBER = {2},
		PAGES = {779--1034},
		ISSN = {1465-3060},
		DOI = {10.2140/gt.2016.20.779},
		URL = {https://doi.org/10.2140/gt.2016.20.779},
	}

	\bib{robbinsalamon2006delignemumford}{article}{
		AUTHOR = {Robbin, Joel W.},
		AUTHOR = {Salamon, Dietmar A.},
		TITLE = {A construction of the {D}eligne-{M}umford orbifold},
		JOURNAL = {J. Eur. Math. Soc. (JEMS)},
		VOLUME = {8},
		YEAR = {2006},
		NUMBER = {4},
		PAGES = {611--699},
		ISSN = {1435-9855},
		DOI = {10.4171/JEMS/101},
		URL = {https://doi.org/10.4171/JEMS/101},
	}

	\bib{ruan1994symplectic}{article}{
		AUTHOR = {Ruan, Yongbin},
		TITLE = {Symplectic topology on algebraic {$3$}-folds},
		JOURNAL = {J. Differential Geom.},
		VOLUME = {39},
		YEAR = {1994},
		NUMBER = {1},
		PAGES = {215--227},
		ISSN = {0022-040X},
		URL = {http://projecteuclid.org/euclid.jdg/1214454682},
	}

	\bib{ruan1996topological}{article}{
		AUTHOR = {Ruan, Yongbin},
		TITLE = {Topological sigma model and {D}onaldson-type invariants in
			{G}romov theory},
		JOURNAL = {Duke Math. J.},
		VOLUME = {83},
		YEAR = {1996},
		NUMBER = {2},
		PAGES = {461--500},
		ISSN = {0012-7094},
		DOI = {10.1215/S0012-7094-96-08316-7},
		URL = {https://doi.org/10.1215/S0012-7094-96-08316-7},
	}

	\bib{rt1995quatumcohomology}{article}{
		AUTHOR = {Ruan, Yongbin},
		AUTHOR = {Tian, Gang},
		TITLE = {A mathematical theory of quantum cohomology},
		JOURNAL = {J. Differential Geom.},
		VOLUME = {42},
		YEAR = {1995},
		NUMBER = {2},
		PAGES = {259--367},
		ISSN = {0022-040X},
		URL = {http://projecteuclid.org/euclid.jdg/1214457234},
	}

	\bib{schmaltz2019steenrod}{article}{
		author = {{Schmaltz}, Wolfgang},
		title = {The Steenrod problem for orbifolds and polyfold invariants as intersection numbers},
		journal = {arXiv e-prints},
		year = {2019},
		pages = {32},
		eprint = {arXiv:1904.02186},
	}

	\bib{schmaltz2019naturality}{article}{
		author	= {{Schmaltz}, Wolfgang},
		title	= {Naturality of polyfold invariants and pulling back abstract perturbations},
		year	= {2019},
		status	= {in preparation}
	}

	\bib{Si}{article}{
		author = {{Siebert}, Bernd},
		title = {{G}romov--{W}itten invariants of general symplectic manifolds},
		journal = {eprint arXiv:dg-ga/960800},
		year = {1996},
		pages = {dg-ga/9608005},
		eprint = {arXiv:dg-ga/9608005},
	}

	\bib{thom1954quelques}{article}{
		AUTHOR = {Thom, Ren\'{e}},
		TITLE = {Quelques propri\'{e}t\'{e}s globales des vari\'{e}t\'{e}s diff\'{e}rentiables},
		JOURNAL = {Comment. Math. Helv.},
		VOLUME = {28},
		YEAR = {1954},
		PAGES = {17--86},
		ISSN = {0010-2571},
		DOI = {10.1007/BF02566923},
		URL = {https://doi.org/10.1007/BF02566923},
	}

	\bib{witten1988topological}{article}{
		AUTHOR = {Witten, Edward},
		TITLE = {Topological sigma models},
		JOURNAL = {Comm. Math. Phys.},
		VOLUME = {118},
		YEAR = {1988},
		NUMBER = {3},
		PAGES = {411--449},
		ISSN = {0010-3616},
		URL = {http://projecteuclid.org/euclid.cmp/1104162092},
	}
	
	\bib{witten1990two}{article}{
		AUTHOR = {Witten, Edward},
		TITLE = {Two-dimensional gravity and intersection theory on moduli
			space},
		BOOKTITLE = {Surveys in differential geometry ({C}ambridge, {MA}, 1990)},
		PAGES = {243--310},
		PUBLISHER = {Lehigh Univ., Bethlehem, PA},
		YEAR = {1991},
	}
\end{biblist}
\end{bibdiv}


\end{document}